\numberwithin{equation}{section}
\theoremstyle{plain}
\newtheorem{thm}{\protect\theoremname}[section]
\theoremstyle{plain}
\newtheorem{cor}[thm]{\protect\corollaryname}
\theoremstyle{plain}
\newtheorem{prop}[thm]{\protect\propositionname}
\theoremstyle{remark}
\newtheorem{rem}[thm]{\protect\remarkname}
\theoremstyle{plain}
\newtheorem{lem}[thm]{\protect\lemmaname}
\definecolor{green}{rgb}{0,0.8,0} 
\newcommand{\nrm}{\@ifstar{\nrmb}{\nrmi}}
\newcommand{\nrmi}[1]{\Vert{#1}\Vert}
\newcommand{\nrmb}[1]{\left\Vert{#1}\right\Vert}
\newcommand{\abs}{\@ifstar{\absb}{\absi}}
\newcommand{\absi}[1]{\vert{#1}\vert}
\newcommand{\absb}[1]{\left\vert{#1}\right\vert}
\newcommand{\brk}{\@ifstar{\brkb}{\brki}}
\newcommand{\brki}[1]{\langle{#1}\rangle}
\newcommand{\brkb}[1]{\left\langle{#1}\right\rangle}
\newcommand{\set}{\@ifstar{\setb}{\seti}}
\newcommand{\seti}[1]{\{#1\}}
\newcommand{\setb}[1]{\left\{ #1\right\}}
\newcommand{\td}[1]{\widetilde{#1}}
\newcommand{\br}[1]{\overline{#1}}
\newcommand{\ul}[1]{\underline{#1}}
\newcommand{\wh}[1]{\widehat{#1}}
\newcommand{\VERT}[1]{{\left\vert\kern-0.25ex\left\vert\kern-0.25ex\left\vert #1 
    \right\vert\kern-0.25ex\right\vert\kern-0.25ex\right\vert}}
\let\Re\relax
\DeclareMathOperator{\Re}{Re}
\let\Im\relax
\DeclareMathOperator{\Im}{Im}
\newcommand{\aeq}{\sim}
\newcommand{\aleq}{\lesssim}
\newcommand{\ageq}{\gtrsim}
\newcommand{\lap}{\Delta}
\newcommand{\ud}{\mathrm{d}}
\newcommand{\rd}{\partial}
\newcommand{\impmi}{\Leftrightarrow}
\newcommand{\peq}{\relphantom{=}}			
\newcommand{\alp}{\alpha}
\newcommand{\bt}{\beta}
\newcommand{\gmm}{\gamma}
\newcommand{\Gmm}{\Gamma}
\newcommand{\dlt}{\delta}
\newcommand{\eps}{\epsilon}
\newcommand{\lmb}{\lambda}
\newcommand{\Lmb}{\Lambda}
\newcommand{\tht}{\theta}
\newcommand{\bfv}{{\bf v}}
\newcommand{\bfD}{{\bf D}}
\newcommand{\bfQ}{{\bf Q}}
\newcommand{\bfS}{{\bf S}}
\newcommand{\bbC}{\mathbb C}
\newcommand{\bbN}{\mathbb N}
\newcommand{\bbQ}{\mathbb Q}
\newcommand{\bbR}{\mathbb R}
\newcommand{\bbZ}{\mathbb Z}
\newcommand{\calA}{\mathcal A}
\newcommand{\calC}{\mathcal C}
\newcommand{\calE}{\mathcal E}
\newcommand{\calF}{\mathcal F}
\newcommand{\calH}{\mathcal H}
\newcommand{\calI}{\mathcal I}
\newcommand{\calK}{\mathcal K}
\newcommand{\calL}{\mathcal L}
\newcommand{\calM}{\mathcal M}
\newcommand{\calN}{\mathcal N}
\newcommand{\calO}{\mathcal O}
\newcommand{\calP}{\mathcal P}
\newcommand{\calR}{\mathcal R}
\newcommand{\calS}{\mathcal S}
\newcommand{\calT}{\mathcal T}
\newcommand{\calU}{\mathcal U}
\newcommand{\calZ}{\mathcal Z}
\newcommand{\To}{\longrightarrow}
\newcommand{\weakto}{\rightharpoonup}
\newcommand{\embed}{\righthookarrow}
\newcommand{\rst}[1]{\left. #1 \right\vert}
\newcommand{\CR}{\bfD_{+}}
\newcommand{\chf}{\boldsymbol{1}}
\newcommand{\Mod}{\mathbf{Mod}}
\newcommand{\out}{\mathrm{out}}
\newcommand{\init}{\mathrm{init}}
\newcommand{\dec}{\mathrm{dec}}
\newcommand{\err}{\mathrm{err}}
\renewcommand{\eps}{\varepsilon}
\newcommand{\tint}[2]{\textstyle \int_{#1}^{#2}}
\providecommand{\corollaryname}{Corollary}
\providecommand{\lemmaname}{Lemma}
\providecommand{\propositionname}{Proposition}
\providecommand{\remarkname}{Remark}
\providecommand{\theoremname}{Theorem}
\providecommand{\corollaryname}{Corollary}
\providecommand{\lemmaname}{Lemma}
\providecommand{\propositionname}{Proposition}
\providecommand{\remarkname}{Remark}
\providecommand{\theoremname}{Theorem}
\begin{document}
\global\long\def\bbC{\mathbb{C}}%
\global\long\def\bbN{\mathbb{N}}%
\global\long\def\bbQ{\mathbb{Q}}%
\global\long\def\bbR{\mathbb{R}}%
\global\long\def\bbZ{\mathbb{Z}}%

\global\long\def\bfD{{\bf D}}%

\global\long\def\bfv{{\bf v}}%

\global\long\def\calA{\mathcal{A}}%
\global\long\def\calC{\mathcal{C}}%
\global\long\def\calE{\mathcal{E}}%
\global\long\def\calF{\mathcal{F}}%
\global\long\def\calH{\mathcal{H}}%
\global\long\def\calI{\mathcal{I}}%
\global\long\def\calK{\mathcal{K}}%
\global\long\def\calL{\mathcal{L}}%
\global\long\def\calM{\mathcal{M}}%
\global\long\def\calN{\mathcal{N}}%
\global\long\def\calO{\mathcal{O}}%
\global\long\def\calP{\mathcal{P}}%
\global\long\def\calR{\mathcal{R}}%
\global\long\def\calS{\mathcal{S}}%
\global\long\def\calT{\mathcal{T}}%
\global\long\def\calU{\mathcal{U}}%
\global\long\def\calZ{\mathcal{Z}}%

\global\long\def\dlt{\delta}%
\global\long\def\eps{\epsilon}%
\global\long\def\gmm{\gamma}%
\global\long\def\Gmm{\Gamma}%
\global\long\def\tht{\theta}%
\global\long\def\lmb{\lambda}%
\global\long\def\Lmb{\Lambda}%

\global\long\def\rd{\partial}%
\global\long\def\aleq{\lesssim}%
\global\long\def\ageq{\gtrsim}%

\global\long\def\peq{\mathrel{\phantom{=}}}%
\global\long\def\To{\longrightarrow}%
\global\long\def\weakto{\rightharpoonup}%
\global\long\def\embed{\hookrightarrow}%
\global\long\def\Re{\mathrm{Re}}%
\global\long\def\Im{\mathrm{Im}}%
\global\long\def\chf{\mathbf{1}}%
\global\long\def\td#1{\widetilde{#1}}%
\global\long\def\br#1{\overline{#1}}%
\global\long\def\ul#1{\underline{#1}}%
\global\long\def\wh#1{\widehat{#1}}%
\global\long\def\tint#1#2{{\textstyle \int_{#1}^{#2}}}%

\global\long\def\CR{\mathbf{D}_{+}}%
\global\long\def\Mod{\mathbf{Mod}}%
\global\long\def\out{\mathrm{out}}%
\global\long\def\init{\mathrm{init}}%
\global\long\def\dec{\mathrm{dec}}%
\global\long\def\err{\mathrm{err}}%

\global\long\def\eps{\varepsilon}%

\title[Blow-up dynamics for CSS]{Blow-up dynamics for \\
 smooth finite energy radial data solutions to the self-dual Chern--Simons--Schrödinger
equation}
\author{Kihyun Kim}
\email{khyun1215@kaist.ac.kr (current address) khyun@ihes.fr}
\address{Department of Mathematical Sciences, Korea Advanced Institute of Science
and Technology, 291 Daehak-ro, Yuseong-gu, Daejeon 34141, Korea\\
(Current address) IHES, 35 route de Chartres, Bures-sur-Yvette 91440,
France}
\author{Soonsik Kwon}
\email{soonsikk@kaist.edu}
\address{Department of Mathematical Sciences, Korea Advanced Institute of Science
and Technology, 291 Daehak-ro, Yuseong-gu, Daejeon 34141, Korea}
\author{Sung-Jin Oh}
\email{sjoh@math.berkeley.edu}
\address{Department of Mathematics, UC Berkeley, Evans Hall 970, Berkeley,
CA 94720-3840, USA and Korea Institue for Advanced Study, 80 Hoegi-ro,
Dongdaemun-gu, Seoul 02455, Korea}
\keywords{Chern--Simons--Schrödinger equation, self-duality, finite-time blow-up
construction, covariant conjugation identity}
\subjclass[2010]{35B44, 35Q55}
\begin{abstract}
We consider the finite-time blow-up dynamics of solutions to the self-dual
Chern--Simons--Schrödinger (CSS) equation (also referred to as the
Jackiw--Pi model) near the radial soliton $Q$ with the least $L^{2}$-norm
(ground state). While a formal application of pseudoconformal symmetry
to $Q$ gives rise to an $L^{2}$-continuous curve of initial data
sets whose solutions blow up in finite time, they all have infinite
energy due to the slow spatial decay of $Q$. In this paper, we exhibit
initial data sets that are smooth finite energy radial perturbations
of $Q$, whose solutions blow up in finite time. It turns out that
their blow-up rate differs from the pseudoconformal rate by a power
of logarithm. Applying pseudoconformal symmetry in reverse, this also
yields a first example of an infinite-time blow-up solution, whose
blow-up profile contracts at a logarithmic rate.

Our analysis builds upon the ideas of previous works of the first
two authors on (CSS) as well as celebrated works on energy-critical
geometric equations by Merle, Raphaël, and Rodnianski. A notable feature
of this paper is a systematic use of nonlinear covariant conjugations
by the covariant Cauchy--Riemann operators in all parts of the argument.
This not only overcomes the nonlocality of the problem, which is the
principal challenge for (CSS), but also simplifies the structure of
nonlinearity arising in the proof.
\end{abstract}

\maketitle

\tableofcontents{}

\section{Introduction}

The subject of this paper is the nonrelativistic Chern--Simons gauge
field theory introduced by Jackiw--Pi \cite{JackiwPi1990PRL}, which
is a Lagrangian field theory with the action 
\begin{equation}
\calS[\phi,A]\coloneqq\frac{1}{2}\int_{\bbR^{1+2}}A\wedge F+\int_{\bbR^{1+2}}\frac{1}{2}\Im(\br{\phi}\bfD_{t}\phi)+\frac{1}{2}\abs{\bfD_{x}\phi}^{2}-\frac{g}{4}\abs{\phi}^{4}\,dtdx,\label{eq:CSS-action}
\end{equation}
where $\phi:\bbR^{1+2}\to\bbC$ is a complex-valued scalar field,
$\bfD_{\alp}=\rd_{\alp}+iA_{\alp}$ $(\alp=t,1,2)$ are the covariant
derivatives associated with a real-valued $1$-form $A=A_{t}dt+A_{1}dx^{1}+A_{2}dx^{2}$
(connection $1$-form) and $F=dA$ is the corresponding curvature
$2$-form. Note that \eqref{eq:CSS-action} is simply the sum of the
\emph{Chern--Simons action}, $\frac{1}{2}\int A\wedge F$, and the
action for the (gauge-covariant) cubic nonlinear Schrödinger equation.
Following a widespread usage in the mathematical literature, we will
refer the resulting Euler--Lagrange equation, written below in Section~\ref{subsec:CSS},
as the \emph{Chern--Simons--Schrödinger equation}.

The Chern--Simons action has been employed in high energy physics
and condensed matter physics to describe interesting planar physics,
such as topological massive gauge theories and the quantum Hall effect;
we refer to \cite{JackiwPi1990PRD,JackiwPi1990PRL,JackiwPi1991PRD,JackiwPi1992Progr.Theoret.}
for detailed reviews. The model \eqref{eq:CSS-action} under consideration
is of particular interest as it is the simplest model that is nonrelativistic
(which is the setting of condensed matter physics) and, after a particular
choice of the coupling constant $g$ (namely $g=1$), \emph{self-dual}.
A remarkable consequence of the self-duality, which was observed in
the seminal paper of Jackiw--Pi \cite{JackiwPi1990PRL}, is the existence
of explicit(!)~spatially-localized static solutions to the model
(also referred to as \emph{solitons} or \emph{nontopological vortices})
that are parametrized by the solutions to the (explicitly solvable)
Liouville equation. In what follows, we refer to these solutions as
\emph{Jackiw--Pi vortices}.

Most basic among the Jackiw--Pi vortices is the \emph{ground state}
$(\bfQ,A)$, given in the polar coordinates $(r,\tht)$ by 
\begin{equation}
\bfQ(r,\tht)=\sqrt{8}\frac{1}{1+r^{2}},\quad A_{t}=\frac{1}{2}\abs{\bfQ}^{2},\quad A_{r}=0,\quad A_{\tht}=-2\frac{r^{2}}{1+r^{2}},\label{eq:Q-full}
\end{equation}
which has the minimal charge (i.e., the integral of $\abs{\bfQ}^{2}$)
among all Jackiw--Pi vortices. The charge is a natural measure of
the size of a solution, as it is invariant under the scaling symmetry
of \eqref{eq:CSS-action}. The ground state $\bfQ$ plays a pivotal
role in the dynamics of solutions. Indeed, within radial symmetry,
it is known that the $L^{2}$-norm of $\bfQ(x)$ serves as the threshold
for global regularity and scattering \cite{LiuSmith2016}. An outstanding
problem, then, is \emph{to understand the dynamics of solutions associated
to initial data in the vicinity of $\bfQ(x)$, with the $L^{2}$-norm
greater than or equal to that of $\bfQ(x)$.}

In this regime, an interesting formal dynamics describing finite-time
blow-up follows from the pseudoconformal symmetry of \eqref{eq:CSS-cov}.
Like the well-known cubic NLS on $\bbR^{1+2}$, the Chern--Simons--Schrödinger
equation is invariant under the pseudoconformal transformations 
\begin{align*}
(t,x)=(\tfrac{T}{1-bT},\tfrac{X}{1-bT}),\qquad\Phi_{b}(T,X)=\tfrac{1}{1-bT}e^{-ib\frac{\abs{X}^{2}}{1-bT}}\phi(\tfrac{T}{1-bT},\tfrac{X}{1-bT}),
\end{align*}
where $b\in\bbR$. Applying such transformations with $b>0$ to the
ground state, we obtain a one parameter family of solutions $(\bfS_{b},A_{b})$
blowing up in finite time (namely, at $T=b^{-1}$). Each $\bfS_{b}$
has the same $L^{2}$-norm as $\bfQ$ and $\bfS_{b}(t=0)\to\bfQ$
in $L^{2}$ as $b\to0+$. However, because of the slow spatial decay
of $\bfQ$, each $\bfS_{b}$ $(b>0)$ has \emph{infinite} $\dot{H}^{1}$-norm
(as well as infinite conserved energy, which is defined below). As
a result, if we consider the dynamics of \emph{finite energy} solutions
in the vicinity of $\bfQ$, the relevance of $\bfS_{b}$ and even
the possibility of a finite-time blow-up are dubious\footnote{Another standard method to deduce finite-time blow-up is using the
virial identity à la Glassey, but in the self-dual case, it only leads
to a pseudoconformal transform of a static solution; see \cite{KimKwon2019arXiv}.}.

The main result of this paper is the first construction of finite
time blow-up solutions with smooth finite energy radial initial data,
which are arbitrarily close to $\bfQ$ in the $L^{2}$-topology. A
detailed description of the blow-up dynamics is given; in particular,
we provide a codimension one set of data leading to the blow-up, as
well as a sharp description of the rate. The blow-up rate differs
from the pseudoconformal rate by a factor of logarithm. This is a
sharp contrast to the case of higher equivariance indices $m\geq1$,
in which case the pseudoconformal blow-up rate is obtained \cite{KimKwon2020arXiv}.
Interestingly, our blow-up rate is identical to that obtained in the
$1$-equivariant Schrödinger maps \cite{MerleRaphaelRodnianski2013InventMath}.
Via the pseudoconformal transform, we also construct infinite-time
blow-up solutions with the blow-up profile $\bfQ$, whose scale contracts
at a rate logarithmic in $t$.

Our analysis follows the road map furnished by the seminal works of
Rodnianski--Sterbenz \cite{RodnianskiSterbenz2010Ann.Math.}, Raphaël--Rodnianski
\cite{RaphaelRodnianski2012Publ.Math.}, and Merle--Raphaël--Rodnianski
\cite{MerleRaphaelRodnianski2013InventMath} in the cases of wave
maps, Yang--Mills, and Schrödinger maps. Compared to the previously
considered cases, a key challenge in the Chern--Simons--Schrödinger
case is the nonlocality of the nonlinearity, which results in a stronger
soliton-radiation interaction. Notable features of our proof are a
systematic use of nonlinear covariant conjugations, and the treatment
of the self-dual Chern--Simons--Schrödinger equation as a coupled
system of nonlinearly conjugated variables of varying orders. These
ideas provide a simple and efficient way to overcome the nonlocality
of the problem. This point of view pervades all steps of our arguments,
such as the derivation of modified profiles and sharp modulation laws,
decomposition of solutions, and energy estimates. See Section~\ref{subsec:Strategy}
for more details.

\subsection{The self-dual Chern--Simons--Schrödinger equation}

\label{subsec:CSS} The Euler--Lagrange equation for \eqref{eq:CSS-action}
in the self-dual case $g=1$ takes the form 
\begin{equation}
\left\{ \begin{aligned}\bfD_{t}\phi & =i(\bfD_{1}\bfD_{1}+\bfD_{2}\bfD_{2})\phi+i\abs{\phi}^{2}\phi,\\
F_{t1} & =-\Im(\br{\phi}\bfD_{2}\phi),\\
F_{t2} & =\Im(\br{\phi}\bfD_{1}\phi),\\
F_{12} & =-\tfrac{1}{2}\abs{\phi}^{2}.
\end{aligned}
\right.\label{eq:CSS-cov}
\end{equation}
We remind the reader that $\phi:\bbR^{1+2}\to\bbC$ is a complex-valued
scalar field, $\bfD_{\alp}=\rd_{\alp}+iA_{\alp}$ $(\alp=t,1,2)$
are the covariant derivatives associated with a real-valued $1$-form
$A=A_{t}dt+A_{1}dx^{1}+A_{2}dx^{2}$ (connection $1$-form) and $F=dA$
is the corresponding curvature $2$-form. We will refer to this equation
as the (self-dual) \emph{Chern--Simons--Schrödinger} (CSS) equation.

\subsubsection*{Symmetries and conservation laws}

We describe some gauge-covariant symmetries and their associated conservation
laws of \eqref{eq:CSS-cov} that are of importance in the present
work. Each symmetry described here consists of a pre-composition of
$\phi$ with a coordinate transform $(t',x')\mapsto(t,x)$ and a further
transformation of the resulting $\phi(t',x')$. \emph{Gauge covariance}
refers to the feature that the $1$-form $A$ is simply pulled back
by $(t',x')\mapsto(t,x)$.

Among the most basic symmetries are the \emph{time translation symmetry}
\[
(t,x)=(t'+t_{0},x'),\quad\td{\phi}=\phi,\qquad(t_{0}\in\bbR)
\]
and the \emph{phase rotation symmetry} 
\[
(t,x)=(t',x'),\quad\td{\phi}=e^{i\gmm}\phi.\qquad(\gmm\in\bbR)
\]
Associated to these symmetries are the conservation laws for the \emph{energy}
and the \emph{charge}: 
\begin{align*}
E[\phi,A] & \coloneqq\int_{\bbR^{2}}\frac{1}{2}\abs{\bfD_{x}\phi}^{2}-\frac{1}{4}\abs{\phi}^{4}\,dx\\
M[\phi] & \coloneqq\int_{\bbR^{2}}\abs{\phi}^{2}\,dx.
\end{align*}

Next, of particular importance in this work are the \emph{scaling
symmetry}, 
\[
(t,x)=(\lmb^{-2}t',\lmb^{-1}x'),\quad\phi'=\lmb^{-1}\phi,\qquad(\lmb>0)
\]
under which the $L^{2}$-norm (or $M[\phi]$) is invariant, and the
discrete \emph{pseudoconformal symmetry}, 
\begin{equation}
(t,x)=(-\tfrac{1}{t'},\tfrac{x'}{t'}),\quad\phi'(t',x')=\tfrac{1}{t'}e^{i\frac{\abs{x'}^{2}}{4t'}}\phi.\label{eq:discrete-pseudo-transf}
\end{equation}
The aforementioned continuous family of pseudoconformal transformations
arise by composing the discrete version with the symmetries discussed
so far. Associated to these symmetries are the \emph{virial identities}
\[
\left\{ \begin{aligned}\rd_{t}\left(\int_{\bbR^{2}}\abs{x}^{2}\abs{\phi}^{2}dx\right) & =4\int_{\bbR^{2}}x^{j}\Im(\br{\phi}\bfD_{j}\phi)dx,\\
\rd_{t}\left(\int_{\bbR^{2}}x^{j}\Im(\br{\phi}\bfD_{j}\phi)dx\right) & =4E[\phi,A].
\end{aligned}
\right.
\]

In this aspect, \eqref{eq:CSS-cov} shares many similarities with
the cubic NLS $i\rd_{t}\phi+\lap\phi+\abs{\phi}^{2}\phi=0$ on $\bbR^{1+2}$.

\subsubsection*{Self-duality}

The particular choice of the coefficient $g=1$ in front of $\abs{\phi}^{2}\phi$
in \eqref{eq:CSS-cov} makes this system \emph{self-dual}: the minimizers
of the Hamiltonian $E[\phi]$, which turn out to coincide with static
solutions, are characterized by a first order (as opposed to second
order) elliptic equation (see \eqref{eq:bog} below).

We introduce the \emph{covariant Cauchy--Riemann operator} $\CR$
and its formal $L^{2}$-adjoint: 
\[
\CR\coloneqq\bfD_{1}+i\bfD_{2},\qquad\CR^{\ast}=-\bfD_{1}+i\bfD_{2}.
\]
Observe that 
\[
\CR^{\ast}\CR=-\bfD_{1}^{2}-\bfD_{2}^{2}-\tfrac{1}{2}\abs{\phi}^{2}.
\]
As a consequence, the first equation of \eqref{eq:CSS-cov} can be
written in the form 
\begin{equation}
(i\bfD_{t}+\tfrac{1}{2}\abs{\phi}^{2})\phi-\CR^{\ast}\CR\phi=0.\label{eq:CSS-sd}
\end{equation}
Moreover, observe that 
\begin{align*}
\frac{1}{2}\int_{\bbR^{2}}\abs{\CR\phi}^{2}\,dx & =\frac{1}{2}\int_{\bbR^{2}}\Re(\br{\phi}\CR^{\ast}\CR\phi)\,dx\\
 & =-\frac{1}{2}\int_{\bbR^{2}}\Re(\br{\phi}(\bfD_{1}^{2}+\bfD_{2}^{2})\phi)\,dx-\frac{1}{4}\int_{\bbR^{2}}\abs{\phi}^{4}\,dx.
\end{align*}
After an integration by parts, the last line is exactly the conserved
energy of the self-dual(!) Chern--Simons--Schrödinger equation,
i.e., 
\begin{equation}
E[\phi,A]=\frac{1}{2}\int_{\bbR^{2}}\abs{\CR\phi}^{2}\,dx.\label{eq:energy-sd}
\end{equation}
Therefore, the minimum energy is zero, and the energy minimizers obey
the \emph{Bogomol'nyi equation} 
\begin{equation}
\left\{ \begin{aligned}\CR\phi & =0,\\
F_{12} & =-\frac{1}{2}\abs{\phi}^{2}.
\end{aligned}
\right.\label{eq:bog}
\end{equation}
The last property is the manifestation of \emph{self-duality}. Any
zero-energy solution (or equivalently, a solution to \eqref{eq:bog})
is a static (i.e., $\rd_{t}\phi=0$) solution to \eqref{eq:CSS-cov}
with $A_{t}=-\tfrac{1}{2}\abs{\phi}^{2}$. Conversely, any static
solution with $\phi\in H^{1}$ and mild conditions on $A_{t},A_{j}$
(e.g., boundedness) necessarily has zero energy and $A_{t}=-\tfrac{1}{2}\abs{\phi}^{2}$
\cite{HuhSeok2013JMP}.

It was observed by Jackiw--Pi \cite{JackiwPi1990PRD} that, at points
where $\phi$ is nonzero, \eqref{eq:bog} implies that $\abs{\phi}^{2}$
solves the Liouville equation $\lap(\log\abs{\phi}^{2})=-\abs{\phi}^{2}$.
The ground state $\abs{\bfQ}^{2}$ is the unique (up to obvious symmetries)
positive finite charge solution to the Liouville equation \cite{ChouWan1994PacificJMath}.

\subsubsection*{Cauchy problem formulation and the Coulomb gauge}

The equation \eqref{eq:CSS-cov} has \emph{gauge invariance}, i.e.,
for any real-valued function $\chi$ (gauge transformation), if $(\phi,A)$
is a solution, then so is its \emph{gauge transform} $(e^{i\chi}\phi,A-d\chi)$.
Accordingly, uniqueness of a solution to the Cauchy problem may be
formulated only up to gauge invariance. In order to fix gauge invariance
and obtain a (locally) well-posed Cauchy problem, we need to impose
a condition on $A$.

In this paper, we impose the \emph{Coulomb gauge condition}, 
\begin{equation}
\rd_{1}A_{1}+\rd_{2}A_{2}=0,\label{eq:coulomb}
\end{equation}
along with a suitable decay condition for $A(t,x)$ as $\abs{x}\to\infty$
at every $t$ (that will be implicit in the formulae for the components
of $A$ in \eqref{eq:A-quad} below) to rule out nontrivial gauge
transformations. We mention that \eqref{eq:CSS-cov} in Coulomb gauge,
viewed as an evolution equation solely for $\phi$, admits the following
Hamiltonian formulation \cite{JackiwPi1990PRD}: 
\begin{equation}
\rd_{t}\phi=-i\frac{\dlt E[\phi]}{\dlt\phi},\label{eq:CSS-ham}
\end{equation}
where $\frac{\dlt}{\dlt\phi}$ is the Fréchet derivative with respect
to the real inner product $\int_{\bbR^{2}}\Re(\br{\psi}\phi)dx$,
and $E[\phi]$ is the energy with $A$ determined by $\phi$ and the
Coulomb gauge condition.

\subsubsection*{Equivariance within Coulomb gauge}

We begin with a short general discussion of the general equivariance
ansatz for \eqref{eq:CSS-cov}. A complex-valued function $\psi$
on $\bbR^{2}$ is said to be \emph{$m$-equivariant} if 
\begin{equation}
\psi(r,\tht)=e^{im\tht}v(r)\label{eq:def-equivariance}
\end{equation}
for some radial function $v(r)$, which we refer to as the radial
profile of $\psi$. Note that $0$-equivariance is equivalent to radiality.
By \eqref{eq:CSS-cov}, if $\phi$ is $m$-equivariant at a fixed
$t$, then $F_{tr}$, $F_{t\tht}$ and $F_{r\tht}$ are radial. As
the Coulomb gauge condition is also radially symmetric, it follows
that, as long as local wellposedness holds, \eqref{eq:CSS-cov} in
Coulomb gauge preserves $m$-equivariance of $\phi$ for any $m\in\bbZ$.

Under the $m$-equivariance and Coulomb gauge conditions, $A_{t}$,
$A_{r}$, $A_{\tht}$ are radial and the Coulomb gauge condition reduces
to $A_{r}=0$. The radial profile $u$ of $\phi$, defined by 
\[
\phi(t,r,\tht)=e^{im\tht}u(t,r),
\]
obeys 
\begin{equation}
i(\rd_{t}+iA_{t}[u])u+\rd_{r}^{2}u+\frac{1}{r}\rd_{r}u-\frac{1}{r^{2}}(m+A_{\tht}[u])^{2}u+\abs{u}^{2}u=0,\label{eq:CSS-equiv-u}
\end{equation}
where $A_{t}[u]$, $A_{\tht}[u]$ are given by 
\begin{equation}
A_{t}[u]=-\int_{r}^{\infty}(m+A_{\tht})|u|^{2}\frac{dr'}{r'},\qquad A_{\tht}[u]=-\frac{1}{2}\int_{0}^{r}|u|^{2}r'dr'.\label{eq:A-quad}
\end{equation}
We write $A_{\tht}[u,v]=-\frac{1}{2}\int_{0}^{r}\Re(\br uv)r'dr'$
for the real bilinear form obtained by polarization. Using $\rd_{r}A_{t}=F_{rt}$
and $\rd_{r}A_{\tht}=F_{r\tht}$, as well the decay and smoothness
properties of $A_{t}$ and $A_{\tht}$, it may be easily verified
that the connection $1$-form $A$ agrees with $A_{t}[u]dt+A_{\tht}[u]d\tht$.
Equations \eqref{eq:CSS-equiv-u} and \eqref{eq:A-quad} furnish an
evolutionary equation for the radial profile $u$ of an $m$-equivariant
solution $\phi$ to \eqref{eq:CSS-cov} in Coulomb gauge.

The Cauchy--Riemann operator $\bfD_{+}$ maps $m$-equivariant functions
to $(m+1)$-equivariant functions (the standard Cauchy--Riemann operator
$\rd_{+}=\rd_{1}+i\rd_{2}$ has this property and $A_{1}+iA_{2}$
under Coulomb gauge is a $1$-equivariant function). Given $A_{r}=0$
and $A_{\tht}=A_{\tht}[v]$, where $v$ may be the radial profile
of an arbitrary $m'$-equivariant function, the radial Cauchy--Riemann
operator $^{(m)}\bfD_{v}$ acting on an $m$-equivariant function
is defined by the relation 
\[
\bfD_{+}(e^{im\tht}w(r))=e^{i(m+1)\tht}[{}^{(m)}\bfD_{v}w](r),
\]
and takes the form 
\begin{equation}
^{(m)}\bfD_{v}w=\rd_{r}w-\frac{1}{r}(m+A_{\tht}[v])w.\label{eq:CR-radial}
\end{equation}

As observed in \cite{KimKwon2019arXiv}, the nonlinear equation \eqref{eq:CSS-equiv-u}
can be written in a self-dual form. More precisely, the spatial part
of \eqref{eq:CSS-equiv-u}, which is a second-order nonlinear operator,
can be factorized into first-order (nonlinear) operators. For radial
functions $v,w$, we also introduce the notation $^{(m)}L_{v}w$ for
the linearization of the (radial) Bogomol'nyi operator $v\mapsto{}^{(m)}\bfD_{v}v$
around $v$. It may be expressed as 
\begin{equation}
\begin{aligned}^{(m)}L_{v}w & =^{(m)}\bfD_{v}w-\frac{2}{r}A_{\tht}[v,w]v\\
 & =\rd_{r}w-\frac{1}{r}(m+A_{\tht}[v])w+\frac{v}{r}\int_{0}^{r}\Re(\br vw)r'dr'.
\end{aligned}
\label{eq:Lmv}
\end{equation}
As an immediate application of the self-duality \eqref{eq:energy-sd}
and the Hamiltonian formulation \eqref{eq:CSS-ham}, we see that the
evolution equation \eqref{eq:CSS-equiv-u} for $u$ takes the \emph{self-dual
form}: 
\begin{equation}
\rd_{t}u+i{}^{(m)}L_{u}^{\ast}{}^{(m)}\bfD_{u}u=0,\label{eq:CSS-m-self-dual-form}
\end{equation}
where 
\[
^{(m)}L_{u}^{\ast}w=-\rd_{r}w-\frac{1}{r}(m+1+A_{\tht}[u])w+u\int_{r}^{\infty}\Re(\br uw)dr'
\]
is the formal $L^{2}$-adjoint of $^{(m)}L_{u}$.

Finally, for each $m\geq0$, there is an \emph{explicit} $m$-equivariant
Jackiw--Pi vortex, which is unique up to the symmetries of the equation:
\[
Q^{(m)}(r)e^{im\theta}=\sqrt{8}(m+1)\frac{r^{m}}{1+r^{2m+2}}e^{im\theta}.
\]

\subsection{Known results}

A brief discussion of the known results on the Cauchy problem for
\eqref{eq:CSS-cov} is in order. The well-posedness of \eqref{eq:CSS-cov}
was first studied in Coulomb gauge; after the earlier works \cite{BergeDeBouardSaut1995Nonlinearity,Huh2013Abstr.Appl.Anal},
Lim \cite{Lim2018JDE} proved $H^{1}$-local well-posedness. Under
the heat gauge, small data $H^{0+}$ local well-posedness is proved
by Liu--Smith--Tataru \cite{LiuSmithTataru2014IMRN}. Under equivariance
within Coulomb gauge, the equation becomes semilinear and the $L^{2}$-critical
local well-posedness can be achieved; see \cite[Section 2]{LiuSmith2016}.

There are also works on the long-term dynamics. Bergé--de Bouard--Saut
\cite{BergeDeBouardSaut1995Nonlinearity} used Glassey's convexity
argument \cite{Glassey1977JMP} to derive a sufficient condition for
finite-time blow-up. However, this method essentially applies for
negative energy solutions, which exist only if $g>1$. The same authors
\cite{BergeDeBouardSaut1995PRL} carried out a formal computation
to derive the log-log blow-up for negative energy solutions. Recently,
Oh--Pusateri \cite{OhPusateri2015} showed global existence and scattering
for small data in weighted Sobolev spaces. Under equivariance within
Coulomb gauge, Liu--Smith \cite{LiuSmith2016} proved global well-posedness
and scattering below the charge of the ground state, $M[Q^{(m)}]$,
for each equivariance class.

Within each equivariance class, a natural question is the dynamics
beyond the threshold. At the threshold charge, in addition to the
vortex solution $Q^{(m)}$, there is an explicit finite-time blow-up
solution 
\[
S^{(m)}(t,r)=\frac{1}{|t|}Q^{(m)}\Big(\frac{r}{|t|}\Big)e^{-i\frac{r^{2}}{4|t|}},\qquad t<0,
\]
which is obtained by applying the pseudoconformal transform to $Q^{(m)}$.
Recently, the first and second authors gave a quantitative description
of the dynamics in the vicinity of $S^{(m)}(t)$. When $m\geq1$,
the authors in \cite{KimKwon2019arXiv} constructed pseudoconformal
blow-up solutions with a prescribed asymptotic profile. Here, a pseudoconformal
blow-up solution means a finite-time blow-up solution $u$ that decomposes
as $u(t,r)\approx S^{(m)}(t,r)+z(t,r)$ with some regular $z(t,r)$
near the blow-up time. Moreover, they exhibited the \emph{rotational
instability} (see the discussion following \eqref{eq:RotationalInstability})
of these solutions. This is a backward construction, and an analogue
of the construction of Bourgain--Wang solutions and their instability
in the NLS context \cite{BourgainWang1997,MerleRaphaelSzeftel2013AJM}.

On the other hand, when $m\geq1$, the same authors \cite{KimKwon2020arXiv}
studied conditional stability of pseudoconformal blow-up solutions
in the context of the Cauchy problem. Indeed, they considered the
forward construction problem, and constructed a codimension one set
of initial data leading to pseudoconformal blow-up, i.e., 
\[
u(t,r)-\frac{e^{i\gmm^{\ast}}}{\ell(T-t)}Q^{(m)}\Big(\frac{r}{\ell(T-t)}\Big)\to u^{\ast}\quad\text{in }L^{2}
\]
for some $\gmm^{\ast}\in\bbR$ and $\ell\in(0,\infty)$ as $t\to T$.
The blow-up solutions constructed there are smooth and have finite
energy. Moreover, when $m\geq3$, they constructed a codimension one
\emph{Lipschitz manifold of initial data} yielding pseudoconformal
blow-up. In view of \cite{KimKwon2019arXiv}, the codimension one
condition seems to be optimal.

The aforementioned works \cite{KimKwon2019arXiv,KimKwon2020arXiv}
only deal with the $m\geq1$ case. In the current paper, we consider
the most physically relevant (and also delicate) case: $m=0$.

\subsection{Main results}

Now we specialize to the setting of the present paper. Note, from
\eqref{eq:Q-full}, that the ground state $(\bfQ,A)$ is radial ($m=0$)
and obeys the Coulomb gauge condition, with the radial profile 
\begin{equation}
Q(r)\coloneqq Q^{(0)}(r)=\sqrt{8}\frac{1}{1+r^{2}}.\label{eq:Q-formula}
\end{equation}
In the remainder of the paper, \textbf{unless otherwise stated, we
assume that $(\phi,A)$ is a radial solution to \eqref{eq:CSS-cov}
in Coulomb gauge. }Namely, we let $m=0$ and consider 
\[
\phi(t,x)=u(t,r),\quad A_{t}[u]=-\int_{r}^{\infty}A_{\theta}|u|^{2}\frac{dr'}{r'},\quad A_{\theta}[u]=-\frac{1}{2}\int_{0}^{r}|u|^{2}r'dr'.
\]
The equation for $u$ is given by 
\begin{equation}
i(\rd_{t}+iA_{t}[u])u+\rd_{r}^{2}u+\frac{1}{r}\rd_{r}u-\frac{1}{r^{2}}A_{\tht}^{2}[u]u+\abs{u}^{2}u=0.\tag{CSS}\label{eq:CSS-rad-u}
\end{equation}
To simplify the notation, we introduce the following shorthands for
the first two radial Cauchy--Riemann operators: 
\begin{align}
\bfD_{v}w & \coloneqq{}^{(0)}\bfD_{v}w=\rd_{r}w-\frac{1}{r}A_{\tht}[v]w,\label{eq:Dv}\\
A_{v}w & \coloneqq{}^{(1)}\bfD_{v}w=\rd_{r}w-\frac{1}{r}(1+A_{\tht}[v])w.\label{eq:Av}
\end{align}
We also use the shorthand 
\begin{equation}
L_{v}w\coloneqq{}^{(0)}L_{v}w={}^{(0)}\bfD_{v}w-\frac{2}{r}A_{\tht}[v,w]v.\label{eq:Lv}
\end{equation}
Note that $\bfD_{v}$ and $A_{v}$ are local operators, but $L_{v}$
is a nonlocal operator. The aforementioned self-dual form \eqref{eq:CSS-m-self-dual-form}
reads 
\begin{equation}
\rd_{t}u+iL_{u}^{\ast}\bfD_{u}u=0.\label{eq:CSS-self-dual-form}
\end{equation}

One of the fundamental differences between the $m\geq1$ case and
the present case $m=0$ is that $S^{(0)}(t)$ is no longer a finite
energy solution, due to the slow decay of $Q$. Though $S^{(0)}(t)$
provides an example of finite-time blow-up $L^{2}$-solution (with
the pseudoconformal blow-up rate $|t|$), it was left open until now
whether \eqref{eq:CSS-rad-u} possesses a smooth \emph{finite energy}
blow-up solutions. Our main result answers that such solutions do
exist.

By a forward construction, sharper descriptions of the constructed
blow-up solutions can be provided. In fact, we show that there exists
a codimension one set of initial data yielding finite-time blow-up
solutions, whose blow-up rate differs logarithmically from the pseudoconformal
blow-up rate.

We introduce the relevant initial data set and the codimension one
condition. We denote by $H_{0}^{3}$ the Sobolev space $H^{3}(\bbR^{2})$
restricted to radial (i.e., $m=0$ in \eqref{eq:def-equivariance})
functions. For some small $b^{\ast}>0$ and codimension four linear
subspace $\calZ^{\perp}$ (see \eqref{eq:def-Z-perp}) of the radial
Sobolev space $H_{0}^{3}$, let 
\begin{equation}
\td{\calU}_{\init}\coloneqq\{(\lmb_{0},\gmm_{0},b_{0},\eps_{0})\in\bbR_{+}\times\bbR/2\pi\bbZ\times\bbR\times\calZ^{\perp}:b_{0}\in(0,b^{\ast}),\ \|\eps_{0}\|_{H_{0}^{3}}<b_{0}^{3}\}.\label{eq:def-Utilde-init}
\end{equation}
We define the set $\calU_{\init}$ of coordinates 
\begin{equation}
\calU_{\init}\coloneqq\{(\lmb_{0},\gmm_{0},b_{0},\eta_{0},\eps_{0}):(\lmb_{0},\gmm_{0},b_{0},\eps_{0})\in\td{\calU}_{\init},\ \eta_{0}\in(-\tfrac{b_{0}}{2|\log b_{0}|},\tfrac{b_{0}}{2|\log b_{0}|})\}.\label{eq:def-U-init}
\end{equation}
We define the set $\calO_{\init}$ by the set of images: 
\begin{equation}
\calO_{\init}\coloneqq\{\frac{e^{i\gmm_{0}}}{\lmb_{0}}[P(\cdot;b_{0},\eta_{0})+\eps_{0}]\Big(\frac{r}{\lmb_{0}}\Big):(\lmb_{0},\gmm_{0},b_{0},\eta_{0},\eps_{0})\in\calU_{\init}\}\subseteq H_{0}^{3},\label{eq:def-O-init}
\end{equation}
where $P(\cdot;b_{0},\eta_{0})$ is the modified profile defined in
Section~\ref{sec:Modified-profiles} such that $P(\cdot;0,0)=Q$.
It will be shown that the set $\calO_{\init}$ is open, $Q$ lies
in the boundary of $\calO_{\init}$, and the elements of $\calU_{\init}$
serve as coordinates of the elements of $\calO_{\init}$. See Lemma~\ref{lem:decomp}
for more details. The precise statement of our main result is as follows.
\begin{thm}[Smooth finite energy blow-up solutions]
\label{thm:main-thm}There exists $b^{\ast}>0$ with the following
properties. Let $(\widehat{\lmb}_{0},\widehat{\gmm}_{0},\widehat{b}_{0},\widehat{\eps}_{0})\in\td{\calU}_{\init}$.
Then, there exists $\widehat{\eta}_{0}\in(-\frac{\widehat{b}_{0}}{2|\log\widehat{b}_{0}|},\frac{\widehat{b}_{0}}{2|\log\widehat{b}_{0}|})$
such that the solution $u(t,r)$ to \eqref{eq:CSS-rad-u} starting
from the initial data 
\begin{equation}
u_{0}(r)=\frac{e^{i\widehat{\gmm}_{0}}}{\widehat{\lmb}_{0}}[P(\cdot;\widehat{b}_{0},\widehat{\eta}_{0})+\widehat{\eps}_{0}]\Big(\frac{r}{\widehat{\lmb}_{0}}\Big)\in\calO_{\init}\label{eq:initial-u0}
\end{equation}
satisfies: 
\begin{itemize}
\item (Finite-time blow-up) $u$ blows up in finite time $T=T(u_{0})\in(0,\infty)$. 
\item (Sharp description of the blow-up) There exist $\ell=\ell(u_{0})\in(0,\infty)$,
$\gmm^{\ast}=\gmm^{\ast}(u_{0})\in\bbR$, and $u^{\ast}=u^{\ast}(u_{0})\in L^{2}$
such that 
\[
u(t,r)-e^{i\gmm^{\ast}}\frac{|\log(T-t)|^{2}}{\ell(T-t)}Q\Big(\frac{|\log(T-t)|^{2}}{\ell(T-t)}r\Big)\to u^{\ast}\text{ in }L^{2}
\]
as $t\to T$. 
\item (Regularity of the asymptotic profile) $u^{\ast}$ has the regularity
\[
u^{\ast}\in H_{0}^{1}.
\]
\end{itemize}
\end{thm}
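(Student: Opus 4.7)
The plan is to adapt the modulation/bootstrap/topological-shooting scheme developed by Rapha\"el--Rodnianski and Merle--Rapha\"el--Rodnianski for energy-critical geometric equations, exploiting throughout the covariant-conjugation structure of the self-dual form \eqref{eq:CSS-self-dual-form}. In broad strokes: (i) decompose any solution starting in $\calO_{init}$ into four modulation parameters $(\lmb,\gmm,b,\eta)$ and a remainder $\eps$ via orthogonality against the four-dimensional subspace $\calZ$; (ii) derive the modulation ODEs together with a self-dual evolution equation for $\eps$; (iii) close an a priori bootstrap for $\eps$ using covariantly conjugated variables; (iv) select the single parameter $\wh\eta_0$ by a one-dimensional Brouwer/shooting argument that kills the unstable mode of the $(b,\eta)$-system.

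Concretely, for $u(t,r)$ close to the soliton manifold I would write
\begin{equation*}
u(t,r) = \frac{e^{i\gmm(t)}}{\lmb(t)}\,[P(\cdot;b(t),\eta(t)) + \eps(t,\cdot)]\Big(\frac{r}{\lmb(t)}\Big),
\end{equation*}
and fix $(\lmb,\gmm,b,\eta)$ by imposing four orthogonality conditions on $\eps$ against the generators of $\calZ$ (the scaling and phase directions, together with $\rd_b P$ and $\rd_\eta P$). A standard implicit function argument gives a well-defined smooth decomposition on an open neighborhood of $Q$, which both explains the openness of $\calO_{init}$ and identifies $(\wh\lmb_0,\wh\gmm_0,\wh b_0,\wh\epsilon_0)$ with the initial values of $(\lmb,\gmm,b,\eps)$ at $t=0$. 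Projecting $\rd_t u + iL_u^\ast \bfD_u u = 0$ onto the orthogonality directions produces, at leading order in the rescaled time $ds = \lmb^{-2}dt$, modulation laws of the schematic form
\begin{equation*}
\tfrac{\lmb_s}{\lmb} + b \aeq 0, \quad \gmm_s \aeq 0, \quad b_s + b^2 + \eta \aeq 0, \quad \eta_s \aeq \text{smaller},
\end{equation*}
together with an equation $\rd_s \eps + iL_Q^\ast \bfD_Q \eps = \Psi + \calN(\eps)$, where $\Psi$ is the residual error made by the modified profile $P$. The profile $P$ is designed precisely so that $\Psi$ has size $\aleq b^{N}|\log b|^{-C}$ in the relevant norm, with $N$, $C$ large enough to close the scheme; the $|\log b|^{-C}$ gain is the source of the logarithmic correction to the pseudoconformal rate.

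The heart of the argument is an a priori bootstrap for $\eps$, and here I would use the covariant-conjugation perspective pervading the paper. Rather than estimating $\eps$ directly --- for which the linearized operator is nonlocal (through $L_Q$) and the cubic nonlinearities are awkward --- I would propagate the conjugated higher-order variable $\eps_2 \coloneqq A_Q L_Q \eps$, whose evolution is governed by an operator that is coercive on the orthogonal complement of $\calZ$ (the conjugation converts $L_Q$ into the local operator $\bfD_Q$ at a higher equivariance level, where repulsivity is improved) and whose nonlinear terms simplify considerably. A suitable Lyapunov functional $\calF(s) \aeq \nrm{\eps_2(s)}_{L^2}^2$ plus lower-order corrections would then be shown to satisfy a differential inequality of the form $\tfrac{d}{ds}\calF \aleq b\,\calF + b^{2N}|\log b|^{-2C}$, which integrates to $\calF(s) \aleq b(s)^{2N-2}|\log b(s)|^{-2C}$. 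Combined with the modulation laws this is precisely the sharp control required to integrate $\lmb_s/\lmb = -b$ and $b_s + b^2 + \eta \aeq 0$, with the logarithmic relation $\eta \aeq b^2/|\log b|$, and to recover the rate $\lmb(t) \aeq \ell(T-t)/|\log(T-t)|^2$.

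Finally, the linearization of the $(b,\eta)$-system around the trapped trajectory possesses one stable and one unstable eigendirection; the bootstrap closes only as long as the unstable projection stays small. Applying a continuity/Brouwer shooting argument to the map $\eta_0 \mapsto (\text{first exit value of the unstable component})$ on the interval $(-\tfrac{\wh b_0}{2|\log \wh b_0|}, \tfrac{\wh b_0}{2|\log \wh b_0|})$ produces at least one $\wh\eta_0$ for which no exit ever occurs; the corresponding solution blows up in finite time $T<\infty$ with the rate above. The $L^2$-convergence to $u^\ast$ then follows from the uniform smallness of $\eps$ in the adapted norm by a Cauchy-sequence argument as $t\to T$, and the regularity $u^\ast \in H_0^1$ is obtained from a separate uniform bound on a covariant $\dot H^1$-type energy of $\eps$ in the rescaled frame, using that the $\dot H^1$-norm of $\frac{1}{\lmb}Q(\cdot/\lmb)$ outside any neighborhood of the concentration point stays bounded. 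The main obstacle I anticipate is step (iii): closing the conjugated energy estimate with the logarithmic precision dictated by the slow decay of $Q$ and the nonlocality of $A_t[u]$, $A_\tht[u]$. This is precisely where the systematic use of covariant Cauchy--Riemann conjugations is indispensable, as it both trades the nonlocal $L_Q$ for compositions of local operators and exposes the cancellations responsible for the $|\log b|^{-C}$ gain.
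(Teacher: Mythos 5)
Your road map (modulation decomposition, conjugated energy bootstrap, one-parameter shooting in $\eta$) is the same as the paper's, but several of the concrete choices you make would not close, and the key mechanism behind the rate is missing. First, the modulation system is not the one you write: the paper derives $\gamma_{s}\approx\eta$ and $b_{s}+b^{2}+\eta^{2}+c_{b}(b^{2}-\eta^{2})=0$, $\eta_{s}+2c_{b}b\eta=0$ with $c_{b}\approx 2/|\log b|$; the parameter $\eta$ enters quadratically, stays in the trapped window $|\eta|\leq b/|\log b|$, and is \emph{not} the source of the logarithmic correction. The correction comes from $c_{b}$, which is forced by the failure of the $L^{2}$-solvability condition $(\Lambda(\tfrac{y}{2}Q),\tfrac{y}{2}Q)_{r}\neq 0$ when one tries to solve $A_{Q}^{\ast}A_{Q}T_{2,0}=\Lambda(\tfrac{y}{2}Q)$ --- i.e.\ from the zero resonance $yQ\notin L^{2}$ of $H_{Q}$, handled by a tail computation \`a la Rapha\"el--Rodnianski. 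Attributing the logarithm to the smallness of the profile error $\Psi$ misses this entirely; without the explicit $c_{b}b^{2}$ term in the $b_{s}$ law you cannot derive $\lambda\sim\ell(T-t)/|\log(T-t)|^{2}$, only the pseudoconformal rate.

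Second, your energy estimate is set at the wrong level and in the wrong variables. Since $\lambda\sim b|\log b|^{2}$, the best one can hope for is $\|\epsilon_{k}\|_{L^{2}}\lesssim\lambda^{k}$, and to keep the error in the $b_{s}$ equation below $b^{2}/|\log b|$ one must control a $\dot H^{3}$-level quantity; the paper propagates $\|\epsilon_{3}\|_{L^{2}}=\|A_{Q}^{\ast}\epsilon_{2}\|_{L^{2}}$, not $\|\epsilon_{2}\|_{L^{2}}$. Moreover the paper's $\epsilon_{2}$ is the \emph{nonlinear} adapted derivative $A_{w}\bfD_{w}w-P_{2}$, not $A_{Q}L_{Q}\epsilon$: with the linear conjugation you propose, the $\epsilon_{2}$-equation accumulates non-perturbative $O(b\epsilon)$ errors with nonlocal coefficients that defeat the estimate. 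Even then the energy identity alone does not close --- one needs the repulsivity of $A_{Q}A_{Q}^{\ast}$ together with a Morawetz correction $b(i\epsilon_{2},yQ^{2}\epsilon_{1})_{r}$ to absorb the remaining cubic term, none of which appears in your sketch. Finally, the paper does not impose all four orthogonality conditions on $\epsilon$: two are imposed on $\epsilon$ and two on $\epsilon_{1}=\bfD_{w}w-P_{1}$, precisely so that the sharp $(b,\eta)$ laws (with the $c_{b}$ corrections) can be read off from the $w_{1}$-equation, where the resonance of $H_{Q}$ is visible. These are not presentational differences; each is needed to reach the stated rate and to close the bootstrap.
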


Applying the pseudoconformal transform to the solution constructed
in Theorem~\ref{thm:main-thm}, we can construct an infinite-time
blow-up solution to \eqref{eq:CSS-rad-u}. 
\begin{cor}[Infinite-time blow-up]
\label{cor:InfiniteBlowup}There exists a smooth compactly supported
(radial) initial data $u_{0}$ such that the corresponding forward-in-time
solution $u$ to \eqref{eq:CSS-rad-u} blows up in infinite time with
\[
u(t,r)-(\log t)^{2}Q\big((\log t)^{2}r\big)-e^{it\Delta}u^{\ast}\to0\text{ in }L^{2}
\]
as $t\to\infty$, for some $u^{\ast}\in L^{2}$. 
\end{cor}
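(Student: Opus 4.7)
I would deduce the corollary from Theorem~\ref{thm:main-thm} by applying the pseudoconformal transform~\eqref{eq:discrete-pseudo-transf}. Let $u$ be a finite-time blow-up solution from Theorem~\ref{thm:main-thm}, blowing up at some time $T_{0}>0$. After a time translation (so that the blow-up occurs at $t = 0$) and a rescaling via the scaling symmetry of~\eqref{eq:CSS-rad-u} (to normalize the constant $\ell$ to $1$), I obtain a solution $w$ on $[-T_{0},0)$ satisfying
\[
w(t, r) - \frac{e^{i\gamma^{\ast}}}{\lambda(t)}\, Q\!\left(\frac{r}{\lambda(t)}\right) \to u^{\ast} \text{ in } L^{2}, \qquad \lambda(t) = \frac{|t|}{|\log|t||^{2}},
\]
as $t \to 0^{-}$. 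I then define
\[
v(s, r') \coloneqq \frac{1}{s}\, e^{i|r'|^{2}/(4s)}\, w\!\left(-\frac{1}{s}, \frac{r'}{s}\right), \qquad s \in (1/T_{0}, \infty),
\]
which, by the pseudoconformal symmetry, is a radial Coulomb-gauge solution to~\eqref{eq:CSS-rad-u} on $(1/T_{0}, \infty)$. Since $s \to \infty$ corresponds to $t \to 0^{-}$, the blow-up of $w$ is converted into an infinite-time blow-up of $v$.

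A direct calculation gives $s\,\lambda(-1/s) = 1/(\log s)^{2}$, so substituting the asymptotic of $w$ into the defining formula for $v$ yields
\[
v(s, r') = e^{i\gamma^{\ast}}\, e^{i|r'|^{2}/(4s)}\, (\log s)^{2}\, Q\!\left((\log s)^{2}\, r'\right) + \frac{1}{s}\, e^{i|r'|^{2}/(4s)}\, u^{\ast}\!\left(\frac{r'}{s}\right) + o_{L^{2}}(1)
\]
as $s \to \infty$. The phase $e^{i|r'|^{2}/(4s)}$ multiplying the soliton converges to $1$ in $L^{2}$ against the soliton profile (by dominated convergence, using $Q \in L^{2}$). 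The second term is, by the standard lens-transform identity for the free Schr\"odinger equation, precisely $e^{is\Delta} v^{\ast}$ for some $v^{\ast} \in L^{2}$ (a Fourier-type transform of $u^{\ast}$). Absorbing the overall phase $e^{i\gamma^{\ast}}$ via the phase-rotation symmetry of~\eqref{eq:CSS-rad-u} yields exactly the asymptotic form claimed in the corollary.

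It remains to arrange that $v$ has smooth compactly supported initial data at some $s_{0} > 1/T_{0}$. Smoothness of $v(s_{0}, \cdot)$ is automatic from persistence of regularity for~\eqref{eq:CSS-rad-u} combined with the smoothness of the multiplier $e^{i|r'|^{2}/(4s_{0})}$. The main obstacle is compact support: since $Q(r) \sim |r|^{-2}$, neither $w(-1/s_{0}, \cdot)$ nor $v(s_{0}, \cdot)$ is compactly supported. To upgrade to compactly supported initial data, I would exploit the openness of $\mathcal{O}_{init}$ in $H_{0}^{3}$ together with the codimension-one tuning parameter $\eta_{0}$ from Theorem~\ref{thm:main-thm}: given a smooth compactly supported radial candidate $u_{0}$ close enough in $H_{0}^{3}$ to the ``ideal'' initial data $v(s_{0},\cdot)$, pull back via the inverse pseudoconformal transform to perturb the corresponding finite-time initial data, and retune $\eta_{0}$ to restore the codimension-one blow-up condition. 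Closing this transversality/approximation argument, and verifying that the resulting asymptotic remains of the claimed form (with $(\log t)^{2}$ scale and free Schr\"odinger radiation), is the main technical difficulty.
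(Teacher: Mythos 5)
Your overall route -- apply the pseudoconformal transform to a finite-time blow-up solution from Theorem~\ref{thm:main-thm}, convert the $|\log|t||^{2}/|t|$ concentration into a $(\log s)^{2}$ contraction, and remove the quadratic phase on the soliton by dominated convergence after rescaling -- is exactly the paper's, and that part of your computation is correct. (One small imprecision: $\tfrac{1}{s}e^{i|r'|^{2}/(4s)}u^{\ast}(r'/s)$ is not \emph{precisely} $e^{is\Delta}v^{\ast}$; the clean way, which the paper uses, is to first replace the constant asymptotic profile $v^{\ast}$ by $e^{it\Delta}v^{\ast}$ at the cost of an $o_{L^{2}}(1)$ error as $t\to0^{-}$, and then use that the pseudoconformal transform maps free Schr\"odinger waves to free Schr\"odinger waves exactly.)

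The genuine gap is your last step. You treat the compact support of the initial data as an open ``transversality/approximation'' problem requiring a retuning of $\eta_{0}$, and you do not close it. This difficulty is artificial: Comment~1 following Theorem~\ref{thm:main-thm} records that the finite-time blow-up solution can be taken to have smooth \emph{compactly supported} initial data, because $P-Q$ is already truncated at scale $B_{1}$ and the slowly decaying tail of $Q$ itself can be deleted by a suitable choice of $\widehat{\epsilon}_{0}$ (a cutoff of $-Q$ far outside $(0,2M]$ has small $H_{0}^{3}$-norm and is automatically in $\mathcal{Z}^{\perp}$, so it is admissible in $\widetilde{\mathcal{U}}_{init}$). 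One then applies the pseudoconformal transform and evaluates at the \emph{initial} time $s=1/T$ (corresponding to $t=-T$), where the transform is just a dilation composed with multiplication by the smooth phase $e^{i|x|^{2}/(4s)}$, hence preserves smoothness and compact support. Your choice of a generic later time $s_{0}>1/T_{0}$ is what creates the (real) obstruction of infinite propagation speed; evaluating at the initial time avoids it entirely, and no perturbation or retuning of $\eta_{0}$ is needed.
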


\subsubsection*{Comments on Theorem~\ref{thm:main-thm} and Corollary~\ref{cor:InfiniteBlowup}}

\ 

1. \emph{Finite energy solutions}. Not only do the constructed blow-up
solutions have finite energy, we can take their initial data to be
smooth and compactly supported. Indeed, the profile $P$ itself does
not have a compact support due to the $Q$-part of $P$. However,
by carefully choosing $\widehat{\eps}_{0}$ to delete the tail of
$Q$, it is possible to make $u_{0}$ compactly supported.

The deviation by a logarithmic factor from the pseudoconformal blow-up
rate stems from the fact that $S^{(0)}(t)$ has infinite energy. In
the context of (NLS), the well-known log-log blow-up rate \cite{MerleRaphael2003GAFA,MerleRaphael2006JAMS},
which deviates by a log-log factor from the self-similar blow-up rate,
comes from the fact that the exact self-similar solution barely fails
to lie in $L^{2}$. A similar remark applies to the wave maps \cite{RaphaelRodnianski2012Publ.Math.}.

2. \emph{Forward construction}. Our method relies on the forward construction
and modulation analysis. When $m\geq1$, $S^{(m)}(t)$ has finite
energy, and the forward construction in the previous work \cite{KimKwon2020arXiv}
yields exactly the pseudoconformal blow-up, which is different from
here. See Section~\ref{subsec:Strategy} for more details on the
forward construction and novel ideas in the present paper. The arguments
used here supercede the old argument in \cite{KimKwon2020arXiv}.
See Remark \ref{rem:Nonlin-ansatz-P-P2}.

3. \emph{Backward construction}. When $m\geq1$, the first two authors
considered the backward construction of blow-up solutions in \cite{KimKwon2019arXiv}.
There, the interaction between the blow-up profile and the asymptotic
profile is weak (though some nontrivial nonlocal interactions lead
to extra phase rotation of the solution) and the blow-up is given
by the pure pseudoconformal blow-up. However, the current $m=0$ case
can be viewed as a strongly interacting regime, as can be seen in
the logarithmic corrections to the blow-up rates in Theorem~\ref{thm:main-thm}
and Corollary~\ref{cor:InfiniteBlowup}. We expect that continuous
blow-up rates as in \cite{KriegerSchlagTataru2008Invent,KriegerSchlagTataru2009Duke,Perelman2014CMP,JendrejLawrieRodriguez2022ASENS}
might be available in the $m=0$ case by a suitable backward construction.

4. \emph{Comparison with the mass-critical NLS}: \eqref{eq:CSS-rad-u}
shares all the symmetries and conservation laws with (NLS). (NLS)
has a standing wave solution $e^{it}R(x)$ with exponentially decaying
profile $R$, but the static solution $Q$ to \eqref{eq:CSS-rad-u}
only shows a polynomial spatial decay. Thanks to the pseudoconformal
symmetry, there are explicit pseudoconformal blow-up solutions like
$S(t)$ in both cases.

In (NLS), there is a stable blow-up regime, the log-log blow-up for
negative energy solutions. However, in \eqref{eq:CSS-rad-u}, the
energy is always non-negative and we believe that stable blow-up regimes
do not exist for \eqref{eq:CSS-rad-u}. Nevertheless, the \emph{non-self-dual}
case $g>1$ is expected to have stable blow-up dynamics as in the
(NLS) case; see \cite{BergeDeBouardSaut1995PRL} for a formal derivation
of the log-log blow-up for negative energy solutions.

Bourgain-Wang type solutions exist and are unstable in both cases.
However, the instability mechanisms differ drastically; we expect
the rotational instability for \eqref{eq:CSS-rad-u}, but the Bourgain-Wang
solutions arise as the border of log-log blow-up solutions and global
scattering solutions. The difference is due to the different spectral
properties of the linearized operator, see \cite{KimKwon2019arXiv}.

One notable feature only arising in the \eqref{eq:CSS-rad-u} case
is that we have a log-corrected pseudoconformal rate due to the slow
spatial decay of $Q$.

5. \emph{Comparison with Schrödinger and wave maps}. \eqref{eq:CSS-rad-u}
has a remarkable parallel with the Schrödinger and wave maps. First,
as observed in \cite{KimKwon2020arXiv}, after a linear conjugation,
the resulting linearized operator is the same as those of wave maps,
Schrödinger maps, and harmonic map heat flows. Second, the first correction
in the profile construction, which is the source of logarithmic correction
to the blow-up rate, is the same as in the wave maps case \cite{RaphaelRodnianski2012Publ.Math.}.
However, the modulation equations are quite different from the wave
maps case, due to the difference between Schrödinger and wave nature
of the equations. In particular, our modulation equations are of the
form $\frac{\lmb_{s}}{\lmb}+b\approx0$ and $b_{s}+b^{2}+\frac{2b^{2}}{|\log b|}\approx0$,
while in the wave maps case, the $b^{2}$ term is missing. Interestingly,
this equation is the same as that of the Schrödinger maps, which gives
arise the same asymptotics for the blow-up rate \cite{MerleRaphaelRodnianski2013InventMath}.
However, the higher order terms in the $b$ and $\eta$ equations,
which are irrelevant to the blow-up rate, are different. Finally,
we note that the blow-up dynamics in the higher equivariance case
has a completely different story from the Schrödinger maps case; \eqref{eq:CSS-rad-u}
has pseudoconformal blow-up solutions for all $m\geq1$ \cite{KimKwon2019arXiv,KimKwon2020arXiv},
but the asymptotic stability is known for $k\geq3$-equivariant Landau--Lifschitz--Gilbert
flows (including both the Schrödinger maps and harmonic map heat flows)
\cite{GustafsonKangTsai2008Duke,GustafsonNakanishiTsai2010CMP}.

6. \emph{Connection with moduli space dynamics}. As pointed out in
\cite{RodnianskiSterbenz2010Ann.Math.} in the case of wave maps,
the approach of this paper may be thought of as a refinement of the
\emph{adiabatic approximation by a moduli space dynamics}, i.e., approximation
of solutions to \eqref{eq:CSS-rad-u} with data close to $Q$ by a
reduced dynamics on the finite dimensional submanifold formed by the
static solutions $\set{\frac{e^{i\gmm}}{\lmb}Q(\frac{\cdot}{\lmb}):\lmb\in(0,\infty),\,\gmm\in\bbR/2\pi\bbZ}$
(moduli space). Our finite-time blow-up solution is formally connected
with an incomplete trajectory on the moduli space (along which $\lmb\to0$).
This subject has a rich tradition of its own; we refer to \cite{DemouliniStuart}
for the study of a model closely related to ours (Manton's model),
and to the monographs \cite{AtiyahHitchin,MantonSutcliffe} for generalities.

7. \emph{Regularity of the asymptotic profile}. We believe, in parallel
to the Schrödinger maps case, that the regularity of the asymptotic
profile in Theorem~\ref{thm:main-thm} is not better than $H^{1}$.
This would require more precise information of the radiation term
and careful measuring of the flux as in \cite{MerleRaphael2005CMP}.
In this sense, we further expect that different blow-up rates will
be obtained from smooth asymptotic profiles, which is typically assumed
in the backward construction problems.

8. \emph{Rotational instability}. The blow-up solutions constructed
in Theorem~\ref{thm:main-thm} and also in \cite{KimKwon2020arXiv}
(when $m\geq1$) are non-generic and obtained in the regime $|\eta|\ll b$.
A natural question is the dynamics near these blow-up solutions.

When $m\geq1$, in view of the modulation equations $\frac{\lmb_{s}}{\lmb}+b=0$,
$\gmm_{s}=(m+1)\eta$, $b_{s}+b^{2}+\eta^{2}\approx0$, and $\eta_{s}\approx0$,
the regime with $\eta(t)\approx\eta_{0}\neq0$ seems to be generic.
In this regime, solutions concentrate to the spatial scale up to $|\eta_{0}|$,
then stop concentrating but exhibit a quick rotation of the phase
by $\mathrm{sgn}(\eta_{0})(m+1)\pi$ on the time inverval of length
$\sim|\eta_{0}|$, and then spread out. This nonlinear scenario is
presented by constructing an explicit one-parameter family of solutions
\cite{KimKwon2019arXiv}. It is conjectured in \cite{KimKwon2020arXiv}
that the aforementioned rotational instability is universal in the
vicinity of pseudoconformal blow-up solutions.

When $m=0$, even the construction of a continuous family of solutions
exhibiting the instability of blow-up solutions (i.e., the analogue
of \cite{KimKwon2019arXiv}) remains as an interesting open question.
In particular, the analysis of the instability mechanism would require
even more refined understanding of the modulation equations; the modulation
equations found in our proof of Theorem~\ref{thm:main-thm} are only
valid under $|\eta|\leq\frac{b}{|\log b|}$ (rotational instability
is turned off) and $b>0$ (shrinking regime).

Rotational instability is also expected in other relevant equations.
Authors in \cite{BergWilliams2013} present formal computations and
numerical evidences for a quick rotation by the angle $\pi$ for the
$1$-equivariant Landau--Lifschitz--Gilbert equation.

\subsection{\label{subsec:Strategy}Strategy of the proof}

We use the notation collected in Section \ref{subsec:Notation}.

We use the forward construction with modulation analysis. We view
solutions $u$ of the form 
\begin{equation}
u(t,r)=\frac{e^{i\gmm(t)}}{\lmb(t)}[P(\cdot;b(t),\eta(t))+\eps(t,\cdot)]\Big(\frac{r}{\lmb(t)}\Big),\label{eq:Strategy-decomp}
\end{equation}
where $P(\cdot;b,\eta)$ is some modified profile with $P(\cdot;0,0)=Q$
and $\eps$ is the error term. The main steps of the proof are the
construction of the modified profiles $P$ and the control of $\eps$.
We use the method of tail computations to construct the modified profile
$P$ and derive the sharp modulation laws of $\lmb,\gmm,b,\eta$.
In order to control $\eps$ forward-in-time, we use a robust energy
method (with repulsivity) to higher order derivatives of $\eps$.

This argument was used to address the forward construction of blow-up
dynamics in various contexts. To list a few, we refer to Rodnianski--Sterbenz
\cite{RodnianskiSterbenz2010Ann.Math.}, Raphaël--Rodnianski \cite{RaphaelRodnianski2012Publ.Math.},
and Merle--Raphaël--Rodnianski \cite{MerleRaphaelRodnianski2013InventMath}
for energy-critical wave maps and Schrödinger maps. We also refer
to \cite{HillairetRaphael2012AnalPDE,RaphaelSchweyer2013CPAM,RaphaelSchweyer2014AnalPDE}
for other energy-critical equations. The method also extends to the
energy-supercritical equations \cite{MerleRaphaelRodnianski2015CambJMath,Collot2017AnalPDE,Collot2018MemAMS}.
For (CSS) with $m\geq1$, the first two authors \cite{KimKwon2020arXiv}
used this argument for blow-up constructions. This list is not exhaustive.
The most relevant ones to this work are \cite{RaphaelRodnianski2012Publ.Math.,MerleRaphaelRodnianski2013InventMath,KimKwon2020arXiv}.

On top of such an existing road map, our main novelty is a systematic
use of nonlinear \emph{covariant conjugation identities} for the self-dual
Chern--Simons--Schrödinger equation. With this strategy, we are
able to overcome most of the difficulties coming from nonlocal nonlinearities.
We use this strategy in \emph{all steps} of the proof.

1. \emph{Covariant conjugation identities}. The main idea is to view
the dynamics not only in the $u$-variable \eqref{eq:CSS-rad-u},
but also in its covariant higher order derivatives of $u$. The reader
may keep in mind that $u$ has a decomposition of the form \eqref{eq:Strategy-decomp}.

Motivated from $\bfD_{Q}Q=0$, we look at the variable $u_{1}=\bfD_{u}u$.
This nonlinear transform hides (or kills) the modulated $Q$ part,
and thus $u_{1}$ enjoys \emph{degeneracy}, i.e., $u_{1}=0$ if $u$
coincides with a modulated $Q$. Moreover, the conjugation via $\bfD_{u}$
behaves very nicely with the original equation \eqref{eq:CSS-rad-u};
$u_{1}$ solves a surprisingly simple equation 
\begin{equation}
\partial_{t}u_{1}+iA_{u}^{\ast}A_{u}u_{1}+\Big(\int_{r}^{\infty}\Re(\overline{u}u_{1})dr'\Big)iu_{1}=0.\label{eq:Strategy-conj1}
\end{equation}
This is the first covariant conjugation identity. This covariant conjugation
shares a similar spirit with the Hasimoto transform \cite{ChangShatahUhlenbeck2000CPAM}
in the Schrödinger maps context, which makes the equation semi-linear.
In the near-soliton dynamics \cite{GustafsonKangTsai2008Duke,GustafsonNakanishiTsai2010CMP},
the Hasimoto transform hides the harmonic map portion of the solution
and leaves out the degenerate variable (the analogue of $u_{1}$).
Because the transform hides the harmonic map portion, the modulation
laws (of the scale and the spatial rotation) can only be dictated
at the map level. In our case, the original equation \eqref{eq:CSS-rad-u}
is used to detect the modulation laws of $\lmb$ and $\gmm$.

The equation \eqref{eq:Strategy-conj1} was derived in \cite{KimKwon2020arXiv},
but it was used in a linearized form, 
\begin{equation}
L_{Q}iL_{Q}^{\ast}=iA_{Q}^{\ast}A_{Q}.\label{eq:Strategy-linear-conj}
\end{equation}
As opposed to $L_{Q}$, which is only $\bbR$-linear and nonlocal,
the operator $A_{Q}$ is $\bbC$-linear and local. Remarkably, the
second order operator $A_{Q}^{\ast}A_{Q}=H_{Q}$ coincides with the
linearized operator arising in Schrödinger maps, wave maps, and harmonic
map heat flows. Experience from these equations further reveals a
hidden monotonicity structure for the linearized dynamics of (CSS),
see for example the repulsivity \eqref{eq:Def-Vtilde} of the operator
$A_{Q}A_{Q}^{\ast}$, which enabled the analysis in \cite{KimKwon2020arXiv}.

Although the nonlinear transform $u\mapsto u_{1}=\bfD_{u}u$ kills
the modulated $Q$ part (which is the degeneracy of $u_{1}$ mentioned
above), the generalized null modes $i\tfrac{r^{2}}{4}Q$ and $\rho$
(see Lemma~\ref{lem:def-rho}) are still alive, in view of $L_{Q}i\tfrac{r^{2}}{4}Q=\frac{1}{2}irQ$
and $L_{Q}\rho=\frac{1}{2}rQ$. We now notice that these generalized
null modes can also be removed if we take further conjugation by $A_{Q}$,
in view of $A_{Q}(rQ)=0$. Motivated from this observation, we consider
the further conjugated variable $u_{2}=A_{u}\bfD_{u}u$ and naturally
expect \emph{further degeneracy of $u_{2}$} over $u_{1}$ in this
linearized context. The further conjugation also behaves very nicely
with the $u_{1}$-equation \eqref{eq:Strategy-conj1} and yields the
following simple equation for $u_{2}$ (the second covariant conjugation
identity): 
\begin{equation}
\partial_{t}u_{2}+iA_{u}A_{u}^{\ast}u_{2}-i\overline{u}(u_{1})^{2}+\Big(\int_{r}^{\infty}\Re(\overline{u}u_{1})dr'\Big)iu_{2}=0.\label{eq:Strategy-conj2}
\end{equation}

In the following analysis, we will view (CSS) as a system of all the
above equations \eqref{eq:CSS-self-dual-form}, \eqref{eq:Strategy-conj1},
and \eqref{eq:Strategy-conj2}, with compatibility conditions $u_{1}=\bfD_{u}u$
and $u_{2}=A_{u}\bfD_{u}u$. We also take advantage of the degeneracies
of the variables $u_{1}$ and $u_{2}$. We note that the derivation
of these equations becomes apparent if we formulate \eqref{eq:CSS-rad-u}
in terms of Wirtinger derivatives, as is done in Section~\ref{sec:Covariant-conjugation-identities}
below.

\ 

2. \emph{Setup for the modulation analysis}. Fix $(\widehat{\lmb}_{0},\widehat{\gmm}_{0},\widehat{b}_{0},\widehat{\eps}_{0})\in\td{\calU}_{\init}$
and let $\eta_{0}$ vary. Consider the initial data 
\[
u_{0}(r)=\frac{e^{i\widehat{\gmm}_{0}}}{\widehat{\lmb}_{0}}[P(\cdot;\widehat{b}_{0},\widehat{\eta}_{0})+\widehat{\eps}_{0}]\Big(\frac{r}{\widehat{\lmb}_{0}}\Big)\in\calO_{\init},
\]
where $P(\cdot;\widehat{b}_{0},\widehat{\eta}_{0})$ is our modified
profile with $P(\cdot;0,0)=Q$ to be introduced in the next step.
The set of four parameters $\lmb,\gmm,b,\eta$ is motivated from the
four dimensional generalized null space of the linearized operator.

We let $u$ be the forward-in-time evolution of $u_{0}$. We will
decompose $u$ as 
\[
u(t,r)=\frac{e^{i\gmm(t)}}{\lmb(t)}[P(\cdot;b(t),\eta(t))+\eps(t,\cdot)]\Big(\frac{r}{\lmb(t)}\Big).
\]
Several issues such as the construction of $P$, fixing the decomposition
(parameters and $\eps$), and the control of $\eps$ forward-in-time,
will be explained on the way.

To analyze the blow-up dynamics, we renormalize the variables by introducing
\[
\frac{ds}{dt}=\frac{1}{\lmb^{2}},\quad y=\frac{r}{\lmb},\quad w(s,y)=\lmb e^{-i\gmm}u(t,\lmb y)|_{t=t(s)}.
\]
Moreover, we renormalize $u_{1}$ and $u_{2}$ in the previous step
by 
\[
w_{1}\coloneqq\bfD_{w}w\qquad\text{and}\qquad w_{2}\coloneqq A_{w}\bfD_{w}w.
\]
The renormalized variables $w$, $w_{1}$, and $w_{2}$ satisfy the
equations \eqref{eq:w-eqn-sd}, \eqref{eq:w1-eqn-sd}, and \eqref{eq:w2-eqn-sd}.
In these equations, we further introduce the \emph{modified phase
parameter} $\td{\gmm}$ with the relation 
\[
\td{\gmm}_{s}\coloneqq\gmm_{s}+\int_{0}^{\infty}\Re(\overline{w}w_{1})dy.
\]
This takes into account some nonlocal interactions leading to an extra
phase rotation of the solutions. In particular, it changes the $\int_{r}^{\infty}$-integral
to a $\int_{0}^{r}$-integral, which is also important to make sense
the tail computation in the next step.

The proof of Theorem~\ref{thm:main-thm} is a combination of bootstrapping
and a topological (connectivity) argument. Smallness of $\eps$ will
be bootstrapped in the regime $|\eta|\leq\frac{b}{|\log b|}$. As
will be explained later, $\eta$ is an unstable parameter and the
regime $|\eta|\leq\frac{b}{|\log b|}$ cannot be bootstrapped; we
show by a connectivity argument that $|\eta|\leq\frac{b}{|\log b|}$
on the maximal forward lifespan is guaranteed for some special initial
choice $\widehat{\eta}_{0}$. Such special solutions are called \emph{trapped
solutions}, and they will be shown to blow up in finite time as described
in Theorem~\ref{thm:main-thm}.

\ 

3. \emph{Modified profile and sharp modulation equations}. The construction
of modified profiles and the derivation of sharp modulation laws are
among the main challenges of this work. In \cite{KimKwon2019arXiv,KimKwon2020arXiv},
the authors introduced a nonlinear profile ansatz, which was an efficient
way to derive pseudoconformal blow-up when $m\geq1$. However, when
$m=0$ this profile ansatz produces an unacceptable profile error.
As we also see a-posteriori from the resulting logarithmically corrected
blow-up rate, it seems that the profiles in \cite{KimKwon2019arXiv,KimKwon2020arXiv}
do not work. Hence we search for sharper modified profiles and modulation
laws.

Since we view the system of $w$, $w_{1}$, $w_{2}$ equations, we
construct modified profiles $P$, $P_{1}$, $P_{2}$ for $w$, $w_{1}$,
$w_{2}$, respectively, and derive sharp modulation equations using
the tail computation (under the adiabatic ansatz $\frac{\lmb_{s}}{\lmb}+b=0$
and $\td{\gmm}_{s}=-\eta$). This strategy, one of our novelties,
remarkably simplifies the rest of the analysis. Indeed, the degeneracies
of $w_{1}$ and $w_{2}$ (explained in Step 1 for the variables $u_{1}$
and $u_{2}$) imply that $P_{1}$ and $P_{2}$ have \emph{degeneracies
in $b$} as follows: $P_{1}=O(b)$, $P_{2}=O(b^{2})$. As a result,
the following simple profile expansions turn out to be sufficient:
\begin{align*}
P & \coloneqq Q+\chi_{B_{1}}\{-ib\tfrac{y^{2}}{4}Q-\eta\rho\},\\
P_{1} & \coloneqq\chi_{B_{1}}\{-(ib+\eta)\tfrac{y}{2}Q\}+\chi_{B_{0}}\{b^{2}T_{2,0}\},\\
P_{2} & \coloneqq\chi_{B_{0}}\{(b^{2}-2ib\eta-\eta^{2})U_{2}+ib^{3}U_{3,0}\},
\end{align*}
for some profiles $T_{2,0},U_{2},U_{3,0}$ and cutoffs $\chi_{B_{0}}$,
$\chi_{B_{1}}$. When we derive $T_{2,0}$ and $U_{2}$, we will see
that the zero resonance $yQ\notin L^{2}$ to the linearized operator
$H_{Q}=A_{Q}^{\ast}A_{Q}$ leads to a logarithmic correction in the
modulation laws, as in \cite{RaphaelRodnianski2012Publ.Math.,MerleRaphaelRodnianski2013InventMath,RaphaelSchweyer2013CPAM}.
In our setting, this is observed in the \emph{$w_{1}$-equation} and
yields the sharp modulation laws: 
\[
b_{s}+b^{2}+\eta^{2}+c_{b}(b^{2}-\eta^{2})=0,\quad\eta_{s}+2c_{b}b\eta=0,
\]
where $c_{b}\approx\frac{2}{|\log b|}$. We remark that it is necessary
to expand $P_{2}$ up to the $b^{3}$-order. However, again thanks
to the degeneracies of $P_{1}$, $P_{2}$, cruder expansions for $P$
and $P_{1}$ suffice.

In order to guarantee a finite-time blow-up, we need $|\eta|\ll b$.
However, in view of $\eta_{s}+2c_{b}b\eta=0$, the trapped regime
$\abs{\eta}\aleq\frac{b}{\abs{\log b}}$ is non-generic. Thus we view
$\eta$ as an unstable parameter.

\ 

4. \emph{Decomposition and propagation of smallness of $\eps$}. Having
defined the profiles $P$, $P_{1}$, $P_{2}$, we decompose our renormalized
solutions $w$, $w_{1}=\bfD_{w}w$, and $w_{2}=A_{w}\bfD_{w}w$ as
\[
w=P+\eps,\qquad w_{1}=P_{1}+\eps_{1},\qquad w_{2}=P_{2}+\eps_{2},
\]
so that $\eps$ satisfies certain orthogonality conditions. The main
novelty is to study the dynamics of $\eps_{1}$ and $\eps_{2}$ that
are defined via higher order (nonlinearly) conjugated variables. Although
$\eps_{1}\approx L_{Q}\eps$ and $\eps_{2}\approx A_{Q}L_{Q}\eps$
at the leading order, $\eps_{1}$ and $\eps_{2}$ are defined in a
nonlinear fashion. We call them \emph{nonlinear adapted derivatives}.
Linear adapted derivatives such as $L_{Q}\eps$ and $A_{Q}L_{Q}\eps$
were used in \cite{KimKwon2020arXiv}, whose idea goes back to the
works \cite{RaphaelRodnianski2012Publ.Math.,MerleRaphaelRodnianski2013InventMath,MerleRaphaelRodnianski2015CambJMath,Collot2018MemAMS}.
Here, by using nonlinear adapted derivatives, the error terms arising
in $\eps_{1}$ and $\eps_{2}$ equations are significantly simplified
compared to the ones obtained by linear adapted derivatives. As we
will see in Section~\ref{subsec:Energy-estimate}, the equation of
$\eps_{2}$ contains only a few error terms of critical size, which
simplifies the energy estimates as well as the Morawetz corrections.

The roles of $\eps$ and $\eps_{1}$-equations are to detect the modulation
laws. We fix the modulation parameters $\lmb,\gmm,b,\eta$ by imposing
four orthogonality conditions. We make a non-standard choice: we impose
two orthogonality conditions on $\eps$, and two on $\eps_{1}$. The
first two are used to detect the modulation equations of $\lmb$ and
$\gmm$; and the other two are used to detect the modulation equations
of $b$ and $\eta$. For the latter, we can take advantage from the
degeneracy $P_{1}=O(b)$ so that the $\eps_{1}$-equation is essentially
decoupled from the modulation equations of $\lmb$ and $\gmm$.

The $\eps_{2}$-equation will be used to propagate the smallness of
$\eps$ (and $\eps_{1}$ and $\eps_{2}$). The main part is to control
a $\dot{H}^{3}$-level quantity of $\eps$; we apply the energy method
to the $\eps_{2}$-equation whose associated energy functional is
$(\eps_{2},A_{Q}A_{Q}^{\ast}\eps_{2})_{r}=\|A_{Q}\eps_{2}\|_{L^{2}}^{2}\eqqcolon\|\eps_{3}\|_{L^{2}}^{2}$.
Here we can use the repulsivity from the operator $A_{Q}A_{Q}^{\ast}$
\eqref{eq:Def-Vtilde} and also the full degeneracy $P_{2}=O(b^{2})$.
In fact, the sole use of the energy functional $\|\eps_{3}\|_{L^{2}}^{2}$
is not sufficient to close the bootstrap, due to some non-perturbative
terms in the $\eps_{2}$-equation. To overcome this difficulty, we
add a Morawetz-type correction to the energy functional $\|\eps_{3}\|_{L^{2}}^{2}$
and observe that the resulting equation error term (still non-perturbative)
has a \emph{good} sign, thanks to $b>0$ and the repulsivity \eqref{eq:Def-Vtilde}
of $A_{Q}A_{Q}^{\ast}$; see \eqref{eq:MorawetzRepulsivity}. A similar
technique was used in \cite{MerleRaphaelRodnianski2013InventMath}.

In the energy/Morawetz estimates, we benefit from the use of the $\eps_{2}$-variable
in a significant way. If one merely proceeds with linear adapted derivatives,
there appear a lot of errors of critical size $O(b\eps)$ in the equation;
see for example the $R_{\mathrm{L-L}}$ term in \cite{KimKwon2020arXiv}.
Thanks to our approach of covariant conjugations, we significantly
reduced the critical errors. In fact, our variable $\eps_{2}$ is
$A_{Q}L_{Q}\eps$ at the leading order, but a lot of $O(b\eps)$ terms
are hidden in $\eps_{2}$. This enables us to choose a Morawetz correction
in a simple form.

\ 

5. \emph{After bootstrapping}. As mentioned above, $\eta$ is an unstable
parameter. We find a special $\eta_{0}$ ensuring that the solution
remains trapped by a soft connectivity argument. The sharp blow-up
rates are obtained by testing against a better approximation of the
generalized kernel elements. The argument in this step is very similar
to that in \cite{MerleRaphaelRodnianski2013InventMath}.

\subsection{\label{subsec:Notation}Notation}

For $A\in\bbC$ and $B>0$, we use the standard asymptotic notation
$A\aleq B$ or $A=O(B)$ to denote the relation $\abs{A}\leq CB$
for some positive constant $C$. The dependencies of $C$ is specified
by subscripts, e.g., $A\aleq_{E}B\impmi A=O_{E}(B)\impmi\abs{A}\leq C(E)B$.
We also introduce the shorthands 
\[
\brk{\cdot}=(1+(\cdot)^{2})^{\frac{1}{2}},\quad\log_{+}(\cdot)=\max\set{0,\log(\cdot)},\quad\log_{-}(\cdot)=\max\set{0,-\log(\cdot)}.
\]
We let $\chi=\chi(x)$ be a smooth spherically symmetric cutoff function
such that $\chi(x)=1$ for $|x|\leq1$ and $\chi(x)=0$ for $|x|\geq2$.
For $A>0$, we define its rescaled version by $\chi_{A}(x)\coloneqq\chi(x/A)$.

Given a function $f:(0,\infty)\to\bbC$, we introduce the shorthand
\[
\int f=\int_{\bbR^{2}}f(\abs{x})dx=2\pi\int f(r)rdr.
\]
For functions $f,g:(0,\infty)\to\bbC$, their \emph{real} $L^{2}$
inner product is given by 
\[
(f,g)_{r}\coloneqq\int\Re(\br fg).
\]
For $s\in\bbR$, let $\Lmb_{s}$ be the infinitesimal generator of
the $\dot{H}^{s}$-invariant scaling, i.e., 
\[
\Lmb_{s}f\coloneqq\rst{\frac{d}{d\lmb}}_{\lmb=1}\lmb^{1-s}f(\lmb\cdot)=(1-s+r\rd_{r})f.
\]
For a nonnegative integer $k$ and a function $f:(0,\infty)\to\bbC$,
we define 
\begin{align*}
\abs{f}_{k}(r) & \coloneqq\sup_{0\leq\ell\leq k}\abs{r^{\ell}\rd_{r}^{\ell}f(r)},\\
\abs{f}_{-k}(r) & \coloneqq\sup_{0\leq\ell\leq k}\abs{r^{-\ell}\rd_{r}^{k-\ell}f(r)}=r^{-k}\abs{f}_{k}.
\end{align*}
For $f:(0,\infty)\to\bbC$, $B>0$ and a norm $\nrm{\cdot}_{X}$,
we write $f=O_{X}(B)$ to denote $\nrm{f}_{X}\aleq B$.

We will use the Laplacian acting on $m$-equivariant functions: $\Delta_{m}=\partial_{rr}+\tfrac{1}{r}\partial_{r}-\frac{m^{2}}{r^{2}}$.
We will also denote $\partial_{+}=\partial_{1}+i\partial_{2}$. If
$\partial_{+}$ acts on $m$-equivariant functions $f(r)e^{im\theta}$,
then $\partial_{+}[f(r)e^{im\theta}]=[\partial_{+}^{(m)}f]e^{i(m+1)\theta}$,
where $\partial_{+}^{(m)}\coloneqq\partial_{r}-\tfrac{m}{r}$. When
the equivariance index $m$ is clear from the context, we use an abuse
of notation $\partial_{+}f=\partial_{+}^{(m)}f$.

We will use two different localization radii for the modified profiles:
\begin{equation}
B_{0}\coloneqq b^{-\frac{1}{2}},\qquad B_{1}\coloneqq b^{-\frac{1}{2}}|\log b|.\label{eq:LocalizationRadii}
\end{equation}

\subsubsection*{Formulas of frequently used linear operators}

We collect the definitions of various linear operators. Let $A_{\tht}[\psi_{1},\psi_{2}]$
be the polarization of $A_{\tht}[\psi]$: 
\[
A_{\theta}[\psi_{1},\psi_{2}]=-\tfrac{1}{2}\tint 0y\Re(\br{\psi_{1}}\psi_{2})y'dy'.
\]
We will often use the first order operators and their formal $L^{2}$-adjoints:
\begin{align*}
\bfD_{w} & =\rd_{y}-\tfrac{1}{y}(m+A_{\tht}[w]), & \bfD_{w}^{\ast} & =-\rd_{y}-\tfrac{1}{y}(m+1+A_{\tht}[w]),\\
L_{w} & =\bfD_{w}-\tfrac{2}{y}A_{\tht}[w,\cdot], & L_{w}^{\ast} & =\bfD_{w}^{\ast}+w\tint y{\infty}\Re(\br w\cdot)dy',\\
A_{w} & =\bfD_{w}-\tfrac{1}{y}, & A_{w}^{\ast} & =\bfD_{w}^{\ast}-\tfrac{1}{y}.
\end{align*}
The second order operators of particular importance are 
\begin{align*}
\calL_{w} & =\nabla^{2}E[w]\text{, i.e., the Hessian of }E,\\
H_{w} & =-\rd_{yy}-\tfrac{1}{y}\rd_{y}+\tfrac{1}{y^{2}}(1+A_{\tht}[w])^{2}-\tfrac{1}{2}|w|^{2}=A_{w}^{\ast}A_{w},\\
\td H_{w} & =-\rd_{yy}-\tfrac{1}{y}\rd_{y}+\tfrac{1}{y^{2}}(2+A_{\tht}[w])^{2}+\tfrac{1}{2}|w|^{2}=A_{w}A_{w}^{\ast}.
\end{align*}
Most frequently, we will use these operators when $w=Q$, where we
have the following convenient relations
\[
\calL_{Q}=L_{Q}^{\ast}L_{Q},\qquad H_{Q}=A_{Q}^{\ast}A_{Q},\qquad\td H_{Q}=A_{Q}A_{Q}^{\ast}.
\]
See Section~\ref{subsec:lin-Q} for more explanations on these linear
operators.

\subsection*{Organization of the paper}

In Section~\ref{sec:Covariant-conjugation-identities}, we introduce
covariant conjugation identities, which provide the algebraic foundation
of the paper. In Section~\ref{sec:Linearized-operators}, we review
the linearization of \eqref{eq:CSS-rad-u}, study outgoing Green's
functions for linearized operators, and construct adapted function
spaces. In Section~\ref{sec:Modified-profiles}, we construct the
modified profiles. In Section~\ref{sec:Trapped-solutions}, we set
up the bootstrap procedure and prove Theorem~\ref{thm:main-thm}
and Corollary~\ref{cor:InfiniteBlowup}. In Appendix~\ref{sec:Adapted-function-spaces},
we prove various facts regarding the adapted function spaces.

\subsection*{Acknowledgements}

K.~Kim and S.~Kwon are partially supported by Samsung Science \&
Technology Foundation BA1701-01 and NRF-2019R1A5A1028324. S.-J.~Oh
is supported by the Samsung Science and Technology Foundation under
Project Number SSTF-BA1702-02, a Sloan Research Fellowship and a NSF
CAREER Grant DMS-1945615. Part of this work was done while K.~Kim
was visiting Bielefeld University through IRTG 2235. He would like
to appreciate its kind hospitality. The authors are grateful to anonymous
referees for their careful reading of this manuscript.

\section{\label{sec:Covariant-conjugation-identities}Covariant conjugation
identities}

As alluded to in the introduction, we will use higher order variables
$u$, $\bfD_{u}u$, and $A_{u}\bfD_{u}u$ obtained by covariant conjugations.
Our goal in this section is to derive the equations satisfied by $u$,
$\bfD_{u}u$, and $A_{u}\bfD_{u}u$, which provide the starting point
for our analysis. We will also need the renormalized variables $w$,
$w_{1}$, and $w_{2}$ of $u$, $\bfD_{u}u$, and $A_{u}\bfD_{u}u$,
respectively, and the equations satisfied by them. To achieve this
goal, we employ a reformulation of \eqref{eq:CSS-cov} in terms of
the Wirtinger derivatives (see \eqref{eq:CSS-curv-wirt}--\eqref{eq:CSS-wirt}),
which is an elegant way to make the self-dual nature of \eqref{eq:CSS-cov}
manifest.

To make clear the generality of the algebraic manipulations we perform,
we proceed in a gauge-covariant, non-radial fashion in Section~\ref{subsec:CSS-wirt},
and only in Section~\ref{subsec:CSS-conj-eq} do we re-impose the
Coulomb gauge condition and radial symmetry.

\subsection{Self-dual Chern--Simons--Schrödinger in terms of Wirtinger derivatives
and covariant conjugation}

\label{subsec:CSS-wirt} To make the self-dual nature of \eqref{eq:CSS-cov}
manifest, it is expedient to rewrite \eqref{eq:CSS-cov} in terms
of the Wirtinger derivatives 
\[
\rd_{z}=\frac{1}{2}\rd_{1}+\frac{1}{2i}\rd_{2},\qquad\rd_{\br z}=\frac{1}{2}\rd_{1}-\frac{1}{2i}\rd_{2}.
\]
Accordingly, given any connection $1$-form (i.e., a real-valued $1$-form)
$A$, we define\footnote{Geometrically, we are simply complexifying the tangent, co-tangent
and the associated tensor bundle over $\bbR_{x^{1},x^{2}}^{2}$ and
using the basis $(dz,d\br z)=(dx^{1}+idx^{2},dx^{1}-idx^{2})$ for
the complexified co-tangent bundle $T_{\bbC}^{\ast}\bbR^{2}$. The
Wirtinger derivatives arise as the dual basis on the complexified
tangent bundle $T_{\bbC}\bbR^{2}$.} 
\begin{align*}
A_{z} & =A(\rd_{z})=\tfrac{1}{2}A_{1}+\tfrac{1}{2i}A_{2}, & A_{\br z} & =A(\rd_{\br z})=\tfrac{1}{2}A_{1}-\tfrac{1}{2i}A_{2},\\
\bfD_{z} & =\rd_{z}+iA_{z}=\tfrac{1}{2}\bfD_{1}+\tfrac{1}{2i}\bfD_{2}, & \bfD_{\br z} & =\rd_{\br z}+iA_{\br z}=\tfrac{1}{2}\bfD_{1}-\tfrac{1}{2i}\bfD_{2}.
\end{align*}
Since the $1$-form $A$ is real-valued, we have $\br{A_{z}}=A_{\br z}$.
For any complex-valued smooth functions $\phi,\psi$, we have 
\[
\rd_{z}(\br{\psi}\phi)=\br{\psi}\bfD_{z}\phi+\br{\bfD_{\br z}\psi}\phi,\qquad\rd_{\br z}(\br{\psi}\phi)=\br{\psi}\bfD_{\br z}\phi+\br{\bfD_{z}\psi}\phi.
\]
The Cauchy--Riemann operator $\CR$ and its adjoint $\CR^{\ast}$
are expressed as 
\begin{equation}
\CR=2\bfD_{\br z},\qquad\CR^{\ast}=-2\bfD_{z}.\label{eq:CR-wirt}
\end{equation}
We note the following anti-commutator relations: 
\begin{align*}
\rd_{z}\rd_{\br z}+\rd_{\br z}\rd_{z} & =2\rd_{\br z}\rd_{z}=\tfrac{1}{2}(\rd_{1}^{2}+\rd_{2}^{2}),\\
\bfD_{z}\bfD_{\br z}+\bfD_{\br z}\bfD_{z} & =\tfrac{1}{2}(\bfD_{1}^{2}+\bfD_{2}^{2}).
\end{align*}
On the other hand, the commutator of two covariant derivatives is
expressed in terms of the curvature tensor. At the level of the curvature
$2$-form $F$, we introduce 
\begin{align*}
F_{tz} & \coloneqq F(\rd_{t},\rd_{z})=\tfrac{1}{2}F_{t1}+\tfrac{1}{2i}F_{t2}, & F_{t\br z} & =F(\rd_{t},\rd_{\br z})=\tfrac{1}{2}F_{t1}-\tfrac{1}{2i}F_{t2},\\
F_{z\br z} & \coloneqq F(\rd_{z},\rd_{\br z})=-\tfrac{1}{2i}F_{12}.
\end{align*}
Since $F$ is real-valued, we have $\br{F_{tz}}=F_{t\br z}$ and $\br{F_{z\br z}}=F_{\br zz}=-F_{z\br z}$.
Clearly, 
\begin{align*}
\bfD_{\alp}\bfD_{\bt}-\bfD_{\bt}\bfD_{\alp}=iF_{\alp\bt},\qquad\rd_{\alp}A_{\bt}-\rd_{\bt}A_{\alp}=F_{\alp\bt},
\end{align*}
for $\alp,\bt\in\set{t,\br z,z}$.

We now write \eqref{eq:CSS-cov} in terms of the Wirtinger derivatives.
The curvature relations in \eqref{eq:CSS-cov} may be rewritten in
the form 
\begin{equation}
\left\{ \begin{aligned}F_{t\br z} & =\tfrac{1}{2}\br{\phi}\bfD_{\br z}\phi-\tfrac{1}{2}\phi\br{\bfD_{z}\phi}=\br{\phi}\bfD_{\br z}\phi-\tfrac{1}{2}\rd_{\br z}\abs{\phi}^{2},\\
F_{z\br z} & =\tfrac{1}{4i}\abs{\phi}^{2}.
\end{aligned}
\right.\label{eq:CSS-curv-wirt-0}
\end{equation}

At this point, observe that $F_{t\br z}$ cleanly splits into a term
involving $\bfD_{\br z}\phi$ and a total derivative $-\tfrac{1}{2}\rd_{\br z}\abs{\phi}^{2}$.
The latter term can be removed by introducing a \emph{modified connection
$1$-form} $\td A$, 
\begin{equation}
\td A=\td A_{t}dt+\td A_{1}dx^{1}+\td A_{2}dx^{2}=(A_{t}-\tfrac{1}{2}\abs{\phi}^{2})dt+A_{1}dx^{1}+A_{2}dx^{2}.\label{eq:CSS-mA}
\end{equation}
Note that the spatial components of $\td A$ and $A$ are the same.
For the associated curvature $\td F=\ud\td A$, \eqref{eq:CSS-curv-wirt-0}
simplifies to
\begin{equation}
\left\{ \begin{aligned}\td F_{t\br z} & =\br{\phi}\td{\bfD}_{\br z}\phi,\\
\td F_{z\br z} & =\tfrac{1}{4i}\abs{\phi}^{2},
\end{aligned}
\right.\label{eq:CSS-curv-wirt}
\end{equation}
where $\td{\bfD}_{\alp}=\rd_{\alp}+i\td A_{\alp}$ is the covariant
derivative associated with $\td A$. Remarkably, with \eqref{eq:CR-wirt},
\eqref{eq:CSS-mA} and \eqref{eq:CSS-curv-wirt}, \eqref{eq:CSS-sd}
simplifies to 
\begin{equation}
i\td{\bfD}_{t}\phi+4\td{\bfD}_{z}\td{\bfD}_{\br z}\phi=0.\label{eq:CSS-wirt}
\end{equation}
Equations \eqref{eq:CSS-curv-wirt} and \eqref{eq:CSS-wirt} constitute
the self-dual Chern--Simons--Schrödinger equation expressed in terms
of the Wirtinger derivatives. By \eqref{eq:CR-wirt}, \eqref{eq:CSS-curv-wirt},
and the fact that $\td{\bfD}_{\br z}=\bfD_{\br z}$ and $\td{\bfD}_{z}=\bfD_{z}$,
the Bogomol'nyi equation may be written as 
\begin{equation}
\left\{ \begin{aligned}\bfD_{\br z}\phi & =0,\\
F_{z\br z} & =\tfrac{1}{4i}\abs{\phi}^{2}.
\end{aligned}
\right.\label{eq:bog-wirt}
\end{equation}

In this formulation, it is straightforward to derive the following
\emph{covariant conjugation identities}, which will play a key role
in the remainder of this paper:
\begin{prop}[Covariant conjugation identities]
\label{prop:cov-conj} Let $\phi,\td A$ obey \eqref{eq:CSS-curv-wirt}
and \eqref{eq:CSS-wirt}. Then 
\begin{align}
i\td{\bfD}_{t}\td{\bfD}_{\br z}\phi+4\td{\bfD}_{z}\td{\bfD}_{\br z}\td{\bfD}_{\br z}\phi & =0,\label{eq:cov-conj-1}\\
i\td{\bfD}_{t}\td{\bfD}_{\br z}\td{\bfD}_{\br z}\phi+4\td{\bfD}_{\br z}\td{\bfD}_{z}\td{\bfD}_{\br z}\td{\bfD}_{\br z}\phi+\br{\phi}(\td{\bfD}_{\br z}\phi)^{2} & =0.\label{eq:cov-conj-2}
\end{align}
\end{prop}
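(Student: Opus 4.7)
The plan is to derive both identities by applying $\widetilde{\bfD}_{\br z}$ successively to \eqref{eq:CSS-wirt}, and, whenever the order of covariant derivatives needs to be swapped, using the commutator formula $[\widetilde{\bfD}_\alp,\widetilde{\bfD}_\bt] = i\widetilde{F}_{\alp\bt}$ together with the curvature identities \eqref{eq:CSS-curv-wirt}. The Wirtinger formulation is tailored so that no tedious algebra beyond these two substitutions is required.

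For \eqref{eq:cov-conj-1}, I would apply $\widetilde{\bfD}_{\br z}$ to \eqref{eq:CSS-wirt} and commute it past $\widetilde{\bfD}_t$ and $\widetilde{\bfD}_z$, respectively. The $\widetilde{\bfD}_t$-commutator generates $i[\widetilde{\bfD}_{\br z},\widetilde{\bfD}_t]\phi = -i\widetilde{F}_{t\br z}\phi$, which by \eqref{eq:CSS-curv-wirt} equals $-i\bar\phi\,\widetilde{\bfD}_{\br z}\phi\cdot\phi$, contributing, after the leading $i$, a term $+|\phi|^2\,\widetilde{\bfD}_{\br z}\phi$. Similarly, the $\widetilde{\bfD}_z$-commutator contributes $4[\widetilde{\bfD}_{\br z},\widetilde{\bfD}_z]\widetilde{\bfD}_{\br z}\phi = 4i\widetilde{F}_{\br z z}\widetilde{\bfD}_{\br z}\phi = -|\phi|^2\,\widetilde{\bfD}_{\br z}\phi$. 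These two contributions exactly cancel, producing \eqref{eq:cov-conj-1}.

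For \eqref{eq:cov-conj-2}, I would apply $\widetilde{\bfD}_{\br z}$ once more, this time only to \eqref{eq:cov-conj-1}, and commute it with $\widetilde{\bfD}_t$ while leaving the $\widetilde{\bfD}_z$ in place (this matches the form of the identity, which has $\widetilde{\bfD}_{\br z}\widetilde{\bfD}_z$ rather than $\widetilde{\bfD}_z\widetilde{\bfD}_{\br z}$, so no further commutation through $\widetilde{\bfD}_z$ is needed). The commutator $i[\widetilde{\bfD}_{\br z},\widetilde{\bfD}_t]\widetilde{\bfD}_{\br z}\phi = -i\widetilde{F}_{t\br z}\widetilde{\bfD}_{\br z}\phi = -i\bar\phi\,\widetilde{\bfD}_{\br z}\phi\cdot\widetilde{\bfD}_{\br z}\phi$, which with the outer $i$ yields exactly $+\bar\phi(\widetilde{\bfD}_{\br z}\phi)^2$, producing the last term in \eqref{eq:cov-conj-2}.

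The computation is essentially one-liner per identity and the only subtlety is bookkeeping of the signs and $i$-factors in the commutator/curvature substitutions; the underlying conceptual point (which I would emphasize briefly in the proof) is that the modified connection $\widetilde{A}$ of \eqref{eq:CSS-mA} is designed precisely so that the total-derivative contribution in $F_{t\br z}$ disappears and $\widetilde{F}_{t\br z}$ reduces to $\bar\phi\,\widetilde{\bfD}_{\br z}\phi$, a form that interacts cleanly with the Bogomol'nyi factor $\widetilde{\bfD}_{\br z}\phi$ appearing in every step of the derivation.
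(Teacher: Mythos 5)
Your proposal is correct and follows exactly the paper's argument: apply $\widetilde{\bfD}_{\br{z}}$ to \eqref{eq:CSS-wirt} (resp.\ to \eqref{eq:cov-conj-1}), commute using $[\widetilde{\bfD}_{\alp},\widetilde{\bfD}_{\bt}]=i\widetilde{F}_{\alp\bt}$, and substitute the curvature identities \eqref{eq:CSS-curv-wirt}, with the $\widetilde{F}_{t\br{z}}$ and $\widetilde{F}_{z\br{z}}$ contributions cancelling in the first identity and the $\widetilde{F}_{t\br{z}}$ contribution surviving as $\br{\phi}(\widetilde{\bfD}_{\br{z}}\phi)^{2}$ in the second. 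The only blemish is a notational slip where you write $i[\widetilde{\bfD}_{\br{z}},\widetilde{\bfD}_{t}]\phi=-i\widetilde{F}_{t\br{z}}\phi$ (the left side already carries the ``leading $i$'' you then apply again), but your final signs are all correct.
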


\begin{rem}
The extensive use of the equations in Proposition~\ref{prop:cov-conj}
is one of the key ideas in this work. An immediate advantage of working
with \eqref{eq:CSS-curv-wirt} and \eqref{eq:cov-conj-1} is that,
thanks to \eqref{eq:bog-wirt}, $\td{\bfD}_{\br z}\phi$ vanishes
when $\phi$ is a modulated soliton. As a consequence, the linearization
of $(i\td{\bfD}_{t}+4\td{\bfD}_{z}\td{\bfD}_{\br z})(\td{\bfD}_{\br z}\phi)$
at a modulated soliton does \emph{not} contain any nonlocal terms
in the corresponding linearization of $\td{\bfD}_{\br z}\phi$, which
is a huge simplification over the case of $(i\td{\bfD}_{t}+4\td{\bfD}_{z}\td{\bfD}_{\br z})\phi$.
Moreover, the simplicity of \eqref{eq:cov-conj-2} already suggests
that $\td{\bfD}_{\br z}\td{\bfD}_{\br z}\phi$ is a very convenient
`nonlinear' high-order variable to prove energy estimate for. See
also Remark~\ref{rem:full-degen} below for a further important cancellation
that occurs for $\td{\bfD}_{\br z}\td{\bfD}_{\br z}\phi$ at the linearized
level.

We remark that \eqref{eq:cov-conj-1} was first proved and used in
\cite{KimKwon2020arXiv} in the context of proving higher order energy
estimates. In this work, the use of \eqref{eq:cov-conj-1} and \eqref{eq:cov-conj-2}
pervades the whole argument, namely in the modified profile construction,
the modulation estimate and the key third-order energy estimate. 
\end{rem}

\begin{proof}
To prove \eqref{eq:cov-conj-1}, we simply apply $\td{\bfD}_{\br z}$
to \eqref{eq:CSS-wirt}, then use \eqref{eq:CSS-curv-wirt} to commute
$\td{\bfD}_{\br z}$ inside. As a result, we obtain 
\begin{align*}
0 & =i\td{\bfD}_{t}\td{\bfD}_{\br z}\phi+4\td{\bfD}_{z}\td{\bfD}_{\br z}\td{\bfD}_{\br z}\phi+i[\td{\bfD}_{\br z},\td{\bfD}_{t}]\phi+4[\td{\bfD}_{\br z},\td{\bfD}_{z}]\td{\bfD}_{\br z}\phi\\
 & =i\td{\bfD}_{t}\td{\bfD}_{\br z}\phi+4\td{\bfD}_{z}\td{\bfD}_{\br z}\td{\bfD}_{\br z}\phi+\br{\phi}\td{\bfD}_{\br z}\phi\phi-\abs{\phi}^{2}\td{\bfD}_{\br z}\phi,
\end{align*}
where the last two terms cancel. To prove \eqref{eq:cov-conj-2},
we apply $\td{\bfD}_{\br z}$ to \eqref{eq:cov-conj-1} and commute
it with $\td{\bfD}_{t}$ using \eqref{eq:CSS-curv-wirt}. \qedhere 
\end{proof}
Finally, for the convenience of the reader, we restate the identities
in Proposition~\ref{prop:cov-conj} in terms of $\CR$ and the original
connection using \eqref{eq:CR-wirt} and \eqref{eq:CSS-mA}: 
\begin{align*}
(i\bfD_{t}+\tfrac{1}{2}|\phi|^{2})\bfD_{+}\phi-\bfD_{+}^{\ast}\bfD_{+}\bfD_{+}\phi & =0,\\
(i\bfD_{t}+\tfrac{1}{2}|\phi|^{2})\bfD_{+}\bfD_{+}\phi-\bfD_{+}\bfD_{+}^{\ast}\bfD_{+}\bfD_{+}\phi+\overline{\phi}(\bfD_{+}\phi)^{2} & =0.
\end{align*}
Note that in the original variables, $\td{\bfD}_{\br z}\phi={\bfD}_{\br z}\phi=2\bfD_{+}\phi$
and $\td{\bfD}_{\br z}\td{\bfD}_{\br z}\phi=4\bfD_{+}\bfD_{+}\phi$.
In our analysis, we use $\bfD_{+}\phi$ and $\bfD_{+}\bfD_{+}\phi$
as our conjugated variables.

\subsection{\label{subsec:CSS-conj-eq}Equations in renormalized variables}

Starting from \eqref{eq:CSS-curv-wirt}--\eqref{eq:CSS-wirt}, we
now impose the Coulomb gauge condition $\rd_{1}\td A_{1}+\rd_{2}\td A_{2}=0$
and the radial symmetry ansatz $\phi(t,r,\tht)=u(t,r)$. Since $\td A_{j}=A_{j}$
$(j=1,2)$, we have, as before, 
\[
\td A_{r}=0,\qquad\td A_{\tht}=A_{\tht}[u]=-\frac{1}{2}\int_{0}^{r}\abs{u}^{2}r'dr'.
\]
By $\td A_{r}=0$, the relation $\rd_{r}=e^{-i\tht}\rd_{\br z}+e^{i\tht}\rd_{z}$,
and \eqref{eq:CSS-curv-wirt}, we have 
\begin{align*}
\rd_{r}\td A_{t}=\td F_{rt}=e^{-i\tht}\td F_{\br zt}+e^{i\tht}\td F_{zt}=-2\Re\left(\br{\phi}(e^{-i\tht}\td{\bfD}_{\br z}\phi)\right)=-\Re(\br u\bfD_{u}u).
\end{align*}
Since $\td A_{t}\to0$ as $r\to\infty$, we may integrate from $\infty$
to obtain 
\[
\td A_{t}=\int_{r}^{\infty}\Re(\br u\bfD_{u}u)dr'.
\]
In this setting, \eqref{eq:cov-conj-1} and \eqref{eq:cov-conj-2}
take the form 
\begin{align}
\left(i\rd_{t}-\int_{r}^{\infty}\Re(\br u\bfD_{u}u)dr'\right)\bfD_{u}u-A_{u}^{\ast}A_{u}\bfD_{u}u & =0,\label{eq:noncov-conj-1}\\
\left(i\rd_{t}-\int_{r}^{\infty}\Re(\br u\bfD_{u}u)dr'\right)A_{u}\bfD_{u}u-A_{u}A_{u}^{\ast}A_{u}\bfD_{u}u+\br u(\bfD_{u}u)^{2} & =0,\label{eq:noncov-conj-2}
\end{align}

Next, given modulation parameters $\lmb:I\to(0,\infty)$ and $\gmm:I\to\bbR$,
which we assume to be $C^{1}$, consider the renormalized independent
variables $(s,y)$ and dependent variable $w$ defined by 
\begin{equation}
\frac{ds}{dt}=\frac{1}{\lmb^{2}},\quad y=\frac{r}{\lmb},\quad w(s,y)=\left.\lmb e^{-i\gmm}u(t,\lmb y)\right|_{t=t(s)}.\label{eq:renrm}
\end{equation}
To simplify the notation, in what follows we write $\lmb(s)=\lmb(t(s))$,
$\gmm(s)=\gmm(t(s))$ and so on. The associated nonlinear higher order
variables are defined by (recall \eqref{eq:Dv}--\eqref{eq:Av})
\begin{align}
w_{1} & =^{(0)}\bfD_{w}w=\bfD_{w}w=\lmb^{2}e^{-i\gmm}(\bfD_{u}u)(t,\lmb y)\big|_{t=t(s)},\label{eq:w1-def}\\
w_{2} & =^{(1)}\bfD_{w}w_{1}=A_{w}w_{1}=\lmb^{3}e^{-i\gmm}(A_{u}\bfD_{u}u)(t,\lmb y)\big|_{t=t(s)}.\label{eq:w2-def}
\end{align}
By applying a simple change of variables to \eqref{eq:CSS-self-dual-form},
\eqref{eq:noncov-conj-1} and \eqref{eq:noncov-conj-2}, and rewriting
\[
-\int_{y}^{\infty}\Re(\br w\bfD_{w}w)\,dy'=-\int_{0}^{\infty}\Re(\br w\bfD_{w}w)\,dy'+\int_{0}^{y}\Re(\br w\bfD_{w}w)\,dy',
\]
we obtain the equations of the renormalized variables $w$, $w_{1}$,
and $w_{2}$:
\begin{prop}[Equations in renormalized variables]
\label{prop:CSS-conj-eq} Let $(\phi,A)$ be a solution on $I\times\bbR^{2}$
obeying the Coulomb gauge condition and radial symmetry (see Section
\ref{subsec:CSS}). Given $C^{1}(I)$ modulation parameters $\lmb(t)>0$
and $\gmm(t)\in\bbR$ for $t\in I$, consider the renormalized variables
$(s,y,w)$ and $w_{1},w_{2}$ defined by \eqref{eq:renrm}, \eqref{eq:w1-def},
and \eqref{eq:w2-def}.

Then the renormalized variables $w$, $w_{1}$, and $w_{2}$ obey
the following equations: 
\begin{gather}
(\rd_{s}-\frac{\lmb_{s}}{\lmb}\Lmb+\gmm_{s}i)w+iL_{w}^{\ast}\bfD_{w}w=0,\label{eq:w-eqn-sd}\\
(\rd_{s}-\frac{\lmb_{s}}{\lmb}\Lmb_{-1}+\td{\gmm}_{s}i)w_{1}+iA_{w}^{\ast}A_{w}w_{1}-\left(\int_{0}^{y}\Re(\br ww_{1})dy'\right)iw_{1}=0,\label{eq:w1-eqn-sd}\\
(\rd_{s}-\frac{\lmb_{s}}{\lmb}\Lmb_{-2}+\td{\gmm}_{s}i)w_{2}+iA_{w}A_{w}^{\ast}w_{2}-\left(\int_{0}^{y}\Re(\br ww_{1})dy'\right)iw_{2}-i\br ww_{1}^{2}=0,\label{eq:w2-eqn-sd}
\end{gather}
where 
\[
\td{\gmm}_{s}\coloneqq\gmm_{s}+\int_{0}^{\infty}\Re(\br ww_{1})dy.
\]
\end{prop}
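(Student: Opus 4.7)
The plan is a direct change-of-variables computation, starting from the three parent equations already available in the excerpt: the self-dual equation \eqref{eq:CSS-self-dual-form} for $u$, and the covariant conjugation identities \eqref{eq:noncov-conj-1}, \eqref{eq:noncov-conj-2} for $\bfD_{u}u$ and $A_{u}\bfD_{u}u$ obtained from Proposition~\ref{prop:cov-conj}. Since $w$, $w_{1}$, $w_{2}$ are defined as the renormalizations of $u$, $\bfD_{u}u$, $A_{u}\bfD_{u}u$ respectively, with weights $\lambda$, $\lambda^{2}$, $\lambda^{3}$, the task is simply to translate each of these three equations under the renormalization \eqref{eq:renrm}.

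First, I would record the scaling/phase-covariance of the nonlinear operators. A short check using $A_{\theta}[u](r)=A_{\theta}[w](y)$ (which follows from the explicit integral definition via $y'=r'/\lambda$) gives
\[
\bfD_{u}u(t,r)=\lambda^{-2}e^{i\gamma}(\bfD_{w}w)(s,y),\quad A_{u}\bfD_{u}u(t,r)=\lambda^{-3}e^{i\gamma}(A_{w}w_{1})(s,y),
\]
and, more generally, covariance of $L_{u}^{\ast}\bfD_{u}u$, $A_{u}^{\ast}A_{u}\bfD_{u}u$, $A_{u}A_{u}^{\ast}A_{u}\bfD_{u}u$, and $\bar{u}(\bfD_{u}u)^{2}$ with weights $\lambda^{-3}$, $\lambda^{-4}$, $\lambda^{-5}$, $\lambda^{-5}$ respectively (all multiplied by $e^{i\gamma}$ and evaluated at $(s,y)$). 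I would also compute the time derivatives via the chain rule: using $s_{t}=1/\lambda^{2}$, $y_{t}=-y\lambda_{t}/\lambda$, $\lambda_{s}=\lambda^{2}\lambda_{t}$, $\gamma_{s}=\lambda^{2}\gamma_{t}$, one obtains for any function $f(t,r)=\lambda^{-k}e^{i\gamma}g(s,y)$ the identity
\[
\partial_{t}f=\lambda^{-k-2}e^{i\gamma}\Big(\partial_{s}-\tfrac{\lambda_{s}}{\lambda}(k+y\partial_{y})+i\gamma_{s}\Big)g=\lambda^{-k-2}e^{i\gamma}\Big(\partial_{s}-\tfrac{\lambda_{s}}{\lambda}\Lambda_{1-k}+i\gamma_{s}\Big)g,
\]
recalling $\Lambda_{s}g=(1-s+y\partial_{y})g$. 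For $u$, $\bfD_{u}u$, $A_{u}\bfD_{u}u$ this produces the operators $\Lambda$, $\Lambda_{-1}$, $\Lambda_{-2}$, which are exactly what appears in \eqref{eq:w-eqn-sd}--\eqref{eq:w2-eqn-sd}.

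Next, I would substitute. For the $w$-equation, plugging the $k=1$ formula and the covariance of $L_{u}^{\ast}\bfD_{u}u$ into \eqref{eq:CSS-self-dual-form} and dividing by $\lambda^{-3}e^{i\gamma}$ yields \eqref{eq:w-eqn-sd} immediately; no nonlocal term rewriting is needed. For the $w_{1}$- and $w_{2}$-equations, the only subtlety is the nonlocal factor $\int_{r}^{\infty}\Re(\bar{u}\bfD_{u}u)\,dr'$ from \eqref{eq:noncov-conj-1}--\eqref{eq:noncov-conj-2}. Converting this integral via $y'=r'/\lambda$ gives $\lambda^{-2}\int_{y}^{\infty}\Re(\bar{w}w_{1})\,dy'$. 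I then rewrite
\[
\int_{y}^{\infty}\Re(\bar{w}w_{1})\,dy'=\int_{0}^{\infty}\Re(\bar{w}w_{1})\,dy'-\int_{0}^{y}\Re(\bar{w}w_{1})\,dy',
\]
and absorb the (spatially constant) first piece into the phase modulation: the term $-i\gamma_{s}w_{j}$ coming from the time derivative combines with $-i\int_{0}^{\infty}\Re(\bar{w}w_{1})\,dy'\cdot w_{j}$ from the nonlocality to produce exactly $-i\tilde{\gamma}_{s}w_{j}$. After dividing by the correct power of $\lambda$ and multiplying by $-i$ to match the signs in \eqref{eq:noncov-conj-1} and \eqref{eq:noncov-conj-2}, the remaining local term $-\int_{0}^{y}\Re(\bar{w}w_{1})\,dy'$ produces the interior integral appearing in \eqref{eq:w1-eqn-sd} and \eqref{eq:w2-eqn-sd}, and the nonlinear term $\bar{u}(\bfD_{u}u)^{2}$ from \eqref{eq:noncov-conj-2} transforms cleanly into $-i\bar{w}w_{1}^{2}$ in \eqref{eq:w2-eqn-sd}.

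There is no real obstacle beyond careful bookkeeping of the $\lambda$ weights, the $\Lambda_{s}$ conventions, and the sign from multiplying through by $-i$. The only conceptual point is the $\int_{r}^{\infty}\to\int_{0}^{\infty}-\int_{0}^{y}$ decomposition together with the definition of $\tilde{\gamma}_{s}$; this is what turns a nonlocal, tail-type integral into a localized one anchored at the origin, which will be crucial later for the tail computations of Section~\ref{sec:Modified-profiles}.
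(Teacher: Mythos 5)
Your proposal is correct and follows exactly the route the paper takes (and leaves mostly implicit): apply the change of variables \eqref{eq:renrm} to \eqref{eq:CSS-self-dual-form}, \eqref{eq:noncov-conj-1}, \eqref{eq:noncov-conj-2}, using the scaling covariance of the nonlinear operators and the chain-rule identity producing $\Lambda_{1-k}$, and then split $\int_{y}^{\infty}=\int_{0}^{\infty}-\int_{0}^{y}$ so that the constant piece is absorbed into $\td{\gmm}_{s}$. The bookkeeping of the $\lambda$-weights, the $\Lambda_{s}$ indices, and the treatment of the nonlocal term all match the paper's derivation.
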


\begin{rem}
\label{rem:gamma-to-gamma-tilde}At the technical level, the reason
for the introduction of the correction $\td{\gmm}_{s}$ is to switch
the domain of the integration in the nonlocal term $\int\Re(\br ww_{1})\,dy'$
from $[y,\infty)$ to $[0,y]$, which is crucial in the ensuing analysis.
Conceptually, the correction $\td{\gmm}_{s}$ contains the dominant
nonlocal effect of the radiation on the soliton, which results in
extra phase rotation of the soliton in the similar spirit of \cite{KimKwon2019arXiv,KimKwon2020arXiv}. 
\end{rem}

\section{\label{sec:Linearized-operators}Linearized operators at $Q$ and
adapted function spaces}

Our goal is to construct a blow-up solution staying close to the modulated
soliton profiles $Q$. After rescaling our solutions, it is necessary
to study the linearized dynamics of \eqref{eq:CSS-rad-u} around $Q$.
In Section~\ref{subsec:lin-Q}, we first review the linearization
of \eqref{eq:CSS-rad-u}. In Section~\ref{subsec:Outgoing-Green's-function},
we construct right inverses of some linear operators that will be
used for the construction of modified profiles. In Section~\ref{subsec:Adapted-function-spaces},
we introduce adapted function spaces and associated coercivity estimates
to be used in the modulation and higher order energy estimates.

\subsection{\label{subsec:lin-Q}Linearization of the Bogomol'nyi equation and
\eqref{eq:CSS-rad-u} at $Q$}

In this subsection, we briefly collect facts about the linearization
of \eqref{eq:CSS-rad-u} around $Q$, which already appeared in \cite[Section~3]{KimKwon2019arXiv}
and \cite[Section~2.1]{KimKwon2020arXiv} (for the case of higher
equivariance case $m\ge1$). Note that we also recorded frequently
used formulas in the notation section for convenience of the readers.

As we have seen in \eqref{eq:CSS-self-dual-form}, we first linearize
the Bogomol'nyi operator $w\mapsto\bfD_{w}w$ and then linearize \eqref{eq:CSS-rad-u}.

Consider the (radial Coulomb-gauge) Bogomol'nyi operator $w\mapsto\bfD_{w}w$.
We write 
\begin{equation}
\bfD_{w+\eps}(w+\eps)=\bfD_{w}w+L_{w}\eps+N_{w}(\eps),\label{eq:LinearizationBogomolnyi}
\end{equation}
where (cf.~\eqref{eq:Lv}) 
\begin{align*}
L_{w}\eps & \coloneqq\bfD_{w}\eps+wB_{w}\eps,\\
N_{w}(\eps) & \coloneqq\eps B_{w}\eps+\tfrac{1}{2}wB_{\eps}\eps+\tfrac{1}{2}\eps B_{\eps}\eps,\\
B_{w}\eps & \coloneqq-\tfrac{2}{y}A_{\tht}[w,\eps]=\tfrac{1}{y}{\textstyle \int_{0}^{y}}\Re(\br w\eps)y'\,dy'.
\end{align*}
The $L^{2}$-adjoint $L_{w}^{\ast}$ of $L_{w}$ takes the form 
\begin{align*}
L_{w}^{\ast}v & =\bfD_{w}^{\ast}v+B_{w}^{\ast}(\br wv),\\
B_{w}^{\ast}v & =w{\textstyle \int_{y}^{\infty}}\Re v\,dy'.
\end{align*}

Next, we linearize \eqref{eq:CSS-rad-u}, which we write in the self-dual
form \eqref{eq:CSS-self-dual-form}: $\rd_{t}u+iL_{u}^{\ast}\bfD_{u}u=0$.
We write 
\[
iL_{w+\eps}^{\ast}\bfD_{w+\eps}(w+\eps)=iL_{w}^{\ast}\bfD_{w}w+\calL_{w}\eps+(\text{h.o.t. in \ensuremath{\eps}}),
\]
where 
\begin{align*}
\calL_{w}\eps & \coloneqq L_{w}^{\ast}L_{w}\eps+\bfD_{w}w(B_{w}\eps)+B_{w}^{\ast}(\br{\eps}\bfD_{w}w)+B_{\eps}^{\ast}(\br w\bfD_{w}w).
\end{align*}
In particular, from $\bfD_{Q}Q=0$, we observe the self-dual factorization
of $i\calL_{Q}$: 
\begin{equation}
i\calL_{Q}=iL_{Q}^{\ast}L_{Q}.\label{eq:calLQ}
\end{equation}
This identity was first observed by Lawrie, Oh, and Shahshahani in
their unpublished note and its derivation can be found in \cite{KimKwon2019arXiv}.
Thus, the linearization of \eqref{eq:CSS-self-dual-form} at $Q$
is 
\begin{equation}
\rd_{t}\eps+i\calL_{Q}\eps=0,\quad\hbox{or}\quad\rd_{t}\eps+iL_{Q}^{\ast}L_{Q}\eps=0.\label{eq:lin-CSS-Q}
\end{equation}

Next, observe that if we linearize \eqref{eq:noncov-conj-1} at $Q$,
then we obtain 
\begin{equation}
\rd_{t}L_{Q}\eps+iA_{Q}^{\ast}A_{Q}L_{Q}\eps=0.\label{eq:LinearizedFlow-1}
\end{equation}
Comparing this equation with the application of $L_{Q}$ to \eqref{eq:lin-CSS-Q}
(as well as the right-invertibility of $L_{Q}$ from Proposition~\ref{prop:LQ-green}
below), we arrive at the remarkable \emph{linearized conjugation identity}
\begin{equation}
iA_{Q}^{\ast}A_{Q}=L_{Q}iL_{Q}^{\ast}.\label{eq:conj-lin}
\end{equation}
This identity was first observed in \cite{KimKwon2020arXiv}. Note
that while $L_{Q}$ and $L_{Q}^{\ast}$ are separately nonlocal operators,
the left-hand side is manifestly local. We introduce the notation
\begin{equation}
H_{Q}:=A_{Q}^{\ast}A_{Q}.\label{eq:HQ}
\end{equation}
Note that while $L_{Q}$, $L_{Q}^{\ast}$ and $\calL_{Q}$ are only
$\bbR$-linear, $\bfD_{Q}$, $A_{Q}$, $H_{Q}$ and their adjoints
are $\bbC$-linear. We further remark that $A_{Q}$ and $H_{Q}$ are
exactly same as those in the wave maps and Schrödinger maps problems,
see \cite[(2.4) and (2.5)]{RaphaelRodnianski2012Publ.Math.} and \cite[(2.11)]{MerleRaphaelRodnianski2013InventMath}.
See also \cite{RaphaelSchweyer2013CPAM,RaphaelSchweyer2014AnalPDE}
for the related harmonic map heat flows.

Finally, we linearize \eqref{eq:noncov-conj-2} at $Q$ to arrive
at 
\begin{equation}
\partial_{t}A_{Q}L_{Q}\eps+iA_{Q}A_{Q}^{\ast}A_{Q}L_{Q}\eps=0.\label{eq:LinearizedFlow-2}
\end{equation}
A crucial fact is that $A_{Q}A_{Q}^{\ast}$ has a \emph{positive repulsive}
potential: 
\[
A_{Q}A_{Q}^{\ast}=-\partial_{yy}-\frac{1}{y}\partial_{y}+\frac{\td V}{y^{2}},
\]
where 
\begin{equation}
\td V=(2+A_{\theta}[Q])^{2}+\tfrac{1}{2}y^{2}Q^{2}\geq0\quad\text{and}\quad-y\partial_{y}\td V\geq0.\label{eq:Def-Vtilde}
\end{equation}
The repulsivity of $A_{Q}A_{Q}^{\ast}$ was first used in \cite{RodnianskiSterbenz2010Ann.Math.}.
This is also used in the Chern--Simons--Schrödinger setting \cite{KimKwon2020arXiv}.
Note that the positivity of $\td V$ is much weaker than that of the
higher equivariance case. Indeed, we have $\td V\sim\langle y\rangle^{-2}$
when $m=0$ but $\td V\sim1$ when $m\geq1$. See Remark~\ref{rem:energy-estimate-weak-repul}
for related discussions.

Next, we consider the kernels of the above linearized operators at
$Q$. Via the phase rotation and scaling symmetries of the Bogomol'nyi
operator, we have 
\begin{equation}
L_{Q}(\Lmb Q)=0,\qquad L_{Q}(iQ)=0.\label{eq:LQ-ker}
\end{equation}
Despite the presence of a nonlocal term, it can be shown that the
$L^{2}$-kernel of $L_{Q}$ is indeed $\mathrm{span}_{\bbR}\set{\Lmb Q,iQ}$;
see \cite[Section~3]{KimKwon2019arXiv}.

For $\bfD_{Q}$, we have 
\begin{equation}
\bfD_{Q}Q=0.\label{eq:DQ-ker}
\end{equation}
Since $\bfD_{Q}$ is first-order, local, and $\bbC$-linear, its $L^{2}$-kernel
is given by $\mathrm{span}_{\bbC}\set{Q}$.

For $A_{w}$, due to $A_{w}(yv)=y\bfD_{w}v$, it follows that 
\begin{equation}
A_{Q}(yQ)=0.\label{eq:AQ-ker}
\end{equation}
As $A_{Q}$ is also first-order, local, and $\bbC$-linear, its formal
(smooth) kernel is $\mathrm{span}_{\bbC}\set{yQ}$. Moreover, by \eqref{eq:HQ},
it follows that 
\begin{equation}
H_{Q}(yQ)=0.\label{eq:HQ-ker}
\end{equation}
However, $yQ\notin L^{2}$; in fact, it is a \emph{resonance} at zero
for the operator $H_{Q}$. Note that there is another formal kernel
element $\Gamma$ of $H_{Q}$ (see Proposition \ref{prop:HQ-green}
below), but it is singular at the origin ($\Gamma\sim y^{-1}$).

We turn to the formal generalized kernel of $i\calL_{Q}$. By \eqref{eq:calLQ},
it follows that 
\begin{equation}
i\calL_{Q}(\Lmb Q)=0,\qquad i\calL_{Q}(iQ)=0,\label{eq:calLQ-ker}
\end{equation}
and that the $L^{2}$-kernel of $i\calL_{Q}$ is $\mathrm{span}_{\bbR}\set{\Lmb Q,iQ}$.
Concerning the formal kernel of $(i\calL_{Q})^{2}$, which is a part
of the formal generalized kernel of $i\calL_{Q}$, we have 
\begin{equation}
i\calL_{Q}(i\tfrac{y^{2}}{4}Q)=\Lmb Q,\qquad i\calL_{Q}\rho=iQ,\label{eq:calLQ-gen-ker}
\end{equation}
where the first identity is easy to verify and $\rho$ is given in
Lemma~\ref{lem:def-rho} below. Note that $i\frac{y^{2}}{4}Q,\rho\notin L^{2}$.
\begin{lem}[Generalized nullspace element $\rho$]
\label{lem:def-rho}\label{lem:calLQ-genker}There exists a unique
smooth function $\rho:(0,\infty)\to\bbR$ satisfying the following
properties: 
\begin{enumerate}
\item (Smoothness on the ambient space) The $m$-equivariant extension $\rho(x)\coloneqq\rho(y)e^{im\theta}$,
$x=ye^{i\theta}$, is smooth on $\bbR^{2}$. 
\item (Equation) $\rho(r)$ satisfies 
\[
L_{Q}\rho=\tfrac{1}{2}yQ\quad\text{and}\quad\calL_{Q}\rho=Q.
\]
\item (Pointwise bounds) We have 
\begin{equation}
|\rho|_{k}\aleq_{k}y^{2}Q,\qquad\forall k\in\bbN.\label{eq:rho-estimate}
\end{equation}
\end{enumerate}
\end{lem}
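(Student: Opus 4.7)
The plan is to construct $\rho$ as a specific right-inverse image of $\tfrac{1}{2}yQ$ under $L_{Q}$; the equation $\mathcal{L}_{Q}\rho=Q$ then follows automatically, and the pointwise bound at the origin pins down uniqueness.

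Since $\mathcal{L}_{Q}=L_{Q}^{\ast}L_{Q}$ by \eqref{eq:calLQ}, once $L_{Q}\rho=\tfrac{1}{2}yQ$ is established, the identity $\mathcal{L}_{Q}\rho=Q$ reduces to the $\rho$-independent algebraic identity $L_{Q}^{\ast}(\tfrac{1}{2}yQ)=Q$. I would verify this by direct computation, using the expressions for $\bfD_{Q}^{\ast}$ and $B_{Q}^{\ast}$ from Section~\ref{subsec:lin-Q}, the relation $y\rd_{y}Q=A_{\tht}[Q]Q$ equivalent to $\bfD_{Q}Q=0$, and the mass identity $\int_{0}^{\infty}Q^{2}\,y\,dy=4$ (equivalently $A_{\tht}[Q]\to-2$ at infinity). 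All $A_{\tht}[Q]Q$ contributions cancel to leave exactly $Q$.

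For the main equation, the same relation $y\rd_{y}Q=A_{\tht}[Q]Q$ gives $\bfD_{Q}(\tfrac{1}{4}y^{2}Q)=\tfrac{1}{2}yQ$, so the ansatz $\rho_{0}\coloneqq\tfrac{1}{4}y^{2}Q$ produces $L_{Q}\rho_{0}=\tfrac{1}{2}yQ+QB_{Q}\rho_{0}$, and the task reduces to solving $L_{Q}\td{\rho}=-QB_{Q}\rho_{0}$. The source can be computed explicitly, is smooth on $\bbR^{2}$, vanishes of order $y^{3}$ at the origin, and decays like $y^{-3}\log y$ at infinity. I would construct $\td{\rho}$ via the right inverse of $L_{Q}$ coming from the outgoing Green's function of Section~\ref{subsec:Outgoing-Green's-function}; equivalently, treat $QB_{Q}\td{\rho}$ perturbatively in the first-order ODE $\bfD_{Q}\td{\rho}=-Q(B_{Q}\rho_{0}+B_{Q}\td{\rho})$ with $\td{\rho}(0)=0$ and run a contraction in a norm weighted by $y^{2}Q$. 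The smoothness of $\rho$ on $\bbR^{2}$ and the bound $\abs{\rho}_{k}\aleq_{k}y^{2}Q$ are then read off from the construction: $\rho_{0}$ matches the target exactly, $\td{\rho}$ inherits both the origin vanishing and boundedness at infinity from the weighted iteration, and higher orders $k\geq1$ follow by differentiating the equation and bootstrapping.

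For uniqueness, two real-valued smooth solutions $\rho_{1},\rho_{2}$ satisfying the pointwise bound have $\rho_{1}-\rho_{2}\in\ker L_{Q}$; by \eqref{eq:LQ-ker} and the $L^{2}$-kernel characterization of $L_{Q}$ on real-valued functions recalled in Section~\ref{subsec:lin-Q}, this forces $\rho_{1}-\rho_{2}=c\,\Lmb Q$ for some $c\in\bbR$. Since $\Lmb Q(0)=\sqrt{8}\neq0$ while the pointwise bound gives $\rho_{i}(0)=0$, we must have $c=0$. The main obstacle is controlling the nonlocal operator $QB_{Q}$ uniformly through the iteration; the choice of weight $y^{2}Q$ is precisely what reconciles the $y^{2}$ vanishing at the origin with the $Q$-type decay at infinity in the target norm, so the contraction closes cleanly.
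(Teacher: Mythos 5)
Your construction is essentially the route the paper itself points to (it omits the proof, citing prior work, but explicitly suggests taking $\rho={}^{(out)}L_{Q}^{-1}(\tfrac{1}{2}yQ)$ via Proposition~\ref{prop:LQ-green}); your refinements — peeling off the explicit leading term $\rho_{0}=\tfrac{1}{4}y^{2}Q$ using $\bfD_{Q}(\tfrac{1}{4}y^{2}Q)=\tfrac{1}{2}yQ$, and reducing $\calL_{Q}\rho=Q$ to the correct identity $L_{Q}^{\ast}(\tfrac{1}{2}yQ)=Q$ via \eqref{eq:calLQ} — are sound. One small caveat on uniqueness: $\rho_{1}-\rho_{2}$ is only bounded (since $y^{2}Q\to\sqrt{8}$), not a priori in $L^{2}$, so the $L^{2}$-kernel characterization does not literally apply; instead, note that $L_{Q}u=0$ for real $u$ smooth at the origin is a Volterra-type first-order equation whose solution is determined by $u(0)$, so $(\rho_{1}-\rho_{2})(0)=0$ already forces $\rho_{1}=\rho_{2}$.
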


For the construction of $\rho$ including the $m=0$ case, see \cite[Lemma 3.6]{KimKwon2019arXiv}.
Further properties of $\rho$ can be proved by following the proof
of \cite[Lemma~2.1]{KimKwon2020arXiv} ($m\geq1$ case) with a suitable
modification. Alternatively, we may construct $\rho$ and prove the
preceding lemma by taking $\rho={}^{(\out)}L_{Q}^{-1}(\tfrac{1}{2}yQ)$,
where $^{(\out)}L_{Q}^{-1}$ is defined by \eqref{eq:out-green} and
Proposition~\ref{prop:LQ-green}. We omit the proof.

When $m\geq2$, the following spaces 
\begin{align*}
N_{g}(i\calL_{Q}) & \coloneqq\mathrm{span}_{\bbR}\{\Lambda Q,iQ,iy^{2}Q,\rho\}\subseteq L^{2},\\
N_{g}(\calL_{Q}i)^{\perp} & \coloneqq\{i\rho,y^{2}Q,Q,i\Lambda Q\}^{\perp}\subseteq L^{2}
\end{align*}
are formally invariant under the flow $\partial_{t}+i\calL_{Q}$.
Moreover, we have a clean splitting of $L^{2}$ by 
\[
L^{2}=N_{g}(i\calL_{Q})\oplus N_{g}(\calL_{Q}i)^{\perp}.
\]
In the previous work \cite{KimKwon2020arXiv}, one was motivated by
this splitting to make a decomposition of the form 
\[
u(r)=\frac{e^{i\gmm}}{\lmb}[P(\cdot;b,\eta)+\eps]\Big(\frac{r}{\lmb}\Big),
\]
where the four modulation parameters $\lmb,\gmm,b,\eta$ take into
account the generalized null space elements ($P(\cdot;0,0)=Q$, $\rd_{\lmb=1}\frac{1}{\lmb}P(\frac{\cdot}{\lmb})=-\Lmb P\approx-\Lmb Q$,
$\rd_{\gmm=0}e^{i\gmm}P=iP\approx iQ$, $\partial_{b}P\approx-i\frac{y^{2}}{4}Q$,
and $\partial_{\eta}P\approx-(m+1)\rho$) and $\eps$ belongs to (a
truncated version of) $N_{g}(\calL_{Q}i)^{\perp}$. When $m\in\{0,1\}$,
the above decomposition does not make sense rigorously, but still
suggests a similar decomposition. It also provides a starting point
of the construction of modified profiles $P$.

The following relation among the formal generalized kernel elements
of $i\calL_{Q}$ and the kernel of $A_{Q}$, which may be read off
of \eqref{eq:conj-lin}, is useful: 
\begin{equation}
L_{Q}\rho=\tfrac{1}{2}yQ,\qquad L_{Q}(i\tfrac{y^{2}}{4}Q)=\tfrac{1}{2}iyQ.\label{eq:LQ-gen-ker}
\end{equation}

\subsection{\label{subsec:Outgoing-Green's-function}Outgoing Green's functions}

In this subsection, we construct right inverses of the (radial) linear
operators $L_{Q}$, $A_{Q}$, and $H_{Q}=A_{Q}^{\ast}A_{Q}$. These
can be used in the construction of modified profiles $P$ (more precisely,
the Taylor expansions).\footnote{In fact, it turns out that outgoing Green's functions for $L_{Q}$
are not used in this work. However, we include their construction
as it may be of independent interest. It should be used when one expands
the modified profile $P$ in higher order.}

Since $L_{Q}$, $A_{Q}$, and $H_{Q}$ have nontrivial kernels, their
right inverses are not unique. To fix them, we simply impose a \emph{good}
behavior (degeneracy) near $y=0$. Concretely, for $T\in\set{L_{Q},i^{-1}L_{Q}i,A_{Q},H_{Q}}$
we construct the \emph{outgoing Green's function} $^{(T)}G(y,y')$,
which is characterized by the properties 
\begin{equation}
\begin{aligned}T\left(^{(T)}G(y,y')\right) & =\delta_{y'}(y),\\
^{(T)}G(y,y') & =0\qquad\text{for }0<y<y',
\end{aligned}
\label{eq:out-green}
\end{equation}
for a linear operator $T$ acting on \emph{real-valued} functions
of the variable $y$. The second property of \eqref{eq:out-green}
concerning the support is the \emph{outgoing} property that uniquely
determines the Green's function $^{(T)}G(y,y')$ (see also the propositions
below). Recall that $L_{Q}$ is only $\bbR$-linear. When $L_{Q}$
acts on complex-valued functions, we need to separate the real and
imaginary parts. For the $\bbC$-linear operators $A_{Q}$ and $H_{Q}$,
the same Green's functions still work for complex-valued functions.
The desired right inverse may then be defined as 
\begin{equation}
\big[{}^{(\out)}T^{-1}f\big](y)=\int_{0}^{\infty}{}^{(T)}G(y,y')f(y')\,dy'.\label{eq:out-inverse}
\end{equation}
By the outgoing property, the domain of the integral on the RHS would
be restricted to $\int_{0}^{y}$, which is the \emph{good} behavior
we need.

\subsubsection*{Outgoing Green's function for $A_{Q}$}

We start with $A_{Q}$, which is the simplest. 
\begin{prop}
\label{prop:AQ-green} The outgoing Green's function for $A_{Q}$
takes the form 
\[
^{(A_{Q})}G(y,y')=\chf_{(0,\infty)}(y-y')\frac{yQ(y)}{y'Q(y')}.
\]
\end{prop}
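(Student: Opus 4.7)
The plan is to verify directly that the given formula satisfies the two defining properties in \eqref{eq:out-green}. Since $A_{Q}$ is a first-order local $\bbC$-linear ODE operator in $y$ of the form
\[
A_{Q} w = \rd_{y} w - \tfrac{1}{y}(1 + A_{\theta}[Q]) w,
\]
the construction of an outgoing Green's function reduces to the standard fundamental-solution recipe for first-order linear ODEs: pick a nonvanishing homogeneous solution $\phi(y)$ and set $G(y,y') = \chf_{(0,\infty)}(y-y') \, \phi(y)/\phi(y')$.

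First, I would recall from \eqref{eq:AQ-ker} that $A_{Q}(yQ) = 0$, so $\phi(y) := yQ(y)$ is a nonvanishing smooth homogeneous solution on $(0,\infty)$. Substituting this choice into the recipe gives exactly the proposed formula
\[
^{(A_{Q})}G(y,y') = \chf_{(0,\infty)}(y-y') \frac{yQ(y)}{y'Q(y')}.
\]

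Second, I would verify the two properties: the outgoing property $^{(A_{Q})}G(y,y')=0$ for $0<y<y'$ is built into the indicator $\chf_{(0,\infty)}(y-y')$. For $y>y'$ the function $y \mapsto {}^{(A_{Q})}G(y,y')$ is a constant multiple of $\phi(y) = yQ(y)$ and hence lies in the kernel of $A_{Q}$, so $A_{Q}({}^{(A_{Q})}G(\cdot,y'))=0$ in the classical sense there. The only distributional contribution comes from the jump of $^{(A_{Q})}G(\cdot,y')$ across $y=y'$, whose size is $\phi(y')/\phi(y')=1$; differentiating in $y$ in the sense of distributions therefore produces exactly $\dlt_{y'}(y)$, and the zeroth-order term $-\tfrac{1}{y}(1+A_{\theta}[Q])\,G$ contributes nothing to the distributional singularity since $G$ is bounded near $y=y'$. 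Combining, $A_{Q}({}^{(A_{Q})}G(\cdot,y')) = \dlt_{y'}(y)$, as required.

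There is no genuine obstacle here: the argument is a direct ODE computation once one identifies the homogeneous solution $yQ$. The only minor point to check is the normalization of the jump, but because $\phi(y)/\phi(y')$ takes the value $1$ at $y=y'$, the delta appears with unit weight and no extra constant is needed.
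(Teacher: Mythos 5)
Your proof is correct and is essentially the paper's argument: both reduce to the fact that $yQ$ spans the kernel of the first-order local operator $A_{Q}$ (from \eqref{eq:AQ-ker}), so that the Green's function is $\chf_{(0,\infty)}(y-y')\,\phi(y)/\phi(y')$ with $\phi=yQ$; the paper phrases this as variation of constants, reducing to $\rd_{y}g_{y'}=\dlt_{y'}$, while you verify the jump condition directly, which is the same computation. No gaps.
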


\begin{proof}
We use the variation of constants. Recall, from \eqref{eq:AQ-ker},
that $A_{Q}(yQ)=0$. Making the substitution $^{(A_{Q})}G(y,y')=g_{y'}(y)\frac{yQ(y)}{y'Q(y')}$
in \eqref{eq:out-green}, we obtain 
\begin{align*}
\rd_{y}g_{y'}(y) & =\dlt_{y'}(y),\\
g_{y'}(y) & =0\qquad\hbox{for}\quad0<y<y',
\end{align*}
thus $g_{y'}(y)=\chf_{(0,\infty)}(y-y')$. The desired formula follows.
\qedhere 
\end{proof}

\subsubsection*{Outgoing Green's function for $H_{Q}$}

Next, we consider the second-order operator $H_{Q}=A_{Q}^{\ast}A_{Q}$. 
\begin{prop}
\label{prop:HQ-green} The outgoing Green's function for $H_{Q}$
takes the form 
\[
^{(A_{Q})}G(y,y')=\chf_{(0,\infty)}(y-y')y'\left(J(y)\Gmm(y')-\Gmm(y)J(y')\right),
\]
where 
\[
J(y)=yQ,\quad\Gmm(y)=J\int_{1}^{y}J^{-2}(y')\frac{dy'}{y'}.
\]
For any nonnegative integer $k$, we have 
\[
\abs{J(y)}_{k}\aleq_{k}\begin{cases}
y & \text{if }y\leq1\\
y^{-1} & \text{if }y\geq1
\end{cases},\qquad\abs{\Gmm(y)}_{k}\aleq_{k}\begin{cases}
y^{-1} & \text{if }y\leq1\\
y & \text{if }y\geq1
\end{cases}.
\]
\end{prop}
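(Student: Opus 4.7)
The plan is to reduce this to a standard Green's function computation for a second-order radial ODE. Since $H_Q = A_Q^{\ast} A_Q$ and $A_Q(yQ) = 0$ by \eqref{eq:AQ-ker}, the function $J(y) = yQ(y)$ already lies in the kernel of $H_Q$. Recall from \eqref{eq:Def-Vtilde} that $H_Q = -\partial_{yy} - \tfrac{1}{y}\partial_y + \widetilde{V}/y^2$.

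First I would construct a second, linearly independent solution $\Gamma$ of $H_Q \Gamma = 0$ by reduction of order. Setting $\Gamma = J g$ and using $H_Q J = 0$, the equation $H_Q(Jg) = 0$ collapses to $(y J^2 g')' = 0$, so $g'(y) = c\,(y J(y)^2)^{-1}$; normalizing by integration from $1$ yields exactly $\Gamma(y) = J(y) \int_1^y \frac{dy'}{y' J(y')^2}$. A short computation then gives the Wronskian $W = J \Gamma' - \Gamma J' = 1/y$.

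Next I would produce the Green's function by the usual matching ansatz: set $G(y, y') = 0$ for $y < y'$ and $G(y, y') = a(y') J(y) + b(y') \Gamma(y)$ for $y > y'$. Continuity at $y = y'$ gives $a J(y') + b \Gamma(y') = 0$, and the distributional identity $H_Q G(\cdot, y') = \delta_{y'}$ translates, via the principal part $-\partial_{yy}$, into the jump condition $\partial_y G |_{y=y'+} - \partial_y G |_{y=y'-} = -1$. Using $W(y') = 1/y'$ these two linear equations are solved by $a(y') = y' \Gamma(y')$ and $b(y') = -y' J(y')$, which reproduces the claimed formula.

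Finally, I would verify the pointwise bounds. For $J = yQ$ with $Q = \sqrt{8}/(1+y^2)$, the estimates $|J|_k \lesssim y$ for $y \leq 1$ and $|J|_k \lesssim y^{-1}$ for $y \geq 1$ are immediate. For $\Gamma$, expanding the integrand as
\[
\frac{1}{y' J(y')^2} = \frac{(1+y'^2)^2}{8 y'^3} = \frac{1}{8 y'^3} + \frac{1}{4 y'} + \frac{y'}{8}
\]
gives $g(y) = -\frac{1}{16 y^2} + \frac{\log y}{4} + \frac{y^2}{16} + C$ for an explicit constant $C$; multiplication by $J$ produces $\Gamma \sim -\tfrac{\sqrt{8}}{16} y^{-1}$ as $y \to 0$ and $\Gamma \sim \tfrac{\sqrt{8}}{16} y$ as $y \to \infty$, with the logarithmic contribution strictly subleading at both ends. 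Since the scaling derivative $y \partial_y$ applied to any expression of the form $y^p \log^q y$ produces terms of the same asymptotic order in $y$, every $y^\ell \partial_y^\ell \Gamma$ inherits the same size bounds, giving $|\Gamma|_k \lesssim y^{-1}$ for $y \leq 1$ and $|\Gamma|_k \lesssim y$ for $y \geq 1$. The only mildly delicate point is to track the logarithmic terms in $g$ and confirm they are harmless in the bounds at $0$ and $\infty$; this is routine bookkeeping and is the principal, if minor, obstacle.
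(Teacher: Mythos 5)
Your proof is correct and takes essentially the same route as the paper, which simply invokes the standard fundamental-system construction with $J=yQ$ and $\Gamma$ obtained by integrating the Wronskian relation $\Gamma'J-J'\Gamma=\tfrac1y$; your reduction of order, jump/continuity matching, and explicit computation of $g$ fill in exactly those steps. (One cosmetic point: \eqref{eq:Def-Vtilde} records the potential of $A_QA_Q^{\ast}$, not of $H_Q=A_Q^{\ast}A_Q=-\Delta_1-Q^2$, but only the common principal part $-\partial_{yy}-\tfrac1y\partial_y$ enters your argument, so nothing is affected.)
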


This is simply the standard construction of Green's function for the
second-order differential operator $A_{Q}^{\ast}A_{Q}=-\rd_{y}^{2}-\frac{1}{y}\rd_{y}+\frac{V}{y^{2}}$
using the fundamental basis consisting of $J$ (recall that $A_{Q}^{\ast}A_{Q}J=0$)
and $\Gmm$, where the latter is obtained by integrating the Wronskian
relation $\Gmm'J-J'\Gmm=\tfrac{1}{y}$ (or, $\partial_{y}(y\Gamma'J-yJ'\Gamma)=0$).
For details, we refer to \cite[Appendix A]{RaphaelRodnianski2012Publ.Math.}
(see also \cite{MerleRaphaelRodnianski2013InventMath,RaphaelSchweyer2013CPAM}),
where exactly the same operator (in the case $k=1$) is considered.

\subsubsection*{Outgoing Green's function for $L_{Q}$}

Finally, we turn to the first-order operator $L_{Q}$, which is most
involved due to its nonlocality. Unlike $A_{Q}$ and $H_{Q}$, the
operator $L_{Q}$ is \emph{not} $\mathbb{C}$-linear; nevertheless,
it is $\bbR$-linear and preserves the real and imaginary parts. Hence,
in order to invert $L_{Q}u=f$ for a complex-valued function $f$,
we need Green's functions for $L_{Q}$ and $i^{-1}L_{Q}i$. 
\begin{prop}
\label{prop:LQ-green} The outgoing Green's functions for $L_{Q}$
and $i^{-1}L_{Q}i$ are 
\begin{align*}
^{(L_{Q})}G(y,y') & =\chf_{(0,\infty)}(y-y')\frac{Q(y)}{Q(y')}I(y,y'),\\
^{(i^{-1}L_{Q}i)}G(y,y') & =\chf_{(0,\infty)}(y-y')\frac{Q(y)}{Q(y')},
\end{align*}
where $I(y,y')$ is smooth on $\set{(y,y'):0<y'<y}$ and obeys the
following properties: 
\begin{enumerate}
\item (Upper bounds) For any nonnegative integer $k$, we have 
\[
\abs{(y\rd_{y})^{k}I(y,y')}\aleq_{k}\begin{cases}
1+\brk{y'}^{-2}\log\left(2+\tfrac{\brk{y}}{\brk{y'}}\right) & \text{if }k=0,\\
\frac{y-y'}{y}\min\set{\frac{y^{2}}{\brk{y}^{2}},\brk{y'}^{-2}} & \text{if }k=1,\\
\frac{y^{2}}{1+y^{4}}\left(1+\brk{y'}^{-2}\log\left(2+\tfrac{\brk{y}}{\brk{y'}}\right)\right) & \text{if }k\geq2.
\end{cases}
\]
\item (Behavior near the diagonal) We have 
\[
\lim_{y-y'\to0+}I(y,y')=1,\qquad\lim_{y-y'\to0+}y\rd_{y}I(y,y')=0.
\]
Moreover, for any nonnegative integer $k$, define $I^{(k)}(y)\coloneqq\lim_{y'\to y-}(y\rd_{y})^{k}I(y,y')$.
For $k\geq2$ and any nonnegative integer $\ell$, we have 
\[
\abs{I^{(k)}(y)}_{\ell}\aleq_{k,\ell}\frac{y^{2}}{1+y^{4}}.
\]
\end{enumerate}
\end{prop}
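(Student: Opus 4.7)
The case of $i^{-1}L_Q i$ is immediate: on real-valued functions $u$, the nonlocal term $B_Q(iu)=y^{-1}\int_0^y\Re(\overline{Q}\cdot iu)y'\,dy'$ vanishes (since $Q$ and $u$ are real), so $L_Q(iu)=i\bfD_Q u$, i.e., $i^{-1}L_Q i=\bfD_Q$. Since $\bfD_Q Q=0$, variation of constants immediately yields the stated formula for $^{(i^{-1}L_Q i)}G$.

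For $L_Q$, I would substitute the ansatz $^{(L_Q)}G(y,y')=\chf_{(0,\infty)}(y-y')\tfrac{Q(y)}{Q(y')}I(y,y')$. Using $\bfD_Q Q=0$, a direct computation (together with the observation that the jump of $G$ at $y=y'$ must equal $I(y'+,y')$ in order to match $\delta_{y'}$ through the first-order principal part $\bfD_Q$) shows that the Green's function equation reduces to the Volterra integro-differential equation
\[
\rd_y I(y,y')+\tfrac{1}{y}\int_{y'}^y Q^2(z)I(z,y')z\,dz=0,\qquad I(y'+,y')=1,
\]
equivalently, to the second-order linear ODE
\[
\rd_y^2 I+\tfrac{1}{y}\rd_y I+Q^2(y)I=0,\qquad I(y',y')=1,\quad\rd_y I(y',y')=0.
\]

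I construct $I$ from a fundamental basis $\{\Psi_1,\Psi_2\}$ of this ODE. From $L_Q(\Lmb Q)=0$ one reads off the explicit solution $\Psi_1(y):=\Lmb Q/Q=(1-y^2)/(1+y^2)$, which is analytic on $[0,\infty)$, uniformly bounded, with $y\Psi_1'(y)=O(y^2\brk{y}^{-4})$. The ODE has a regular singular point at $y=0$ with double indicial root $0$, so a second linearly independent Frobenius solution exists in the form $\Psi_2(y)=\Psi_1(y)\log y+\widetilde{\Psi}_2(y)$ with $\widetilde{\Psi}_2$ analytic at $0$; I normalize by $\widetilde{\Psi}_2(0)=0$. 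A matched asymptotic analysis at $y=\infty$ (where $Q^2=O(y^{-4})$ is a decaying perturbation of the Euler equation $I''+y^{-1}I'=0$) gives $\Psi_2(y)=a+b\log y+O(y^{-2})$ with $b\neq 0$, consistent with the Abel relation $\Psi_1\Psi_2'-\Psi_1'\Psi_2=c/y$; a short calculation from the Frobenius expansion at $y=0$ fixes $c=1$. Applying Cramer's rule to the initial conditions then yields the closed form
\[
I(y,y')=y'\bigl[\Psi_1(y)\Psi_2'(y')-\Psi_2(y)\Psi_1'(y')\bigr],
\]
for which $I(y',y')=y'W(y')=1$ holds automatically, and $\rd_y I(y',y')=0$ by direct evaluation.

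The pointwise bounds follow by feeding the asymptotics of $\Psi_j$ into the closed form. Using $y'\Psi_2'(y')=1+O(y'^2|\log y'|)$ near $0$ and $y'\Psi_2'(y')=b+O(y'^{-2})$ at infinity, together with $y'\Psi_1'(y')=O(y'^2\brk{y'}^{-4})$, case analysis on the regimes $\{y'\lesssim 1,\ y'\gtrsim 1\}\times\{y\lesssim y',\ y\gg y'\}$ yields $|I(y,y')|\aleq 1+\brk{y'}^{-2}\log(2+\brk{y}/\brk{y'})$. The more refined bound $|y\rd_y I|\aleq\tfrac{y-y'}{y}\min\{y^2\brk{y}^{-2},\brk{y'}^{-2}\}$ is most transparent from the Volterra equation itself, since the integral vanishes at $y=y'$ and satisfies $\int_{y'}^y Q^2(z)z\,dz\aleq\min\{y^2\brk{y}^{-2},\brk{y'}^{-2}\}$. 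For $k\ge 2$, iteratively differentiating the ODE $\rd_y^2 I=-y^{-1}\rd_y I-Q^2 I$ and invoking $Q^2\aleq\brk{y}^{-4}$ produces the enhanced decay by $y^2\brk{y}^{-4}$; the diagonal values $I^{(k)}(y)$ are computed recursively from the initial conditions $I^{(0)}=1,\ I^{(1)}=0$, giving $I^{(2)}(y)=-y^2Q^2(y)\aleq y^2\brk{y}^{-4}$, and the bounds on $|I^{(k)}|_\ell$ for $k\ge 2$, $\ell\ge 0$ follow by induction. The main technical care will be needed in the intermediate regime $y\sim y'\sim 1$: $\Psi_1$ has a simple zero at $y=1$, which is a removable singularity of the closed form for $I$ (the underlying ODE being regular at $y=1$), but the required cancellation between the two terms of the closed form must be handled with care to extract sharp constants.
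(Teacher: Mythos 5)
Your construction is correct, but it takes a genuinely different route from the paper's for the existence of $I$ and the $k=0$ bound. The paper never passes to the second-order ODE to build $I$: it integrates the integro-differential equation once more to get the Volterra equation $I(y,y')=\chf_{(0,\infty)}(y-y')-\int_{0}^{y}zQ^{2}\log\tfrac{y}{z}\,I(z,y')\,dz$, solves it by Picard iteration, and obtains the $k=0$ upper bound via Gronwall on bounded regions plus a contraction-mapping argument in a weighted sup-norm built from $1+\brk{y'}^{-2}\log(2+\brk{y}/\brk{y'})$ for large $y$. You instead solve the equivalent Cauchy problem $\rd_{y}^{2}I+\tfrac{1}{y}\rd_{y}I+Q^{2}I=0$, $I(y',y')=1$, $\rd_{y}I(y',y')=0$, using the explicit bounded solution $\Lmb Q/Q=(1-y^{2})/(1+y^{2})$ (from $L_{Q}\Lmb Q=0$) together with a Frobenius/log companion solution normalized via the Wronskian $1/y$. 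Your route is more structural: it exposes the origin of the logarithm (your $\Psi_{2}\sim\log y$ at both ends), gives smoothness of $I$ in $(y,y')$ for free, and feeding $\Psi_{1}=O(1)$, $y'\Psi_{1}'(y')=O(y'^{2}\brk{y'}^{-4})$, $y'\Psi_{2}'(y')=O(1)$, $\abs{\Psi_{2}(y)}\aleq\brk{\log y}$ into the closed form $I(y,y')=y'[\Psi_{1}(y)\Psi_{2}'(y')-\Psi_{2}(y)\Psi_{1}'(y')]$ does reproduce the stated $k=0$ bound in every regime; the paper's route avoids analyzing the second solution at the price of the weighted fixed-point argument. For $k\geq1$ the two proofs coincide, both resting on $y\rd_{y}I=-\int_{y'}^{y}zQ^{2}I\,dz$ and $(y\rd_{y})^{2}I=-y^{2}Q^{2}I$. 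Two small remarks: your worry about the zero of $\Psi_{1}$ at $y=1$ is a non-issue, since the closed form is a linear combination of globally smooth solutions with nonvanishing Wronskian (trouble would arise only if you produced $\Psi_{2}$ by reduction of order through $\Psi_{1}$); and the assertion $b\neq0$ should be stated as following from the Wronskian, since $b=0$ would force $\Psi_{1}\Psi_{2}'-\Psi_{1}'\Psi_{2}=O(y^{-3})$ at infinity, contradicting $W=1/y$.
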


\begin{proof}
We begin with the simpler case of $i^{-1}L_{Q}i$. For a real-valued
function $u$, 
\[
i^{-1}L_{Q}iu=\bfD_{Q}u=\rd_{y}u-\frac{1}{y}A_{\theta}[Q]u.
\]
In particular, $i^{-1}L_{Q}i$ is a local operator (acted on real-valued
functions). Moreover, recall from \eqref{eq:DQ-ker} that $\bfD_{Q}Q=0$.
Substituting $^{(i^{-1}L_{Q}i)}G(y,y')=g_{y'}(y)\frac{Q(y)}{Q(y')}$,
\eqref{eq:out-green} becomes 
\begin{align*}
\rd_{y}g_{y'} & =\dlt_{y'}(y),\\
g_{y'}(y) & =0\qquad\hbox{for}\quad0<y<y',
\end{align*}
from which the desired expression for $^{(i^{-1}L_{Q}i)}G(y,y')$
follows.

Next, we turn to $L_{Q}$. While $\ker_{\bbR}L_{Q}=\set{\Lmb Q}$,
variation of constants does not work due to the nonlocal integral
term. Instead, we simply make the same substitution $^{(L_{Q})}G(y,y')=I(y,y')\frac{Q(y)}{Q(y')}$
as before, after which \eqref{eq:out-green} becomes 
\begin{equation}
\begin{aligned}\rd_{y}I(y,y')+\frac{1}{y} & \int_{0}^{y}zQ^{2}(z)I(z,y')dz=\dlt_{y'}(y),\\
I(y,y') & =0\quad\hbox{for}\quad0<y<y'.
\end{aligned}
\label{eq:LQ-out-green}
\end{equation}
Integrating from $y=0$ and applying Fubini's theorem, we arrive at
the Volterra-type equation 
\begin{equation}
I(y,y')=\chf_{(0,\infty)}(y-y')-\int_{0}^{y}zQ^{2}\log\frac{y}{z}I(z,y')\,dz.\label{eq:LQ-green-volterra}
\end{equation}
By a standard Picard iteration argument applied to \eqref{eq:LQ-green-volterra},
the existence of a unique solution $I(y,y')$ for $y\in(y',y_{+})$
for some $y_{+}>y'$ follows. Moreover, it is clear that $\lim_{y-y'\to0+}I(y,y')=1$.
Finally, observe that $I(y,y')$ may be extended past $y_{+}$ as
long as $\limsup_{y\to y_{+}-}\abs{I(y,y')}<\infty$.

In order to construct and estimate $I(y,y')$ on the whole interval
$(y',\infty)$, we introduce a parameter $C_{0}>1$ to be fixed below,
and split the argument into the following two cases:

\textbf{Case 1:} $y<2C_{0}$. We may assume that $y'\leq y<\min\set{2C_{0},y_{+}}$,
since $I(y,y')$ is zero otherwise. Then by \eqref{eq:LQ-green-volterra},
\begin{align*}
\abs{I(y,y')}\leq1+\int_{y'}^{y}zQ^{2}\log\frac{y}{z}\abs{I(z,y')}\,dz,
\end{align*}
so by Gronwall's inequality, 
\[
\abs{I(y,y')}\leq\exp\left(\int_{y'}^{y}zQ^{2}\log\frac{y}{z}\,dz\right)\aleq_{C_{0}}1.
\]
In particular, if we take $C_{0}\to\infty$, it already follows that
$I(y,y')$ exists for all $y\in(y',\infty)$. However, the resulting
bound for large $y$'s is bad, so we give a separate argument in that
case as follows.

\textbf{Case 2:} $y>C_{0}$, where $C_{0}$ is a parameter to be fixed
below. By the preceding remark, we may assume that $I(y,y')$ exists
on $y\in(y',\infty)$. In this case, for $\max\set{y',C_{0}}<y$,
we rewrite \eqref{eq:LQ-green-volterra} as 
\begin{align*}
\ I(y,y') & =\underbrace{1-\int_{0}^{C_{0}}zQ^{2}\log\frac{y}{z}I(z,y')\,dz}_{=:g_{0}(\cdot,y')}-\underbrace{\int_{\max\set{y',C_{0}}}^{y}zQ^{2}\log\frac{y}{z}I(z,y')\,dz}_{=:TI(\cdot,y')}.
\end{align*}
Consider the norm 
\[
\nrm{g}\coloneqq\sup_{y>\max\set{y',C_{0}}}\left(1+\brk{C_{0}^{-1}y'}^{-2}\log\left(2+\tfrac{\brk{C_{0}^{-1}y}}{\brk{C_{0}^{-1}y'}}\right)\right)^{-1}\abs{g(y)}.
\]
Observe that $g_{0}=1$ if $y'>C_{0}$ and $\abs{g_{0}}\aleq_{C_{0}}1$
by Case~1 otherwise; hence $\nrm{g}\aleq_{C_{0}}1$. On the other
hand, we claim that 
\begin{equation}
\nrm{Tg}\aleq C_{0}^{-2}\nrm{g}.\label{eq:LQ-green-contract}
\end{equation}
To verify \eqref{eq:LQ-green-contract}, we may normalize $\nrm{g}=1$.
For simplicity, we only consider the case $y'>C_{0}$; the alternative
case may be handled similarly. Since $zQ^{2}\aleq z^{-3}$ on the
domain of integration, we have 
\begin{align*}
\abs*{\int_{y'}^{y}zQ^{2}\log\frac{y}{z}g(z)\,dz} & \aleq\int_{y'}^{y}\frac{1}{z^{3}}\log\frac{y}{z}\left(1+\brk{C_{0}^{-1}y'}^{-2}\log\left(2+\tfrac{\brk{C_{0}^{-1}z}}{\brk{C_{0}^{-1}y'}}\right)\right)\,dz\\
 & \aleq(y')^{-2}\log\frac{y}{y'}\left(1+\brk{C_{0}^{-1}y'}^{-2}\right)\\
 & \aleq C_{0}^{-2}\brk{C_{0}^{-1}y'}^{-2}\log\left(2+\tfrac{\brk{C_{0}^{-1}y}}{\brk{C_{0}^{-1}y'}}\right),
\end{align*}
which proves \eqref{eq:LQ-green-contract}.

By \eqref{eq:LQ-green-contract}, $T$ is a contraction with respect
to $\nrm{\cdot}$ once we fix a large enough $C_{0}>1$. By the contraction
mapping principle, it follows that, 
\[
\abs{I(y,y')}\aleq_{C_{0}}1+\brk{C_{0}^{-1}y'}^{-2}\log\left(2+\tfrac{\brk{C_{0}^{-1}y}}{\brk{C_{0}^{-1}y'}}\right)\aleq_{C_{0}}1+\brk{y'}^{-2}\log\left(2+\tfrac{\brk{y}}{\brk{y'}}\right),
\]
which is the desired bound for $I(y,y')$.

For $y\rd_{y}I(y,y')$, we use the equation 
\begin{equation}
y\rd_{y}I(y,y')=-\int_{0}^{y}zQ^{2}(z)I(z,y')dz\qquad\hbox{for}\quad y'<y,\label{eq:LQ-green-ydy}
\end{equation}
which immediately follows from \eqref{eq:LQ-out-green}. From \eqref{eq:LQ-green-ydy},
$\lim_{y-y'\to0+}y\rd_{y}I(y,y')=0$ is immediate. To verify the asserted
bound for $\abs{y\rd_{y}I(y,y')}$, it suffices to establish 
\[
\abs{y\rd_{y}I(y,y')}\aleq\begin{cases}
(y-y')y' & y'<y\leq2y',\ y'\leq2\\
\frac{y^{2}}{\brk{y}^{2}} & 2y'<y,\ y'\leq2\\
\frac{y-y'}{(y')^{3}} & y'<y\leq2y',\ y'>2\\
(y')^{-2} & 2y'<y,\ y'>2,
\end{cases}
\]
each of which is a straightforward consequence of \eqref{eq:LQ-green-ydy},
$\abs{zQ^{2}}\aleq\frac{z}{1+z^{4}}$ and the preceding bound for
$I(y,y')$. Finally, the assertions concerning $(y\rd_{y})^{k}I(y,y')$
follow in an inductive manner from 
\[
(y\rd_{y})^{2}I(y,y')=-y^{2}Q^{2}(y)I(y,y')\qquad\hbox{for}\quad y'<y,
\]
which is obtained by differentiating \eqref{eq:LQ-green-ydy}; we
omit the details. \qedhere 
\end{proof}

\subsection{\label{subsec:Adapted-function-spaces}Adapted function spaces}

In this subsection, we briefly review the definitions of equivariant
Sobolev spaces $H_{m}^{k}$ and construct adapted function spaces
$\dot{\calH}_{0}^{1}$, $\dot{\calH}_{2}^{1}$, $\dot{\calH}_{1}^{2}$,
and $\dot{\calH}_{0}^{3}$. These function spaces are designed to
have (sub-)coercivity estimates of the linear operators $L_{Q}$,
$A_{Q}$, and $A_{Q}^{\ast}$ at various levels of regularity. Moreover,
since $L_{Q}$ and $A_{Q}$ shift the equivariance index by $1$,
and $A_{Q}^{\ast}$ shifts the equivariance index by $-1$ when viewed
as acting on functions on the ambient space $\bbR^{2}$, we need to
handle various equivariance indices too.

\subsubsection*{Equivariant Sobolev spaces}

Perhaps a natural starting point is to consider equivariant Sobolev
spaces. Let $m\geq0$. Given an $m$-equivariant function $f$ (see
\eqref{eq:def-equivariance} for the definition), we will often identify
it with its \emph{radial part} $g:\bbR_{+}\to\bbC$, i.e. $f(x)=g(r)e^{im\theta}$,
under the usual polar coordinates relation $x_{1}+ix_{2}=re^{i\theta}$.
We often consider $g$ as an $m$-equivariant function, i.e. we say
that $g$ belongs to some $m$-equivariant function space if its $m$-equivariant
extension belongs to that.

For $s\geq0$, we denote by $H_{m}^{s}$ the set of $m$-equivariant
$H^{s}(\bbR^{2})$ functions. The set of $m$-equivariant Schwartz
functions is denoted by $\calS_{m}$. The $H_{m}^{s}$-norm and $\dot{H}_{m}^{s}$-norm
mean the usual $H^{s}(\bbR^{2})$-norm and $\dot{H}^{s}(\bbR^{2})$-norm,
but the subscript $m$ indicates the equivariance index. When $0\leq k\leq m$,
we have \emph{generalized Hardy's inequality} \cite[Lemma A.7]{KimKwon2019arXiv}:
\begin{equation}
\|\sup_{0\leq\ell\leq k}|r^{-\ell}\rd_{r}^{k-\ell}f|\|_{L^{2}}=\||f|_{-k}\|_{L^{2}}\sim_{k,m}\|f\|_{\dot{H}_{m}^{k}},\qquad\forall f\in\calS_{m}.\label{eq:GenHardyAppendix-1}
\end{equation}
In addition, when $m\geq1$ and $k=1$, we have the \emph{Hardy-Sobolev
inequality} \cite[Lemma A.6]{KimKwon2019arXiv}: 
\begin{equation}
\|r^{-1}f\|_{L^{2}}+\|f\|_{L^{\infty}}\aleq\|f\|_{\dot{H}_{m}^{1}}.\label{eq:HardySobolevAppendix-1}
\end{equation}
As is well known, \eqref{eq:HardySobolevAppendix-1} \emph{fails}
when $m=0$, but we can have a logarithmically weakened version of
it; see \eqref{eq:LogHardy}. The generalized Hardy's inequality \eqref{eq:GenHardyAppendix-1}
allows us define the space $\dot{H}_{m}^{k}$ when $0\leq k\leq m$
by taking the completion of $\calS_{m}$ under the $\dot{H}_{m}^{k}$-norm,
with the embedding properties 
\[
\calS_{m}\hookrightarrow H_{m}^{k}\hookrightarrow\dot{H}_{m}^{k}\hookrightarrow L_{\mathrm{loc}}^{2}.
\]

\subsubsection*{Adapted function spaces}

As alluded to above, we will track the dynamics of $w$, $w_{1}=\bfD_{w}w$,
and $w_{2}=A_{w}w_{1}=A_{w}\bfD_{w}w$: see the equations \eqref{eq:w-eqn-sd},
\eqref{eq:w1-eqn-sd}, and \eqref{eq:w2-eqn-sd}. The related linearized
equations are \eqref{eq:lin-CSS-Q}, \eqref{eq:LinearizedFlow-1},
and \eqref{eq:LinearizedFlow-2}, respectively. Thus we need to handle
adapted derivatives $L_{Q}\eps$, $A_{Q}L_{Q}\eps$, and so on. Here
we investigate how these derivatives control the original $\eps$.
The preceding equivariant Sobolev spaces do not work very well with
those adapted derivatives. We need to introduce new \emph{adapted
function spaces} $\dot{\calH}_{m}^{k}$, which are slightly modified
from the original equivariant Sobolev spaces $\dot{H}_{m}^{k}$. More
precisely, we will obtain (sub-)coercivity properties of $L_{Q}$,
$A_{Q}$, and $A_{Q}^{\ast}$ in terms of $\dot{\calH}_{m}^{k}$-norms.

We define the $\dot{\calH}_{m}^{k}$-norms for $(k,m)\in\{(1,0),(1,2),(2,1),(3,0)\}$
by (recall $\log_{\pm}r=\max\{0,\pm\log r\}$)
\begin{gather*}
\|v\|_{\dot{\calH}_{0}^{1}}\coloneqq\|\partial_{r}v\|_{L^{2}}+\|r^{-1}\langle\log_{-}r\rangle^{-1}v\|_{L^{2}},\\
\|v\|_{\dot{\calH}_{2}^{1}}\coloneqq\|\partial_{r}v\|_{L^{2}}+\|r^{-1}\langle\log_{+}r\rangle^{-1}v\|_{L^{2}},\\
\|v\|_{\dot{\calH}_{1}^{2}}\coloneqq\|\partial_{rr}v\|_{L^{2}}+\|r^{-1}\langle\log r\rangle^{-1}|v|_{-1}\|_{L^{2}},\\
\|v\|_{\dot{\calH}_{0}^{3}}\coloneqq\|\partial_{rrr}v\|_{L^{2}}+\|r^{-1}\langle\log r\rangle^{-1}|\partial_{r}v|_{-1}\|_{L^{2}}+\|r^{-1}\langle r\rangle^{-2}\langle\log r\rangle^{-1}v\|_{L^{2}}.
\end{gather*}
The space $\dot{\calH}_{m}^{k}$ is defined by the completion of the
space $\calS_{m}$ of $m$-equivariant Schwartz functions under the
$\dot{\calH}_{m}^{k}$-norms. It turns out that $\dot{\calH}_{0}^{1}\hookrightarrow\dot{H}_{0}^{1}$,
$\dot{\calH}_{1}^{2}\hookrightarrow\dot{H}_{1}^{2}$, and $\dot{\calH}_{0}^{3}\hookrightarrow\dot{H}_{0}^{3}$.
But we have a reverse embedding for $\dot{\calH}_{2}^{1}$: $\dot{H}_{2}^{1}\hookrightarrow\dot{\calH}_{2}^{1}$.
Note that the norms $\dot{\calH}_{m}^{k}$ are same as $\dot{H}_{m}^{k}$
norms for high frequency pieces. In particular, one has $\dot{\calH}_{m}^{k}\cap L^{2}=H_{m}^{k}$.
See Appendix~\ref{sec:Adapted-function-spaces} for more details.

The spaces $\dot{\calH}_{m}^{k}$ are constructed in order to have
boundedness and subcoercivity estimates of $L_{Q}$, $A_{Q}$, and
$A_{Q}^{\ast}$. Actually this is how we chose the weights in the
definitions of the $\dot{\calH}_{m}^{k}$-norms. For more details,
we refer to \cite[Section 2.3]{KimKwon2020arXiv}. Since $L_{Q}$
and $A_{Q}$ have nontrivial kernels, we cannot have a coercivity
estimate like $\|L_{Q}v\|_{L^{2}}\sim\|v\|_{\dot{\calH}_{0}^{1}}$.
Instead, we can have a subcoercivity estimate as 
\[
\|L_{Q}v\|_{L^{2}}+\|\chf_{r\sim1}v\|_{L^{2}}\sim\|v\|_{\dot{\calH}_{0}^{1}}.
\]
The associated coercivity can be obtained by ruling out the kernel
elements of $L_{Q}$. The same remark applies to $A_{Q}$. For $A_{Q}^{\ast}$,
due to the positivity \eqref{eq:Def-Vtilde} of $A_{Q}A_{Q}^{\ast}$,
the unconditional coercivity estimate for $A_{Q}^{\ast}$ holds. As
a result, we have the following coercivity estimates (see Appendix
\ref{sec:Adapted-function-spaces} for the proof). 
\begin{prop}[Linear coercivity estimates]
\label{prop:LinearCoercivity}\ 
\begin{enumerate}
\item (Coercivity of $L_{Q}$ at $\dot{H}^{1}$-level) Let $\psi_{1},\psi_{2}\in(\dot{\calH}_{0}^{1})^{\ast}$
be such that the $2\times2$ matrix $(a_{ij})$ defined by $a_{i1}=(\psi_{i},\Lambda Q)_{r}$
and $a_{i2}=(\psi_{i},iQ)_{r}$ has nonzero determinant. Then, we
have a coercivity estimate 
\begin{equation}
\|v\|_{\dot{\calH}_{0}^{1}}\aleq_{\psi_{1},\psi_{2}}\|L_{Q}v\|_{L^{2}}\aleq\|v\|_{\dot{\calH}_{0}^{1}},\qquad\forall v\in\dot{\calH}_{m}^{1}\cap\{\psi_{1},\psi_{2}\}^{\perp},\label{eq:LQ-coer-H1-sec2}
\end{equation}
where $\perp$ is defined with respect to the real inner product $(\cdot,\cdot)_{r}$.
\item (Coercivity of $L_{Q}$ at $\dot{H}^{3}$-level) Let $\psi_{1},\psi_{2}\in(\dot{\calH}_{0}^{3})^{\ast}$
be such that the $2\times2$ matrix $(a_{ij})$ defined by $a_{i1}=(\psi_{i},\Lambda Q)_{r}$
and $a_{i2}=(\psi_{i},iQ)_{r}$ has nonzero determinant. Then, we
have a coercivity estimate 
\begin{equation}
\|v\|_{\dot{\calH}_{0}^{3}}\aleq_{\psi_{1},\psi_{2}}\|L_{Q}v\|_{\dot{\calH}_{1}^{2}}\aleq\|v\|_{\dot{\calH}_{0}^{3}},\qquad\forall v\in\dot{\calH}_{0}^{3}\cap\{\psi_{1},\psi_{2}\}^{\perp}.\label{eq:LQ-coer-H3-sec2}
\end{equation}
\item (Coercivity of $A_{Q}$ at $\dot{H}^{2}$-level) Let $\psi_{1},\psi_{2}\in(\dot{\calH}_{1}^{2})^{\ast}$
be such that the $2\times2$ matrix $(a_{ij})$ defined by $a_{i1}=(\psi_{i},rQ)_{r}$
and $a_{i2}=(\psi_{i},irQ)_{r}$ has nonzero determinant. Then, we
have a coercivity estimate 
\begin{equation}
\|v\|_{\dot{\calH}_{1}^{2}}\aleq_{\psi_{1},\psi_{2}}\|A_{Q}v\|_{\dot{\calH}_{2}^{1}}\aleq\|v\|_{\dot{\calH}_{1}^{2}},\qquad\forall v\in\dot{\calH}_{1}^{2}\cap\{\psi_{1},\psi_{2}\}^{\perp}.\label{eq:AQ-coer-H2-sec2}
\end{equation}
\item (Unconditional coercivity of $A_{Q}^{\ast}$ at $\dot{H}^{1}$-level)
We have 
\begin{equation}
\|A_{Q}^{\ast}v\|_{L^{2}}\sim\|v\|_{\dot{\calH}_{2}^{1}},\qquad\forall v\in\dot{\calH}_{2}^{1}.\label{eq:positivity-AQAQstar}
\end{equation}
\end{enumerate}
\end{prop}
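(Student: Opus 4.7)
The plan is to treat the three kernel-modded estimates for $L_Q$ at the $\dot H^1$- and $\dot H^3$-levels and for $A_Q$ at the $\dot H^2$-level by a common two-step template: (a) prove a direct boundedness bound $\|Tv\|_Y\lesssim\|v\|_X$ together with a \emph{subcoercive} lower bound of the shape
\[
\|v\|_X\lesssim\|Tv\|_Y+\|\mathbf{1}_{r\sim 1}v\|_{L^2},
\]
and (b) upgrade subcoercivity to full coercivity by a soft compactness--contradiction argument using the orthogonality to $\psi_1,\psi_2$. The last estimate, for $A_Q^*$, will follow directly from the nonnegativity of $\widetilde V$ in \eqref{eq:Def-Vtilde} by a single integration by parts, with no contradiction step needed.

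Boundedness in all cases is straightforward from the explicit formulas $L_Qv=\bfD_Qv-\tfrac{2}{r}A_\theta[Q,v]Q$ and $A_Qv=\partial_r v-\tfrac{1}{r}(1+A_\theta[Q])v$, the pointwise bound $|A_\theta[Q]|\lesssim r^2\langle r\rangle^{-2}$, and the logarithmic Hardy inequalities encoded in the weights of $\dot{\mathcal{H}}_m^k$; the $\dot H^3$-level estimate for $L_Q$ is reduced to the $\dot H^1$-level one by differentiating $L_Qv=f$ twice and iterating. For the subcoercivity, the key algebraic input is the factorization $\bfD_Qv=Q\,\partial_r(v/Q)$ and the analogous $A_Qv=rQ\,\partial_r(v/(rQ))$; combined with the pointwise control $|A_\theta[Q,v]|\lesssim\int_0^r Q|v|r'\,dr'$ of the nonlocal term, these reconstruct $\|v\|_{\dot{\mathcal{H}}_0^1}$ from $\|L_Q v\|_{L^2}$ modulo an $L^2$ bulk on the fixed annulus $r\sim 1$. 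The main technical obstacle throughout is the precise bookkeeping of the logarithmic weights $\langle\log_\pm r\rangle^{-1}$, which compensate the failure of the usual two-dimensional radial Hardy inequality at $m=0$; this is exactly why the $\dot{\mathcal{H}}_m^k$-norms are defined as they are.

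Subcoercivity combined with Rellich-type local compactness then yields full coercivity by contradiction in a standard way: if the $L_Q$ estimate at the $\dot H^1$-level failed, a sequence $v_n\in\dot{\mathcal{H}}_0^1\cap\{\psi_1,\psi_2\}^\perp$ with $\|v_n\|_{\dot{\mathcal{H}}_0^1}=1$ and $\|L_Qv_n\|_{L^2}\to 0$ would, upon extraction, have a weak $\dot{\mathcal{H}}_0^1$-limit $v_\infty$ that converges strongly in $L^2_{\mathrm{loc}}$; the subcoercivity would force $\|\mathbf{1}_{r\sim 1}v_\infty\|_{L^2}\gtrsim 1$, so $v_\infty\ne 0$, while $L_Qv_n\to 0$ would give $L_Qv_\infty=0$, hence by \eqref{eq:LQ-ker} $v_\infty=a\Lambda Q+b\cdot iQ$ for some $a,b\in\bbR$. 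Passing the orthogonality to the limit and using the nondegeneracy of $(a_{ij})$ would force $a=b=0$, a contradiction. The same scheme applies to the $A_Q$ estimate with $\ker_{\bbC}A_Q=\mathrm{span}_{\bbC}\{rQ\}$, and to the $L_Q$ estimate at the $\dot H^3$-level with one additional derivative. Finally, integration by parts in $r\,dr$ yields
\[
\|A_Q^*v\|_{L^2}^2=(v,A_QA_Q^*v)_r=\int_0^\infty|\partial_r v|^2\,r\,dr+\int_0^\infty\frac{\widetilde V}{r^2}|v|^2\,r\,dr,
\]
and the pointwise lower bound $\widetilde V\gtrsim\langle r\rangle^{-2}$ from \eqref{eq:Def-Vtilde} combined with the $2$-equivariant Hardy inequality \eqref{eq:GenHardyAppendix-1} reconstructs the weighted part of $\|v\|_{\dot{\mathcal{H}}_2^1}$, while the matching upper bound follows immediately from the explicit expression for $A_Q^*$.
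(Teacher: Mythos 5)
Your overall template is the same as the paper's (Appendix~\ref{sec:Adapted-function-spaces}): prove boundedness together with a subcoercivity bound of the form $\|v\|_{X}\lesssim\|Tv\|_{Y}+\|\mathbf{1}_{r\sim1}v\|_{L^{2}}$ using the factorizations $\bfD_{Q}v=Q\partial_{r}(Q^{-1}v)$ and $A_{Q}v=rQ\,\partial_{r}(v/(rQ))$, weighted/logarithmic Hardy inequalities, and a perturbative treatment of the nonlocal term $QB_{Q}$; then remove the annulus term using the nondegeneracy of $(a_{ij})$ and the kernel characterizations. The only real divergence is in that last step: the paper delegates the subcoercivity-to-coercivity upgrade to \cite{KimKwon2020arXiv}, while you supply a compactness--contradiction argument; that is a standard and legitimate alternative here, since $\Lambda Q,iQ\in\dot{\mathcal{H}}_{0}^{1}$, $rQ\in\dot{\mathcal H}^2_1$, and the $\psi_{i}$-pairings pass to weak limits. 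Your reduction of the $\dot H^{3}$-level estimate to the $\dot H^{1}$-level one by ``differentiating twice'' is, however, too glib: because $\bfD_{Q}\approx\partial_{r}$ near the origin but $\bfD_{Q}\approx\partial_{r}+\tfrac{2}{r}$ near infinity, the paper has to work with the conjugated derivatives $\partial_{+}\partial_{+}$, the identity \eqref{eq:ComparisonH3H3-identity}, and a region splitting; a naive iteration of $\partial_{r}$ does not interact cleanly with $\bfD_{Q}$ and loses the sharp logarithmic weights, which cannot be removed.

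The one genuine misstep is in part (4). The integration by parts gives $\|A_{Q}^{\ast}v\|_{L^{2}}^{2}=\|\partial_{r}v\|_{L^{2}}^{2}+\int \widetilde V r^{-2}|v|^{2}$ with $\widetilde V\sim\langle r\rangle^{-2}$, hence only the weight $r^{-1}\langle r\rangle^{-1}$, which is far weaker at infinity than the $r^{-1}\langle\log_{+}r\rangle^{-1}$ appearing in $\|\cdot\|_{\dot{\mathcal{H}}_{2}^{1}}$. To bridge the gap you invoke the generalized Hardy inequality \eqref{eq:GenHardyAppendix-1}, but that inequality controls $\|r^{-1}v\|_{L^{2}}$ by the \emph{full} $\dot H_{2}^{1}$-norm (its nontrivial content comes from the angular derivative), and the bound $\|A_{Q}^{\ast}v\|_{L^{2}}\gtrsim\|r^{-1}v\|_{L^{2}}$ is actually false: a logarithmic cutoff profile equal to $1$ on $[1,R]$ and decaying linearly in $\log r$ on $[R,R^{2}]$ has $\|A_{Q}^{\ast}v\|_{L^{2}}=O(1)$ while $\|r^{-1}v\|_{L^{2}}^{2}\gtrsim\log R$. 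This is exactly why $\dot H_{2}^{1}\hookrightarrow\dot{\mathcal{H}}_{2}^{1}$ is a strict embedding and why the logarithmic weight is built into the norm. The correct tool is the logarithmic Hardy inequality \eqref{eq:LogHardy} with $k=0$, which gives $\|\mathbf{1}_{r\geq1}r^{-1}\langle\log_{+}r\rangle^{-1}v\|_{L^{2}}\lesssim\|\partial_{r}v\|_{L^{2}}+\|\mathbf{1}_{r\sim1}v\|_{L^{2}}$, after which the annulus term is absorbed by the potential term $\|r^{-1}\langle r\rangle^{-1}v\|_{L^{2}}$. With that substitution part (4) goes through unconditionally, as in the paper.
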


In later applications, we will use orthogonality conditions depending
on a large truncation parameter $M$. Thus in the above coercivity
estimates $\aleq_{\psi_{1},\psi_{2}}$ becomes $\aleq_{M}$.

We will later decompose $w$, $w_{1}$, $w_{2}$ as 
\[
w=P+\eps,\quad w_{1}=P_{1}+\eps_{1},\quad w_{2}=P_{2}+\eps_{2},
\]
where $P$, $P_{1}$, $P_{2}$ are some modified profiles, and $\eps$,
$\eps_{1}$, $\eps_{2}$ are the errors. Thus $\eps$, $\eps_{1}$,
$\eps_{2}$ are $0$, $1$, $2$-equivariant functions, respectively.
Although $\eps_{1}$ and $\eps_{2}$ are constructed in a nonlinear
fashion (later called \emph{nonlinear adapted derivatives}), we approximately
have $\eps_{1}\approx L_{Q}\eps$ and $\eps_{2}\approx A_{Q}\eps_{1}$.
We will also use $\eps_{3}=A_{Q}^{\ast}\eps_{2}$. In bootstrap analysis,
we want to control $\|\eps\|_{L^{2}}$, $\|\eps_{1}\|_{L^{2}}$, and
$\|\eps_{3}\|_{L^{2}}$. In view of the above coercivity estimates,
$\|\eps_{1}\|_{L^{2}}$ will control $\|\eps\|_{\dot{\calH}_{0}^{1}}$,
and $\|\eps_{3}\|_{L^{2}}$ will control $\|\eps_{2}\|_{\dot{\calH}_{2}^{1}},\|\eps_{1}\|_{\dot{\calH}_{1}^{2}},\|\eps\|_{\dot{\calH}_{0}^{3}}$.
See Lemma~\ref{lem:NonlinearCoercivity}.

Finally, for technical reasons, we will need an auxiliary norm $\nrm{\cdot}_{X}$
\begin{equation}
\|f\|_{X}\coloneqq\|\langle y\rangle^{-2}\langle\log_{+}y\rangle f\|_{L^{2}}.\label{eq:Def-Xnorm}
\end{equation}
This will be used in the Morawetz correction (Section~\ref{subsec:Energy-estimate}),
e.g.~in the estimate 
\begin{align*}
(i\eps_{2},yQ^{2}\eps_{1})_{r} & \aleq\|\eps_{2}\|_{\dot{\calH}_{2}^{1}}\|\langle y\rangle^{-2}\langle\log_{+}y\rangle\eps_{1}\|_{L^{2}}\sim\|\eps_{3}\|_{L^{2}}\|\eps_{1}\|_{X}.
\end{align*}

\section{\label{sec:Modified-profiles}Modified profiles}

This section is devoted to the construction of modified profiles and
the derivation of a sharp logarithmic correction to the pseudoconformal
blow-up rate, which are one of the novelties of this work.

As we have seen in Section~\ref{subsec:lin-Q}, the information on
the generalized nullspace of $i\calL_{Q}$ suggests a decomposition
of the form 
\[
u(t,r)=\frac{e^{i\gmm(t)}}{\lmb(t)}[P(\cdot;b(t),\eta(t))+\eps(t,\cdot)]\Big(\frac{r}{\lmb(t)}\Big),
\]
where $P(\cdot;0,0)=Q$, $\partial_{b}P\approx-i\frac{y^{2}}{4}Q$,
$\partial_{\eta}P\approx-(m+1)\rho$. Here we focus on the modulated
blow-up profile $P$.

The case considered here ($m=0$) is significantly different from
the case $m\geq1$. When $m\geq1$, the authors in \cite{KimKwon2020arXiv}
constructed pseudoconformal blow-up solutions using the modified profiles
\[
Q_{b}^{(\eta)}(y)=\chi_{B_{0}}(y)Q^{(\eta)}(y)e^{-ib\frac{y^{2}}{4}},
\]
where $Q^{(\eta)}$ is some profile satisfying $Q^{(0)}=Q$ and $\partial_{\eta}Q^{(\eta)}\approx-(m+1)\rho$.
Moreover, $Q^{(\eta)}$ is obtained by solving the \emph{modified
Bogomol'nyi equation} \cite{KimKwon2019arXiv} 
\[
\bfD_{Q^{(\eta)}}Q^{(\eta)}=-\eta\tfrac{y}{2}Q^{(\eta)}
\]
in the region $y\ll|\eta|^{-\frac{1}{2}}$. This profile suggests
the modulation equation of the form 
\begin{equation}
\frac{\lmb_{s}}{\lmb}+b=0,\quad\gmm_{s}\approx(m+1)\eta,\quad b_{s}+b^{2}+\eta^{2}=0,\quad\eta_{s}=0.\label{eq:high-equiv-mod-eqn}
\end{equation}
This nonlinear profile ansatz was a quick and efficient way to derive
the above modulation equation. Moreover, when $m\geq1$, the profile
error $\Psi$ (generated by the truncation $\chi_{B_{0}}$) is sufficiently
small to guarantee pseudoconformal blow-up.

Moreover, the rotational instability for $m\geq1$ can be read off
from \eqref{eq:high-equiv-mod-eqn}. Setting $\eta$ as a fixed small
constant $\eta_{0}$, \eqref{eq:high-equiv-mod-eqn} has solutions
\begin{equation}
\begin{gathered}b(t)=|t|,\quad\lmb(t)=(t^{2}+\eta^{2})^{\frac{1}{2}},\quad\eta(t)=\eta_{0},\\
\gmm(t)=\begin{cases}
0 & \text{if }\eta_{0}=0,\\
\mathrm{sgn}(\eta)(m+1)\tan^{-1}(\tfrac{t}{|\eta|}) & \text{if }\eta_{0}\neq0.
\end{cases}
\end{gathered}
\label{eq:RotationalInstability}
\end{equation}
When $\eta_{0}=0$, the solution blows up in the pseudoconformal regime
and shows no phase rotation. However, when $\eta_{0}\neq0$, regardless
how much small $|\eta_{0}|$ is, the solution is global and shows
an abrupt phase rotation on the short time interval $|t|\aleq|\eta_{0}|$,
by the fixed amount of angle $(m+1)\pi$. In \cite{KimKwon2019arXiv},
an explicit family of solutions for $\eta_{0}\geq0$ was constructed
to establish the (one-sided) rotational instability for $m\geq1$.

Unfortunately when $m=0$, the above nonlinear profile ansatz does
not work; it generates a profile error $\Psi$ of critical size. Hence
we search for a more refined profile. Experiences from other critical
equations such as wave maps, Schrödinger maps, and harmonic map heat
flows \cite{RaphaelRodnianski2012Publ.Math.,MerleRaphaelRodnianski2013InventMath,RaphaelSchweyer2013CPAM}
tell us that there might be a logarithmic correction to the blow-up
rate, driven by the zero \emph{resonance} for the linearized operator
$H_{Q}$.

The authors in \cite{KimKwon2020arXiv} found a remarkable conjugation
identity \eqref{eq:conj-lin}, which bridges \eqref{eq:CSS-cov} to
the above critical equations. This connection is observed when we
proceed to the variable $L_{Q}\eps$ in the linearized equation 
\[
\partial_{t}L_{Q}\eps+iH_{Q}L_{Q}\eps=0,\qquad H_{Q}=A_{Q}^{\ast}A_{Q}.
\]
As mentioned earlier, this $H_{Q}$ is the same as the one appearing
in the above critical equations and has the \emph{zero resonance}
$yQ\notin L^{2}$. This connection motivates us to look at the $w_{1}$-equation,
instead of the original equation for $w$. Moreover, we are able to
extract, \emph{from the $w_{1}$-equation}, logarithmic corrections
to $b_{s}+b^{2}+\eta^{2}=0$, which results in a logarithmic correction
to the pseudoconformal blow-up rate.

Motivated from the previous discussion, we not only track the dynamics
of $w$, but also its covariant higher order variables $w_{1}=\bfD_{w}w$
and $w_{2}=A_{w}w_{1}$. Using the conjugation identities, we derived
evolution equations of $w_{1}$ and $w_{2}$. We view \eqref{eq:CSS-rad-u}
as a system of evolution equations of $w$, $w_{1}$, $w_{2}$ under
the compatibility conditions $w_{1}=\bfD_{w}w$ and $w_{2}=A_{w}w_{1}$.
We are about to construct modified profiles $P$, $P_{1}$, $P_{2}$
for $w$, $w_{1}$, $w_{2}$, respectively.

Of course one can try to set $P_{1}=\bfD_{P}P$ and $P_{2}=A_{P}P_{1}$,
but this choice is nothing but looking at only the $w$-equation.
One of the main novelties here is to construct $P$, $P_{1}$, $P_{2}$
that approximately solve the evolution equations as well as the compatibility
conditions. Here, the point is that we also relax the compatibility
conditions: $\bfD_{P}P\approx P_{1}$ and $A_{P}P_{1}\approx P_{2}$.

In this setting, we have another advantage. It turns out that we do
not need to expand $P$ and $P_{1}$ to very higher orders. In fact,
it suffices to expand $P$ \emph{only} up to linear order and $P_{1}$
up to quadratic order. This is because the degeneracies of the profiles
$P_{1}=O(b)$ and $P_{2}=O(b^{2})$, which ultimately relies on the
facts that $\bfD_{Q}Q=0$ and $A_{Q}L_{Q}$ kills all the elements
of $\{\Lambda Q,iQ,i\tfrac{y^{2}}{4}Q,\rho\}$, as explained in Section
\ref{subsec:Strategy}. As we will apply the energy estimate for the
variable $w_{2}$, $P_{2}$ should be constructed to the highest order
compared to $P$ and $P_{1}$. However, thanks to the degeneracy of
$P_{2}$, it contains only the quadratic and cubic order terms, which
are still quite simple.

Finally, we remark that we are able to observe logarithmic corrections
in the modulation laws from the $w_{1}$-equation. As explained above,
at the linear level, the $w_{1}$-equations solves a similar equation
to the Schrödinger map case. The effects of the logarithmic corrections
can be seen in the quadratic terms of $P_{1}$ and $P_{2}$ expansions.

\subsection{Formal derivation of the profiles}

Our starting points are the evolution equations \eqref{eq:w-eqn-sd},
\eqref{eq:w1-eqn-sd}, and \eqref{eq:w2-eqn-sd} for $w$, $w_{1}=\bfD_{w}w$,
and $w_{2}=A_{w}w_{1}$, derived in Proposition~\ref{prop:CSS-conj-eq}.
After substitutions $w_{1}=\bfD_{w}w$ and $w_{2}=A_{w}w_{1}$, they
are written as 
\begin{gather}
(\partial_{s}-\frac{\lmb_{s}}{\lmb}\Lambda+\gmm_{s}i)w+iL_{w}^{\ast}w_{1}=0,\label{eq:w-eqn-sd-profile}\\
(\partial_{s}-\frac{\lmb_{s}}{\lmb}\Lambda_{-1}+\td{\gmm}_{s}i)w_{1}+iA_{w}^{\ast}w_{2}-\Big(\int_{0}^{y}\Re(\overline{w}w_{1})dy'\Big)iw_{1}=0,\label{eq:w1-eqn-sd-profile}\\
(\rd_{s}-\frac{\lmb_{s}}{\lmb}\Lmb_{-2}+\td{\gmm}_{s}i)w_{2}+iA_{w}A_{w}^{\ast}w_{2}-\Big(\int_{0}^{y}\Re(\br ww_{1})dy'\Big)iw_{2}-i\br ww_{1}^{2}=0,\label{eq:w2-eqn-sd-profile}
\end{gather}
where 
\[
\td{\gmm}_{s}=\gmm_{s}+{\textstyle \int_{0}^{\infty}}\Re(\overline{w}w_{1})dy.
\]
Recall that the role of the phase correction $\gmm_{s}\mapsto\td{\gmm}_{s}$
is to replace the above $\int_{y}^{\infty}$-integral by $\int_{0}^{y}$.
Note that $\int_{y}^{\infty}$ has the technical problem that it cannot
be defined for functions with growing tails, which typically arise
in the Taylor expansion of the profiles. See also Remark~\ref{rem:gamma-to-gamma-tilde}.
Assume the \emph{adiabatic ansatz} 
\[
\frac{\lmb_{s}}{\lmb}+b=0\quad\text{and}\quad\td{\gmm}_{s}=-\eta.
\]

We will construct an approximate solution of the form 
\[
(w,w_{1},w_{2})=(P,P_{1},P_{2}),
\]
to \eqref{eq:w-eqn-sd-profile}--\eqref{eq:w2-eqn-sd-profile} and
the compatibility conditions $w_{1}=\bfD_{w}w$ and $w_{2}=A_{w}w_{1}$.
Here, $P$, $P_{1}$, and $P_{2}$ will be suitable localizations
of 
\begin{equation}
\begin{aligned}\widehat{P} & \coloneqq Q-ib\tfrac{y^{2}}{4}Q-\eta\rho,\\
\widehat{P}_{1} & \coloneqq-(ib+\eta)\tfrac{y}{2}Q+b^{2}T_{2,0},\\
\widehat{P}_{2} & \coloneqq(b^{2}-2ib\eta-\eta^{2})U_{2}+ib^{3}U_{3,0},
\end{aligned}
\label{eq:P-hat-ansatz}
\end{equation}
where $T_{2,0}$, $U_{2}$, $U_{3,0}$ are real-valued. The profiles
$T_{2,0}$, $U_{2}$, $U_{3,0}$, as well as the laws for $b_{s}$
and $\eta_{s}$, are unknowns and will be chosen subsequently to minimize
the profile error.

The profiles up to the first order in $\widehat{P}$, $\widehat{P}_{1}$,
and $\widehat{P}_{2}$ are easily derived from the generalized nullspace
relations and the adiabatic ansatz. Indeed, if we start from $w=Q$,
then $\bfD_{Q}Q=0$ and the compatibility conditions suggest that
zeroth order terms of $w_{1}$ and $w_{2}$ should vanish. Next, from
\eqref{eq:w-eqn-sd-profile} and the adiabatic ansatz $\frac{\lmb_{s}}{\lmb}+b=0$
and $\gmm_{s}\approx\eta$, we are led to 
\[
L_{Q}^{\ast}w_{1}\approx_{1}ib\Lambda Q-\eta Q,
\]
in the sense that both hand sides are equal up to the first order.
This suggests us the choice $w_{1}\approx_{1}-(ib+\eta)\tfrac{y}{2}Q$.
By linearizing the compatibility relation $w_{1}=\bfD_{w}w$, we have
\[
L_{Q}(w-Q)\approx_{1}-(ib+\eta)\tfrac{y}{2}Q,
\]
which motivates the choice $w\approx_{1}Q-ib\tfrac{y^{2}}{4}Q-\eta\rho$.
Finally, $A_{Q}(yQ)=0$ and the compatibility relation $w_{2}=A_{w}w_{1}$
suggest $w_{2}\approx_{1}0$. In summary, we are led to 
\begin{align*}
\widehat{P} & \approx_{1}Q-ib\tfrac{y^{2}}{4}Q-\eta\rho,\\
\widehat{P}_{1} & \approx_{1}-(ib+\eta)\tfrac{y}{2}Q,\\
\widehat{P}_{2} & \approx_{1}0.
\end{align*}

We now search for higher order expansions for $\widehat{P}$, $\widehat{P}_{1}$,
and $\widehat{P}_{2}$. In the following, we will also assume 
\[
|\eta|\leq\frac{b}{|\log b|}\qquad\text{and}\qquad0<b\ll1.
\]
Although our sharp modulation equation will be slightly different
from \eqref{eq:high-equiv-mod-eqn} of the $m\geq1$ case, \eqref{eq:high-equiv-mod-eqn}
still motivates us to assume $|\eta|\ll b$ to guarantee the blow-up.
\begin{rem}
In order to obtain the sharp energy estimate \eqref{eq:Psi2-SharpEnergy}
under $|\eta|\leq\frac{b}{|\log b|}$, it is necessary to expand $\widehat{P}_{2}$
up to $b^{3}$-order terms. Thus one may start from considering a
general expansion 
\begin{align*}
\widehat{P} & =Q-ib\tfrac{y^{2}}{4}Q-\eta\rho+b^{2}\widehat{S}_{2,0}+b\eta\widehat{S}_{1,1}+\eta^{2}\widehat{S}_{0,2}+\cdots,\\
\widehat{P}_{1} & \coloneqq-(ib+\eta)\tfrac{y}{2}Q+b^{2}\widehat{T}_{2,0}+b\eta\widehat{T}_{1,1}+\eta^{2}\widehat{T}_{0,2}+\cdots,\\
\widehat{P}_{2} & \coloneqq b^{2}\widehat{U}_{2,0}+b\eta\widehat{U}_{1,1}+\eta^{2}\widehat{U}_{0,2}+b^{3}\widehat{U}_{3,0},
\end{align*}
for some complex-valued profiles $\widehat{S}_{i,j}$, $\widehat{T}_{i,j}$,
and $\widehat{U}_{i,j}$. Due to \eqref{eq:Psi2-SharpEnergy} and
$|\eta|\leq\frac{b}{|\log b|}$, it is enough to stop at $b^{3}\widehat{U}_{3,0}$;
our main goal is to construct $\widehat{U}_{3,0}$.

In the following, we will use the ansatz \eqref{eq:P-hat-ansatz}
for the simplicity of presentation. On the way, the reader may see
that the linear expansion is enough for $\widehat{P}$, and the expansion
up to the $b^{2}$-term is enough for $\widehat{P}_{1}$. The other
quadratic terms $b\eta\widehat{T}_{1,1}$ and $\eta^{2}\widehat{T}_{0,2}$
are not necessary, due to $|\eta|\leq\frac{b}{|\log b|}$. Moreover,
the coefficients in the ansatz \eqref{eq:P-hat-ansatz} naturally
appear in the derivation. 
\end{rem}

\subsubsection*{Derivation of $U_{2}$ and $T_{2,0}$}

\ 

Here we search for the quadratic terms of the expansions. We look
at the $w_{1}$-equation \eqref{eq:w1-eqn-sd-profile}. At this point,
we assume that $b_{s}$ and $\eta_{s}$ have unknown quadratic terms
in $b$ and $\eta$, though we expect that $b_{s}\approx-b^{2}-\eta^{2}$
and $\eta_{s}\approx0$ from \eqref{eq:high-equiv-mod-eqn}. We collect
the $O(b^{2},b\eta,\eta^{2})$-terms (not including $O(1,b,\eta)$
terms) in the equation \eqref{eq:w1-eqn-sd-profile}: 
\begin{align*}
\partial_{s}w_{1}\quad & \to\quad(-ib_{s}-\eta_{s})(\tfrac{y}{2}Q),\\
b\Lambda_{-1}w_{1}\quad & \to\quad(-ib^{2}-b\eta)\Lambda_{-1}(\tfrac{y}{2}Q),\\
-\eta iw_{1}\quad & \to\quad(-b\eta+i\eta^{2})(\tfrac{y}{2}Q),\\
iA_{w}^{\ast}w_{2}\quad & \to\quad(ib^{2}+2b\eta-i\eta^{2})A_{Q}^{\ast}U_{2},\\
-({\textstyle \int_{0}^{y}}\Re(\overline{w}w_{1})dy')iw_{1}\quad & \to\quad(b\eta-i\eta^{2})(2-\Lambda)(\tfrac{y}{2}Q),
\end{align*}
where in the last one we used 
\begin{equation}
({\textstyle \int_{0}^{y}}\tfrac{y'}{2}Q^{2}dy')\tfrac{y}{2}Q=-A_{\theta}[Q]\tfrac{y}{2}Q=\tfrac{y}{2}Q+(yA_{Q}-y\partial_{y})\tfrac{y}{2}Q=(2-\Lambda)(\tfrac{y}{2}Q).\label{eq:some-computation}
\end{equation}
Summing up, we arrive at 
\[
(-i(b_{s}+b^{2}+\eta^{2})-\eta_{s})(\tfrac{y}{2}Q)+(ib^{2}+2b\eta-i\eta^{2})(A_{Q}^{\ast}U_{2}-\Lambda(\tfrac{y}{2}Q))=0.
\]
Here, the key point is that $\Lambda(\tfrac{y}{2}Q)$ exhibits better
spatial decay (by order $2$) compared to the main term $yQ$, which
is grouped together with the modulation differentials $b_{s}$, $\eta_{s}$.
Roughly speaking, the term with the worst growth $yQ$ is cancelled
by choosing $b_{s}$, $\eta_{s}$ appropriately, whereas we attempt
to introduce profile $U_{2}$ (and also $T_{2,0}$ below) to solve
away the remaining better decaying terms. This is the \emph{tail computation}
due to \cite{RaphaelRodnianski2012Publ.Math.,MerleRaphaelRodnianski2013InventMath,MerleRaphaelRodnianski2015CambJMath}.

This motivates us to formally set 
\[
b_{s}+b^{2}+\eta^{2}=0\quad\text{and}\quad\eta_{s}=0
\]
up to quadratic terms. For the profile $U_{2}$, a naive choice would
be to solve $A_{Q}^{\ast}U_{2}-\Lambda(\tfrac{y}{2}Q)=0$. However,
with this choice we cannot avoid the profile error $\Psi_{2}$ of
critical size. Indeed, solving $A_{Q}^{\ast}U_{2}-\Lambda(\tfrac{y}{2}Q)=0$,
we have $U_{2}\sim1$ near infinity. This lack of decay is due to
the violation of the $L^{2}$-solvability condition $(\Lambda(\tfrac{y}{2}Q),\tfrac{y}{2}Q)_{r}=2\pi\neq0$,
which in turn is due to $yQ\not\in L^{2}$. Continuing the expansion
with this $U_{2}$, we would arrive at $U_{3,0}\sim y^{2}$ near infinity.
In the computation of the profile error $\Psi_{2}$, with any cutoff
at some $y=B$, $\|\Psi_{2}\|_{\dot{\calH}_{2}^{1}}$ would see the
cutoff error of $U_{3,0}$ at $y=B$, which is 
\[
b^{3}\|\chf_{y\sim B}|U_{3,0}|_{-3}\|_{L^{2}}\sim b^{3}\|\chf_{y\sim B}\tfrac{1}{y}\|_{L^{2}}\sim b^{3}.
\]
This error is \emph{of critical size,} in the sense that we would
not be able to make $\|\eps\|_{\dot{\calH}_{0}^{3}}\ll b^{2}$ in
the energy argument because of it. This also explains why we cannot
use the profile ansatz used in the case $m\geq1$.

To overcome this issue, we follow \cite{RaphaelRodnianski2012Publ.Math.}
and use the fact that $\tfrac{y}{2}Q$ is a \emph{resonance} to the
operator $A_{Q}^{\ast}A_{Q}$. From the compatibility condition $A_{w}w_{1}=w_{2}$
(compare $b^{2}$-order terms), we choose $T_{2,0}$ such that 
\[
A_{Q}T_{2,0}=U_{2}.
\]
Thus if $A_{Q}^{\ast}U_{2}=\Lambda(\tfrac{y}{2}Q)$, then $T_{2,0}$
should satisfy $A_{Q}^{\ast}A_{Q}T_{2,0}=\Lambda(\frac{y}{2}Q)$.
Note again that the $L^{2}$-solvability condition \emph{does not}
hold because $\tfrac{y}{2}Q\notin L^{2}$: 
\[
(\Lambda(\tfrac{y}{2}Q),\tfrac{y}{2}Q)_{r}=2\pi\neq0.
\]
As in \cite[p.31 Step 6]{RaphaelRodnianski2012Publ.Math.}, we introduce
\[
c_{b}\coloneqq\frac{(\Lambda(\tfrac{y}{2}Q),\tfrac{y}{2}Q)_{r}}{(\tfrac{y}{2}Q\chi_{B_{0}},\tfrac{y}{2}Q)_{r}}=\frac{2}{|\log b|}+O\Big(\frac{1}{|\log b|^{2}}\Big)
\]
and solve instead\footnote{For interested readers to the case $m\geq1$, we note that the solvability
condition $(\Lambda(\tfrac{y}{2}Q),\tfrac{y}{2}Q)_{r}=0$ holds because
$\tfrac{y}{2}Q\in L^{2}$. Thus one may define $U_{2}$ and $T_{2,0}$
by solving $A_{Q}^{\ast}U_{2}=\Lambda(\tfrac{y}{2}Q)$ and $A_{Q}T_{2,0}=U_{2}$
instead. Note that one can find explicit formulae $U_{2}=-\tfrac{y^{2}}{4}Q$
and $T_{2,0}=-\tfrac{y^{3}}{8}Q$, as motivated from the Taylor expansion
of the pseudoconformal phase $e^{-ib\frac{y^{2}}{4}}$. This leads
to the pseudoconformal blow-up rate.} 
\[
A_{Q}^{\ast}A_{Q}T_{2,0}=\Lambda(\tfrac{y}{2}Q)-c_{b}\tfrac{y}{2}Q\chi_{B_{0}}\eqqcolon g_{2}.
\]
Because $g_{2}$ is now orthogonal to $\frac{y}{2}Q$, it can be shown
(see Lemma \ref{lem:profiles} below) that $T_{2,0}$ has a logarithmically
improved decay at $y\sim B_{0}$ compared to the formal diverging
kernel $\Gamma\sim y$ of $H_{Q}=A_{Q}^{\ast}A_{Q}$. For the choice
of the radius $B_{0}$, see Remark~\ref{rem:profile-loc}. We remark
that the power $-\frac{1}{2}$ of $B_{0}=b^{-\frac{1}{2}}$ is tied
to the sharp blow-up rate.

Therefore, we will choose $U_{2}$ and $T_{2,0}$ such that 
\begin{align}
A_{Q}^{\ast}U_{2} & =\Lambda(\tfrac{y}{2}Q)-c_{b}\tfrac{y}{2}Q\chi_{B_{0}}=g_{2},\label{eq:U2-property}\\
A_{Q}T_{2,0} & =U_{2}.\label{eq:T20-property}
\end{align}
With this $U_{2}$, it turns out that one has a logarithmic gain $\frac{1}{|\log b|}$
in the region $y\ageq B_{0}$, so the previous issue is overcome.
On the other hand, the equation \eqref{eq:w1-eqn-sd-profile} is solved
up to quadratic terms with the additional error 
\[
(ib^{2}+2b\eta-i\eta^{2})c_{b}\tfrac{y}{2}Q\chi_{B_{0}}.
\]
This will give rise to additional terms of order $O(\frac{b^{2}}{|\log b|},\frac{b\eta}{|\log b|},\frac{\eta^{2}}{|\log b|})$
in the equations for $b_{s}$ and $\eta_{s}$, which in turn cause
the logarithmic correction to the blow-up rate. As a result, we get
the formal parameter law: 
\begin{equation}
\frac{\lmb_{s}}{\lmb}+b=0,\quad\td{\gmm}_{s}=-\eta,\quad b_{s}+b^{2}+\eta^{2}+c_{b}(b^{2}-\eta^{2})=0,\quad\eta_{s}+2c_{b}b\eta=0,\label{eq:FormalParameterLaw}
\end{equation}
with $c_{b}\approx\frac{2}{|\log b|}$ defined above. 
\begin{rem}[Full quadratic expansion for $\widehat{P}_{1}$]
By the same way, but using $A_{w}^{\ast}A_{w}w_{1}$ instead of $A_{w}^{\ast}w_{2}$
in \eqref{eq:w1-eqn-sd-profile} and collecting the quadratic terms
$O(b^{2},b\eta,\eta^{2})$, one can derive the full quadratic expansion
of $\widehat{P}_{1}$: 
\[
\widehat{P}_{1}=-(ib+\eta)\tfrac{y}{2}Q+(b^{2}-2ib\eta-\eta^{2})T_{2,0}+(ib\eta+\eta^{2})\td T_{2},
\]
where $T_{2,0}$ is as above and $\td T_{2}$ solves $A_{Q}\td T_{2}=A_{\theta}[Q,\rho]Q$.
As mentioned in the previous remark, $O(b\eta)$ and $O(\eta^{2})$
terms are not necessary in the derivation of $U_{3,0}$ and later
analysis. 
\end{rem}

\subsubsection*{Derivation of $U_{3,0}$}

\ 

We finally search for the $b^{3}$ term of the $\widehat{P}_{2}$-expansion.
We again look at the $w_{1}$-equation \eqref{eq:w1-eqn-sd-profile}.
We collect $b^{3}$-terms of the error. 
\begin{align*}
\partial_{s}w_{1}\quad & \to\quad-2b^{3}T_{2,0},\\
b\Lambda_{-1}w_{1}\quad & \to\quad b^{3}\Lambda_{-1}T_{2,0},\\
-\eta iw_{1}\quad & \to\quad0,\\
iA_{w}^{\ast}w_{2}\quad & \to\quad-b^{3}A_{Q}^{\ast}U_{3,0},\\
-({\textstyle \int_{0}^{y}}\Re(\overline{w}w_{1})dy')iw_{1}\quad & \to\quad-b^{3}({\textstyle \int_{0}^{y}}(QT_{2,0}+\tfrac{(y')^{3}}{8}Q^{2})y'dy')\tfrac{y}{2}Q.
\end{align*}
Summing these up, we are motivated to choose $U_{3,0}$ by solving
\begin{equation}
A_{Q}^{\ast}U_{3,0}=\Lambda_{1}T_{2,0}-({\textstyle \int_{0}^{y}}(QT_{2,0}+\tfrac{(y')^{3}}{8}Q^{2})y'dy')\tfrac{y}{2}Q\eqqcolon g_{3,0}.\label{eq:U30-property}
\end{equation}
Taking $A_{Q}$, we obtain the identity for later use: 
\begin{equation}
A_{Q}g_{3,0}=A_{Q}\Lambda_{1}T_{2,0}-(QT_{2,0}+\tfrac{y^{3}}{8}Q^{2})(\tfrac{y}{2}Q).\label{eq:AQg30}
\end{equation}

\subsection{Estimates of profiles in Taylor expansions}

In the previous subsection, we discussed how we choose the higher
order profiles $T_{2,0},U_{2},U_{3,0}$ used in the definitions $\widehat{P}_{1}$
and $\widehat{P}_{2}$. Here we construct these profiles satisfying
\eqref{eq:U2-property}, \eqref{eq:T20-property}, and \eqref{eq:U30-property},
using the outgoing Green's function discussed in Section~\ref{subsec:Outgoing-Green's-function}. 
\begin{lem}[Profiles $T_{2,0},U_{2},U_{3,0}$]
\label{lem:profiles} For any sufficiently small $b>0$, define smooth
functions on $(0,\infty)$ by 
\begin{align*}
T_{2,0}(y;b) & \coloneqq{}^{(\out)}H_{Q}^{-1}g_{2},\\
U_{2}(y;b) & \coloneqq A_{Q}T_{2,0}=-(A_{Q}\Gamma){\textstyle \int_{0}^{y}}g_{2}Jy'dy'=(A_{Q}\Gamma){\textstyle \int_{y}^{\infty}}g_{2}Jy'dy',\\
U_{3,0}(y;b) & \coloneqq A_{Q}{}^{(\out)}H_{Q}^{-1}g_{3,0}=-(A_{Q}\Gamma){\textstyle \int_{0}^{y}}g_{3,0}Jy'dy'.
\end{align*}
where 
\begin{align*}
g_{2}(y;b) & =\Lambda(\tfrac{y}{2}Q)-c_{b}\tfrac{y}{2}Q\chi_{B_{0}},\\
g_{3,0}(y;b) & =\Lambda_{1}T_{2,0}-({\textstyle \int_{0}^{y}}(QT_{2,0}+\tfrac{(y')^{3}}{8}Q^{2})y'dy')(\tfrac{y}{2}Q),\\
c_{b} & =\tfrac{(\Lambda(yQ),yQ)_{r}}{(yQ\chi_{B_{0}},yQ)_{r}}=\tfrac{2}{|\log b|}+O(\tfrac{1}{|\log b|^{2}}).
\end{align*}
Then, for any nonnegative integer $k$, the following properties hold:
\begin{enumerate}
\item (Rough pointwise estimates, only sharp in the compact regions $y\sim1$)
We have 
\begin{equation}
|U_{2}|_{k}+\tfrac{1}{y}|T_{2,0}|_{k}+\tfrac{1}{y^{2}}|U_{3,0}|_{k}\aleq_{k}1.\label{eq:RoughPointwise}
\end{equation}
\item (Sharp pointwise estimates) Recall $B_{0}=b^{-1/2}$.
\begin{enumerate}
\item In the region $1\leq y\leq B_{0}$, we have 
\[
\begin{aligned}|U_{2}|_{k}+\tfrac{1}{y}|T_{2,0}|_{k}+\tfrac{1}{y^{2}}|U_{3,0}|_{k} & \aleq_{k}\tfrac{1}{|\log b|}|\log(b^{\frac{1}{2}}y)|,\\
|b\partial_{b}U_{2}|_{k}+\tfrac{1}{y}|b\partial_{b}T_{2,0}|_{k}+\tfrac{1}{y^{2}}|b\partial_{b}U_{3,0}|_{k} & \aleq_{k}\tfrac{1}{|\log b|^{2}}|\log(b^{\frac{1}{2}}y)|.
\end{aligned}
\]
\item In the region $B_{0}\leq y\leq2B_{0}$, we have 
\[
\begin{aligned}|U_{2}|_{k}+\tfrac{1}{y}|T_{2,0}|_{k}+\tfrac{1}{y^{2}}|U_{3,0}|_{k} & \aleq_{k}\tfrac{1}{|\log b|},\\
|b\partial_{b}U_{2}|_{k}+\tfrac{1}{y}|b\partial_{b}T_{2,0}|_{k}+\tfrac{1}{y^{2}}|b\partial_{b}U_{3,0}|_{k} & \aleq_{k}\tfrac{1}{|\log b|}.
\end{aligned}
\]
\item In the region $y\leq1$, we have 
\[
\tfrac{1}{y^{2}}|U_{2}|_{k}+\tfrac{1}{y^{3}}|T_{2,0}|_{k}+\tfrac{1}{y^{4}}|U_{3,0}|_{k}\aleq_{k}1.
\]
Moreover, the profile $T_{2,0}$ has smooth $1$-equivariant extension
on $\bbR^{2}$; and the profiles $U_{2},U_{3,0}$ have smooth $2$-equivariant
extension on $\bbR^{2}$. 
\end{enumerate}
\end{enumerate}
\end{lem}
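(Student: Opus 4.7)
The plan is to build the three profiles sequentially, $T_{2,0} \to U_2 \to U_{3,0}$, using the explicit Green's function representations for $H_Q = A_Q^{\ast} A_Q$ and $A_Q$ from Propositions~\ref{prop:HQ-green} and \ref{prop:AQ-green}. Concretely, applying $^{(out)}H_Q^{-1}$ to $g_2$ via the kernel of Proposition~\ref{prop:HQ-green} gives
\[
T_{2,0}(y) = J(y)\int_0^y \Gamma(y')\,g_2(y')\,y'\,dy' - \Gamma(y)\int_0^y J(y')\,g_2(y')\,y'\,dy',
\]
and then $U_2 = A_Q T_{2,0} = -(A_Q\Gamma)(y)\int_0^y g_2(y')\,J(y')\,y'\,dy'$ because $A_Q J = 0$; an analogous formula gives $U_{3,0}$ in terms of $g_{3,0}$. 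The second representation of $U_2$ as $(A_Q\Gamma)\int_y^\infty g_2 J y'\,dy'$ follows from the orthogonality relation below.

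The first step is the asymptotic expansion of $c_b$. Using $Q=\sqrt{8}/(1+y^2)$, the numerator $(\Lambda(yQ), yQ)_r$ is a fixed nonzero constant (which I compute to be $8\pi$), while splitting the denominator at $y=1$ gives $(yQ\chi_{B_0}, yQ)_r = 4\pi\log B_0 + O(1) = 2\pi|\log b| + O(1)$, yielding $c_b = 2/|\log b| + O(|\log b|^{-2})$. The crucial consequence, built into the definition of $c_b$, is the solvability relation
\begin{equation*}
\int_0^\infty g_2(y')\,J(y')\,y'\,dy' = 0,
\end{equation*}
so that $\int_0^y g_2 J y'\,dy' = -\int_y^\infty g_2 J y'\,dy'$.

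The second step converts the above into pointwise bounds. For $y \geq 2B_0$, we have $g_2 = \Lambda(\tfrac{y}{2}Q) = O(\langle y\rangle^{-3})$, so the tail integral is $O(y^{-2})$; combined with $A_Q \Gamma = O(1)$ this gives the sharp $1/|\log b|$ decay of $U_2$ in the outer region and, after one more integration, the $O(y)$, $O(y^2)$ bounds on $T_{2,0}$, $U_{3,0}$ of \eqref{eq:RoughPointwise}. For $1 \leq y \leq B_0$, I split $g_2$ into the regular piece $\Lambda(\tfrac{y}{2}Q)$, whose contribution to $\int_0^y g_2 J y'\,dy'$ converges to a bounded limit, and the cutoff piece $-c_b \tfrac{y}{2}Q\chi_{B_0}$, which contributes $c_b \int_0^y (y'Q)^2 y'\,dy' \sim c_b \log y \sim |\log(b^{1/2}y)|/|\log b|$; this produces the advertised logarithmic factor. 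The inner region $y \leq 1$ uses the Taylor expansions $g_2(y) = O(y)$ and the bounds $J(y) = O(y)$, $\Gamma(y) = O(y^{-1})$ from Proposition~\ref{prop:HQ-green}, giving $T_{2,0} = O(y^3)$, $U_2 = O(y^2)$; together with the fact that $g_2$, $J$, $\Gamma$ extend smoothly to $\bbR^2$ in the appropriate equivariance classes, this yields the asserted smooth equivariant extension at the origin. The step from $T_{2,0}$ to $U_{3,0}$ repeats the same pattern using the explicit formula \eqref{eq:AQg30} and the bounds just obtained on $T_{2,0}$.

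The $b\partial_b$-estimates come from differentiating the explicit formulas: $b\partial_b c_b = O(|\log b|^{-2})$ and $b\partial_b \chi_{B_0}$ is supported in the shell $y \sim B_0$ with $L^\infty$ bound $O(1)$, so the differentiated profiles obey the same structure with the gain $|\log b|^{-1}$. The derivative bounds $|\cdot|_k$ follow inductively by commuting $y\partial_y$ past the Green's function representation and using the pointwise bounds $|J|_k, |\Gamma|_k$ from Proposition~\ref{prop:HQ-green}. The main technical obstacle I anticipate is the sharp logarithmic bookkeeping in the intermediate range $1 \leq y \leq B_0$: one must track precisely how the cutoff in $c_b$ combines with the slowly decaying resonance $J=yQ$ to produce $|\log(b^{1/2}y)|/|\log b|$ rather than a cruder $O(|\log b|^{-1})$ estimate, and verify that differentiating the cutoff in the $b\partial_b$ calculation does not destroy this cancellation.
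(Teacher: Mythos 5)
Your overall strategy coincides with the paper's: represent $T_{2,0}$, $U_{2}$, $U_{3,0}$ through the outgoing Green's functions of Propositions~\ref{prop:HQ-green} and~\ref{prop:AQ-green}, exploit the solvability relation $(g_{2},yQ)_{r}=0$ forced by the definition of $c_{b}$, feed the resulting $T_{2,0}$ bounds into $g_{3,0}$, and differentiate the explicit formulas in $b$. The one place where your write-up would not deliver the stated estimate is the intermediate region $1\leq y\leq B_{0}$. You correctly record the flip $\int_{0}^{y}g_{2}Jy'\,dy'=-\int_{y}^{\infty}g_{2}Jy'\,dy'$ as ``the crucial consequence,'' but the computation you then actually describe --- ``the regular piece converges to a bounded limit, the cutoff piece contributes $c_{b}\log y$'' --- abandons it. The limit of $\int_{0}^{y}\Lambda(\tfrac{y'}{2}Q)Jy'\,dy'$ is a fixed \emph{nonzero} constant, and adding to it a cutoff contribution of size $c_{b}\log y$ (which is $\approx 1$, not $\approx 0$, at $y\sim B_{0}$) gives a quantity of size $O(1)$ there, not $O(1/|\log b|)$. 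The sharp bound emerges only because that constant is cancelled \emph{exactly} by the full cutoff integral --- this is precisely the definition of $c_{b}$ --- leaving the tail $c_{b}\int_{y}^{2B_{0}}(\tfrac{y'}{2}Q)^{2}y'\,dy'\sim\langle\log(b^{\frac{1}{2}}y)\rangle/|\log b|$. Note also that $c_{b}\log y$ and $|\log(b^{\frac{1}{2}}y)|/|\log b|$ are \emph{not} comparable; they differ by $O(1)$ at $y\sim B_{0}$. So you must actually run the estimate on the flipped integral $\int_{y}^{\infty}$, as the paper does (and does again, after differentiating the orthogonality relation, to treat $\partial_{b}T_{2,0}$); with that correction the argument closes.

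Three smaller points. First, your arithmetic for $c_{b}$ is internally inconsistent: a numerator of $8\pi$ over your stated denominator $4\pi\log B_{0}=2\pi|\log b|$ gives $4/|\log b|$, not $2/|\log b|$, and the paper's \eqref{eq:yQyQ} would give $16\pi\log B_{0}$ for the denominator. Only the order $|\log b|^{-1}$ matters for the pointwise bounds in this lemma, but the constant is asserted in the statement and feeds into the blow-up rate, so it should be pinned down carefully. Second, $\Gamma$ does \emph{not} extend smoothly to the origin (it behaves like $y^{-1}$ there); smoothness of $T_{2,0}$ at $y=0$ comes from the cancellation in the combination $J\int_{0}^{y}\Gamma g_{2}y'\,dy'-\Gamma\int_{0}^{y}Jg_{2}y'\,dy'$ together with the vanishing $g_{2}=O(y)$, not from regularity of $\Gamma$ itself. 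Third, for $U_{3,0}$ the phrase ``the same pattern'' is slightly misleading: no orthogonality flip is available (or needed) for $g_{3,0}$; one estimates $\int_{0}^{y}g_{3,0}Jy'\,dy'$ directly, the logarithmic gain being inherited from the $T_{2,0}$ bounds entering $g_{3,0}$, and the relevant input is the defining formula for $g_{3,0}$ rather than \eqref{eq:AQg30}.
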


\begin{rem}
An important point is that one has \emph{logarithmic gain} in the
region $y\sim B_{0}$. In the region $y\aleq1$, we do not have any
logarithmic gain. 
\end{rem}

\begin{rem}
The rough pointwise estimates are sharp only in the region $y\sim1$,
and not sharp in far regions $y\ageq B_{0}$. Thus rough pointwise
estimates will be effective when the main contributions to errors
come from the compact region $y\sim1$. Of course, the rough pointwise
estimates are easy to implement. 
\end{rem}

\begin{proof}
Bounds of $U_{2}$ are immediate from the bounds of $T_{2,0}$. Henceforth,
we focus on $T_{2,0}$ and $U_{3,0}$.

For $T_{2,0}$, thanks to the \emph{cancellation property} near the
infinity 
\[
\chf_{[1,\infty)}|\Lambda(yQ)|\aleq\chf_{[1,\infty)}y^{-3},
\]
$g_{2}$ satisfies (use $\partial_{b}c_{b}\aleq\frac{1}{b|\log b|^{2}}$
for $\partial_{b}g_{2}$) 
\begin{align}
|g_{2}|_{k} & \aleq_{k}\chf_{(0,1]}y+\chf_{[1,2B_{0}]}\tfrac{1}{|\log b|y}+\chf_{[2B_{0},\infty)}\tfrac{1}{y^{3}},\label{eq:g2-sharp}\\
|\partial_{b}g_{2}|_{k} & \aleq_{k}\tfrac{1}{b|\log b|}(\chf_{(0,1]}\tfrac{y}{|\log b|}+\chf_{[1,B_{0}]}\tfrac{1}{|\log b|y}+\chf_{[B_{0},2B_{0}]}\tfrac{1}{y}).\label{eq:db-g2-sharp}
\end{align}
In particular, by Proposition~\ref{prop:HQ-green}, it easily follows
that 
\[
\chf_{(0,1]}\abs{T_{2,0}}_{k}\aleq_{k}y^{3},\qquad\chf_{(0,1]}\abs{\rd_{b}T_{2,0}}_{k}\aleq_{k}\tfrac{1}{b\abs{\log b}^{2}}y^{3}.
\]
Because $g_{2}$ satisfies the solvability condition $(g_{2},yQ)_{r}=0$
(thus by differentiating it $(\partial_{b}g_{2},yQ)_{r}=0$), we can
rewrite (see Proposition~\ref{prop:HQ-green}) 
\begin{align*}
T_{2,0} & =J{\textstyle \int_{0}^{y}}g_{2}\Gamma y'dy'+\Gamma{\textstyle \int_{y}^{\infty}}g_{2}Jy'dy',\\
\partial_{b}T_{2,0} & =J{\textstyle \int_{0}^{y}}\partial_{b}g_{2}\Gamma y'dy'+\Gamma{\textstyle \int_{y}^{\infty}}\partial_{b}g_{2}Jy'dy'.
\end{align*}
Substituting the pointwise estimates of $g_{2}$ shows the bounds
of $T_{2,0}$: 
\begin{align}
\chf_{[1,\infty)}|T_{2,0}|_{k} & \aleq_{k}\chf_{[1,2B_{0}]}\tfrac{1}{|\log b|}y\langle\log(b^{\frac{1}{2}}y)\rangle+\chf_{[2B_{0},\infty)}\tfrac{1}{y}(\tfrac{1}{b|\log b|}+\log y),\label{eq:T20-lemma-temp}\\
\chf_{[1,\infty)}|\partial_{b}T_{2,0}|_{k} & \aleq_{k}\chf_{[1,2B_{0}]}\tfrac{1}{b|\log b|^{2}}y\langle\log(b^{\frac{1}{2}}y)\rangle+\chf_{[B_{0},\infty)}\tfrac{1}{b^{2}|\log b|}\tfrac{1}{y}.\nonumber 
\end{align}
From these estimates, the sharp pointwise estimates for $T_{2,0}$
follow.

Finally, we bound $U_{3,0}$. We start from estimating $g_{3,0}$.
By the nonsharp bounds 
\[
\abs{QT_{2,0}}_{k}\aleq_{k}\tfrac{y^{3}}{1+y^{4}},\qquad\abs{Q\rd_{b}T_{2,0}}_{k}\aleq_{k}\chf_{(0,1]}\tfrac{1}{b\abs{\log b}^{2}}y^{3}+\chf_{[1,\infty)}\tfrac{1}{b\abs{\log b}}\tfrac{1}{y},
\]
we obtain 
\begin{align*}
\abs{g_{3,0}-\Lambda_{1}T_{2,0}}_{k} & \aleq_{k}\chf_{(0,1]}y^{6}+\chf_{[1,\infty)}\\
\abs{\rd_{b}(g_{3,0}-\Lambda_{1}T_{2,0})}_{k} & \aleq_{k}\chf_{(0,1]}\tfrac{1}{b\abs{\log b}^{2}}y^{6}+\chf_{[1,\infty)}\tfrac{1}{b\abs{\log b}}.
\end{align*}
Hence, using the sharp $T_{2,0}$-estimates for $\Lambda_{1}T_{2,0}$,
it follows that 
\begin{align}
\abs{g_{3,0}}_{k} & \aleq_{k}\chf_{(0,1]}y^{3}+\chf_{[1,2B_{0}]}\tfrac{1}{|\log b|}y\brk{\log(b^{\frac{1}{2}}y)}+\chf_{[2B_{0},\infty)}\tfrac{1}{y}(\tfrac{1}{b|\log b|}+y),\label{eq:g30-sharp}\\
\abs{\partial_{b}g_{3,0}}_{k} & \aleq_{k}\chf_{(0,1]}\tfrac{1}{b\abs{\log b}^{2}}y^{3}+\chf_{[1,2B_{0}]}\tfrac{1}{b|\log b|^{2}}y\langle\log(b^{\frac{1}{2}}y)\rangle\label{eq:db-g30-sharp}\\
 & \peq+\chf_{[B_{0},\infty)}\tfrac{1}{b\abs{\log b}}\tfrac{1}{y}\left(\tfrac{1}{b}+y\right).\nonumber 
\end{align}
Substituting these bounds to 
\[
U_{3,0}=-(A_{Q}\Gamma){\textstyle \int_{0}^{y}}g_{3,0}Jy'dy',\qquad\partial_{b}U_{3,0}=-(A_{Q}\Gamma){\textstyle \int_{0}^{y}}\partial_{b}g_{3,0}Jy'dy',
\]
and using $|A_{Q}\Gamma|_{k}\aleq_{k}\chf_{(0,1]}\tfrac{1}{y^{2}}+\chf_{[1,\infty)}$,
we have 
\begin{align*}
\abs{U_{3,0}}_{k} & \aleq_{k}\chf_{(0,1]}y^{4}+\chf_{[1,2B_{0}]}\tfrac{1}{\abs{\log b}}y^{2}\brk{\log(b^{\frac{1}{2}}y)}\\
 & \peq+\chf_{[2B_{0},\infty)}(\tfrac{1}{b\abs{\log b}}\brk{\log(b^{\frac{1}{2}}y)}+y)\\
\abs{\rd_{b}U_{3,0}}_{k} & \aleq_{k}\chf_{(0,1]}\tfrac{1}{b\abs{\log b}}y^{4}+\chf_{[1,2B_{0}]}\tfrac{1}{b\abs{\log b}^{2}}y^{2}\brk{\log(b^{\frac{1}{2}}y)}\\
 & \peq+\chf_{[B_{0},\infty)}\tfrac{1}{b\abs{\log b}}(\tfrac{1}{b}\brk{\log(b^{-\frac{1}{2}}y)}+y).
\end{align*}
Thus the $U_{3,0}$ estimate follows.

We finally note that the smoothness (analyticity) of the profiles
at the origin follow from the explicit formulae of the involved functions.
This completes the proof. \qedhere 
\end{proof}

\subsection{Modified profiles}

We are now ready to define the modified profiles $P$, $P_{1}$, and
$P_{2}$ by adding suitable truncations. Then we will show that $P$,
$P_{1}$, and $P_{2}$ solve the evolution equations \eqref{eq:w-eqn-sd-profile},
\eqref{eq:w1-eqn-sd-profile}, \eqref{eq:w2-eqn-sd-profile} under
the formal parameter evolution laws \eqref{eq:FormalParameterLaw},
and the compatibility conditions $P_{1}\approx\bfD_{P}P$ and $P_{2}\approx A_{P}P_{1}$
up to admissible errors.

Recall the unlocalized modified profiles 
\begin{align*}
\widehat{P} & =Q-ib\tfrac{y^{2}}{4}Q-\eta\rho,\\
\widehat{P}_{1} & =-(ib+\eta)\tfrac{y}{2}Q+b^{2}T_{2,0},\\
\widehat{P}_{2} & =(b^{2}-2ib\eta-\eta^{2})U_{2}+ib^{3}U_{3,0}.
\end{align*}
We define the \emph{localized modified profiles} with $B_{0}=b^{-\frac{1}{2}}$
and $B_{1}=b^{-\frac{1}{2}}|\log b|$ by 
\begin{align*}
P & \coloneqq Q+\chi_{B_{1}}\{-ib\tfrac{y^{2}}{4}Q-\eta\rho\}\\
P_{1} & \coloneqq\chi_{B_{1}}\{-(ib+\eta)\tfrac{y}{2}Q\}+\chi_{B_{0}}\{b^{2}T_{2,0}\},\\
P_{2} & \coloneqq\chi_{B_{0}}\{(b^{2}-2ib\eta-\eta^{2})U_{2}+ib^{3}U_{3,0}\}.
\end{align*}
We truncated linear terms at $B_{1}$, but higher order terms at $B_{0}$.
It is crucial to take $B_{1}\gg B_{0}$; see Remark~\ref{rem:profile-loc}
below for the motivation. To incorporate the logarithmic corrections
to the modulation equations, we introduce 
\begin{align*}
\Mod & \coloneqq(\frac{\lmb_{s}}{\lmb}+b,\gmm_{s}-\eta,b_{s}+b^{2}+\eta^{2},\eta_{s})^{t},\\
\td{\Mod} & \coloneqq(\frac{\lmb_{s}}{\lmb}+b,\td{\gmm}_{s}+\eta,b_{s}+b^{2}+\eta^{2}+c_{b}(b^{2}-\eta^{2}),\eta_{s}+2c_{b}b\eta)^{t},\\
\mathbf{v}_{k} & \coloneqq(\Lambda_{-k}P_{k},-iP_{k},-\partial_{b}P_{k},-\partial_{\eta}P_{k})^{t},\qquad\forall k\in\{0,1,2\}.
\end{align*}
We will write $\bfv=\bfv_{0}$ and $P=P_{0}$ in short.
\begin{prop}[Modified profile]
\label{prop:ModifiedProfile}Assume the following range of $b$ and
$\eta$: 
\[
|\eta|\leq\frac{b}{|\log b|}\quad\text{and}\quad0<b<b^{\ast}.
\]
If $b^{\ast}>0$ is sufficiently small, then we have the following.
\begin{enumerate}
\item (Estimates for modulation vectors) For $\mathbf{v}=\mathbf{v}_{0}$,
we have
\begin{equation}
\begin{aligned}\chf_{(0,B_{0}/2]}(|\Lambda P-\Lambda Q|+|iP-iQ|) & \aleq b,\\
\chf_{(0,B_{0}/2]}(|\partial_{b}P+i\tfrac{y^{2}}{4}Q|+|\partial_{\eta}P+\rho|) & =0.
\end{aligned}
\label{eq:v-estimate}
\end{equation}
For $\mathbf{v}_{1}$, we have $b$-degeneracy for scalings/phase;
for some constant $C>0$, (recall the $X$-norm \eqref{eq:Def-Xnorm})
we have 
\begin{equation}
\begin{aligned}\|\Lambda_{-1}P_{1}\|_{X}+\|iP_{1}\|_{X} & \aleq b,\\
\|\partial_{b}P_{1}+\chi_{B_{1}}i\tfrac{y}{2}Q\|_{X}+\|\partial_{\eta}P_{1}+\chi_{B_{1}}\tfrac{y}{2}Q\|_{X} & \aleq b|\log b|^{C}.
\end{aligned}
\label{eq:v1-estimate}
\end{equation}
For $\mathbf{v}_{2}$, we have full degeneracy 
\begin{equation}
\begin{aligned}\|\Lambda_{-2}P_{2}\|_{\dot{\calH}_{2}^{1}}+\|iP_{2}\|_{\dot{\calH}_{2}^{1}} & \aleq b^{2},\\
\|\partial_{b}P_{2}\|_{\dot{\calH}_{2}^{1}}+\|\partial_{\eta}P_{2}\|_{\dot{\calH}_{2}^{1}} & \aleq b.
\end{aligned}
\label{eq:v2-estimate}
\end{equation}
\item (Compatibiliity relations of $P,P_{1},P_{2}$) We have 
\begin{align}
\|\bfD_{P}P-P_{1}\|_{L^{2}} & \aleq b,\label{eq:Comparison-P-P1-H1}\\
\|\bfD_{P}P-P_{1}\|_{\dot{\calH}_{1}^{2}} & \aleq b^{2},\label{eq:Comparison-P-P1-H3}\\
\|A_{P}P_{1}-P_{2}\|_{\dot{\calH}_{2}^{1}} & \aleq\tfrac{b^{2}}{|\log b|}.\label{eq:Comparison-P1-P2-H3}
\end{align}
\item (Equation for $P$) We can write 
\begin{equation}
(\partial_{s}-\frac{\lmb_{s}}{\lmb}\Lambda+\gmm_{s}i)P+iL_{P}^{\ast}P_{1}=-\Mod\cdot\mathbf{v}+i\Psi\label{eq:P-equation}
\end{equation}
such that
\begin{align}
\chf_{(0,B_{0}/2]}|\Psi| & \aleq b^{2}\abs{\log b}.\label{eq:Psi-RoughBound}
\end{align}
\item (Equation for $P_{1}$) We can write 
\begin{equation}
(\partial_{s}-\frac{\lmb_{s}}{\lmb}\Lambda_{-1}+\td{\gmm}_{s}i)P_{1}+iA_{P}^{\ast}P_{2}-\Big(\int_{0}^{y}\Re(\overline{P}P_{1})dy'\Big)iP_{1}=-\td{\Mod}\cdot\mathbf{v}_{1}+i\Psi_{1}\label{eq:P1-equation}
\end{equation}
such that we have 
\begin{align}
\|\Psi_{1}\|_{X} & \aleq b^{3}|\log b|^{C}\label{eq:Psi1-LocalEnergy}
\end{align}
for some constant $C>0$.
\item (Equation for $P_{2}$) We can write 
\begin{equation}
\begin{aligned} & (\partial_{s}-\frac{\lmb_{s}}{\lmb}\Lambda_{-2}+\td{\gmm}_{s}i)P_{2}+iA_{P}A_{P}^{\ast}P_{2}-\Big(\int_{0}^{y}\Re(\overline{P}P_{1})dy'\Big)iP_{2}-i\overline{P}(P_{1})^{2}\\
 & =-\td{\Mod}\cdot\mathbf{v}_{2}+i\Psi_{2}
\end{aligned}
\label{eq:P2-equation}
\end{equation}
such that we have a sharp $\dot{\calH}_{2}^{1}$-estimate 
\begin{equation}
\|\Psi_{2}\|_{\dot{\calH}_{2}^{1}}\aleq\frac{b^{3}}{|\log b|}.\label{eq:Psi2-SharpEnergy}
\end{equation}
\end{enumerate}
\end{prop}

\begin{rem}
\label{rem:sharp-error}We make the general remark that, in order
to close the energy estimate in the main bootstrap argument in the
following section, the second line of \eqref{eq:v2-estimate} needs
to be sharp even up to the power of $\abs{\log b}$.

\eqref{eq:Psi2-SharpEnergy} seems to have a very little room. This
can be explained by following the blow-up analysis in the next section.
In the energy estimate, the size of \eqref{eq:Psi2-SharpEnergy} limits
the size of bootstrap assumption on $\|\eps_{3}\|_{L^{2}}$, which
is a $\dot{H}_{0}^{3}$-like quantity of $\eps$, and the size of
$\|\eps_{3}\|_{L^{2}}$ should be sufficiently small to justify the
sharp modulation equations for $b$ and $\eta$ (Lemma~\ref{lem:RefinedModulationEstimates}).
It seems that we have a room of only a small power of $|\log b|$
for \eqref{eq:Psi2-SharpEnergy}.

For the remaining error estimates at the same level, we have more
room; for instance \eqref{eq:Comparison-P1-P2-H3} only needs to be
of size $o(b^{2})$ as $b\to0$.
\end{rem}

\begin{rem}
\label{rem:profile-loc} We note that the larger localization scale
$B_{1}=b^{-\frac{1}{2}}\abs{\log b}$ for the first-order profiles
is needed for the localization errors in \eqref{eq:Comparison-P-P1-H3}
and \eqref{eq:Comparison-P1-P2-H3}; actually, in view of Remark~\ref{rem:sharp-error},
truncating at $y\aeq b^{-\frac{1}{2}}\abs{\log b}^{\alp}$ for any
$\alp>0$ is enough.

All the localization scales in the definition of $P,P_{1},P_{2}$
(i.e., $B_{0}$ and $B_{1}$) should be $b^{-\frac{1}{2}}$ up to
some logarithmic powers. For example, if one uses a smaller radius
$B'=b^{-\alpha}$ for some $0<\alpha<\frac{1}{2}$, then the profile
error $\Psi_{2}$ arsing from applying the cutoff $\chi_{B'}$ to
$U_{3,0}$ cannot satisfy \eqref{eq:Psi2-SharpEnergy}. On the other
hand, if one uses a larger scale $B'=b^{-\alpha}$ for some $\alpha>\frac{1}{2}$,
then the cutoff error measured in lower Sobolev norms might be harmful;
e.g., the second line of \eqref{eq:v2-estimate} would be violated
due to the growing tail of $U_{3,0}$. This explains why the localization
scale for $U_{3,0}$ should be the parabolic scale $b^{-\frac{1}{2}}$.

Moreover, in the definition of $g_{2}$, the radius $B_{0}=b^{-\frac{1}{2}}$
is also sharp in the sense that any other radii $b^{-\alpha}$, $\alpha\neq\frac{1}{2}$
are not allowed. Indeed, if we use some other radius $B'=b^{-\alpha}$
in the definition of $g_{2}$, the logarithmic gain $\frac{1}{|\log b|}$
for the profiles $U_{2}$, $T_{2,0}$, or $U_{3,0}$ would appear
at $y\ageq B'$ (see for example \eqref{eq:T20-lemma-temp}). In order
to obtain \eqref{eq:Psi2-SharpEnergy}, we need to take advantage
of this logarithmic gain, so the cutoff radius used in the definition
of $P_{2}$ (i.e., $B_{0}$) should detect this. In other words, $B'\leq B_{0}$,
i.e., $\alpha\leq\frac{1}{2}$. On the other hand, if $B'$ is too
small compared to $B_{0}$, then $\Psi_{2}$ would collect an error
of the form $(ib^{2}+2b\eta-i\eta^{2})c_{b}\tfrac{y}{2}Q(\chi_{B'}-\chi_{B_{0}}),$
whose $\dot{\calH}_{2}^{1}$- norm cannot satisfy \eqref{eq:Psi2-SharpEnergy}
if $\alpha<\frac{1}{2}$. Thus $\alpha=\frac{1}{2}$ is a tight choice.
\end{rem}

\begin{rem}
As we will see in Section~\ref{sec:Trapped-solutions}, the $P$-equation
\eqref{eq:P-equation} will be used in the modulation estimates of
$\lmb$ and $\gmm$; the $P_{1}$-equation \eqref{eq:P1-equation}
will be used in the modulation estimates of $b$ and $\eta$, and
also in the Morawetz corrections; the $P_{2}$-equation \eqref{eq:P2-equation}
will be used in the sharp third energy estimate. These tell us how
much error is acceptable for the profile errors $\Psi$, $\Psi_{1}$,
and $\Psi_{2}$. It is necessary for $\Psi$ and $\Psi_{1}$ to be
small in order not to disturb the modulation laws \eqref{eq:FormalParameterLaw}.
This says that it is only necessary to have $\Psi=o(b)$ and $\Psi_{1}=o(b^{2})$.
This also explains why it suffices to expand $P$ and $P_{1}$ in
lower order than $P_{2}$. 
\end{rem}

\begin{rem}
\label{rem:full-degen} The full degeneracy estimate \eqref{eq:v2-estimate}
for $\bfv_{2}$ holds thanks to the fact that $P_{2}\approx A_{Q}L_{Q}P$
at the linear level, while $A_{Q}L_{Q}(i\tfrac{y^{2}}{4}Q)=A_{Q}L_{Q}(\rho)=A_{Q}(\tfrac{y}{2}Q)=0$.
This cancellation allows for an easier treatment of the term $\td{\Mod}\cdot\mathbf{v}_{2}$
in the energy estimate compared to the general case without self-duality,
in which a higher derivative of $P$ is not expected to possess such
a degeneracy \cite{HillairetRaphael2012AnalPDE}.
\end{rem}

\begin{rem}
\label{rem:Nonlin-ansatz-P-P2}As mentioned in the introduction, when
$m\geq1$, the pseudoconformal blow-up construction in \cite{KimKwon2020arXiv}
can be further simplified by the current method. In the modified profile
construction, one can further take advantage of the nonlinear profile
ansatz $Q_{b}^{(\eta)}$ of \cite{KimKwon2019arXiv,KimKwon2020arXiv}
(see also the discussions at the beginning of this section) to define
the modified profiles for $w$, $w_{1}$, $w_{2}$ as 
\[
P=Q_{b}^{(\eta)}\chi_{B_{0}},\quad P_{1}=-(ib+\eta)\tfrac{y}{2}Q_{b}^{(\eta)}\chi_{B_{0}},\quad P_{2}=(ib+\eta)^{2}\tfrac{y^{2}}{4}Q_{b}^{(\eta)}\chi_{B_{0}}.
\]
\end{rem}

\begin{proof}
\textbf{Step 1:} \emph{Estimates for the modulation vectors.}

We first show \eqref{eq:v-estimate}. Due to the cutoff $\chf_{(0,B_{0}/2]}$,
we do not need to take care of the cutoff errors from the localizations
$\chi_{B_{0}},\chi_{B_{1}}$ in the definition of $P$. Thus 
\begin{align*}
\chf_{(0,B_{0}/2]}(\Lambda P-\Lambda Q) & =\chf_{(0,B_{0}/2]}(-ib\Lambda(\tfrac{y^{2}}{4}Q)-\eta\Lambda\rho),\\
\chf_{(0,B_{0}/2]}(iP-iQ) & =\chf_{(0,B_{0}/2]}(b\tfrac{y^{2}}{4}Q-i\eta\rho),\\
\chf_{(0,B_{0}/2]}(\partial_{b}P+i\tfrac{y^{2}}{4}Q) & =0,\\
\chf_{(0,B_{0}/2]}(\partial_{\eta}P+\rho) & =0.
\end{align*}
We view the RHS as errors and substitute the pointwise bounds from
the $\rho$-estimates \eqref{eq:rho-estimate}. This shows \eqref{eq:v-estimate}.

We turn to show \eqref{eq:v1-estimate}. We will use the rough estimates
\eqref{eq:RoughPointwise}: $|P_{1}|_{1}\aleq\chf_{(0,2B_{1}]}(b\tfrac{1}{\brk{y}}+b^{2}y)$.
In view of the $X$-norm \eqref{eq:Def-Xnorm}, we multiply by $\langle y\rangle^{-2}\langle\log_{+}y\rangle$
and take the $L^{2}$-norm to get the claims for $\Lambda_{-1}P_{1}$
and $iP_{1}$. For $\partial_{b}P_{1}$, we compute 
\begin{align*}
 & \partial_{b}P_{1}+i\tfrac{y}{2}Q\chi_{B_{1}}\\
 & =\chi_{B_{0}}\{2bT_{2,0}+b^{2}\partial_{b}T_{2,0}\}+(\partial_{b}\chi_{B_{0}})(b^{2}T_{2,0})+(\partial_{b}\chi_{B_{1}})(-(ib+\eta)\tfrac{y}{2}Q).
\end{align*}
Multiplying $\langle y\rangle^{-2}\langle\log_{+}y\rangle$ to the
RHS and taking $L^{2}$ yield the claim for $\partial_{b}P_{1}$.
For $\partial_{\eta}P_{1}$, we in fact have 
\[
\partial_{\eta}P_{1}+\tfrac{y}{2}Q\chi_{B_{1}}=0,
\]
thus the claim for $\partial_{\eta}P_{1}$ follows trivially.

We turn to show \eqref{eq:v2-estimate}. Due to the coercivity \eqref{eq:positivity-AQAQstar}
of $A_{Q}A_{Q}^{\ast}$, it suffices to estimate $\|A_{Q}^{\ast}\mathbf{v}_{2}\|_{L^{2}}$.
We will need to use the logarithmic gain induced by taking $A_{Q}^{\ast}$.
From the definitions of $U_{2}$ and $U_{3,0}$, we have 
\[
A_{Q}^{\ast}U_{2}=g_{2},\qquad A_{Q}^{\ast}U_{3,0}=g_{3,0}.
\]
We also have the scaling identity 
\[
A_{Q}^{\ast}\Lambda_{-2}P_{2}=\Lambda_{-3}A_{Q}^{\ast}P_{2}+\tfrac{1}{2}(yQ^{2})P_{2}.
\]
Thus the desired claim 
\[
\|A_{Q}^{\ast}\Lambda_{-2}P_{2}\|_{L^{2}}+\|A_{Q}^{\ast}iP_{2}\|_{L^{2}}\aleq\||A_{Q}^{\ast}P_{2}|_{1}\|_{L^{2}}+\|\langle y\rangle^{-3}P_{2}\|_{L^{2}}\aleq b^{2}
\]
follows from 
\begin{align*}
|A_{Q}^{\ast}P_{2}|_{1} & \aleq\chf_{(0,2B_{0}]}(b^{2}|g_{2}|_{1}+b^{3}|g_{3,0}|_{1})+\chf_{[B_{0},2B_{0}]}\tfrac{1}{y}|\widehat{P}_{2}|_{1},\\
|\langle y\rangle^{-3}P_{2}| & \aleq\chf_{(0,2B_{0}]}(b^{2}\langle y\rangle^{-3}+b^{3}\langle y\rangle^{-1}),
\end{align*}
and \eqref{eq:g2-sharp}, \eqref{eq:g30-sharp}, Lemma~\ref{lem:profiles}.
For $\partial_{b}P_{2}$, note that 
\begin{align*}
\partial_{b}P_{2} & =\chi_{B_{0}}\{(2b-2i\eta)U_{2}+3ib^{2}U_{3,0}+(b^{2}-2ib\eta-\eta^{2})\partial_{b}U_{2}+ib^{3}\partial_{b}U_{3,0}\}\\
 & \quad+(\partial_{b}\chi_{B_{0}})\widehat{P}_{2}.
\end{align*}
For the first line, we take $A_{Q}^{\ast}$, measure the $L^{2}$-norm
and proceed as before, where we also use \eqref{eq:db-g2-sharp} and
\eqref{eq:db-g30-sharp} for $A_{Q}^{\ast}\rd_{b}U_{2}=\rd_{b}g_{2}$
and $A_{Q}^{\ast}\rd_{b}U_{3,0}=\rd_{b}g_{3,0}$, respectively. For
the second line, we have $\||(\partial_{b}\chi_{B_{0}})\widehat{P}_{2}|_{-1}\|_{L^{2}}\aleq\frac{b}{|\log b|}$
by Lemma~\ref{lem:profiles}. For $\partial_{\eta}P_{2}$, note that
\[
\partial_{\eta}P_{2}=\chi_{B_{0}}(-2ib-2\eta)U_{2}.
\]
Again, we take $A_{Q}^{\ast}$, measure the $L^{2}$-norm and proceed
as before.

\textbf{Step 2:} \emph{The relations between $P,P_{1},P_{2}$.}

We first show \eqref{eq:Comparison-P-P1-H1} and \eqref{eq:Comparison-P-P1-H3}.
From the linearization of the Bogomol'nyi operator, we have 
\begin{align*}
\bfD_{P}P & =L_{Q}(P-Q)-\tfrac{1}{y}A_{\theta}[P-Q]Q-\tfrac{1}{y}(A_{\theta}[P]-A_{\theta}[Q])(P-Q)\\
 & =\chi_{B_{1}}\{-(ib+\eta)\tfrac{y}{2}Q\}+[L_{Q},\chi_{B_{1}}](-ib\tfrac{y^{2}}{4}Q-\eta\rho)\\
 & \quad-\tfrac{1}{y}A_{\theta}[\chi_{B_{1}}(-ib\tfrac{y^{2}}{4}Q-\eta\rho)]Q-\tfrac{1}{y}(A_{\theta}[P]-A_{\theta}[Q])(P-Q).
\end{align*}
By the definition of $P_{1}$, we see that the first term $\chi_{B_{1}}\{-(ib+\eta)\tfrac{y}{2}Q\}$
cancels: 
\begin{equation}
\begin{aligned}\bfD_{P}P-P_{1} & =[L_{Q},\chi_{B_{1}}](-ib\tfrac{y^{2}}{4}Q-\eta\rho)\\
 & \quad-\tfrac{1}{y}A_{\theta}[\chi_{B_{1}}(-ib\tfrac{y^{2}}{4}Q-\eta\rho)]Q\\
 & \quad-\tfrac{1}{y}(A_{\theta}[P]-A_{\theta}[Q])(P-Q)\\
 & \quad-\chi_{B_{0}}\{b^{2}T_{2,0}\}.
\end{aligned}
\label{eq:DP-P1-exp}
\end{equation}
It suffices to measure the $L^{2}$-difference and $\dot{\calH}_{1}^{2}$-difference
of the RHS.

We now estimate each line on the RHS of \eqref{eq:DP-P1-exp}. For
the first line, notice that 
\[
[L_{Q},\chi_{B_{1}}]f=(\rd_{y}\chi_{B_{1}})f+\tfrac{Q}{y}\left(\tint 0y\chi_{B_{1}}\Re fQy'dy'-\chi_{B_{1}}\tint 0y\Re fQy'dy'\right).
\]
Note that the second term is supported on $[B_{1},\infty)$ and only
uses the information of $f$ on $(0,2B_{1}]$. Thus $[L_{Q},\chi_{B_{1}}]f$
satisfies the pointwise estimates 
\begin{equation}
\abs{[L_{Q},\chi_{B_{1}}]f}_{2}\aleq\chf_{[B_{1},2B_{1}]}\tfrac{1}{y}\abs{f}_{2}+\chf_{[B_{1},\infty)}\tfrac{1}{y^{3}}{\textstyle \int_{0}^{2B_{1}}}\abs{f}\tfrac{1}{y'}dy'.\label{eq:LQ-chi-comm}
\end{equation}
Substituting $f=-ib\tfrac{y^{2}}{4}Q-\eta\rho$, Lemma~\ref{lem:profiles}
implies that 
\begin{align*}
\|[L_{Q},\chi_{B_{1}}](-ib\tfrac{y^{2}}{4}Q-\eta\rho)\|_{L^{2}} & \aleq b,\\
\||[L_{Q},\chi_{B_{1}}](-ib\tfrac{y^{2}}{4}Q-\eta\rho)|_{-2}\|_{L^{2}} & \aleq\tfrac{b^{2}}{|\log b|^{2}}.
\end{align*}
We remark that while the contribution of the second term in \eqref{eq:LQ-chi-comm}
is nonlocal, thanks to the fast decay $\tfrac{1}{y^{3}}$, its contribution
is better by $b\abs{\log b}^{C}$ compared to the first term.

For the second line of \eqref{eq:DP-P1-exp}, using the bound 
\[
|A_{\theta}[\chi_{B_{1}}(-ib\tfrac{y^{2}}{4}Q-\eta\rho)]|_{2}\aleq b^{2}\min\{y^{2},B_{1}^{2}\}
\]
we have 
\begin{align*}
\|\tfrac{1}{y}A_{\theta}[\chi_{B_{1}}(-ib\tfrac{y^{2}}{4}Q-\eta\rho)]Q\|_{L^{2}} & \aleq b^{2-},\\
\||\tfrac{1}{y}A_{\theta}[\chi_{B_{1}}(-ib\tfrac{y^{2}}{4}Q-\eta\rho)]Q|_{-2}\|_{L^{2}} & \aleq b^{2}.
\end{align*}

For the third line of \eqref{eq:DP-P1-exp}, we note the bound 
\begin{equation}
\chf_{(0,2B_{1}]}|\tfrac{1}{y}(A_{\theta}[P]-A_{\theta}[Q])|_{2}\aleq\tfrac{b}{|\log b|}\tfrac{\langle\log y\rangle}{\brk{y}}+b^{2}y,\label{eq:AthtP-AthtQ}
\end{equation}
which follows from 
\[
A_{\theta}[P]-A_{\theta}[Q]=-\tint 0y\Re(P-Q)Qy'dy'-\tfrac{1}{2}\tint 0y\abs{P-Q}^{2}y'dy'
\]
and the easy bounds 
\begin{equation}
\begin{aligned}|\Re(P-Q)|_{2} & \aleq\chf_{(0,2B_{1}]}\eta\leq\chf_{(0,2B_{1}]}\tfrac{b}{\abs{\log b}},\\
|\Im(P-Q)|_{2} & \aleq\chf_{(0,2B_{1}]}b.
\end{aligned}
\label{eq:P-Q-est}
\end{equation}
Thus 
\begin{align*}
\|\tfrac{1}{y}(A_{\theta}[P]-A_{\theta}[Q])(P-Q)\|_{L^{2}} & \aleq b^{2-},\\
\||\tfrac{1}{y}(A_{\theta}[P]-A_{\theta}[Q])(P-Q)|_{-2}\|_{L^{2}} & \aleq\tfrac{b^{2}}{|\log b|}.
\end{align*}

For the last line of \eqref{eq:DP-P1-exp}, the sharp estimates show
\[
\|\chi_{B_{0}}b^{2}T_{2,0}\|_{L^{2}}\aleq\tfrac{b}{|\log b|}.
\]
For the $\dot{\calH}_{1}^{2}$ estimate, crudely estimating the $\||\cdot|_{-2}\|_{L^{2}}$-norm
will give only $b^{2}|\log b|^{\frac{1}{2}}$, so we will elaborate
a little bit more. In view of the subcoercivity estimates \eqref{eq:subcoercivity-AQ}
and \eqref{eq:Coercivity-AQstar-appendix}, we have 
\begin{align*}
\|\chi_{B_{0}}b^{2}T_{2,0}\|_{\dot{\calH}_{1}^{2}} & \aleq\|A_{Q}(\chi_{B_{0}}b^{2}T_{2,0})\|_{\dot{\calH}_{2}^{1}}+\|\chf_{y\sim1}\chi_{B_{0}}b^{2}T_{2,0}\|_{L^{2}}\\
 & \aleq\|A_{Q}^{\ast}A_{Q}(\chi_{B_{0}}b^{2}T_{2,0})\|_{\dot{\calH}_{2}^{1}}+\|\chf_{y\sim1}b^{2}T_{2,0}\|_{L^{2}}.
\end{align*}
The second term is obviously bounded by $b^{2}$. Since $A_{Q}^{\ast}A_{Q}T_{2,0}=g_{2}$,
after commuting $A_{Q}^{\ast}A_{Q}$ with $\chi_{B_{0}}$, the first
term can be estimated by (using Lemma~\ref{lem:profiles} and \eqref{eq:g2-sharp})
\[
\|A_{Q}^{\ast}A_{Q}(\chi_{B_{0}}b^{2}T_{2,0})\|_{L^{2}}\aleq\|\chf_{(0,2B_{0}]}b^{2}g_{2}\|_{L^{2}}+\|\chf_{[B_{0},2B_{0}]}\tfrac{1}{y}b^{2}|T_{2,0}|_{-1}\|_{L^{2}}\aleq b^{2}.
\]
Thus \eqref{eq:Comparison-P-P1-H1} and \eqref{eq:Comparison-P-P1-H3}
are proved.

We turn to show \eqref{eq:Comparison-P1-P2-H3}. Using $A_{Q}(yQ)=0$
and $A_{Q}T_{2,0}=U_{2}$, we obtain 
\begin{align*}
A_{P}P_{1} & =A_{Q}P_{1}+(A_{P}-A_{Q})P_{1}\\
 & =\chi_{B_{0}}b^{2}U_{2}+(\partial_{y}\chi_{B_{0}})\{b^{2}T_{2,0}\}+(\partial_{y}\chi_{B_{1}})\{(-ib-\eta)\tfrac{y}{2}Q\}+(A_{P}-A_{Q})P_{1}.
\end{align*}
Therefore, we have 
\begin{align}
A_{P}P_{1}-P_{2} & =\chi_{B_{0}}\big\{(2ib\eta+\eta^{2})U_{2}-ib^{3}U_{3,0}\big\}\label{eq:APP1-P2-temp}\\
 & \quad+(\partial_{y}\chi_{B_{0}})\{b^{2}T_{2,0}\}+(\partial_{y}\chi_{B_{1}})\big\{(-ib-\eta)\tfrac{y}{2}Q\big\}+(A_{P}-A_{Q})P_{1}.\nonumber 
\end{align}
It remains to estimate the RHS of \eqref{eq:APP1-P2-temp} in the
$\dot{\calH}_{2}^{1}$-norm. For the first term, we use \eqref{eq:positivity-AQAQstar}
and $|\eta|\leq\frac{b}{|\log b|}$ to estimate 
\begin{align*}
 & \|A_{Q}^{\ast}(\chi_{B_{0}}\{(2ib\eta+\eta^{2})U_{2}-ib^{3}U_{3,0}\})\|_{L^{2}}\\
 & \aleq\|\chi_{B_{0}}(\tfrac{b^{2}}{|\log b|}|g_{2}|+b^{3}|g_{3,0}|)\|_{L^{2}}+\|\tfrac{1}{y}(\tfrac{b^{2}}{|\log b|}|U_{2}|+b^{3}|U_{3,0}|)\|_{L^{2}}\aleq\tfrac{b^{2}}{|\log b|},
\end{align*}
where in the last inequality we used \eqref{eq:g2-sharp} and \eqref{eq:g30-sharp}.
Using the sharp estimates in Lemma~\ref{lem:profiles} and $|\eta|\leq\frac{b}{|\log b|}$,
we have 
\begin{align*}
\||(\partial_{y}\chi_{B_{0}})b^{2}T_{2,0}|_{-1}\|_{L^{2}} & \aleq\tfrac{b^{2}}{|\log b|},\\
\||(\partial_{y}\chi_{B_{1}})\big\{(-ib-\eta)\tfrac{y}{2}Q\big\}|_{-1}\|_{L^{2}} & \aleq\tfrac{b^{2}}{|\log b|^{2}},
\end{align*}
where we used the logarithmic improvement $\frac{1}{|\log b|}$ of
$T_{2,0}$ in the region $[B_{0},2B_{0}]$ and $B_{1}=B_{0}|\log b|$.
Finally, we use \eqref{eq:AthtP-AthtQ} and $|P_{1}|_{-1}\aleq\chf_{(0,2B_{1}]}b\tfrac{1}{\langle y\rangle^{2}}$
to estimate 
\[
\||(A_{P}-A_{Q})P_{1}|_{-1}\|_{L^{2}}\aleq\|\chf_{(0,2B_{1}]}(\tfrac{b}{|\log b|}\tfrac{\langle\log y\rangle}{\langle y\rangle}+b^{2}y)\cdot b\tfrac{1}{\langle y\rangle^{2}}\|_{L^{2}}\aleq\tfrac{b^{2}}{|\log b|}.
\]
This completes the proof of \eqref{eq:Comparison-P1-P2-H3}.

\textbf{Step 3:}\emph{ Equation for $P$.}

Here, as our aim is to measure $\Psi$ in the region $(0,B_{0}/2]$,
in many cases (only except the $L_{P}^{\ast}$-part) the error computations
are simple and profile localization has no effect.

First, we note the computations 
\begin{equation}
\begin{aligned}\chf_{(0,B_{0}/2]}\partial_{s}P & =\chf_{(0,B_{0}/2]}\{(b_{s}+b^{2}+\eta^{2})\partial_{b}P+\eta_{s}\partial_{\eta}P+O(b^{2})\},\\
\chf_{(0,B_{0}/2]}\{-\frac{\lmb_{s}}{\lmb}\Lambda P\} & =\chf_{(0,B_{0}/2]}\{b\Lambda Q-\Big(\frac{\lmb_{s}}{\lmb}+b\Big)\Lambda P+O(b^{2})\},\\
\chf_{(0,B_{0}/2]}\gmm_{s}iP & =\chf_{(0,B_{0}/2]}\{\eta iQ+(\gmm_{s}-\eta)iP+O(\tfrac{b^{2}}{|\log b|})\},
\end{aligned}
\label{eq:P-equation-claim1}
\end{equation}
which easily follow from 
\begin{align*}
\chf_{(0,B_{0}/2]}(b^{2}+\eta^{2})\partial_{b}P & =O(b^{2}),\\
\chf_{(0,B_{0}/2]}b(\Lambda P-\Lambda Q) & =O(b^{2}),\\
\chf_{(0,B_{0}/2]}\eta(iP-iQ) & =O(\tfrac{b^{2}}{|\log b|}).
\end{align*}

Next, we claim that 
\begin{equation}
\chf_{(0,B_{0}/2]}iL_{P}^{\ast}P_{1}=\chf_{(0,B_{0}/2]}\{-b\Lambda Q-\eta iQ+O(b^{2}|\log b|)\}.\label{eq:P-equation-claim2}
\end{equation}
To see this, let us write 
\begin{align*}
iL_{P}^{\ast}P_{1} & =iL_{Q}^{\ast}(-(ib+\eta)\tfrac{y}{2}Q)\\
 & \quad+iL_{Q}^{\ast}((ib+\eta)(1-\chi_{B_{1}})\tfrac{y}{2}Q)+i(L_{P}^{\ast}-L_{Q}^{\ast})(-(ib+\eta)\tfrac{y}{2}Q\chi_{B_{1}})\\
 & \quad+iL_{P}^{\ast}(\chi_{B_{0}}b^{2}T_{2,0}).
\end{align*}
For the first term, we use $L_{Q}^{\ast}(i\tfrac{y}{2}Q)=-i\Lambda Q$
and $L_{Q}^{\ast}(\tfrac{y}{2}Q)=Q$ to get 
\[
iL_{Q}^{\ast}(-(ib+\eta)\tfrac{y}{2}Q)=-b\Lambda Q-\eta iQ.
\]
For the second term, we have 
\[
\chf_{(0,B_{0}/2]}iL_{Q}^{\ast}((ib+\eta)(1-\chi_{B_{1}})\tfrac{y}{2}Q)\aleq\chf_{(0,B_{0}/2]}bQ{\textstyle \int_{B_{1}}^{\infty}}Q^{2}y'dy'\aleq\chf_{(0,B_{0}/2]}\tfrac{b^{2}}{|\log b|^{2}}.
\]
For the third term, we note that 
\begin{align*}
 & \chf_{(0,B_{0}/2]}i(L_{P}^{\ast}-L_{Q}^{\ast})(-(ib+\eta)\tfrac{y}{2}Q\chi_{B_{1}})\\
 & \aleq\chf_{(0,B_{0}/2]}b\{|A_{\theta}[P]-A_{\theta}[Q]|Q+|P-Q|{\textstyle \int_{0}^{2B_{1}}}Q^{2}y'dy'+|P|{\textstyle \int_{0}^{2B_{1}}}|P-Q|Qy'dy'\}.
\end{align*}
Using $\chf_{(0,B_{0}/2]}|A_{\theta}[P]-A_{\theta}[Q]|+|P-Q|\aleq b$,
which follow from \eqref{eq:P-Q-est} and \eqref{eq:AthtP-AthtQ},
we see that 
\[
\chf_{(0,B_{0}/2]}i(L_{P}^{\ast}-L_{Q}^{\ast})(-(ib+\eta)\tfrac{y}{2}Q\chi_{B_{1}})\aleq b^{2}\abs{\log b}.
\]
For the fourth term, note that 
\[
L_{P}^{\ast}(\chi_{B_{0}}f)\aleq\tfrac{1}{y}\abs{f}_{1}+\tfrac{1}{y}\abs{A_{\tht}[P]}\abs{f}+\abs{P}\tint 0{2B_{0}}\abs{P-Q}\abs{f}dy'+\abs{P}\tint 0{2B_{0}}Q\abs{f}dy'
\]
By the rough pointwise bounds \eqref{eq:RoughPointwise}, we have
$\chf_{(0,B_{0}/2]}\tfrac{1}{y}\abs{A_{\tht}[P]}+\abs{P}\aleq1$.
Then using \eqref{eq:RoughPointwise} again for $T_{2,0}$, we see
that 
\[
\chf_{(0,B_{0}/2]}L_{P}^{\ast}(\chi_{B_{0}}b^{2}T_{2,0})\aleq b^{2}\abs{\log b}.
\]
Thus the claim \eqref{eq:P-equation-claim2} is proved.

Summing up the claims \eqref{eq:P-equation-claim1} and \eqref{eq:P-equation-claim2},
we have 
\begin{align*}
(\partial_{s}-\frac{\lmb_{s}}{\lmb}\Lambda+\gmm_{s}i)P+iL_{P}^{\ast}P_{1} & =-\Mod\cdot\mathbf{v}+i\Psi
\end{align*}
with 
\[
\chf_{(0,B_{0}/2]}|\Psi|\aleq b^{2}\abs{\log b}.
\]

\textbf{Step 4: }\emph{Equation for $P_{1}$ and refined modulation
equations.}

Although we motivated the profile $U_{3,0}$ using the $w_{1}$-equation
by solving up to $O(b^{3})$ correctly, here it is not necessary to
keep track of $O(b^{3})$-terms because the asserted claim \eqref{eq:Psi1-LocalEnergy}
only requires $O_{X}(b^{3}|\log b|^{C})$. Thus we will only keep
up to quadratic terms. However, in Step 5, we need to keep track of
the $O(b^{3})$-terms in order to get the sharp estimate \eqref{eq:Psi2-SharpEnergy}.

First, we claim that 
\begin{equation}
\begin{aligned}\partial_{s}P_{1} & =\chi_{B_{0}}\{(ib^{2}+i\eta^{2}+ic_{b}(b^{2}-\eta^{2})+2c_{b}b\eta)\tfrac{y}{2}Q\}\\
 & \quad+(b_{s}+b^{2}+\eta^{2}+c_{b}(b^{2}-\eta^{2}))\partial_{b}P_{1}+(\eta_{s}+2c_{b}b\eta)\partial_{\eta}P_{1}\\
 & \quad+O_{X}(b^{3}|\log b|^{C}).
\end{aligned}
\label{eq:P1-equation-claim1}
\end{equation}
This would follow from 
\begin{align*}
\partial_{b}P_{1}+\chi_{B_{0}}(i\tfrac{y}{2}Q) & =O_{X}(b|\log b|^{C}),\\
\partial_{\eta}P_{1}+\chi_{B_{0}}\tfrac{y}{2}Q & =O_{X}(b|\log b|^{C}).
\end{align*}
These follow from \eqref{eq:v1-estimate} and 
\[
\nrm{(\chi_{B_{1}}-\chi_{B_{0}})yQ}_{X}\aleq\nrm{\chf_{[B_{0},2B_{1}]}\tfrac{1}{y^{3}}\brk{\log_{+}y}}_{L^{2}}\aleq b\abs{\log b}^{C}.
\]

Next, we claim that 
\begin{equation}
\begin{aligned}-\frac{\lmb_{s}}{\lmb}\Lambda_{-1}P_{1} & =\chi_{B_{0}}\{(-ib^{2}-b\eta)\Lambda_{-1}(\tfrac{y}{2}Q)\}-\Big(\frac{\lmb_{s}}{\lmb}+b\Big)\Lambda_{-1}P_{1}\\
 & \quad+O_{X}(b^{3}|\log b|^{C}).
\end{aligned}
\label{eq:P1-equation-claim2}
\end{equation}
This would follow from 
\[
\Lambda_{-1}P_{1}+\chi_{B_{0}}\{(ib+\eta)\Lambda_{-1}\tfrac{y}{2}Q\}=O_{X}(b^{2}|\log b|^{C}),
\]
which in turn follows from applying the rough estimates \eqref{eq:RoughPointwise}
to 
\begin{align*}
 & \Lambda_{-1}P_{1}+\chi_{B_{0}}\{(ib+\eta)\Lambda_{-1}(\tfrac{y}{2}Q)\}\\
 & =\Lambda_{-1}\{\chi_{B_{0}}b^{2}T_{2,0}\}-(\chi_{B_{1}}-\chi_{B_{0}})(ib+\eta)\Lambda_{-1}(\tfrac{y}{2}Q)-(y\partial_{y}\chi_{B_{1}})((ib+\eta)\tfrac{y}{2}Q).
\end{align*}

Next, we claim that 
\begin{align}
\td{\gmm}_{s}iP_{1} & =\chi_{B_{0}}\{(-b\eta+i\eta^{2})\tfrac{y}{2}Q\}+(\td{\gmm}_{s}+\eta)iP_{1}+O_{X}(b^{3}|\log b|^{C}).\label{eq:P1-equation-claim3}
\end{align}
This would follow from 
\[
P_{1}+\chi_{B_{0}}(ib+\eta)\tfrac{y}{2}Q=O_{X}(b^{2}|\log b|^{C}),
\]
which follows from applying the rough estimates to: 
\[
P_{1}+\chi_{B_{0}}(ib+\eta)\tfrac{y}{2}Q=\chi_{B_{0}}\{b^{2}T_{2,0}\}-(\chi_{B_{1}}-\chi_{B_{0}})(ib+\eta)\tfrac{y}{2}Q.
\]

Next, we claim that 
\begin{equation}
-({\textstyle \int_{0}^{y}}\Re(\overline{P}P_{1})dy')iP_{1}=\chi_{B_{0}}(b\eta-i\eta^{2})(2-\Lambda)\tfrac{y}{2}Q+O_{X}(b^{3}|\log b|^{C}).\label{eq:P1-equation-claim4}
\end{equation}
To show this, we begin with the bounds 
\begin{align*}
\overline{P} & =Q+\chi_{B_{1}}\{ib\tfrac{y^{2}}{4}Q-\eta\rho\},\\
P_{1} & =\chi_{B_{1}}\{-(ib+\eta)\tfrac{y}{2}Q\}+\chf_{(0,2B_{0}]}O(b^{2}y),
\end{align*}
which follow from \eqref{eq:RoughPointwise}. It then follows that
\begin{equation}
\begin{aligned}\Re(\overline{P}P_{1}) & =\chf_{(0,2B_{0}]}(-\eta\tfrac{y}{2}Q^{2})+\chf_{(0,2B_{1}]}O(b^{2}\tfrac{1}{\langle y\rangle}).\end{aligned}
\label{eq:Re-P-P1}
\end{equation}
Hence, 
\begin{align*}
 & \chf_{(0,2B_{1}]}{\textstyle \int_{0}^{y}}\Re(\overline{P}P_{1})dy'\\
 & =\chi_{B_{0}}{\textstyle \int_{0}^{y}}(-\eta\tfrac{y'}{2}Q^{2})dy'+O(\chf_{(0,B_{0}]}b^{2}|\log b|+\chf_{[B_{0},2B_{1}]}\tfrac{b}{|\log b|}).
\end{align*}
Thus 
\begin{align*}
 & -({\textstyle \int_{0}^{y}}\Re(\overline{P}P_{1})dy')iP_{1}\\
 & =\chi_{B_{0}}\{(b\eta-i\eta^{2})({\textstyle \int_{0}^{y}}\tfrac{y'}{2}Q^{2}dy')\tfrac{y}{2}Q\}+O(\chf_{(0,B_{0}]}\tfrac{b^{3}}{|\log b|}y+\chf_{[B_{0},2B_{1}]}\tfrac{b^{2}}{|\log b|}\tfrac{1}{y}).
\end{align*}
The last term contributes to the error $O_{X}(b^{3}|\log b|^{C})$
as desired. The proof of the claim \eqref{eq:P1-equation-claim4}
follows from the computation 
\[
({\textstyle \int_{0}^{y}}\tfrac{y'}{2}Q^{2}dy')\tfrac{y}{2}Q=(2-\Lambda)(\tfrac{y}{2}Q),
\]
where we have used $A_{Q}(\tfrac{y}{2}Q)=0$.

Finally, we claim that 
\begin{equation}
iA_{P}^{\ast}P_{2}=\chi_{B_{0}}\{(ib^{2}+2b\eta-i\eta^{2})g_{2}\}+O_{X}(b^{3}|\log b|^{C}).\label{eq:P1-equation-claim5}
\end{equation}
In fact, we will prove a stronger estimate for later use in Step 5:
\begin{equation}
iA_{P}^{\ast}P_{2}=\chi_{B_{0}}\{(ib^{2}+2b\eta-i\eta^{2})g_{2}-b^{3}g_{3,0}\}+\chf_{(0,2B_{0}]}O_{\||\cdot|_{-2}\|_{L^{2}}}(\tfrac{b^{3}}{|\log b|}).\label{eq:A_P-P_2}
\end{equation}
To see this, we start from 
\[
iA_{P}^{\ast}P_{2}=\chi_{B_{0}}i(A_{Q}^{\ast}\widehat{P}_{2})+(\partial_{y}\chi_{B_{0}})i\widehat{P}_{2}+i(A_{P}^{\ast}-A_{Q}^{\ast})P_{2}.
\]
We keep the first term in the form 
\begin{align*}
\chi_{B_{0}}i(A_{Q}^{\ast}\widehat{P}_{2}) & =\chi_{B_{0}}\{(ib^{2}+2b\eta-i\eta^{2})A_{Q}^{\ast}U_{2}-b^{3}A_{Q}^{\ast}U_{3,0}\}\\
 & =\chi_{B_{0}}\{(ib^{2}+2b\eta-i\eta^{2})g_{2}-b^{3}g_{3,0}\}.
\end{align*}
For the second term, we use $|\partial_{y}\chi_{B_{0}}|_{2}\aleq\chf_{[B_{0},2B_{0}]}\frac{1}{y}$
and $\chf_{[B_{0},2B_{0}]}|\widehat{P}_{2}|_{2}\aleq\frac{b^{2}}{|\log b|}$
to get 
\[
\||(\partial_{y}\chi_{B_{0}})i\widehat{P}_{2}|_{-2}\|\aleq\tfrac{b^{3}}{|\log b|}.
\]
For the last term, we note that, by \eqref{eq:AthtP-AthtQ}, 
\[
\abs{(A_{P}^{\ast}-A_{Q}^{\ast})f}_{2}=\abs{-\tfrac{1}{y}(A_{\tht}[P]-A_{\tht}[Q])f}_{2}\aleq(\tfrac{b}{|\log b|}\tfrac{\langle\log y\rangle}{\brk{y}}+b^{2}y)\abs{f}_{2}.
\]
Using also $|P_{2}|_{2}\aleq\chf_{(0,2B_{0}]}b^{2}$, which follows
from \eqref{eq:RoughPointwise}, we have 
\[
\||(A_{P}^{\ast}-A_{Q}^{\ast})P_{2}|_{-2}\|_{L^{2}}\aleq\tfrac{b^{3}}{|\log b|}.
\]

Summing up the above claims \eqref{eq:P1-equation-claim1}--\eqref{eq:P1-equation-claim5},
we have 
\begin{align*}
 & (\partial_{s}-\frac{\lmb_{s}}{\lmb}\Lambda_{-1}+\td{\gmm}_{s}i)P_{1}+iA_{P}P_{2}-\Big(\int_{0}^{y}\Re(\overline{P}P_{1})dy'\Big)iP_{1}\\
 & =-\td{\Mod}\cdot\mathbf{v}_{1}+\chi_{B_{0}}\{(ib^{2}+2b\eta-i\eta^{2})(g_{2}-\Lambda(\tfrac{y}{2}Q)+c_{b}\tfrac{y}{2}Q)\}+O_{X}(b^{3}|\log b|^{C}).
\end{align*}
By the definition of $g_{2}$, the quadratic order terms \emph{almost
vanish}: 
\begin{align*}
 & \chi_{B_{0}}\{(ib^{2}+2b\eta-i\eta^{2})(g_{2}-\Lambda(\tfrac{y}{2}Q)+c_{b}\tfrac{y}{2}Q)\}\\
 & =\chi_{B_{0}}(1-\chi_{B_{0}})(ib^{2}+2b\eta-i\eta^{2})c_{b}\tfrac{y}{2}Q=O_{X}(b^{3}|\log b|^{C}).
\end{align*}
Therefore, we can rearrange the above display as 
\[
=-\td{\Mod}\cdot\mathbf{v}_{1}+O_{X}(b^{3}|\log b|^{C})\eqqcolon-\td{\Mod}\cdot\mathbf{v}_{1}+i\Psi_{1}.
\]
This coompletes the proof of \eqref{eq:Psi1-LocalEnergy}.

\textbf{Step 5:} \emph{Equation for $P_{2}$ and sharp energy estimates.}

First, we claim that 
\begin{equation}
\begin{aligned}\partial_{s}P_{2} & =-2b^{3}U_{2}\chi_{B_{0}}+(b_{s}+b^{2}+\eta^{2}+c_{b}(b^{2}-\eta^{2}))\partial_{b}P_{2}\\
 & \quad+i(\eta_{s}+2c_{b}b\eta)\partial_{\eta}P_{2}+O_{\dot{\calH}_{2}^{1}}(\tfrac{b^{3}}{|\log b|}).
\end{aligned}
\label{eq:P2-equation-claim1}
\end{equation}
We note that the terms including $c_{b}$ can be considered as an
error, but we include them to match the formula for $\td{\Mod}$.
By \eqref{eq:v2-estimate} and \eqref{eq:positivity-AQAQstar}, the
claim would follow from 
\begin{align*}
\|A_{Q}^{\ast}(\partial_{b}P_{2}-2bU_{2}\chi_{B_{0}})\|_{L^{2}} & \aleq\tfrac{b}{|\log b|}.
\end{align*}
We compute 
\begin{align*}
 & \partial_{b}P_{2}-2bU_{2}\chi_{B_{0}}\\
 & =\chi_{B_{0}}(-2i\eta\td U_{2}+3ib^{2}U_{3,0}+(b^{2}-\eta^{2}-2ib\eta)\partial_{b}\td U_{2}+ib^{3}\partial_{b}U_{3,0})+(\partial_{b}\chi_{B_{0}})\widehat{P}_{2}.
\end{align*}
Taking $A_{Q}^{\ast}$, we have 
\begin{align*}
 & A_{Q}^{\ast}(\partial_{b}P_{2}-2bU_{2}\chi_{B_{0}})\\
 & =\chi_{B_{0}}(-2i\eta g_{2}+3ib^{2}g_{3,0}+(b^{2}-\eta^{2}-2ib\eta)\partial_{b}g_{2}+ib^{3}\partial_{b}g_{3,0})\\
 & \quad+(\partial_{y}\chi_{B_{0}})(-2i\eta U_{2}+3ib^{2}U_{3,0}+(b^{2}-\eta^{2}-2ib\eta)\partial_{b}U_{2}+ib^{3}\partial_{b}U_{3,0}))\\
 & \quad+A_{Q}^{\ast}((\partial_{b}\chi_{B_{0}})\widehat{P}_{2}).
\end{align*}
Using $|\eta|\leq\frac{b}{|\log b|}$ and the sharp bounds \eqref{eq:g2-sharp},
\eqref{eq:db-g2-sharp}, \eqref{eq:g30-sharp} and \eqref{eq:db-g30-sharp},
we have 
\[
\|\chi_{B_{0}}(-2i\eta g_{2}+3ib^{2}g_{3,0}+(b^{2}-\eta^{2}-2ib\eta)\partial_{b}g_{2}+ib^{3}\partial_{b}g_{3,0})\|_{L^{2}}\aleq\tfrac{b}{|\log b|}.
\]
Next, using the logarithmic gain at $y\sim B_{0}$ in Lemma~\ref{lem:profiles},
we also have 
\begin{align*}
\nrm{(\partial_{y}\chi_{B_{0}})(-2i\eta U_{2}+3ib^{2}U_{3,0}+(b^{2}-\eta^{2}-2ib\eta)\partial_{b}U_{2}+ib^{3}\partial_{b}U_{3,0}))}_{L^{2}} & \aleq\tfrac{b}{|\log b|},\\
\nrm{A_{Q}^{\ast}((\partial_{b}\chi_{B_{0}})\widehat{P}_{2})}_{L^{2}} & \aleq\tfrac{b}{|\log b|}.
\end{align*}

Next, we claim that 
\begin{equation}
-\frac{\lmb_{s}}{\lmb}\Lambda_{-2}P_{2}=\chi_{B_{0}}b^{3}\Lambda_{-2}U_{2}-\Big(\frac{\lmb_{s}}{\lmb}+b\Big)\Lambda_{-2}P_{2}+O_{\dot{\calH}_{2}^{1}}(\tfrac{b^{3}}{|\log b|}).\label{eq:P2-equation-claim2}
\end{equation}
By \eqref{eq:positivity-AQAQstar}, it suffices to show 
\[
\|A_{Q}^{\ast}((y\partial_{y}\chi_{B_{0}})\widehat{P}_{2}+\chi_{B_{0}}\Lambda_{-2}(\widehat{P}_{2}-b^{2}U_{2}))\|_{L^{2}}\aleq\tfrac{b^{2}}{|\log b|}.
\]
For this, further using $A_{Q}^{\ast}\Lambda_{-2}=\Lambda_{-3}A_{Q}^{\ast}-\tfrac{yQ^{2}}{2}$,
it suffices to show 
\begin{align*}
\|\chf_{(0,2B_{0}]}|(-2ib\eta-\eta^{2})g_{2}+ib^{3}g_{3,0}|_{1}\|_{L^{2}} & \aleq\tfrac{b^{2}}{|\log b|},\\
\|\chf_{(0,2B_{0}]}\tfrac{1}{\langle y\rangle^{3}}|(-2ib\eta-\eta^{2})U_{2}+ib^{3}U_{3,0}|\|_{L^{2}} & \aleq\tfrac{b^{2}}{|\log b|},\\
\|\chf_{[B_{0},2B_{0}]}\tfrac{1}{y}(b^{2}|U_{2}|_{1}+b^{3}|U_{3,0}|_{1})\|_{L^{2}} & \aleq\tfrac{b^{2}}{|\log b|}.
\end{align*}
These are now immediate consequences of $\abs{\eta}\leq\tfrac{b}{\abs{\log b}}$,
the sharp pointwise bounds in Lemma~\ref{lem:profiles}, as well
as \eqref{eq:g2-sharp} and \eqref{eq:g30-sharp}.

Next, we claim that 
\begin{equation}
\td{\gmm}_{s}iP_{2}=(\td{\gmm}_{s}+\eta)iP_{2}+O_{\dot{\calH}_{2}^{1}}(\tfrac{b^{3}}{|\log b|}).\label{eq:P2-equation-claim3}
\end{equation}
This immediately follows from $|\eta|\leq\tfrac{b}{|\log b|}$ and
\eqref{eq:v2-estimate}.

Next, we claim that 
\begin{equation}
\|-({\textstyle \int_{0}^{y}}\Re(\overline{P}P_{1})dy')iP_{2}\|_{\dot{\calH}_{2}^{1}}\aleq\tfrac{b^{3}}{|\log b|}.\label{eq:P2-equation-claim4}
\end{equation}
By \eqref{eq:positivity-AQAQstar}, it suffices to show 
\[
\|\Re(\overline{P}P_{1})P_{2}\|_{L^{2}}+\|({\textstyle \int_{0}^{y}}\Re(\overline{P}P_{1})dy')A_{Q}^{\ast}P_{2}\|_{L^{2}}\aleq\tfrac{b^{3}}{|\log b|}.
\]
Since $P_{2}$ is supported in $(0,2B_{0}]$, it suffices to estimate
on that region. Note that $\Re(\overline{P}P_{1})=O(|\eta|+b^{2})$,
at least in $y\aleq1$. By \eqref{eq:Re-P-P1} and the rough bound
$\abs{P_{2}}\aleq b^{2}$, the first one $\|\Re(\overline{P}P_{1})P_{2}\|_{L^{2}}\aleq\frac{b^{3}}{|\log b|}$
follows. The second one follows from 
\[
\chf_{(0,2B_{0}]}|{\textstyle \int_{0}^{y}}\Re(\overline{P}P_{1})dy'|\aleq\tfrac{b}{|\log b|},
\]
which is proved using $\abs{\eta}\leq\tfrac{b}{\abs{\log b}}$ and
\eqref{eq:Re-P-P1}, as well as 
\begin{align*}
A_{Q}^{\ast}P_{2} & \aleq\chf_{(0,2B_{0}]}(b^{2}|g_{2}|+b^{3}|g_{3,0}|)+\chf_{[B_{0},2B_{0}]}\tfrac{1}{y}|P_{2}|\\
 & \aleq\chf_{(0,2B_{0}]}(b^{2}\tfrac{1}{\abs{\log b}\brk{y}}+b^{3}y)+\chf_{[B_{0},2B_{0}]}\tfrac{b^{2}}{\abs{\log b}y},
\end{align*}
where we used Lemma~\ref{lem:profiles}, \eqref{eq:g2-sharp} and
\eqref{eq:g30-sharp}.

Next, we claim that 
\begin{align}
-i\overline{P}(P_{1})^{2} & =\chi_{B_{0}}\{(ib^{2}+2b\eta-i\eta^{2})(\tfrac{y^{2}}{4}Q^{3})-b^{3}(yQ^{2}T_{2,0}+\tfrac{y^{4}}{16}Q^{3})\}\label{eq:P2-equation-claim5}\\
 & \quad+O_{\dot{\calH}_{2}^{1}}(\tfrac{b^{3}}{|\log b|}).\nonumber 
\end{align}
To see this, it suffices to use the rough estimates \eqref{eq:RoughPointwise}
and $|\eta|\leq\frac{b}{|\log b|}$, by which we have 
\begin{align*}
\overline{P}P_{1} & =-\chi_{B_{1}}(ib+\eta)\tfrac{y}{2}Q^{2}+b^{2}(\chi_{B_{0}}QT_{2,0}+\chi_{B_{1}}^{2}\tfrac{y^{3}}{8}Q^{2})\\
 & \peq+O(\chf_{(0,2B_{1}]}\tfrac{b^{2}}{|\log b|}\tfrac{1}{\brk{y}}+\chf_{(0,2B_{0}]}b^{3}y).
\end{align*}
Thus 
\begin{align*}
-i\overline{P}(P_{1})^{2} & =\chi_{B_{1}}^{2}(ib^{2}+2b\eta-i\eta^{2})\tfrac{y^{2}}{4}Q^{3}-b^{3}(\chi_{B_{0}}yQ^{2}T_{2,0}+\chi_{B_{1}}^{3}\tfrac{y^{4}}{16}Q^{3})\\
 & \peq+O(\chf_{(0,2B_{1}]}\tfrac{b^{3}}{|\log b|}\tfrac{1}{\brk{y}^{2}}+\chf_{(0,2B_{0}]}b^{4})\\
 & =\chi_{B_{0}}\{(ib^{2}+2b\eta-i\eta^{2})\tfrac{y^{2}}{4}Q^{3}-b^{3}(yQ^{2}T_{2,0}+\tfrac{y^{4}}{16}Q^{3})\}\\
 & \peq+O(\chf_{[B_{0},2B_{1}]}b^{3}\tfrac{1}{y^{2}}+\chf_{(0,2B_{1}]}\tfrac{b^{3}}{|\log b|}\tfrac{1}{\brk{y}^{2}}+\chf_{(0,2B_{0}]}b^{4}).
\end{align*}
Taking the $\nrm{\abs{\cdot}_{-1}}_{L^{2}}$ norm, the claim follows.

Next, we claim that 
\begin{equation}
iA_{P}A_{P}^{\ast}P_{2}=\chi_{B_{0}}\{(ib^{2}+2b\eta-i\eta^{2})A_{Q}g_{2}-b^{3}A_{Q}g_{3,0}\}+O_{\dot{\calH}_{2}^{1}}(\tfrac{b^{3}}{|\log b|}),\label{eq:P2-equation-claim6}
\end{equation}
Recall from \eqref{eq:A_P-P_2} that 
\[
A_{P}^{\ast}P_{2}=\chi_{B_{0}}\{(ib^{2}+2b\eta-i\eta^{2})g_{2}-b^{3}g_{3,0}\}+\chf_{(0,2B_{0}]}\cdot O_{\||\cdot|_{-2}\|_{L^{2}}}(\tfrac{b^{3}}{|\log b|}).
\]
Thus 
\[
iA_{Q}A_{P}^{\ast}P_{2}=\chi_{B_{0}}\{(ib^{2}+2b\eta-i\eta^{2})A_{Q}g_{2}-b^{3}A_{Q}g_{3,0}\}+O_{\dot{\calH}_{2}^{1}}(\tfrac{b^{3}}{|\log b|}).
\]
On the other hand, using $(A_{P}-A_{Q})f=-\tfrac{1}{y}(A_{\tht}[P]-A_{\tht}[Q])f$,
\eqref{eq:AthtP-AthtQ}, \eqref{eq:A_P-P_2}, \eqref{eq:g2-sharp}
and \eqref{eq:g30-sharp}, we have 
\[
\nrm{(A_{P}-A_{Q})A_{P}^{\ast}P_{2}}_{\dot{\calH}_{2}^{1}}\aleq\nrm{\chf_{(0,2B_{0}]}(\tfrac{b}{\abs{\log b}}\tfrac{\brk{\log y}}{\brk{y}}+b^{2}y)\abs{A_{P}^{\ast}P_{2}}_{-1}}_{L^{2}}\aleq\tfrac{b^{3}}{\abs{\log b}}.
\]
Thus the claim is shown.

Summing up the above claims \eqref{eq:P2-equation-claim1}--\eqref{eq:P2-equation-claim6}
yield 
\begin{align*}
 & (\partial_{s}-\frac{\lmb_{s}}{\lmb}\Lambda_{-2}+\td{\gmm}_{s}i)P_{2}+iA_{P}A_{P}^{\ast}P_{2}-\Big(\int_{0}^{y}\Re(\overline{P}P_{1})dy'\Big)iP_{2}-i\overline{P}(P_{1})^{2}\\
 & =-\td{\Mod}\cdot\mathbf{v}_{2}+\chi_{B_{0}}\{(ib^{2}+2b\eta-i\eta^{2})(\tfrac{y^{2}}{4}Q^{3}+A_{Q}g_{2})\\
 & \quad+b^{3}(\Lambda U_{2}-yQ^{2}T_{2,0}-\tfrac{y^{4}}{16}Q^{3}-A_{Q}g_{3,0})\}+O_{\dot{\calH}_{2}^{1}}(\tfrac{b^{3}}{|\log b|}).
\end{align*}
In fact, the $b^{3}$-order term vanishes, by the definition of $U_{3,0}$.
To see this, we rearrange the $b^{3}$-order term as 
\begin{align*}
 & \Lambda U_{2}-yQ^{2}T_{2,0}-\tfrac{y^{4}}{16}Q^{3}-A_{Q}g_{3,0}\\
 & =\Lambda(A_{Q}T_{2,0})-\tfrac{1}{2}yQ^{2}T_{2,0}-(QT_{2,0}+\tfrac{y^{3}}{8}Q^{2})(\tfrac{y}{2}Q)-A_{Q}g_{3,0}.
\end{align*}
Using the scaling identity $\Lambda A_{Q}T_{2,0}-\tfrac{1}{2}yQ^{2}T_{2,0}=A_{Q}\Lambda_{1}T_{2,0}$
and \eqref{eq:AQg30}, the above display continues as 
\[
=A_{Q}\Lambda_{1}T_{2,0}-(QT_{2,0}+\tfrac{y^{3}}{8}Q^{2})(\tfrac{y}{2}Q)-A_{Q}g_{3,0}=0.
\]
Next, by the definition of $g_{2}$, the quadratic order term \emph{almost
vanishes}. Indeed, using the scaling identity $A_{Q}\Lambda=\Lambda_{-1}A_{Q}-\tfrac{yQ^{2}}{2}$
and $A_{Q}(yQ)=0$, we have 
\begin{align*}
 & \chi_{B_{0}}\{(ib^{2}+2b\eta-i\eta^{2})(\tfrac{y^{2}}{4}Q^{3}+A_{Q}g_{2})\}\\
 & =-\chi_{B_{0}}\{c_{b}(ib^{2}+2b\eta-i\eta^{2})(\partial_{y}\chi_{B_{0}})\tfrac{y}{2}Q\}=O_{\dot{\calH}_{2}^{1}}(\tfrac{b^{3}}{|\log b|}).
\end{align*}
Therefore, 
\begin{align*}
 & (\partial_{s}-\frac{\lmb_{s}}{\lmb}\Lambda_{-2}+\td{\gmm}_{s}i)P_{2}+iA_{P}A_{P}^{\ast}P_{2}-\Big(\int_{0}^{y}\Re(\overline{P}P_{1})dy'\Big)iP_{2}-i\overline{P}(P_{1})^{2}\\
 & =-\td{\Mod}\cdot\mathbf{v}_{2}+O_{\dot{\calH}_{2}^{1}}(\tfrac{b^{3}}{|\log b|})\eqqcolon-\td{\Mod}\cdot\mathbf{v}_{2}+\Psi_{2}.
\end{align*}
The proof of \eqref{eq:Psi2-SharpEnergy} is now completed. 
\end{proof}

\section{\label{sec:Trapped-solutions}Trapped solutions}

So far, we constructed the modified profiles $P$, $P_{1}$, $P_{2}$,
and derived the formal modulation equations \eqref{eq:FormalParameterLaw}.
Applying the modulation parameters satisfying \eqref{eq:FormalParameterLaw}
to the modified profiles give approximate finite-time blow-up solutions
to \eqref{eq:CSS-equiv-u}. In this section, we hope to construct
a full nonlinear solution $u$ to \eqref{eq:CSS-equiv-u}, whose evolution
closely follows that of the approximate solution.

To achieve this, we will decompose our solution $u$ of the form 
\[
u(t,r)=\frac{e^{i\gmm(t)}}{\lmb(t)}[P(\cdot;b(t),\eta(t))+\eps(t,\cdot)]\Big(\frac{r}{\lmb(t)}\Big),
\]
where $\eps(t,y)$ is the error part of $u$. We will fix the decomposition
by imposing certain orthogonality conditions. We then apply a robust
energy method with a bootstrap argument to show that $\eps$ is sufficiently
small (and goes to $0$ at the blow-up time), guaranteeing that the
modulation parameters $\lmb,\gmm,b,\eta$ evolve as in \eqref{eq:FormalParameterLaw}.

As mentioned earlier, we carry out the analysis on the hierarchy of
equations for $w$, $w_{1}$, $w_{2}$: \eqref{eq:w-eqn-sd}, \eqref{eq:w1-eqn-sd},
and \eqref{eq:w2-eqn-sd}. As our modified profiles $P$, $P_{1}$,
$P_{2}$ are motivated from this hierarchical structure, the decomposition
of $u$ will also be based on this structure. Indeed, we use the decompositions
\begin{equation}
\begin{aligned}w & =e^{-i\gmm}\lmb u(\lmb\cdot)=P(\cdot;b,\eta)+\eps,\\
w_{1} & =\bfD_{w}w=P_{1}(\cdot;b,\eta)+\eps_{1},\\
w_{2} & =A_{w}w_{1}=P_{2}(\cdot;b,\eta)+\eps_{2},
\end{aligned}
\label{eq:w012}
\end{equation}
and impose four orthogonality conditions to fix the decomposition.

In this hierarchy, $\eps_{1}$ or $\eps_{2}$ are the same as $L_{Q}\eps$
or $A_{Q}L_{Q}\eps$, respectively, at the leading order. In the previous
work \cite{KimKwon2020arXiv}, the authors used linear adapted derivatives
such as $L_{Q}\eps$, $A_{Q}L_{Q}\eps$, or $A_{Q}^{\ast}A_{Q}L_{Q}\eps$.
Such adapted derivatives were used in the earlier works \cite{RaphaelRodnianski2012Publ.Math.,MerleRaphaelRodnianski2013InventMath,MerleRaphaelRodnianski2015CambJMath,Collot2018MemAMS}.
In this paper, however, we proceed to \emph{nonlinear} adapted derivatives.
Compared to that the linear adapted derivatives are chosen to respect
the linear flows, our \emph{nonlinear} adapted derivatives are chosen
to respect the nonlinear flows. It turns out that going up to higher
order by nonlinear adapted derivatives is more efficient, in the sense
that error terms in the evolution equations are much simpler.

The roles of the equations at different levels are all distinct. The
evolution equations of $\lmb$ and $\gmm$ are derived at the level
of the $w$-equation. The $w_{1}$-equation detects the sharp evolution
equations of $b$ and $\eta$, from which we observe the logarithmic
corrections in the blow-up rate \eqref{eq:sharp-lambda-asymptotics}.
Finally, the energy method will be applied to $\eps_{2}$, where we
observe the repulsivity \eqref{eq:Def-Vtilde}, and the full degeneracy
of $P_{2}$ \eqref{eq:v2-estimate}.

\subsection{Decompositions of solutions}

In this subsection, we explain in detail how we decompose our solutions.
We use the decomposition 
\[
u(t,r)=\frac{e^{i\gmm(t)}}{\lmb(t)}[P(\cdot;b(t),\eta(t))+\eps(t,\cdot)]\Big(\frac{r}{\lmb(t)}\Big).
\]
For each time $t$, there are four degrees of freedom to choose the
parameters $\lmb,\gmm,b,\eta$. We determine them by imposing four
orthogonality conditions on $\eps$. What follows is a fixed-time
analysis and we omit the time variable $t$.

We note that in the hierarchy of the variables $w$, $w_{1}$, $w_{2}$,
the modulation parameters $\lmb,\gmm,b,\eta$ and the error parts
$\eps$, $\eps_{1}$, $\eps_{2}$ are determined according to the
decomposition \eqref{eq:w012}: 
\begin{equation}
\begin{aligned}w & \coloneqq e^{-i\gmm}\lmb u(\lmb\cdot), & w_{1} & \coloneqq\bfD_{w}w, & w_{2} & \coloneqq A_{w}w_{1},\\
\eps & \coloneqq w-P(\cdot;b,\eta), & \eps_{1} & \coloneqq w_{1}-P_{1}(\cdot;b,\eta), & \eps_{2} & \coloneqq w_{2}-P_{2}(\cdot;b,\eta).
\end{aligned}
\label{eq:def-e-e2}
\end{equation}

We will consider two different decompositions, corresponding to two
different orthogonality conditions. Perhaps a standard decomposition
would require $\eps$ to lie in $N_{g}(\calL_{Q}i)^{\perp}$. However,
due to the slow decay of the generalized kernel elements, we will
use truncated orthogonality conditions. This means that, for some
large $M>1$ to be chosen later, we impose 
\begin{equation}
(\eps,\calZ_{1})_{r}=(\eps,\calZ_{2})_{r}=(\eps,\calZ_{3})_{r}=(\eps,\calZ_{4})_{r}=0,\label{eq:OrthogonalityRough}
\end{equation}
where (recall $\chi_{M}$ from the notation section)
\begin{align*}
\calZ_{1} & \coloneqq y^{2}Q\chi_{M}-\frac{2(\rho,y^{2}Q\chi_{M})_{r}}{(yQ,yQ\chi_{M})_{r}}L_{Q}^{\ast}(yQ\chi_{M}),\\
\calZ_{2} & \coloneqq i\rho\chi_{M}-\frac{(y^{2}Q,\rho\chi_{M})_{r}}{2(yQ,yQ\chi_{M})_{r}}L_{Q}^{\ast}(iyQ\chi_{M}),\\
\calZ_{3} & \coloneqq L_{Q}^{\ast}(iyQ\chi_{M}),\\
\calZ_{4} & \coloneqq L_{Q}^{\ast}(yQ\chi_{M}).
\end{align*}
Another way of putting this is to say $\eps\in\calZ^{\perp}$, where
$\calZ^{\perp}$ is a codimension four linear subspace of $H_{0}^{3}$
defined by 
\begin{equation}
\calZ^{\perp}\coloneqq\{\eps\in H_{0}^{3}:(\eps,\calZ_{1})_{r}=(\eps,\calZ_{2})_{r}=(\eps,\calZ_{3})_{r}=(\eps,\calZ_{4})_{r}=0\}.\label{eq:def-Z-perp}
\end{equation}
We call this decomposition the \emph{rough decomposition}. We will
use it as a preliminary decomposition, for instance when we describe
the initial data set and its coordinates. The choices of \eqref{eq:def-Z-perp}
is motivated from the transversality condition; see \eqref{eq:Transversality-Z1234}
below.

However, we will use a different decomposition that detects sharper
modulation equations for $b$ and $\eta$. In view of the hierarchical
structure, these are well-detected from the $\eps_{1}$-equation instead
of the $\eps$-equation. One may observe the error for a more refined
modulation equation $\td{\Mod}$ in the $P_{1}$-equation \eqref{eq:P1-equation}.
Thus we replace the third and fourth orthogonality conditions in \eqref{eq:OrthogonalityRough}
by orthogonality conditions for $\eps_{1}$: 
\begin{equation}
(\eps,\calZ_{1})_{r}=(\eps,\calZ_{2})_{r}=(\eps_{1},\td{\calZ}_{3})_{r}=(\eps_{1},\td{\calZ}_{4})_{r}=0,\label{eq:OrthogonalityNonlinear}
\end{equation}
where 
\begin{align*}
\td{\calZ}_{3} & \coloneqq iyQ\chi_{M},\\
\td{\calZ}_{4} & \coloneqq yQ\chi_{M}.
\end{align*}
In view of $\eps_{1}\approx L_{Q}\eps$ up to the leading order, this
is a slight modification of the rough decomposition. We will call
this the \emph{nonlinear decomposition, }as $\eps$ does not belong
to a fixed codimension four linear subspace. More precisely, after
writing \eqref{eq:OrthogonalityNonlinear} in terms of $b,\eta,\eps$,
we see that $\eps$ belongs to some codimension four manifold \emph{depending}
on $b$ and $\eta$. The nonlinear decomposition \emph{does not} in
general mean that $\eps$ belongs to $\calZ^{\perp}$. 
\begin{lem}[Estimates of $\calZ_{k}$'s]
The following estimates hold. 
\begin{enumerate}
\item (Logarithmic divergence) 
\begin{equation}
(yQ,yQ\chi_{M})_{r}=16\pi\log M+O(1).\label{eq:yQyQ}
\end{equation}
\item (Pointwise estimates) 
\begin{align*}
|\calZ_{1}|_{1}+|\calZ_{2}|_{1} & \aleq M^{2}Q\chf_{(0,2M]},\\
|\calZ_{3}|_{1}+|\calZ_{4}|_{1} & \aleq Q\chf_{(0,2M]},\\
|\td{\calZ}_{3}|_{1}+|\td{\calZ}_{4}|_{1} & \aleq yQ\chf_{(0,2M]}.
\end{align*}
\item (Transversality) For $k\in\{1,2,3,4\}$, we have 
\begin{equation}
\begin{aligned}(\Lambda Q,\calZ_{k})_{r} & =(-(yQ,yQ\chi_{M})_{r}+O(1))\delta_{1k},\\
(-iQ,\calZ_{k})_{r} & =(-\tfrac{1}{4}(yQ,yQ\chi_{M})_{r}+O(1))\delta_{2k},\\
(i\tfrac{y^{2}}{4}Q,\calZ_{k})_{r} & =\tfrac{1}{2}(yQ,yQ\chi_{M})_{r}\delta_{3k},\\
(\rho,\calZ_{k})_{r} & =\tfrac{1}{2}(yQ,yQ\chi_{M})_{r}\delta_{4k}.
\end{aligned}
\label{eq:Transversality-Z1234}
\end{equation}
For $k\in\{3,4\}$, we have 
\begin{equation}
\begin{aligned}(i\tfrac{y}{2}Q,\td{\calZ}_{k})_{r} & =\tfrac{1}{2}(yQ,yQ\chi_{M})_{r}\delta_{3k},\\
(\tfrac{y}{2}Q,\td{\calZ}_{k})_{r} & =\tfrac{1}{2}(yQ,yQ\chi_{M})_{r}\delta_{4k}.
\end{aligned}
\label{eq:Transversality-Z-tilde-34}
\end{equation}
\end{enumerate}
\end{lem}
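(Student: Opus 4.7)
The plan is to dispatch the three items by direct computation using the closed form $Q(y)=\sqrt 8/(1+y^2)$, the generalized null-space identities \eqref{eq:LQ-ker} and \eqref{eq:LQ-gen-ker}, and the adjointness $(f, L_Q^{\ast} g)_r = (L_Q f, g)_r$. For (i), I would substitute $Q$ directly into
\[
(yQ, yQ\chi_M)_r = 2\pi \int_0^\infty y^3 Q^2 \chi_M\,dy = 16\pi \int_0^\infty \frac{y^3}{(1+y^2)^2}\chi_M(y)\,dy,
\]
then split $y^3/(1+y^2)^2 = 1/y + O(y^{-3})$ for large $y$; the remainder integrates to $O(1)$, while $\int_0^\infty y^{-1}\chi_M\,dy = \log M + O(1)$ by the cutoff properties of $\chi_M$, producing \eqref{eq:yQyQ}.

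For (ii), the bounds are crude estimates on the support. Every term in $\mathcal{Z}_k$ and $\widetilde{\mathcal{Z}}_k$ is supported in $(0, 2M]$: the visible $\chi_M$ factors cap the local pieces, while the $L_Q^{\ast}$-contributions in $\mathcal{Z}_1, \mathcal{Z}_2$ inherit support through either the outer $Q$ factor from $\bfD_Q^{\ast}$ or the tail integral $\int_y^\infty \Re(\cdot)\,dy'$, which vanishes for $y \geq 2M$. On this support, $|y^2 Q|_1 \lesssim M^2 Q$ and $|\rho|_1 \lesssim y^2 Q \lesssim M^2 Q$ by \eqref{eq:rho-estimate}, which dominates the subleading $L_Q^{\ast}$-pieces (bounded by $Q$ times a logarithmic factor); the renormalization constants $c_1, c_2$ in $\mathcal{Z}_1, \mathcal{Z}_2$ are $O(1)$ by \eqref{eq:rho-estimate} combined with (i). The estimates on $|\widetilde{\mathcal{Z}}_k|_1$ are immediate.

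For (iii), the adjointness identity reduces every pairing $(\psi, L_Q^{\ast} v)_r$ to $(L_Q\psi, v)_r$. Combined with $L_Q \Lambda Q = L_Q(iQ) = 0$, $L_Q(i\tfrac{y^2}{4}Q) = \tfrac{1}{2} iyQ$, $L_Q\rho = \tfrac{1}{2} yQ$, and the elementary identity $(f, ig)_r = 0$ for real $f, g$, this yields all off-diagonal vanishings in \eqref{eq:Transversality-Z1234} and \eqref{eq:Transversality-Z-tilde-34} as well as the diagonal values for $\mathcal{Z}_3, \mathcal{Z}_4, \widetilde{\mathcal{Z}}_3, \widetilde{\mathcal{Z}}_4$. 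The coefficients $c_1, c_2$ inside $\mathcal{Z}_1, \mathcal{Z}_2$ were engineered precisely so that $(\rho, \mathcal{Z}_1)_r = 0$ and $(i\tfrac{y^2}{4}Q, \mathcal{Z}_2)_r = 0$. Two nontrivial diagonal computations remain. For $(\Lambda Q, y^2 Q\chi_M)_r$, I would use $\Lambda Q = \partial_y(yQ)$ and integrate by parts; writing $3Q + y\partial_y Q = 2Q + \Lambda Q$ the identity self-improves to $2(\Lambda Q, y^2 Q\chi_M)_r = -2(yQ, yQ\chi_M)_r + O(1)$, the $O(1)$ being a boundary contribution from $\partial_y\chi_M$ supported in $y \sim M$. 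For $(-iQ, i\rho\chi_M)_r = -(Q, \rho\chi_M)_r$, I would invoke $\mathcal{L}_Q\rho = Q$ from Lemma~\ref{lem:calLQ-genker} together with the fact that $\mathcal{L}_Q = L_Q^{\ast} L_Q$ at $Q$ (immediate from the general formula for $\mathcal{L}_w$ and $\bfD_Q Q = 0$; see \eqref{eq:calLQ}) to write
\[
(Q, \rho\chi_M)_r = (L_Q\rho, L_Q(\rho\chi_M))_r = \tfrac{1}{4}(yQ, yQ\chi_M)_r + O(1),
\]
where the $O(1)$ absorbs $(L_Q\rho, [L_Q, \chi_M]\rho)_r$.

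The main technical obstacle lies in the last step: unlike a classical differential commutator, $[L_Q, \chi_M]\rho$ also contains the nonlocal piece $Q\bigl(B_Q(\chi_M \rho) - \chi_M B_Q\rho\bigr)$ built from $Q$-weighted integrals of $\rho$ over annuli near $y \sim M$. The needed estimate is that this annular contribution, tested against $\tfrac{1}{2}yQ$, is uniformly $O(1)$; this is doable using the pointwise decay $Q \lesssim \langle y\rangle^{-2}$ together with the boundedness of $\rho$ at infinity (implicit in \eqref{eq:rho-estimate} and $L_Q\rho = \tfrac{1}{2}yQ$), but it is the most delicate bookkeeping in the argument.
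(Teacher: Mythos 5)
Your proposal is correct and follows essentially the same route as the paper: part (1) by direct computation with the explicit $Q$, part (2) by support considerations plus the pointwise bounds $|L_{Q}^{\ast}(yQ\chi_{M})|\lesssim Q\mathbf{1}_{(0,2M]}$ and $|\rho|\lesssim y^{2}Q$, and part (3) by adjointness, the identities $L_{Q}\Lambda Q=L_{Q}(iQ)=0$, $L_{Q}\rho=\tfrac{1}{2}yQ$, $L_{Q}(i\tfrac{y^{2}}{4}Q)=\tfrac{1}{2}iyQ$, real/imaginary parity, and exactly the two nontrivial diagonal computations the paper performs (your $(L_{Q}\rho,L_{Q}(\rho\chi_{M}))_{r}$ is literally the paper's $\tfrac{1}{2}(yQ,L_{Q}(\rho\chi_{M}))_{r}$, with the same $O(1)$ commutator term $[L_{Q},\chi_{M}]\rho$ to control). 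One minor slip: the coefficients in $\mathcal{Z}_{1},\mathcal{Z}_{2}$ are not $O(1)$ but only $O(M^{2}/\log M)$, since $(\rho,y^{2}Q\chi_{M})_{r}$ is generically of size $M^{2}$; this is harmless because $|L_{Q}^{\ast}(yQ\chi_{M})|_{1}\lesssim Q\mathbf{1}_{(0,2M]}$ still yields the stated bound $\lesssim M^{2}Q\mathbf{1}_{(0,2M]}$.
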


\begin{proof}
(1) This is immediate from the explicit formula \eqref{eq:Q-formula}
of $Q$.

(2) The pointwise estimates for $\calZ_{1}$ and $\calZ_{2}$ follow
from \eqref{eq:rho-estimate} and 
\begin{align*}
|L_{Q}^{\ast}(yQ\chi_{M})|+|L_{Q}^{\ast}(iyQ\chi_{M})| & \aleq Q\chf_{(0,2M]},\\
|(\rho,y^{2}Q\chi_{M})_{r}| & \aleq M^{2},\\
(yQ,yQ\chi_{M})_{r} & \sim\log M.
\end{align*}
The pointwise estimates for $\td{\calZ}_{3}$ and $\td{\calZ}_{4}$
are immediate.

(3) Let $k\in\{1,2\}$. Since $\calZ_{1}$ is real, $\calZ_{2}$ is
imaginary, and $L_{Q}\Lambda Q=L_{Q}iQ=0$, we have 
\begin{align*}
(\Lambda Q,\calZ_{k})_{r} & =(\Lambda Q,y^{2}Q\chi_{M})_{r}\delta_{1k},\\
(-iQ,\calZ_{k})_{r} & =-(Q,\rho\chi_{M})_{r}\delta_{2k}.
\end{align*}
We then compute 
\begin{align*}
(\Lambda Q,y^{2}Q\chi_{M})_{r} & =\tfrac{1}{2}([y^{2}\chi_{M},\Lambda]Q,Q)_{r}=-(y^{2}Q\chi_{M},Q)_{r}+O(1),\\
(Q,\rho\chi_{M})_{r} & =\tfrac{1}{2}(yQ,L_{Q}(\rho\chi_{M}))_{r}=\tfrac{1}{4}(yQ,yQ\chi_{M})_{r}+O(1).
\end{align*}
Next, using $L_{Q}\rho=\tfrac{1}{2}yQ$ and $L_{Q}iy^{2}Q=2iyQ$,
we see that the additional terms in the definition of $\calZ_{1}$
and $\calZ_{2}$ are chosen to satisfy 
\[
(iy^{2}Q,\calZ_{k})_{r}=(\rho,\calZ_{k})_{r}=0.
\]

Let $k\in\{3,4\}$. Since $L_{Q}\Lambda Q=L_{Q}iQ=0$, we have 
\[
(\Lambda Q,\calZ_{k})_{r}=(-iQ,\calZ_{k})_{r}=0.
\]
Since $\calZ_{3}$ is imaginary, $\calZ_{4}$ is real, $L_{Q}\rho=\tfrac{1}{2}yQ$,
and $L_{Q}iy^{2}Q=2iyQ$, we have 
\begin{align*}
(i\tfrac{y^{2}}{4}Q,\calZ_{k})_{r} & =\tfrac{1}{2}(yQ,yQ\chi_{M})_{r}\delta_{3k},\\
(\rho,\calZ_{k})_{r} & =\tfrac{1}{2}(yQ,yQ\chi_{M})_{r}\delta_{4k}.
\end{align*}
Thus \eqref{eq:Transversality-Z1234} is proved. Finally, \eqref{eq:Transversality-Z-tilde-34}
for $\td{\calZ}_{3}$ and $\td{\calZ}_{4}$ are immediate from the
fact that $\td{\calZ}_{3}$ is imaginary and $\td{\calZ}_{4}$ is
real.
\end{proof}
We will define an open set $\calO_{\dec}\subseteq H_{0}^{3}$ near
the set of modulated solitons (i.e., the set of all $\frac{e^{i\gmm}}{\lmb}Q(\frac{\cdot}{\lmb})$'s),
on which both the above decompositions can be made. The set of coordinates
$(\lmb,\gmm,b,\eta,\eps)$ will be denoted by $\calU_{\dec}$. For
$\delta_{\dec}>0$ to be chosen, we define $\calU_{\dec}\subseteq\bbR_{+}\times\bbR/2\pi\bbZ\times\bbR\times\bbR\times\calZ^{\perp}$
by the set of $(\lmb,\gmm,b,\eta,\eps)$ satisfying 
\[
0<b<\delta_{\dec},\quad|\eta|<\tfrac{2b}{|\log b|},\quad\|\eps\|_{H_{0}^{3}}<\delta_{\dec}.
\]
The set $\calO_{\dec}$ is defined by the set of images 
\[
\calO_{\dec}\coloneqq\{\frac{e^{i\gmm}}{\lmb}[P(\cdot;b,\eta)+\eps]\Big(\frac{r}{\lmb}\Big):(\lmb,\gmm,b,\eta,\eps)\in\calU_{\dec}\}.
\]

\begin{lem}[Decompositions]
\label{lem:decomp}For all sufficiently large $M$, there exist $\delta_{1}>\delta_{1}'>\delta_{\dec}>0$
such that the following holds.
\begin{enumerate}
\item (The set $\calO_{\dec}$ and rough decomposition) The set $\calO_{\dec}$
is open in $H_{0}^{3}$. Moreover, the map 
\[
[\Phi(\lmb,\gmm,b,\eta,\eps)](r)\coloneqq\frac{e^{i\gmm}}{\lmb}[P(\cdot;b,\eta)+\eps]\Big(\frac{r}{\lmb}\Big)
\]
is a homeomorphism from $\overline{\calU}_{\dec}$ to $\overline{\calO}_{\dec}$.
We denote by $\mathbf{G}^{(1)}$ the $(\lmb,\gmm,b,\eta)$-components
of $\Phi^{-1}$. In other words, for any $u\in\br{\calO}_{\dec}$,
$\mathbf{G}^{(1)}(u)$ denotes the modulation parameters for the rough
decomposition satisfying \eqref{eq:OrthogonalityRough}.
\item (Nonlinear decomposition) For any $u\in\overline{\calO}_{\dec}$,
there exists unique $(\mathbf{G}^{(2)},\eps)=(\lmb,\gmm,b,\eta,\eps)\in\bbR_{+}\times\bbR/2\pi\bbZ\times B_{\delta_{1}}(0)\times B_{\delta_{1}}(0)\times B_{\delta_{1}'}(0)$\footnote{Since we are using two different decompositions, we have two different
$(\lmb,\gmm,b,\eta,\eps)$ for the same $u\in\overline{\calO}_{\dec}$.
We will use the same notation $(\lmb,\gmm,b,\eta,\eps)$ when no confusion
arises.} satisfying \eqref{eq:OrthogonalityNonlinear}, namely, 
\[
(\eps,\calZ_{1})_{r}=(\eps,\calZ_{2})_{r}=(\eps_{1},\td{\calZ}_{3})_{r}=(\eps_{1},\td{\calZ}_{4})_{r}=0.
\]
\item ($C^{1}$-regularity) The map $u\mapsto(\lmb,\gmm,b,\eta)$ for each
decomposition is $C^{1}$, i.e., the maps $\mathbf{G}^{(1)}$ and
$\mathbf{G}^{(2)}$ are $C^{1}$.
\item (Difference estimate) For $u\in\overline{\calO}_{\dec}$, we have
\begin{equation}
\mathrm{dist}(\mathbf{G}^{(1)}(u),\mathbf{G}^{(2)}(u))\aleq|(\eps_{1},\td{\calZ}_{3})_{r}|+|(\eps_{1},\td{\calZ}_{4})_{r}|,\label{eq:DecompositionDifference}
\end{equation}
where $\eps_{1}$ is computed using the rough decomposition and the
formula \eqref{eq:def-e-e2}.
\item (Initial data set) Recall the initial data sets \eqref{eq:def-Utilde-init}--\eqref{eq:def-O-init}.
If $b^{\ast}>0$ is sufficiently small depending on $M$ (in particular
$b^{\ast}\ll\delta_{\mathrm{dec}}$), then we have $\calU_{\init}\subseteq\calU_{\dec}$
and $\calO_{\init}\subseteq\calO_{\dec}$. Moreover, the statements
of (1) also hold when we replace $\calU_{\dec}$ and $\calO_{\dec}$
by $\calU_{\init}$ and $\calO_{\init}$, respectively.
\end{enumerate}
\end{lem}

\begin{proof}
The proof is an extension of \cite[Lemma 4.2]{KimKwon2020arXiv}.
We include the full proof for the reader's convenience.

Let us introduce some notation to be used in this proof. For $\lmb\in\bbR_{+}$
and $\gmm\in\bbR/2\pi\bbZ$, let us denote 
\[
f_{\lmb,\gmm}(y)\coloneqq\frac{e^{i\gmm}}{\lmb}f\Big(\frac{y}{\lmb}\Big),\quad X_{\lmb,\gmm}\coloneqq\{f_{\lmb,\gmm}:f\in X\}.
\]
We equip $\bbR_{+}$ with the metric $\mathrm{dist}(\lmb_{1},\lmb_{2})=|\log(\lmb_{1}/\lmb_{2})|$,
and equip $\bbR/2\pi\bbZ$ with the induced metric from $\bbR$. We
will choose small parameters $\delta_{1},\delta_{1}',\delta_{2},\delta_{\dec}>0$
on the way, with the parameter dependence 
\[
0<b^{\ast}\ll\delta_{\dec}\ll\delta_{2}\ll\delta_{1}'\ll\delta_{1}\ll M^{-1}\ll1,
\]
which means that $\delta_{1}$ is chosen sufficiently small depending
on the large parameter $M$, $\delta_{1}'$ is chosen sufficiently
small depending on $\delta_{1}$ (and hence only on $M$), and so
on.

\textbf{Step 1:} \emph{Extension of the profiles $P$ and $P_{1}$}.

Notice that in Section~\ref{sec:Modified-profiles}, the profiles
$P$ and $P_{1}$ are considered only for $(b,\eta)$ with $|\eta|\ll b$
(specifically $|\eta|\leq\frac{b}{|\log b|}$ with $b>0$ small),
\emph{not} for all $|(b,\eta)|\ll1$. As we want to apply the implicit
function theorem at $Q=P(\cdot;0,0)$, we will consider artificial
extensions $\td P(y;b,\eta)$ and $\td P_{1}(y;b,\eta)$ of $P(y;b,\eta)$
and $P_{1}(y;b,\eta)$ defined for all $(b,\eta)$ in a neighborhood
of $(0,0)$, respectively.

First, we extend $P(y;b,\eta)$ and $P_{1}(y;b,\eta)$ for $|\eta|\leq\frac{2b}{|\log b|}$
and $|b|<\delta_{1}$. If $b=0$ (hence $\eta=0$), then we set $P(\cdot;0,0)=Q$
and $P_{1}(\cdot;0,0)=0$. If $b\neq0$, then define $P$ and $P_{1}$
via the formulae \eqref{eq:P-equation} and \eqref{eq:P1-equation}
with $B_{0}=|b|^{-\frac{1}{2}}$, $B_{1}=|b|^{-\frac{1}{2}}|\log|b||^{-1}$,
$S_{2,0}(y;b,\eta)\coloneqq S_{2,0}(y;|b|,\eta)$, and similarly for
$\td T_{2},T_{3,0}$. We remark that the estimates \eqref{eq:v-estimate}
and \eqref{eq:v1-estimate} are still valid for $|b|<\delta_{1}$.
In particular $\partial_{b}P=-i\frac{y^{2}}{4}Q$ and $\partial_{b}P_{1}=-i\tfrac{y}{2}Q$
when $(b,\eta)=(0,0)$.

Next, in order to define the extensions $\td P$ and $\td P_{1}$
for all $|(b,\eta)|\ll1$, we will introduce a suitable cutoff function
for $\eta$. Choose a smooth function $\psi:\bbR\to\bbR$ such that
$\psi(\td{\eta})=\td{\eta}$ for $|\td{\eta}|\leq2$ and $\sup|\psi|_{1}\aleq1$.
For $|b|<\delta_{1}$, we define $\psi_{b}(\td{\eta})=\frac{|b|}{|\log|b||}\psi(\frac{|\log|b||}{|b|}\td{\eta})$
if $b\neq0$ and $\psi_{0}(\td{\eta})=0$. Thus $\partial_{b}\psi_{b}(\td{\eta})=-\mathrm{sgn}(b)(\frac{1}{|\log|b||}+\frac{1}{|\log|b||^{2}})[\Lambda_{2}\psi](\frac{|\log|b||}{|b|}\td{\eta})$
if $b\neq0$ and $\partial_{b=0}\psi_{b}(\td{\eta})=0$. In particular,
$\|\partial_{b}\psi_{b}\|_{L^{\infty}}\aleq\frac{1}{|\log|b||}$.
Finally, we define 
\begin{align*}
\td P(\cdot;b,\eta) & \coloneqq P(\cdot;b,\psi_{b}(\eta))-(\eta-\psi_{b}(\eta))\rho\chi_{2M},\\
\td P_{1}(\cdot;b,\eta) & \coloneqq P_{1}(\cdot;b,\psi_{b}(\eta))-(\eta-\psi_{b}(\eta))(\tfrac{y}{2}Q\chi_{2M}),
\end{align*}
for $|\eta|,|b|<\delta_{1}$. By the definition, $\td P(\cdot;b,\eta)=P(\cdot;b,\eta)$
for $|\eta|\leq\frac{2b}{|\log b|}$.

\textbf{Step 2:} \emph{Setting for the implicit function theorem}.

The main part of the proof is to use the implicit function theorem.
Define the maps 
\[
\mathbf{F}^{(1)},\mathbf{F}^{(2)}:\bbR_{+}\times\bbR/2\pi\bbZ\times B_{\delta_{1}}(0)\times B_{\delta_{1}}(0)\times L^{2}\to\bbR^{4}
\]
with variables $\lmb,\gmm,b,\eta,u$ and components $F_{1}^{(j)},F_{2}^{(j)},F_{3}^{(j)},F_{4}^{(j)}$,
by 
\begin{align*}
F_{1}^{(1)} & =(\eps,\calZ_{1})_{r}, & F_{2}^{(1)} & =(\eps,\calZ_{2})_{r}, & F_{3}^{(1)} & =(\eps,\calZ_{3})_{r}, & F_{4}^{(1)} & =(\eps,\calZ_{4})_{r},\\
F_{1}^{(2)} & =(\eps,\calZ_{1})_{r}, & F_{2}^{(2)} & =(\eps,\calZ_{2})_{r}, & F_{3}^{(2)} & =(\eps_{1},\td{\calZ}_{3})_{r}, & F_{4}^{(2)} & =(\eps_{1},\td{\calZ}_{4})_{r},
\end{align*}
where 
\[
\begin{aligned}w & \coloneqq e^{-i\gmm}\lmb u(\lmb\cdot), & w_{1} & \coloneqq\bfD_{w}w,\\
\eps & \coloneqq w-\td P(\cdot;b,\eta), & \eps_{1} & \coloneqq w_{1}-\td P_{1}(\cdot;b,\eta).
\end{aligned}
\]
Here, $\mathbf{F}^{(1)}$ and $\mathbf{F}^{(2)}$ correspond to the
rough and nonlinear decomposition, respectively.

We first consider $\mathbf{F}^{(1)}$. In order to use the implicit
function theorem, we will check that $\mathbf{F}^{(1)}$ is $C^{1}$
and $\partial_{\lmb,\gmm,b,\eta}\mathbf{F}^{(1)}$ is invertible at
$(\lmb,\gmm,b,\eta,u)=(1,0,0,0,Q)$. For different $(\lmb,\gmm)$,
we will apply scale/phase invariances in Step 3. For $(\lmb,\gmm,b,\eta,u)$
near $(1,0,0,0,Q)$, we compute using \eqref{eq:Transversality-Z1234}
\begin{align*}
\partial_{\lmb}F_{k}^{(1)} & =(\Lambda Q,[\calZ_{k}]_{\lmb,\gmm})_{r}-(u-Q,[\Lambda\calZ_{k}]_{\lmb,\gmm})_{r}\\
 & =(-(yQ,yQ\chi_{M})_{r}+O(1))\delta_{1k}+M^{C}O(\mathrm{dist}((\lmb,\gmm),(1,0))+\|u-Q\|_{L^{2}}),\\
\partial_{\gmm}F_{k}^{(1)} & =(-iQ,[\calZ_{k}]_{\lmb,\gmm})_{r}+(u-Q,[i\calZ_{k}]_{\lmb,\gmm})_{r}\\
 & =(-\tfrac{1}{4}(yQ,yQ\chi_{M})_{r}+O(1))\delta_{2k}+M^{C}O(\mathrm{dist}((\lmb,\gmm),(1,0))+\|u-Q\|_{L^{2}}).
\end{align*}
Next, by the pointwise estimates \eqref{eq:v-estimate} and $\|\partial_{b}\psi_{b}\|_{L^{\infty}}\aleq\frac{1}{|\log|b||}$,
we have 
\begin{align*}
 & \chf_{(0,2M]}|\partial_{b}\td P(0;b,\eta)+i\tfrac{y^{2}}{4}Q|\\
 & =\chf_{(0,2M]}\Big|\Big(\partial_{b}P(\cdot;b,\td{\eta})|_{\td{\eta}=\psi_{b}(\eta)}+i\tfrac{y^{2}}{4}Q\Big)+\partial_{b}\psi_{b}(\eta)\cdot\partial_{\td{\eta}=\psi_{b}(\eta)}P(\cdot;b,\td{\eta})+\partial_{b}\psi_{b}(\eta)\rho\Big|\\
 & \aleq\chf_{(0,2M]}(|b|y^{2}+\tfrac{1}{|\log|b||})
\end{align*}
Combining this with \eqref{eq:Transversality-Z1234}, we have 
\[
\partial_{b}F_{k}^{(1)}=(-\partial_{b}\td P,\calZ_{k})_{r}=\tfrac{1}{2}(yQ,yQ\chi_{M})_{r}\delta_{3k}+M^{C}O(\tfrac{1}{|\log|b||}).
\]
Next, again by pointwise estimates \eqref{eq:v-estimate}, we have
\[
\chf_{(0,2M]}|\partial_{\eta}\td P(\cdot;b,\eta)+\rho|=\chf_{(0,2M]}\Big|\psi_{b}'(\eta)\Big(\partial_{\td{\eta}=\psi_{b}(\eta)}P(\cdot;b,\td{\eta})+\rho\Big)\Big|\aleq\chf_{(0,2M]}|b|y^{2}.
\]
Combining this with \eqref{eq:Transversality-Z-tilde-34}, we have
\[
\partial_{\eta}F_{k}^{(1)}=(-\partial_{\eta}\td P,\calZ_{k})_{r}=\tfrac{1}{2}(yQ,yQ\chi_{M})_{r}\delta_{4k}+M^{C}O(|b|).
\]
Finally, we have 
\[
\frac{\delta F_{k}^{(1)}}{\delta u}=(\calZ_{k})_{\lmb,\gmm}\in L^{2}.
\]
In summary, $\mathbf{F}^{(1)}$ is $C^{1}$ and $\partial_{\lmb,\gmm,b,\eta}\mathbf{F}^{(1)}$
is invertible at $(\lmb,\gmm,b,\eta,u)=(1,0,0,0,Q)$ since the nonzero
leading terms are on the diagonal.

We turn to $\mathbf{F}^{(2)}$. We check that $\mathbf{F}^{(2)}$
is $C^{1}$ and $\partial_{\lmb,\gmm,b,\eta}\mathbf{F}^{(2)}$ is
invertible at $(\lmb,\gmm,b,\eta,u)=(1,0,0,0,Q)$. As $\mathbf{F}_{1}^{(2)}=\mathbf{F}_{1}^{(1)}$
and $\mathbf{F}_{2}^{(2)}=\mathbf{F}_{2}^{(1)}$, it suffices to consider
$\mathbf{F}_{k}^{(2)}$ for $k\in\{3,4\}$. Let us temporarily denote
$f_{\text{\ensuremath{\underbar{\ensuremath{\lmb}}}},\gmm}\coloneqq e^{i\gmm}f(\frac{\cdot}{\lmb})$
(the $\dot{H}^{1}$-scaling). For $(\lmb,\gmm,b,\eta,u)$ near $(1,0,0,0,Q)$,
we compute using $\bfD_{Q}Q=0$ and the linearization of the Bogomol'nyi
operator \eqref{eq:LinearizationBogomolnyi} that 
\begin{align*}
\partial_{\lmb}F_{k}^{(2)} & =-(\bfD_{u}u,[\Lambda_{1}\td{\calZ}_{k}]_{\text{\ensuremath{\underbar{\ensuremath{\lmb}}}},\gmm})_{r}\\
 & =-(u-Q,L_{Q}^{\ast}[\Lambda_{1}\td{\calZ}_{k}]_{\text{\ensuremath{\underbar{\ensuremath{\lmb}}}},\gmm})_{r}-(N_{Q}(u-Q),[\Lambda_{1}\td{\calZ}_{k}]_{\text{\ensuremath{\underbar{\ensuremath{\lmb}}}},\gmm})_{r}\\
 & =M^{C}O(\|u-Q\|_{L^{2}}).
\end{align*}
Similarly, 
\[
\partial_{\gmm}F_{k}^{(2)}=(\bfD_{u}u,[i\td{\calZ}_{k}]_{\text{\ensuremath{\underbar{\ensuremath{\lmb}}}},\gmm})_{r}=M^{C}O(\|u-Q\|_{L^{2}}).
\]
For $\partial_{b}$ and $\partial_{\eta}$, by $\|\partial_{b}\psi_{b}\|_{L^{\infty}}\aleq\frac{1}{|\log|b||}$
we have 
\begin{align*}
 & \chf_{(0,2M]}|\partial_{b}\td P_{1}(0;b,\eta)+i\tfrac{y}{2}Q|\\
 & =\chf_{(0,2M]}\Big|\Big(\partial_{b}P_{1}(\cdot;b,\td{\eta})|_{\td{\eta}=\psi_{b}(\eta)}+i\tfrac{y}{2}Q\Big)+\partial_{b}\psi_{b}(\eta)\cdot\partial_{\td{\eta}=\psi_{b}(\eta)}P_{1}(\cdot;b,\td{\eta})+\partial_{b}\psi_{b}(\eta)\tfrac{y}{2}Q\Big|\\
 & \aleq\chf_{(0,2M]}(|b|y+\tfrac{1}{|\log|b||}\tfrac{1}{y}).
\end{align*}
Combining this with \eqref{eq:Transversality-Z-tilde-34}, we have
\[
\partial_{b}F_{k}^{(2)}=(-\partial_{b}\td P_{1},\td{\calZ}_{k})_{r}=\tfrac{1}{2}(yQ,yQ\chi_{M})_{r}\delta_{3k}+M^{C}O(\tfrac{1}{|\log|b||}).
\]
Similarly, we have 
\[
\chf_{(0,2M]}|\partial_{\eta}\td P_{1}(\cdot;b,\eta)+\tfrac{y}{2}Q|=\chf_{(0,2M]}\Big|\psi_{b}'(\eta)\Big(\partial_{\td{\eta}=\psi_{b}(\eta)}P_{1}(\cdot;b,\td{\eta})+\tfrac{y}{2}Q\Big)\Big|\aleq\chf_{(0,2M]}|b|y
\]
so 
\[
\partial_{\eta}F_{k}^{(2)}=(-\partial_{\eta}\td P_{1},\td{\calZ}_{k})_{r}=\tfrac{1}{2}(yQ,yQ\chi_{M})_{r}\delta_{4k}+M^{C}O(|b|).
\]
Finally, we have 
\[
\frac{\delta F_{k}^{(2)}}{\delta u}=L_{u}^{\ast}[\td{\calZ}_{k}]_{\underline{\lmb},\gmm}\in L^{2}.
\]
This shows that $\mathbf{F}^{(2)}$ is $C^{1}$ and $\partial_{\lmb,\gmm,b,\eta}\mathbf{F}^{(2)}$
is invertible at $(\lmb,\gmm,b,\eta,u)=(1,0,0,0,Q)$.

Therefore, by the implicit function theorem, provided that $M\gg1$,
there exist $\delta_{1},\delta_{2}>0$, and $C^{1}$-maps $\mathbf{G}_{1,0}^{(j)}:B_{\delta_{2}}(Q)\to B_{\delta_{1}}(1,0,0,0)$
such that for given $u\in B_{\delta_{2}}(Q)\subseteq L^{2}$, $\mathbf{G}_{1,0}^{(j)}(u)$
is a unique solution to $\mathbf{F}^{(j)}(\mathbf{G}_{1,0}^{(j)}(u),u)=0$
in $B_{\delta_{1}}(1,0,0,0)$. We fix $\delta_{1}$ here, but we can
freely shrink $\delta_{2}$ and in particular we assume $\delta_{2}\ll\delta_{1}$.
Note that we also have a Lipschitz estimate 
\[
\mathrm{dist}(\mathbf{G}_{1,0}^{(j)}(u),(1,0,0,0))\aleq\|u-Q\|_{L^{2}}.
\]
The proof of the implicit function theorem also guarantees the difference
estimate: 
\[
\mathrm{dist}(\mathbf{G}_{1,0}^{(1)}(u),\mathbf{G}_{1,0}^{(2)}(u))\aleq|\mathbf{F}^{(2)}(\mathbf{G}_{1,0}^{(1)}(u),u)-\mathbf{F}^{(2)}(\mathbf{G}_{1,0}^{(2)}(u),u)|=|\mathbf{F}^{(2)}(\mathbf{G}_{1,0}^{(1)}(u),u)|.
\]

\textbf{Step 3:} \emph{Definition and uniqueness of $\mathbf{G}^{(j)}$}.

We now apply scale/phase invariances to cover the $\delta_{2}$-neighborhood
of $\{Q_{\lmb,\gmm}:\lmb\in\bbR_{+},\gmm\in\bbR/2\pi\bbZ\}$ in $L^{2}$.
For $\lmb\in\bbR_{+}$ and $\gmm\in\bbR/2\pi\bbZ$, apply the scale/phase
invariances to $\mathbf{G}_{1,0}^{(j)}$ to define $\mathbf{G}_{\lmb,\gmm}^{(j)}:B_{\delta_{2}}(Q)_{\lmb,\gmm}\to B_{\delta_{1}}(\lmb,\gmm,0,0)$
in the obvious way. Thus uniqueness property of $\mathbf{G}_{\lmb,\gmm}^{(j)}$
holds for values in $B_{\delta_{1}}(\lmb,\gmm,0,0)$ and there holds
the difference estimate 
\begin{equation}
\mathrm{dist}(\mathbf{G}_{\lmb,\gmm}^{(1)}(u),\mathbf{G}_{\lmb,\gmm}^{(2)}(u))\aleq|\mathbf{F}^{(2)}(\mathbf{G}_{\lmb,\gmm}^{(1)}(u),u)|.\label{eq:difference-temp}
\end{equation}

We claim that 
\[
\mathbf{G}^{(j)}\coloneqq{\textstyle \bigcup_{\lmb,\gmm}}\mathbf{G}_{\lmb_{1},\gmm_{1}}^{(j)}:{\textstyle \bigcup_{\lmb,\gmm}}B_{\delta_{2}}(Q)_{\lmb,\gmm}\to\bbR_{+}\times\bbR/2\pi\bbZ\times B_{\delta_{1}}(0)\times B_{\delta_{1}}(0)
\]
is well-defined, i.e. the family $\{\mathbf{G}_{\lmb,\gmm}^{(j)}\}_{\lmb,\gmm}$
is compatible. Indeed, if $u\in B_{\delta_{2}}(Q)_{\lmb_{1},\gmm_{1}}\cap B_{\delta_{2}}(Q)_{\lmb_{2},\gmm_{2}}$,
then $\mathrm{dist}((\lmb_{1},\gmm_{1}),(\lmb_{2},\gmm_{2}))\aleq\delta_{2}$
thus $\mathrm{dist}(\mathbf{G}_{\lmb_{2},\gmm_{2}}^{(j)}(u),(\lmb_{1},\gmm_{1},0,0))\aleq\delta_{2}\ll\delta_{1}$.
Since $\mathbf{G}_{\lmb_{2},\gmm_{2}}^{(j)}(u)$ satisfies the equation
$\mathbf{F}^{(j)}(\mathbf{G}_{\lmb_{2},\gmm_{2}}^{(j)}(u),u)=0$,
we have $\mathbf{G}_{\lmb_{2},\gmm_{2}}^{(j)}(u)=\mathbf{G}_{\lmb_{1},\gmm_{1}}^{(j)}(u)$
by the uniqueness of $\mathbf{G}_{\lmb_{1},\gmm_{1}}^{(j)}(u)$ in
$B_{\delta_{1}}(\lmb_{1},\gmm_{1},0,0)$.

Having defined $\mathbf{G}^{(j)}$, we can define the map 
\[
\eps^{(j)}:{\textstyle \bigcup_{\lmb,\gmm}}B_{\delta_{2}}(Q)_{\lmb,\gmm}\to B_{\delta_{1}'}(0)
\]
by $\eps^{(j)}(u)=u_{\lmb^{-1},-\gmm}-\td P(\cdot;b,\eta)$, where
$(\lmb,\gmm,b,\eta)=\mathbf{G}^{(j)}(u)$. At this point, the map
$\eps^{(j)}$ is defined whenever $0<\delta_{1}'<\delta_{1}$ and
$\delta_{2}\ll\delta_{1}'$. The small parameter $\delta_{1}'\ll\delta_{1}$
will be fixed in the next paragraph.

Next, we claim the uniqueness property of $\mathbf{G}^{(j)}$: given
$u\in\bigcup_{\lmb,\gmm}B_{\delta_{2}}(Q)_{\lmb,\gmm}$, $\mathbf{G}^{(j)}(u)\in\bbR_{+}\times\bbR/2\pi\bbZ\times B_{\delta_{1}}(0)\times B_{\delta_{1}}(0)$
is the unique solution to $\mathbf{F}^{(j)}(\mathbf{G}^{(j)}(u),u)=0$
such that $\|\eps^{(j)}\|_{L^{2}}<\delta_{1}'$. To see this, let
$\mathbf{G}'=(\lmb',\gmm',b',\eta')$ be a solution to $\mathbf{F}^{(j)}(\mathbf{G}',u)=0$
such that $\eps'=u_{(\lmb')^{-1},-\gmm'}-P(\cdot;b',\eta')$ satisfies
$\|\eps'\|_{L^{2}}<\delta_{1}'$. If $\mathrm{dist}(\mathbf{G}',\mathbf{G}^{(j)}(u))<\delta_{1}$,
then $\mathbf{G}^{(j)}(u)=\mathbf{G}'$ by the uniqueness of $\mathbf{G}^{(j)}(u)$.
If $\mathrm{dist}(\mathbf{G}',\mathbf{G}^{(j)}(u))\geq\delta_{1}$,
then $\|\td P(\cdot;b',\eta')_{\lmb',\gmm'}-\td P(\cdot;b,\eta)_{\lmb,\gmm}\|_{L^{2}}\ageq\delta_{1}$
but $\|\eps'\|_{L^{2}},\|\eps^{(j)}\|_{L^{2}}<\delta_{1}'\ll\delta_{1}$,
contradicting $[\td P(\cdot;b',\eta')+\eps']_{\lmb',\gmm'}=u=[\td P(\cdot;b,\eta)+\eps]_{\lmb,\gmm}$.

\textbf{Step 4:} \emph{Coordinate system of the rough decomposition}.

From now on, we work with the $H_{0}^{3}$-topology and $j=1$. Note
that $\eps^{(1)}$ is continuous on the $H_{0}^{3}$-topology, i.e.
\[
\eps^{(1)}:{\textstyle \bigcup_{\lmb,\gmm}}B_{\delta_{2}}^{H_{0}^{3}}(Q)_{\lmb,\gmm}\to B_{\delta_{1}'}^{\calZ^{\perp}}(0)
\]
is continuous. By the definition of $\eps^{(1)}$, the map 
\begin{align*}
(\mathbf{G}^{(1)},\eps^{(1)}):{\textstyle \bigcup_{\lmb,\gmm}}B_{\delta_{2}}^{H_{0}^{3}}(Q)_{\lmb,\gmm} & \to\bbR_{+}\times\bbR/2\pi\bbZ\times B_{\delta_{1}}(0)\times B_{\delta_{1}}(0)\times B_{\delta_{1}'}^{\calZ^{\perp}}(0)\\
u & \mapsto(\mathbf{G}^{(1)}(u),\eps^{(1)}(u))
\end{align*}
has a continuous left inverse 
\begin{gather*}
\Phi:\bbR_{+}\times\bbR/2\pi\bbZ\times B_{\delta_{1}}(0)\times B_{\delta_{1}}(0)\times B_{\delta_{1}'}^{\calZ^{\perp}}(0)\to H_{0}^{3}\\
(\lmb,\gmm,b,\eta,\eps)\mapsto[P(\cdot;b,\eta)+\eps]_{\lmb,\gmm}.
\end{gather*}
Moreover, the uniqueness of $\mathbf{G}^{(1)}$ implies that $\mathrm{Im}(\mathbf{G}^{(1)},\eps^{(1)})=\Phi^{-1}(\bigcup_{\lmb,\gmm}B_{\delta_{2}}^{H_{0}^{3}}(Q)_{\lmb,\gmm})$
(and in particular it is open) and $\Phi|_{\mathrm{Im}(\mathbf{G}^{(1)},\eps^{(1)})}$
is a right inverse of $(\mathbf{G}^{(1)},\eps^{(1)})$. Therefore,
the restriction 
\[
\Phi|_{\mathrm{Im}(\mathbf{G}^{(1)},\eps^{(1)})}:\mathrm{Im}(\mathbf{G}^{(1)},\eps^{(1)})\to{\textstyle \bigcup_{\lmb,\gmm}}B_{\delta_{2}}^{H_{0}^{3}}(Q)_{\lmb,\gmm}
\]
is a homeomorphism with the inverse $(\mathbf{G}^{(1)},\eps^{(1)})$.

\textbf{Step 5:} \emph{Completion of the proof}.

We finish the proof of this lemma.

(1) We further restrict to the sets $\calU_{\dec}$ and $\calO_{\dec}$.
Since $\overline{\calU}_{\dec}$ lies in the domain of $\Phi$ and
$\overline{\calO}_{\dec}\subseteq\bigcup_{\lmb,\gmm}B_{\delta_{2}}^{H_{0}^{3}}(Q)_{\lmb,\gmm}$,
we have $\overline{\calU}_{\dec}\subseteq\Im(\mathbf{G}^{(1)},\eps^{(1)})$
due to the uniqueness of $\mathbf{G}^{(1)}$. Therefore, restricting
the homeomorphism $\Phi|_{\Im(\mathbf{G}^{(1)},\eps^{(1)})}$ on $\overline{\calU}_{\dec}$
implies that $\calO_{\dec}$ is open, $\Phi(\overline{\calU}_{\dec})=\overline{\calO}_{\dec}$,
and $\Phi|_{\overline{\calU}_{\dec}}:\overline{\calU}_{\dec}\to\overline{\calO}_{\dec}$
is a homeomorphism.

(2) This is merely a summary of the properties of $\mathbf{G}^{(2)}$
shown above.

(3) We showed above that $\mathbf{G}^{(j)}$ is $C^{1}$ with respect
to the $L^{2}$-topology. The $C^{1}$ property of $\mathbf{G}^{(j)}$
on the $H_{0}^{3}$-topology is immediate from the embedding $H_{0}^{3}\hookrightarrow L^{2}$.

(4) \eqref{eq:DecompositionDifference} follows from $\mathbf{G}^{(j)}=\bigcup_{\lmb,\gmm}\mathbf{G}_{\lmb,\gmm}^{(j)}$,
the difference estimate \eqref{eq:difference-temp} for $\mathbf{G}_{\lmb,\gmm}^{(1)}$
and $\mathbf{G}_{\lmb,\gmm}^{(2)}$, and the definition of $\mathbf{F}^{(2)}$.
Note that $\mathbf{F}_{k}^{(2)}=\mathbf{F}_{k}^{(1)}=0$ for $k\in\{1,2\}$.

(5) This follows from the parameter dependence $b^{\ast}\ll\delta_{\dec}=\delta_{\dec}(M)$.
\end{proof}

\subsection{Trapped solutions and reduction of Theorem~\ref{thm:main-thm}}

In this subsection, we reduce Theorem~\ref{thm:main-thm} to Propositions~\ref{prop:main-bootstrap},
\ref{prop:Sets-I-pm}, and \ref{prop:SharpDescription}. We also prove
Corollary~\ref{cor:InfiniteBlowup}. Among these, the main ingredient
is a bootstrap argument, Proposition~\ref{prop:main-bootstrap}.
We will call solutions satisfying the bootstrap conditions the \emph{trapped
solutions}. By bootstrapping (Proposition~\ref{prop:main-bootstrap})
with a connectivity argument (Proposition~\ref{prop:Sets-I-pm}),
we show the existence of trapped solutions. We then show that (Proposition
\ref{prop:SharpDescription}) those solutions are finite-time blow-up
solutions as described in Theorem~\ref{thm:main-thm}. Such an argument
is standard in the literature.

Roughly speaking, trapped solutions are required to satisfy $|\eta|\ll b$
and certain smallness conditions on $\eps$ on its maximal forward
lifespan, to guarantee the blow-up derived in Section~\ref{sec:Modified-profiles}.
To describe more precisely, we quantify $|\eta|\ll b$ and the smallness
conditions on $\eps$ in terms of the nonlinear decomposition (see
Lemma~\ref{lem:decomp}) and nonlinear adapted derivatives of $\eps$.
Namely, for a function $u\in\calO_{\dec}$, we decompose it as 
\[
u(r)=\frac{e^{i\gmm}}{\lmb}[P(\cdot;b,\eta)+\eps]\Big(\frac{r}{\lmb}\Big)
\]
with the orthogonality conditions \eqref{eq:OrthogonalityNonlinear}
according to Lemma~\ref{lem:decomp}. We recall the \emph{nonlinear
adapted derivatives}, which are given by 
\[
\begin{aligned}w & \coloneqq e^{-i\gmm}\lmb u(\lmb\cdot), & w_{1} & \coloneqq\bfD_{w}w, & w_{2} & \coloneqq A_{w}w_{1},\\
\eps & \coloneqq w-P(\cdot;b,\eta), & \eps_{1} & \coloneqq w_{1}-P_{1}(\cdot;b,\eta), & \eps_{2} & \coloneqq w_{2}-P_{2}(\cdot;b,\eta).
\end{aligned}
\tag{\ref{eq:def-e-e2}}
\]
We further define $\eps_{3}$ by taking the linear operator $A_{Q}^{\ast}$
to $\eps_{2}$: 
\[
\eps_{3}\coloneqq A_{Q}^{\ast}\eps_{2}.
\]
Here it suffices to use this \emph{linear} adapted derivative $\eps_{3}$
of $\eps_{2}$, as opposed to $\eps_{1}$ or $\eps_{2}$. With these
adapted derivatives, we can rigorously state our bootstrap hypothesis.
For a large universal constant $K>1$ to be chosen later, we set the
bootstrap assumptions 
\begin{equation}
\begin{gathered}0<b<b^{\ast},\quad|\eta|<\tfrac{b}{|\log b|},\\
\|\eps\|_{L^{2}}<(b^{\ast})^{\frac{1}{4}},\ \|\eps_{1}\|_{L^{2}}<Kb|\log b|^{2},\ \|\eps_{3}\|_{L^{2}}<K\tfrac{b^{2}}{|\log b|}.
\end{gathered}
\label{eq:BootstrapHypothesis}
\end{equation}
Let $u$ be a solution to \eqref{eq:CSS-self-dual-form} with the
initial data $u_{0}\in\calO_{\init}$ and maximal forward-in-time
lifespan $[0,T)$. This $u$ is called a \emph{trapped solution} if
it admits the nonlinear decomposition for each time $t\in[0,T)$ and
satisfies the bootstrap assumptions \eqref{eq:BootstrapHypothesis}.

We note that the assumptions \eqref{eq:BootstrapHypothesis} are initially
satisfied at $t=0$. In other words, any elements of $\calO_{\init}$
satisfy \eqref{eq:BootstrapHypothesis}. Indeed, if we are given $(\widehat{\lmb},\widehat{\gmm},\widehat{b},\widehat{\eta},\widehat{\eps})\in\calU_{\init}$
and denote $\widehat{w}=P(\cdot;\widehat{b},\widehat{\eta})+\widehat{\eps}$
and $\widehat{\eps}_{1}=\bfD_{\widehat{w}}\widehat{w}-P_{1}(\cdot;\widehat{b},\widehat{\eta})$,
then we have for $k\in\{3,4\}$ 
\begin{equation}
\begin{aligned}(\widehat{\eps}_{1},\td{\calZ}_{k})_{r} & =(\widehat{\eps}_{1},\td{\calZ}_{k})_{r}-(\widehat{\eps},\calZ_{k})_{r}=(\bfD_{\widehat{w}}\widehat{w}-P_{1}-L_{Q}\widehat{\eps},\calZ_{k})_{r}\\
 & \aleq M^{C}(\|\bfD_{P}P-P_{1}\|_{\dot{\calH}_{1}^{2}}+\|(L_{P}-L_{Q})\widehat{\eps}\|_{\dot{\calH}_{1}^{2}}+\|N_{P}(\widehat{\eps})\|_{\dot{\calH}_{1}^{2}})\aleq M^{C}(\widehat{b})^{2},
\end{aligned}
\label{eq:initial-decomposition-transition}
\end{equation}
where the last inequality can be proved by the proof of \eqref{eq:NonlinearCoercivityH3-e1e3}
below. Therefore, by the difference estimate \eqref{eq:DecompositionDifference},
the rough decomposition $(\widehat{\lmb},\widehat{\gmm},\widehat{b},\widehat{\eta},\widehat{\eps})\in\calU_{\init}$
and the nonlinear decomposition $(\lmb,\gmm,b,\eta,\eps)$ only differ
by $O(M^{C}(\widehat{b})^{2})$ for data in $\calO_{\init}$.

In the sequel, we will see that all the assumptions except the bound
$|\eta|<\frac{b}{|\log b|}$ can be bootstrapped. Note that $\eta$
is almost conserved by $\eta_{s}\approx0$, whereas $b$ tends to
zero by $b_{s}+b^{2}+\frac{2b^{2}}{|\log b|}\approx0$. Thus the $\eta$-bound
$|\eta|<\frac{b}{|\log b|}$ cannot be bootstrapped and the trapped
solutions are non-generic. This is the source of codimension one as
illustrated before. We will construct these non-generic solutions
using a soft connectivity argument.

We conclude this subsection by reducing the proof of Theorem~\ref{thm:main-thm}
into three propositions: main bootstrap (Proposition~\ref{prop:main-bootstrap}),
a proposition for the connectivity argument (Proposition~\ref{prop:Sets-I-pm}),
and a sharp description of the trapped solutions (Proposition~\ref{prop:SharpDescription}).
The heart of the proof is the main boostrap, Proposition~\ref{prop:main-bootstrap}.
\begin{proof}[Proof of Theorem~\ref{thm:main-thm} assuming Propositions~\ref{prop:main-bootstrap},
\ref{prop:Sets-I-pm}, and \ref{prop:SharpDescription}]
Let $(\widehat{\lmb}_{0},\widehat{\gmm}_{0},\widehat{b}_{0},\widehat{\eps}_{0})\in\td{\calU}_{\init}$
and consider $\widehat{\eta}_{0}$ which varies in the range $(-\frac{\widehat{b}_{0}}{2|\log\widehat{b}_{0}|},\frac{\widehat{b}_{0}}{2|\log\widehat{b}_{0}|})$.
Define $u_{0}\in\calO_{\init}$ via \eqref{eq:initial-u0} and let
$u$ be the forward-in-time maximal solution to \eqref{eq:CSS-self-dual-form}
with the initial data $u_{0}$ and lifespan $[0,T)$.

Our main goal is to show that $u$ is a trapped solution for a well-chosen
$\widehat{\eta}_{0}$. Notice that $u_{0}$ is formed by the rough
decomposition. Define the exit time of $\calO_{\dec}$: 
\[
T_{\dec}\coloneqq\sup\{\tau\in[0,T):u(\tau')\in\calO_{\dec}\text{ for }\tau'\in[0,\tau]\}\in(0,T].
\]
Thus $u(t)$ for $t\in[0,T_{\dec})$ admits the nonlinear decomposition
$(\lmb(t),\gmm(t),b(t),\eta(t),\eps(t))$ according to Lemma~\ref{lem:decomp}.
Moreover, if $T_{\dec}<T$, then $u(T_{\dec})\in\overline{\calO}_{\dec}\setminus\calO_{\dec}$
and it also admits the nonlinear decomposition at time $t=T_{\dec}$.
Next, thanks to \eqref{eq:initial-decomposition-transition}, the
nonlinear decomposition $(\lmb_{0},\gmm_{0},b_{0},\eta_{0},\eps_{0})$
at $t=0$ satisfies the bootstrap assumption \eqref{eq:BootstrapHypothesis}.
Thus we can also define the exit time of the bootstrap hypotheses:
\[
T_{\mathrm{exit}}\coloneqq\sup\{\tau\in[0,T_{\dec}):\eqref{eq:BootstrapHypothesis}\text{ holds for all }\tau'\in[0,\tau]\}\in(0,T_{\dec}].
\]
Thus our goal is to show that $T_{\mathrm{exit}}=T_{\dec}=T$ for
some $\widehat{\eta}_{0}$. Then $u$ is a trapped solution with this
$\widehat{\eta}_{0}$.

In fact, it suffices to show that $T_{\mathrm{exit}}=T_{\dec}$ for
some $\widehat{\eta}_{0}$. Indeed, if $T_{\mathrm{exit}}=T_{\dec}$
but $T_{\dec}<T$, then $u(T_{\dec})\in\overline{\calO}_{\dec}\setminus\calO_{\dec}$
but $(\lmb,\gmm,b,\eta,\eps)$ at $t=T_{\mathrm{exit}}$ lies in the
closure of the bootstrap hypotheses. Since $u(T_{\mathrm{exit}})=u(T_{\dec})$,
we must have $b=\eta=0$ and $\eps=0$ at $t=T_{\mathrm{exit}}$.
In other words, $u$ is a rescaled $Q$, which is a static solution.
This contradicts the assumption $u_{0}\in\calO_{\init}$.

To show that $T_{\mathrm{exit}}=T_{\dec}$ for some $\widehat{\eta}_{0}$,
assume for the sake of contradiction that $T_{\mathrm{exit}}<T_{\dec}$
for all $\widehat{\eta}_{0}$. The following proposition is shown
in Section~\ref{subsec:ClosingBootstrap}, and is the heart of the
proof of Theorem~\ref{thm:main-thm}: 
\begin{prop}[Main bootstrap]
\label{prop:main-bootstrap}Let $u$ have the nonlinear decomposition
$(\lmb,\gmm,b,\eta,\eps)$. If the boostrap hypotheses \eqref{eq:BootstrapHypothesis}
hold for $t\in[0,\tau_{\ast}]$ for some $\tau_{\ast}>0$, then the
following hold for $t\in[0,\tau_{\ast}]$: 
\[
b\in(0,b_{0}],\quad\|\eps\|_{L^{2}}<\tfrac{1}{2}(b^{\ast})^{\frac{1}{4}},\quad\|\eps_{1}\|_{L^{2}}<\tfrac{K}{2}b|\log b|^{2},\quad\|\eps_{3}\|_{L^{2}}<\tfrac{K}{2}\tfrac{b^{2}}{|\log b|}.
\]
\end{prop}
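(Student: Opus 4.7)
The plan is to run the standard three-layer modulation/energy argument, with the novelty concentrated in the highest-order estimate done on the $\epsilon_2$-variable rather than on a linearly conjugated derivative of $\epsilon$.

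\textbf{Step 1 (Modulation estimates).} First I would differentiate in $s$ the four orthogonality conditions defining the nonlinear decomposition, namely $(\epsilon,\mathcal{Z}_1)_r=(\epsilon,\mathcal{Z}_2)_r=(\epsilon_1,\widetilde{\mathcal{Z}}_3)_r=(\epsilon_1,\widetilde{\mathcal{Z}}_4)_r=0$. Substituting the equations \eqref{eq:w-eqn-sd} and \eqref{eq:w1-eqn-sd} and using the profile equations \eqref{eq:P-equation} and \eqref{eq:P1-equation}, I obtain four linear equations for the components of $\widetilde{\mathbf{Mod}}=(\frac{\lambda_s}{\lambda}+b,\widetilde\gamma_s+\eta,b_s+b^2+\eta^2+c_b(b^2-\eta^2),\eta_s+2c_b b\eta)$. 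The transversality identities \eqref{eq:Transversality-Z1234}--\eqref{eq:Transversality-Z-tilde-34} produce a nonsingular leading matrix of size $\sim\log M$ (up to weights from $\mathcal{Z}_k$'s), and inverting it yields
\[
|\widetilde{\mathbf{Mod}}|\lesssim_M \|\epsilon_3\|_{L^2}+b^{3}/|\log b|
\]
where the $\|\epsilon_3\|_{L^2}$ comes from pairings such as $(i\mathcal{L}_Q\epsilon,\mathcal{Z}_k)_r=(\epsilon_3,\cdots)_r+\cdots$ after integrating by parts with $L_Q^{*}$ and $A_Q^{*}$, and the $b^{3}/|\log b|$ is controlled by \eqref{eq:Psi-RoughBound} and \eqref{eq:Psi1-LocalEnergy} via the weighted test functions.

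\textbf{Step 2 ($L^2$ and monotonicity of $b$).} The $L^2$-bound on $\epsilon$ is essentially free: $\|w\|_{L^2}=\|u_0\|_{L^2}$ is conserved under \eqref{eq:w-eqn-sd}, and the orthogonality $(\epsilon,\mathcal{Z}_1)_r=(\epsilon,\mathcal{Z}_2)_r=0$ together with $\|P-Q\|_{L^2}\lesssim b^{1/2}$ gives $\|\epsilon\|_{L^2}\leq\|\widehat\epsilon_0\|_{L^2}+O(b^{1/2})<\frac12(b^{\ast})^{1/4}$. From the modulation law $b_s\leq-b^2-\frac{2b^2}{|\log b|}+O(\|\epsilon_3\|_{L^2}+b^3/|\log b|)$ derived in Step 1, and the bootstrap $\|\epsilon_3\|_{L^2}<Kb^2/|\log b|$, I obtain $b_s<0$, hence $b\in(0,b_0]$.

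\textbf{Step 3 (Third-order energy/Morawetz estimate).} This is the main obstacle. I would compute $\frac{d}{ds}\|\epsilon_3\|_{L^2}^{2}$ using \eqref{eq:w2-eqn-sd} and the profile equation \eqref{eq:P2-equation}. Writing the $\epsilon_2$-equation schematically as $\partial_s\epsilon_2-\frac{\lambda_s}{\lambda}\Lambda_{-2}\epsilon_2+\widetilde\gamma_s i\epsilon_2+iA_P A_P^{*}\epsilon_2=\widetilde{\mathbf{Mod}}\cdot\mathbf v_2-i\Psi_2+\mathcal N_2$, with $\mathcal N_2$ the genuinely nonlinear remainder, the anti-self-adjointness of $iA_PA_P^{*}$ (up to harmless commutators with $A_Q^{*}$) kills the principal elliptic contribution, and I am left with:
\begin{enumerate}
\item modulation source terms, bounded by $|\widetilde{\mathbf{Mod}}|\cdot\|\mathbf v_2\|_{\dot{\mathcal H}^1_2}\lesssim|\widetilde{\mathbf{Mod}}|\cdot b^2$ using the full degeneracy \eqref{eq:v2-estimate};
\item the sharp profile error term $(\Psi_2,\text{stuff})$ controlled by \eqref{eq:Psi2-SharpEnergy};
\item non-perturbative cross terms of the form $(i\overline P P_1 \epsilon_1,\epsilon_2)_r$ and $(i\overline\epsilon P_1^2,\epsilon_2)_r$, which are not of perturbative size by themselves.
\end{enumerate}
To absorb (iii) I add a Morawetz correction in the spirit of Merle--Rapha\"el--Rodnianski, tailored to the repulsivity $-y\partial_y\widetilde V\geq 0$ in \eqref{eq:Def-Vtilde}. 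The positive commutator yields a coercive term dominating the non-perturbative crosses, using the auxiliary $X$-norm bound $\|\epsilon_1\|_X\lesssim \|\epsilon_3\|_{L^2}$ (from the coercivity suite in Proposition~\ref{prop:LinearCoercivity}). The resulting Lyapunov functional $\mathcal F=\|\epsilon_3\|_{L^2}^{2}+\mathcal M$ satisfies $\frac{d}{ds}\mathcal F\leq C\frac{b^{5}}{|\log b|^{2}}+Cb\,\mathcal F/|\log b|$, and since the modulation law gives $-\frac{ds}{db}\approx b^{-2}$, integration from the initial data using $\mathcal F(0)\lesssim b_0^{6}$ closes to $\|\epsilon_3\|_{L^2}^{2}\lesssim b^{4}/|\log b|^{2}$, i.e.\ $\|\epsilon_3\|_{L^2}<\frac{K}{2}b^{2}/|\log b|$ for $K$ large and $b^{\ast}$ small.

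\textbf{Step 4 ($\dot H^1$-level estimate and conclusion).} The coercivity $\|\epsilon_1\|_{\dot{\mathcal H}^2_1}\lesssim\|\epsilon_3\|_{L^2}$ controls the derivative part of $\epsilon_1$, but the $L^2$-norm of $\epsilon_1$ carries a scale and must be estimated separately via $\frac{d}{ds}\|\epsilon_1\|_{L^2}^{2}$ using \eqref{eq:w1-eqn-sd}. The anti-self-adjointness of $iA_w^{*}A_w$ removes the principal term; the remaining modulation contribution is $O(b|\widetilde{\mathbf{Mod}}|\cdot\|\mathbf v_1\|_{L^2})$ and the profile error contributes $O(\|\Psi_1\|_X\|\epsilon_1\|_{X^{*}})\lesssim b^{3}\|\epsilon_1\|_{L^2}|\log b|^{C}$. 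Integrating and using $-\frac{ds}{db}\approx b^{-2}$ gives $\|\epsilon_1\|_{L^2}\lesssim b|\log b|^{C}<\frac{K}{2}b|\log b|^{2}$ for $K$ sufficiently large. Combined with Steps 1--3 this closes all bootstrap assumptions except the (unstable) $\eta$ bound, completing Proposition~\ref{prop:main-bootstrap}. The main obstacle throughout is Step 3: obtaining an $\dot H^{3}$-energy monotonicity that is sharp up to a small power of $|\log b|$ relies crucially on (a) the nonlinear variable $\epsilon_2=A_w\mathbf D_w w-P_2$ already absorbing the troublesome $O(b\epsilon)$ cross-terms present in purely linear adapted variables, and (b) the full degeneracy \eqref{eq:v2-estimate} of $\mathbf v_2$, which is a manifestation of self-duality (Remark~\ref{rem:full-degen}).
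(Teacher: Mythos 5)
Your overall architecture (modulation estimates, conservation laws for the low norms, third-order energy with a Morawetz correction exploiting the repulsivity of $A_QA_Q^\ast$) matches the paper's, but the way you close the energy estimate in Step 3 has a genuine gap. The differential inequality one actually obtains is of the form $(\partial_s-6\tfrac{\lambda_s}{\lambda})\mathcal F_3\leq b\big(\tfrac1{100}\|\epsilon_3\|_{L^2}^2+C\tfrac{b^4}{|\log b|^2}\big)$ (see \eqref{eq:ModifiedEnergyDeriv}); the scaling term $-6\tfrac{\lambda_s}{\lambda}\mathcal F_3$ cannot be dropped, and the forcing must be integrated for the \emph{renormalized} quantity $\mathcal F_3/\lambda^6$. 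If you instead integrate $\tfrac{d}{ds}\mathcal F\leq C b^5/|\log b|^2$ "from the initial data" using $ds\approx -db/b^2$, you get $\int\frac{b^3}{|\log b|^2}\,db\sim\frac{b_0^4}{|\log b_0|^2}$, a bound at the \emph{initial} scale $b_0$, not at the current scale $b(t)\ll b_0$; this does not yield $\|\epsilon_3(t)\|_{L^2}^2\lesssim b(t)^4/|\log b(t)|^2$ and the bootstrap does not close. The missing ingredient is the endpoint-domination estimate \eqref{eq:Conseq-Modulation-claim1}, $\int_0^t\frac{b}{\lambda^2}\cdot\frac{b^4}{\lambda^6|\log b|^2}\,d\tau\lesssim\frac{b^4(t)}{\lambda^6(t)|\log b(t)|^2}$, which holds precisely because the weight $\lambda^{-6}$ blows up toward $t$; together with $\lambda(t)/\lambda_0\leq(b(t)/b_0)^{3/4}$ (\eqref{eq:Conseq-Modulation-claim3}) to dispose of the transported initial energy, this is what converts the inequality into a bound at time $t$.

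A second gap is in Step 4. The paper does not run an energy estimate on $\|\epsilon_1\|_{L^2}$ at all: it uses the \emph{exact} conservation of energy, $E[u]=\tfrac12\|\bfD_u u\|_{L^2}^2$, which gives $\|w_1(t)\|_{L^2}/\lambda(t)=\mathrm{const}$, combined with the asymptotic constancy of $b|\log b|^2/\lambda$ from \eqref{eq:Conseq-Modulation-claim2}; the latter in turn requires the \emph{refined} modulation estimates of Lemma~\ref{lem:RefinedModulationEstimates} (testing against $yQ\chi_{B_\delta}$ with $B_\delta=b^{-\delta}$ to gain $|\log b|^{-1/2}$), which you never invoke. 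Your alternative route may be salvageable, but as written the conclusion "$\|\epsilon_1\|_{L^2}\lesssim b|\log b|^C<\tfrac K2 b|\log b|^2$ for $K$ large" is a non sequitur whenever $C>2$, since $|\log b|^{C-2}\to\infty$ as $b\to0$ no matter how large $K$ is; the power of $|\log b|$ must come out exactly, and that is what the conservation-law argument (with $\|P_1\|_{L^2}\sim b|\log b|^{1/2}$ and $\lambda\sim b|\log b|^2$) delivers.
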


The fact that $T_{\mathrm{exit}}<T_{\dec}$ together with Proposition~\ref{prop:main-bootstrap}
imply that $|\eta|=\frac{b}{2|\log b|}$ at $t=T_{\mathrm{exit}}$.
To derive a contradiction, we use a basic connectivity argument. Let
$\calI_{\pm}$ be the set of initial $\widehat{\eta}_{0}$ such that
$\eta=\pm\frac{b}{2|\log b|}$ at $t=T_{\mathrm{exit}}$. Note that
$\calI_{\pm}$ partitions $(-\frac{b_{0}}{2|\log b_{0}|},\frac{b_{0}}{2|\log b_{0}|})$.
The following proposition is shown in Section~\ref{subsec:ClosingBootstrap}. 
\begin{prop}[The sets $\calI_{\pm}$]
\label{prop:Sets-I-pm}The sets $\calI_{\pm}$ are nonempty and open. 
\end{prop}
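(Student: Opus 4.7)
The plan is to exploit the instability of the rescaled parameter $\sigma(t) \coloneqq \eta(t)|\log b(t)|/b(t)$ and couple it with continuous dependence on the initial data. By Proposition~\ref{prop:main-bootstrap}, on $[0, T_{\mathrm{exit}}]$ the three bounds on $\|\epsilon\|_{L^{2}}$, $\|\epsilon_{1}\|_{L^{2}}$, $\|\epsilon_{3}\|_{L^{2}}$ are strictly improved, while $b$ stays in $(0, b_{0}] \subsetneq (0, b^{\ast})$. Hence the only bound in \eqref{eq:BootstrapHypothesis} that can saturate at $t = T_{\mathrm{exit}}$ is $|\eta| < b/|\log b|$, and in the $\sigma$-variable this means $\mathcal{I}_{\pm}$ correspond exactly to $\sigma(T_{\mathrm{exit}}) = \pm 1/2$, with $|\sigma(t)| < 1/2$ for $t < T_{\mathrm{exit}}$.

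I next establish transversality at the boundary $|\sigma| = 1/2$. The sharp modulation laws produced in the course of proving Proposition~\ref{prop:main-bootstrap}, schematically
\[
b_{s} + b^{2}(1 + c_{b}) + \eta^{2}(1 - c_{b}) = o(b^{2}), \qquad \eta_{s} + 2 c_{b} b \eta = o(b^{2}),
\]
with $c_{b} \sim 2/|\log b|$, combined with $\frac{d}{ds}|\log b| = -b_{s}/b$ and $|\eta| \leq b/|\log b|$, yield after a short computation $\sigma_{s}/\sigma = b + O(b/|\log b|)$. In particular, whenever $|\sigma| = 1/2$ and $b$ is sufficiently small, $\sigma_{s}$ has the same sign as $\sigma$ with $|\sigma_{s}| \sim b/2$. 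This is the transversality I will use.

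Both claims of the proposition now follow from this transversality together with continuous dependence. By Lemma~\ref{lem:decomp}(3), the nonlinear decomposition $u \mapsto (\lambda,\gamma,b,\eta)$ is $C^{1}$ on $\mathcal{O}_{dec}$, and local $H_{0}^{3}$-wellposedness of \eqref{eq:CSS-rad-u} makes $u(t)$ depend continuously on $\widehat{\eta}_{0}$ on any closed subinterval of $[0, T_{\mathrm{exit}})$, so that $t \mapsto \sigma(t; \widehat{\eta}_{0})$ is continuous in $\widehat{\eta}_{0}$ in the same sense. For openness of $\mathcal{I}_{+}$: given $\widehat{\eta}_{0} \in \mathcal{I}_{+}$, transversality at $T_{\mathrm{exit}}$ plus this continuity imply that for $\widehat{\eta}_{0}'$ sufficiently close the perturbed $\sigma$-trajectory still crosses the level $1/2$ at a nearby time, which by the first paragraph is the first bootstrap exit, placing $\widehat{\eta}_{0}' \in \mathcal{I}_{+}$; openness of $\mathcal{I}_{-}$ is handled symmetrically. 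For nonemptiness: choosing $\widehat{\eta}_{0}$ so that $\sigma_{0}$ lies in a tiny left neighborhood of $1/2$ (admissible since $\widehat{\eta}_{0}$ ranges up to $b_{0}/(2|\log b_{0}|)$), the differential inequality $\sigma_{s} \geq b/4$ valid in the slab $\sigma \in [1/4, 1/2]$ forces $\sigma$ to reach $1/2$ in finite rescaled time before any other bootstrap exit could occur, so $\widehat{\eta}_{0} \in \mathcal{I}_{+}$; the symmetric choice provides an element of $\mathcal{I}_{-}$. The main technical obstacle is already absorbed into Proposition~\ref{prop:main-bootstrap}: one needs the modulation errors to be of size $o(b^{2})$ uniformly, small enough to leave the $c_{b}$-correction untouched in the transversality computation at $|\sigma| = 1/2$, and this precision is an output of the refined modulation analysis carried out inside the bootstrap argument.
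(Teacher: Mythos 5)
Your argument is correct and follows essentially the same route as the paper: both track the ratio $\eta|\log b|/b$, show via the modulation equations that it grows monotonically in magnitude (preserving sign) once it exceeds a fixed threshold, and combine this with continuous dependence on $\widehat{\eta}_{0}$ at a time strictly before $T_{\mathrm{exit}}$ for openness and with an initial choice near the boundary of the admissible range for nonemptiness. One bookkeeping caveat: the error in your transversality identity is actually $O(Kb/\sqrt{\log M})$ (from the $\|\epsilon_{3}\|_{L^{2}}$-term in \eqref{eq:ModEstimate-b-eta} multiplied by $|\log b|/b$), which is absorbed because $K/\sqrt{\log M}\ll1$ in the parameter hierarchy rather than being $O(b/|\log b|)$ absorbed by taking $b$ small; the conclusion that $\sigma_{s}$ has the sign of $\sigma$ with magnitude $\gtrsim b|\sigma|$ on the relevant slab is unaffected.
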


We have a contradiction from the connectivity of $(-\frac{b_{0}}{2|\log b_{0}|},\frac{b_{0}}{2|\log b_{0}|})$.
Thus our claim, $T_{\mathrm{exit}}=T_{\dec}$ for some $\widehat{\eta}_{0}$,
is proved. Therefore, there exists a trapped solution $u$ with this
$\widehat{\eta}_{0}$.

The remaining part of the proof is the sharp description of this trapped
solution. The following is proved in Section~\ref{subsec:ClosingBootstrap}. 
\begin{prop}[Sharp description]
\label{prop:SharpDescription}Let $u$ be a trapped solution. Then,
it blows up in finite time as described in Theorem~\ref{thm:main-thm}. 
\end{prop}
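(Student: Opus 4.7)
The plan is to post-process the closed bootstrap of Proposition~\ref{prop:main-bootstrap} by integrating the refined modulation ODEs, then read off the blow-up time, the sharp scaling law, the limit phase, and the asymptotic profile. First, in the trapped regime $|\eta|\le b/|\log b|$, one can sharpen $\widetilde{\mathbf{Mod}}=0$ modulo errors quadratic in $\epsilon_1,\epsilon_3$; the $\eta^2$ and $c_b\eta^2$ contributions are negligible relative to $b^2/|\log b|$, so the $b$-equation reduces to $b_s=-b^2(1+2/|\log b|)+o(b^2/|\log b|)$, while $\eta_s=-2c_bb\eta+o(\cdot)$ keeps $|\eta|\le b/|\log b|$ propagated. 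A direct ODE integration gives $s=\tfrac{1}{b}-\tfrac{2\log|\log b|}{b|\log b|}+O(1/b)$, equivalently $b(s)\sim\tfrac{1}{s}$ with a $\tfrac{2\log\log s}{\log s}$ correction. Using $\tfrac{\lambda_s}{\lambda}=-b+O(\cdot)$, this integrates to $\log\lambda(s)=-\log s-2\log\log s+O(1)$, i.e.\ $\lambda(s)\sim C_\ast/(s(\log s)^2)$.

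Second, I would convert $s$-time back to $t$-time via $dt/ds=\lambda^2$. Since $\lambda^2\sim1/(s^2(\log s)^4)$ is integrable at $s=\infty$, the blow-up time $T<\infty$, and the asymptotic relation $T-t\sim 1/(s(\log s)^4)$ can be inverted to yield
\[
\frac{1}{\lambda(t)}\sim\frac{|\log(T-t)|^2}{\ell(T-t)}
\]
with a constant $\ell=\ell(u_0)\in(0,\infty)$ arising from the $O(1)$ integration constant. As hinted in the text, the sharp constant in the $|\log|^2$ factor is pinned down by testing the $\epsilon$- and $\epsilon_1$-equations against improved approximations of the generalized null modes $i\tfrac{y^2}{4}Q$ and $\rho$, in the spirit of \cite{MerleRaphaelRodnianski2013InventMath}; this is where the main obstacle lies, since the crude modulation identities used to close the bootstrap only give the rate up to an indeterminate power of $|\log|$, and the extra sharpness must be extracted via a refined testing that distinguishes the top-order contribution of the resonance $yQ$. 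Simultaneously, since $\widetilde\gamma_s=-\eta+O(\cdot)$ and $\int|\eta|\,ds$ converges (because $\eta b^{-1}|\log b|\to0$ is propagated and $b\,ds\sim ds/s$ is only log-divergent, compensated by the sign-definiteness of $\eta_s$), the phase $\gamma(t)$ converges to some $\gamma^\ast=\gamma^\ast(u_0)$.

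Third, to produce $u^\ast\in L^2$, write
\[
u(t,r)-\frac{e^{i\gamma(t)}}{\lambda(t)}Q\!\left(\frac{r}{\lambda(t)}\right)=\frac{e^{i\gamma(t)}}{\lambda(t)}\bigl[(P(\cdot;b,\eta)-Q)+\epsilon\bigr]\!\left(\frac{r}{\lambda(t)}\right).
\]
From Proposition~\ref{prop:ModifiedProfile} the truncated profile correction $P-Q$ has vanishing $L^2$-size as $b\to0$, and the bootstrap gives $\|\epsilon\|_{L^2}\to0$ along the flow after rescaling back to the physical variable. A Cauchy criterion in $L^2$ for the left-hand side, based on the improved decay of the rescaled $\epsilon$-part obtained from the third-order energy estimate together with the time-integrability of $b\,dt\sim (T-t)|\log(T-t)|^{-2}\,dt$, identifies the limit $u^\ast$. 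Finally, the regularity $u^\ast\in H^1_0$ follows from the coercivity $\|\epsilon\|_{\dot{\mathcal H}_0^1}\lesssim\|\epsilon_1\|_{L^2}\lesssim b|\log b|^2$ of Proposition~\ref{prop:LinearCoercivity}, scale invariance of $\dot H^1$, and a standard argument showing that the outer (non-concentrating) part of $\epsilon$ converges in $\dot H^1$, while the inner part transfers its $\dot H^1$-mass into the modulated $Q$-profile, leaving $u^\ast$ with finite homogeneous Sobolev norm and hence in $H^1_0$ after combining with $u^\ast\in L^2$.
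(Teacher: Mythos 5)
Your roadmap for the rates coincides with the paper's: integrate the refined modulation ODEs of Lemma~\ref{lem:RefinedModulationEstimates} to get $b(s)=\tfrac1s-\tfrac{2}{s\log s}+\dots$ and $\lambda(s)\sim(\log s)^2/(\ell s)$, then convert to $t$. (Two small points there: the paper deduces $T<\infty$ more directly from $\partial_t\lambda^{2/3}\le -b_0/(4\lambda_0^{4/3})$, which avoids having to argue separately that the rescaled time interval is all of $[0,\infty)$; and your intermediate asymptotic $T-t\sim 1/(s(\log s)^4)$ has the logarithm in the wrong place --- $T-t=\int_s^\infty\lambda^2\,d\sigma\sim(\log s)^4/(\ell^2 s)$ --- though your final formula for $\lambda(t)$ is correct.) The first genuine gap is the convergence of $\gamma$. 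The trapped bound $|\eta|\le b/|\log b|$ only gives $|\eta(s)|\lesssim 1/(s\log s)$, which is \emph{not} integrable in $s$, and neither ``$\eta b^{-1}|\log b|\to0$'' nor ``sign-definiteness of $\eta_s$'' closes this. What is actually needed is to integrate $\widetilde\eta_s=O(b^2|\log b|^{-3/2+})$ from $s=\infty$ (using $\eta(T)=0$) to obtain $|\eta(s)|\lesssim s^{-1}(\log s)^{-3/2+}$, which is integrable and makes $\gamma_s=-\eta+O(b^{2-})$ integrable.

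The second, more serious gap is the construction of $u^\ast$. A Cauchy criterion in $L^2$ for $u-\frac{e^{i\gamma}}{\lambda}Q(\cdot/\lambda)$, i.e.\ for the rescaled error $\epsilon^\sharp=\frac{e^{i\gamma}}{\lambda}\epsilon(\cdot/\lambda)$, does not follow from the bootstrap: $\partial_t\epsilon^\sharp$ contains $\frac{\lambda_t}{\lambda}\cdot\frac{e^{i\gamma}}{\lambda}(\Lambda\epsilon)(\cdot/\lambda)$, whose natural $L^2$ size is $\frac{b}{\lambda^2}\cdot O(1)$ (the bootstrap gives no decay of $\|\epsilon\|_{L^2}$ and no control of $\|y\partial_y\epsilon\|_{L^2}$ at all), and $\int\frac{b}{\lambda^2}\,dt=\int b\,ds\sim\int\frac{ds}{s}$ diverges. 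The paper circumvents this by proving \emph{outer} $L^2$-convergence: for each $R>0$, $\|\mathbf{1}_{[R,\infty)}\partial_t u\|_{L^2}=\lambda^{-2}\|\mathbf{1}_{[\lambda^{-1}R,\infty)}L_w^\ast w_1\|_{L^2}\lesssim\lambda^{-2}\||\epsilon_1|_{-1}\|_{L^2}$, because $P_1$ is supported in $y\le 2B_1\ll\lambda^{-1}R$; the interpolation bound $\||\epsilon_1|_{-1}\|_{L^2}\lesssim b^{3/2}|\log b|^{1/2+}$ of \eqref{eq:H2-interpolation} then makes this integrable in $t$. Convergence on compact sets is obtained separately from the uniform bound $\|\epsilon^\sharp\|_{\dot H_0^1}=\lambda^{-1}\|\epsilon\|_{\dot H_0^1}\lesssim b|\log b|^2/\lambda\lesssim\ell$ (which is also what gives $u^\ast\in H_0^1$ by weak compactness) together with Rellich--Kondrachov. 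Your sketch omits both the outer/inner splitting and the key quantitative input $\||\epsilon_1|_{-1}\|_{L^2}\lesssim b^{3/2}|\log b|^{1/2+}$; without them the $L^2$-convergence to $u^\ast$ cannot be closed.
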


This ends the proof of Theorem~\ref{thm:main-thm} assuming Propositions
\ref{prop:main-bootstrap}, \ref{prop:Sets-I-pm}, and \ref{prop:SharpDescription}. 
\end{proof}
Using Theorem~\ref{thm:main-thm} and the pseudoconformal transform,
we prove Corollary~\ref{cor:InfiniteBlowup}.
\begin{proof}[Proof of Corollary~\ref{cor:InfiniteBlowup}]
Let $v$ be a finite-time blow-up solution with smooth compactly
supported initial data $v_{0}$, constructed in Theorem~\ref{thm:main-thm}
(see also Comments on Theorem~\ref{thm:main-thm}). Applying scaling,
phase rotation, and time translation symmetries, we may assume that
$v$ is defined on $[-T,0)$ with $v(-T)=v_{0}$ and blows up at time
$0$ with the decomposition 
\[
v(t)-\frac{|\log|t||^{2}}{|t|}Q\Big(\frac{|\log|t||^{2}}{|t|}r\Big)-v^{\ast}\to0\text{ in }L^{2}
\]
as $t\to0^{-}$. For convenience, we rewrite this as 
\[
v(t)=\frac{|\log|t||^{2}}{|t|}Q\Big(\frac{|\log|t||^{2}}{|t|}r\Big)+e^{it\Delta}v^{\ast}+\err(t)
\]
with $\|\err(t)\|_{L^{2}}\to0$ as $t\to0^{-}$. We now apply the
pseudoconformal transform $\calC$ \eqref{eq:discrete-pseudo-transf}
to obtain the solution $u$ on $[1/T,\infty)$ defined by 
\[
u(t)\coloneqq[\calC v](t).
\]
Note that the initial data $u(1/T)$ is smooth and compactly supported.
Since $\calC$ preserves the $L^{2}$-norm, the contribution of $\err(t)$
is negligible: $\|[\calC(\err)](t)\|_{L^{2}}\to0$ as $t\to\infty$.
Moreover, since $\calC$ preserves linear Schrödinger waves, 
\[
[\calC(e^{it\Delta}v^{\ast})](t)=e^{it\Delta}u^{\ast}
\]
for some $u^{\ast}\in L^{2}$ with $\|u^{\ast}\|_{L^{2}}=\|v^{\ast}\|_{L^{2}}$.
Finally, we have 
\[
\bigg[\calC\bigg(\frac{|\log|t||^{2}}{|t|}Q\Big(\frac{|\log|t||^{2}}{|t|}r\Big)\bigg)\bigg](t)=e^{i\frac{r^{2}}{4t}}|\log(t)|^{2}Q\Big(|\log(t)|^{2}r\Big).
\]
We can remove $e^{i\frac{r^{2}}{4t}}$ by applying the dominated convergence
theorem (after rescaling): 
\[
\bigg\|(e^{i\frac{r^{2}}{4t}}-1)\bigg\{|\log(t)|^{2}Q\Big(|\log(t)|^{2}r\Big)\bigg\}\bigg\|_{L^{2}}\to0
\]
as $t\to\infty$. Therefore, 
\[
u(t)-|\log(t)|^{2}Q\Big(|\log(t)|^{2}r\Big)-e^{it\Delta}u^{\ast}\to0\text{ in }L^{2}
\]
as $t\to\infty$. 
\end{proof}
In the remaining sections, we show Propositions~\ref{prop:main-bootstrap},
\ref{prop:Sets-I-pm}, and \ref{prop:SharpDescription}. The main
bootstrap Proposition \ref{prop:main-bootstrap}, which is the heart
of the proof, is proved through Sections~\ref{subsec:CoercivityNonlinearAdaptedDer}--\ref{subsec:ClosingBootstrap}.
Propositions~\ref{prop:Sets-I-pm} and~\ref{prop:SharpDescription}
are proved in Section~\ref{subsec:ClosingBootstrap}.

In the rest of this paper, \textbf{we assume the bootstrap hypotheses
\eqref{eq:BootstrapHypothesis}.} Moreover, \textbf{we assume the
parameter dependence} 
\begin{equation}
0\ll b^{\ast}\ll M^{-1}\ll K^{-1}\ll1,\label{eq:parameter-dependence}
\end{equation}
where $K$ is the constant in the bootstrap, $M$ is a large parameter
introduced in the decomposition Lemma \ref{lem:decomp}, and $b^{\ast}$
is a small parameter introduced in the definition of initial data
sets \eqref{eq:def-Utilde-init}--\eqref{eq:def-O-init} that restricts
the admissible range of $b$: $0<b<b^{\ast}$. In the sequel, we will
freely shrink $b^{\ast}>0$ and enlarge $M\gg1$ (at the cost of further
shrinking $b^{\ast})$. Finally, we adopt the following abuse of notation:
When there is a string of $\aleq$'s, we only express the dependencies
of the implicit constants in relation to the left-most expression.
For instance, if we have an estimate $\nrm{\eps}_{\dot{\calH}_{0}^{1}}\aleq_{M}Kb\abs{\log b}^{2}$,
then $\nrm{\eps}_{\dot{\calH}_{0}^{1}}\aleq b\abs{\log b}^{2+}$ by
parameter dependence (using $C(M)K\leq|\log b^{\ast}|^{0+}\leq|\log b|^{0+}$).
We simply write this chain of estimates as $\nrm{\eps}_{\dot{\calH}_{0}^{1}}\aleq_{M}Kb\abs{\log b}^{2}\aleq b\abs{\log b}^{2+}$.

\subsection{\label{subsec:CoercivityNonlinearAdaptedDer}Coercivity for nonlinear
adapted derivatives}

Recall that we decomposed our solution $u$ according to the nonlinear
decomposition. That is, 
\[
\begin{aligned}w & \coloneqq e^{-i\gmm}\lmb u(\lmb\cdot), & w_{1} & \coloneqq\bfD_{w}w, & w_{2} & \coloneqq A_{w}w_{1},\\
\eps & \coloneqq w-P(\cdot;b,\eta), & \eps_{1} & \coloneqq w_{1}-P_{1}(\cdot;b,\eta), & \eps_{2} & \coloneqq w_{2}-P_{2}(\cdot;b,\eta),
\end{aligned}
\tag{\ref{eq:def-e-e2}}
\]
and the orthogonality conditions 
\[
(\eps,\calZ_{1})_{r}=(\eps,\calZ_{2})_{r}=(\eps_{1},\td{\calZ}_{3})_{r}=(\eps_{1},\td{\calZ}_{4})_{r}=0\tag{\ref{eq:OrthogonalityNonlinear}}
\]
are satisfied. We defined $\eps_{3}$ by $\eps_{3}\coloneqq A_{Q}^{\ast}\eps_{2}$.

The goal of this section is to transfer the linear coercivity estimates
(Proposition \ref{prop:LinearCoercivity}) to the nonlinear adapted
derivatives $\eps_{1},\eps_{2},\eps_{3}$, under the bootstrap assumptions
\eqref{eq:BootstrapHypothesis}. By the linearization of the Bogomol'nyi
operator (see \eqref{eq:LinearizationBogomolnyi}), $\bfD_{P}P\approx P_{1}$
(see \eqref{eq:Comparison-P-P1-H1} and \eqref{eq:Comparison-P-P1-H3}),
and $A_{P}P_{1}\approx P_{2}$ (see \eqref{eq:Comparison-P1-P2-H3}),
we see that $\eps_{1}\approx L_{Q}\eps$ and $\eps_{2}\approx A_{Q}\eps_{1}$.

As mentioned earlier, we will take advantage of using nonlinear adapted
derivatives in various places. Compared to using the linear ones,
one can observe that error terms are simplified in the evolution equations
of $\eps_{1},\eps_{2},\eps_{3}$ in Sections~\ref{subsec:Modulation-estimates}
and~\ref{subsec:Energy-estimate}. The following estimates are the
trade-offs. We need additional arguments to establish the coercivity
relations of the nonlinear adapted derivatives.
\begin{lem}[Nonlinear coercivity estimates]
\label{lem:NonlinearCoercivity}The following estimates hold. 
\begin{enumerate}
\item ($\dot{H}^{1}$-level) 
\begin{equation}
\|\eps\|_{\dot{\calH}_{0}^{1}}\aleq_{M}Kb|\log b|^{2}\aleq b|\log b|^{2+}.\label{eq:NonlinearCoercivityH1}
\end{equation}
\item ($\dot{H}^{3}$-level) 
\begin{gather}
\|\eps_{2}\|_{\dot{\calH}_{2}^{1}}\sim\|\eps_{3}\|_{L^{2}},\label{eq:NonlinearCoercivityH3-e2e3}\\
\|\eps_{1}\|_{\dot{\calH}_{1}^{2}}\aleq_{M}K\tfrac{b^{2}}{|\log b|}\aleq\tfrac{b^{2}}{|\log b|^{1-}}.\label{eq:NonlinearCoercivityH3-e1e3}\\
\|\eps\|_{\dot{\calH}_{0}^{3}}\aleq_{M}Kb^{2}\aleq b^{2}|\log b|^{0+}.\label{eq:NonlinearCoercivityH3-e0e3}
\end{gather}
\item (Interpolation estimates at $\dot{H}^{2}$-level) 
\begin{equation}
\||\eps_{1}|_{-1}\|_{L^{2}}+\|\eps_{2}\|_{L^{2}}\aleq_{M}Kb^{\frac{3}{2}}|\log b|^{\frac{1}{2}}\aleq b^{\frac{3}{2}}|\log b|^{\frac{1}{2}+}.\label{eq:H2-interpolation}
\end{equation}
\end{enumerate}
\end{lem}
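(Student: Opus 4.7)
\smallskip

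The overall strategy is to view each nonlinear adapted derivative as the corresponding linear adapted derivative of $\epsilon$ plus controllable perturbations, and then invoke the linear coercivity estimates of Proposition~\ref{prop:LinearCoercivity}. Concretely, from \eqref{eq:LinearizationBogomolnyi} we have $\bfD_w w = \bfD_P P + L_P\epsilon + N_P(\epsilon)$, so
\[
\epsilon_1 = L_Q\epsilon + (\bfD_P P - P_1) + (L_P - L_Q)\epsilon + N_P(\epsilon),
\]
and similarly $\epsilon_2 = A_Q\epsilon_1 + (A_P P_1 - P_2) + (A_P - A_Q)P_1 + (A_P - A_Q)\epsilon_1$. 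The modified-profile relations \eqref{eq:Comparison-P-P1-H1}--\eqref{eq:Comparison-P1-P2-H3} control the ``compatibility errors'' $\bfD_P P - P_1$ and $A_P P_1 - P_2$; the remaining discrepancies $L_P - L_Q$, $A_P - A_Q$, and $N_P(\epsilon)$ are bounded via the pointwise estimates on $P-Q$ already used in the proof of Proposition~\ref{prop:ModifiedProfile} (in particular \eqref{eq:P-Q-est} and \eqref{eq:AthtP-AthtQ}) combined with the bootstrap assumptions \eqref{eq:BootstrapHypothesis} and the Hardy/Hardy--Sobolev inequalities adapted to each $\dot{\mathcal{H}}^k_m$.

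For \eqref{eq:NonlinearCoercivityH1}, I would apply \eqref{eq:LQ-coer-H1-sec2} with $\psi_1 = \mathcal{Z}_1$, $\psi_2 = \mathcal{Z}_2$; the nondegeneracy of the relevant $2\times 2$ matrix is exactly the diagonal part of \eqref{eq:Transversality-Z1234}. One gets $\|\epsilon\|_{\dot{\mathcal{H}}_0^1} \lesssim_M \|L_Q\epsilon\|_{L^2}$, and then substituting the identity above yields $\|L_Q\epsilon\|_{L^2} \lesssim \|\epsilon_1\|_{L^2} + O(b) + o(1)\|\epsilon\|_{\dot{\mathcal{H}}_0^1}$, where $o(1) \to 0$ as $b \to 0$ via the bounds on $(L_P-L_Q)$ and $N_P(\epsilon)$. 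Absorbing the last term on the left and using $\|\epsilon_1\|_{L^2} < Kb|\log b|^2$ from \eqref{eq:BootstrapHypothesis} closes the estimate.

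For \eqref{eq:NonlinearCoercivityH3-e2e3}--\eqref{eq:NonlinearCoercivityH3-e0e3} the argument proceeds hierarchically through three layers. First, \eqref{eq:NonlinearCoercivityH3-e2e3} is immediate from \eqref{eq:positivity-AQAQstar} since $\epsilon_3 = A_Q^\ast \epsilon_2$. Next, to obtain \eqref{eq:NonlinearCoercivityH3-e1e3}, apply \eqref{eq:AQ-coer-H2-sec2} with $\psi_1 = \widetilde{\mathcal{Z}}_3$, $\psi_2 = \widetilde{\mathcal{Z}}_4$: \eqref{eq:Transversality-Z-tilde-34} gives the needed transversality against $rQ, irQ$, and the orthogonality conditions \eqref{eq:OrthogonalityNonlinear} on $\epsilon_1$ are built in by our choice of nonlinear decomposition. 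Then
\[
\|\epsilon_1\|_{\dot{\mathcal{H}}_1^2} \lesssim_M \|A_Q\epsilon_1\|_{\dot{\mathcal{H}}_2^1} \lesssim \|\epsilon_2\|_{\dot{\mathcal{H}}_2^1} + \|A_P P_1 - P_2\|_{\dot{\mathcal{H}}_2^1} + \|(A_P-A_Q)(P_1+\epsilon_1)\|_{\dot{\mathcal{H}}_2^1},
\]
where the first term equals $\|\epsilon_3\|_{L^2} \lesssim K b^2/|\log b|$ by bootstrap, the second is $\lesssim b^2/|\log b|$ by \eqref{eq:Comparison-P1-P2-H3}, and the third is shown to be $\lesssim b \cdot (b|\log b|^C) + $ (subcritical in $b$) using \eqref{eq:AthtP-AthtQ} and the pointwise decay of $P_1$ from Proposition~\ref{prop:ModifiedProfile}. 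Finally, \eqref{eq:NonlinearCoercivityH3-e0e3} follows from \eqref{eq:LQ-coer-H3-sec2} applied with $\psi_1 = \mathcal{Z}_1$, $\psi_2 = \mathcal{Z}_2$, combined with a $\dot{\mathcal{H}}_1^2$-analogue of the expansion of $L_Q\epsilon$ above; the compatibility error is controlled by \eqref{eq:Comparison-P-P1-H3}.

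The interpolation estimate \eqref{eq:H2-interpolation} is then a routine Gagliardo--Nirenberg-type consequence of the $L^2$ bounds from \eqref{eq:BootstrapHypothesis} and the $\dot{\mathcal{H}}^2$ bound just obtained: $\||\epsilon_1|_{-1}\|_{L^2} \lesssim \|\epsilon_1\|_{L^2}^{1/2}\|\epsilon_1\|_{\dot{\mathcal{H}}_1^2}^{1/2}$ and similarly for $\|\epsilon_2\|_{L^2} \lesssim \|\epsilon_2\|_{\dot{\mathcal{H}}_2^1}^{1/2}\|(\text{something at }L^2)\|^{1/2}$, where for $\epsilon_2$ one first reduces to $\epsilon_1$ via $\epsilon_2 \approx A_Q\epsilon_1 + \text{small}$ and uses the $L^2$ bound on $\epsilon_1$. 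The main technical obstacle is the middle step: checking that all the profile-dependent discrepancies $A_P - A_Q$, $L_P - L_Q$, $N_P(\epsilon)$, and $(\bfD_P P - P_1)$, $(A_P P_1 - P_2)$ only contribute terms of size $o(1)$ times the LHS at each level, so that the linear coercivities can be absorbed. This is delicate because the $\dot{\mathcal{H}}^k_m$ norms are nonstandard (weighted by $\langle \log r\rangle^{-1}$) and the terms must be measured in whichever such norm the operator $L_Q$ or $A_Q$ maps out of; the $M$-dependence in the implicit constants enters only through the coercivity estimates, and is harmless because $M$ is fixed once and for all while $b$ is subsequently sent to $0$.
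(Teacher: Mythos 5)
Your proposal follows essentially the same route as the paper: express each nonlinear adapted derivative as the corresponding linear one plus compatibility and conjugation errors, invoke the linear coercivities of Proposition~\ref{prop:LinearCoercivity} level by level, absorb the perturbative terms, and finish with the interpolation inequality \eqref{eq:interpolation-2}.

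Two points in your write-up need repair, though neither changes the strategy. First, your identity for $\epsilon_{2}$ is not correct as stated: since $w_{2}=A_{w}w_{1}$ with $w=P+\epsilon$, the correct decomposition is $\epsilon_{2}=A_{Q}\epsilon_{1}+(A_{P}P_{1}-P_{2})+(A_{w}-A_{P})P_{1}+(A_{w}-A_{Q})\epsilon_{1}$; writing $(A_{P}-A_{Q})$ in place of $(A_{w}-A_{P})$ and $(A_{w}-A_{Q})$ silently discards the genuinely nonlinear contributions $\tfrac{1}{y}A_{\theta}[P,\epsilon]\,(\cdot)$ and $\tfrac{1}{y}A_{\theta}[\epsilon,\epsilon]\,(\cdot)$, which are precisely the terms that make the lemma ``nonlinear'' and which require the weighted $L^{\infty}$ bounds of Lemma~\ref{lem:Weighted-Linfty} rather than just \eqref{eq:AthtP-AthtQ}. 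Second, at the $\dot{H}^{3}$ level the estimates for $\|\epsilon_{1}\|_{\dot{\mathcal{H}}_{1}^{2}}$ and $\|\epsilon\|_{\dot{\mathcal{H}}_{0}^{3}}$ cannot be run purely sequentially as you present them: the error $\|(A_{w}-A_{Q})\epsilon_{1}\|_{\dot{\mathcal{H}}_{2}^{1}}$ in the first is controlled by quantities involving $\|\epsilon\|_{\dot{\mathcal{H}}_{0}^{3}}$, which in turn is only bounded through $\|\epsilon_{1}\|_{\dot{\mathcal{H}}_{1}^{2}}$. The paper resolves this circularity by absorbing into the single weighted quantity $\|\epsilon_{1}\|_{\dot{\mathcal{H}}_{1}^{2}}+\tfrac{1}{|\log b|}\|\epsilon\|_{\dot{\mathcal{H}}_{0}^{3}}$ (the $|\log b|^{-1}$ weight matching the loss in \eqref{eq:LQ-coer-H3-sec2} relative to \eqref{eq:Comparison-P-P1-H3}); an equivalent fix is to first close the $\epsilon$-estimate in terms of $\|\epsilon_{1}\|_{\dot{\mathcal{H}}_{1}^{2}}$ and feed it back, using that the coefficients are $o_{b^{\ast}\to0}(1)$ after $M$ is fixed. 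With these corrections your argument coincides with the paper's proof.
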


\begin{proof}
(1) From the relation 
\[
P_{1}+\eps_{1}=w_{1}=\bfD_{w}w=\bfD_{P}P+L_{Q}\eps+(L_{P}-L_{Q})\eps+N_{P}(\eps),
\]
the coercivity estimate \eqref{eq:LQ-coer-H1-sec2} implies that 
\[
\|\eps\|_{\dot{\calH}_{0}^{1}}\aleq_{M}\|L_{Q}\eps\|_{L^{2}}\aleq\|\eps_{1}\|_{L^{2}}+\|\bfD_{P}P-P_{1}\|_{L^{2}}+\|(L_{P}-L_{Q})\eps\|_{L^{2}}+\|N_{P}(\eps)\|_{L^{2}}.
\]
The second term is estimated by $b$ due to \eqref{eq:Comparison-P-P1-H1}.
We claim that the last two terms are estimated by 
\begin{equation}
\|(L_{P}-L_{Q})\eps\|_{L^{2}}+\|N_{P}(\eps)\|_{L^{2}}\aleq(o_{b^{\ast}\to0}(1)+\|\eps\|_{\dot{\calH}_{0}^{1}})\|\eps\|_{\dot{\calH}_{0}^{1}}.\label{eq:NonlinearCoercivityTemp1}
\end{equation}
Assuming this, these terms are absorbed into the LHS and we have 
\[
\|\eps\|_{\dot{\calH}_{0}^{1}}\aleq_{M}\|\eps_{1}\|_{L^{2}}+b.
\]
The bootstrap hypothesis \eqref{eq:BootstrapHypothesis} on $\eps_{1}$
and the parameter dependence \eqref{eq:parameter-dependence} yield
\eqref{eq:NonlinearCoercivityH1}.

Henceforth, we show the claim \eqref{eq:NonlinearCoercivityTemp1}.
Notice that $(L_{P}-L_{Q})\eps$ and $N_{P}(\eps)$ are linear combinations
of $\tfrac{1}{y}A_{\theta}[\psi_{1},\psi_{2}]\psi_{3}$, which we
estimate by 
\[
\|\tfrac{1}{y}A_{\theta}[\psi_{1},\psi_{2}]\psi_{3}\|_{L^{2}}\aleq\min_{\{j_{1},j_{2},j_{3}\}=\{1,2,3\}}\|\tfrac{1}{\langle y\rangle}\psi_{j_{1}}\|_{L^{2}}\|(\tfrac{\langle y\rangle}{y})^{\frac{1}{2}}\psi_{j_{2}}\|_{L^{2}}\|(\tfrac{\langle y\rangle}{y})^{\frac{1}{2}}\psi_{j_{3}}\|_{L^{2}}.
\]
For $(L_{P}-L_{Q})\eps$, we can assume $\psi_{j_{1}}=\eps$, $\psi_{j_{2}}=P-Q$,
and $\psi_{j_{3}}\in\{P,Q\}$ so 
\[
\|(L_{P}-L_{Q})\eps\|_{L^{2}}\aleq b^{\frac{1}{2}-}\|\eps\|_{\dot{\calH}_{0}^{1}}.
\]
For $N_{P}(\eps)$, we can assume $\psi_{j_{1}}=\psi_{j_{2}}=\eps$
and $\psi_{j_{3}}\in\{P,\eps\}$ so 
\[
\|N_{P}(\eps)\|_{L^{2}}\aleq(\|\eps\|_{\dot{\calH}_{0}^{1}}+\|\eps\|_{L^{2}})\|\eps\|_{\dot{\calH}_{0}^{1}}\aleq(\|\eps\|_{\dot{\calH}_{0}^{1}}+o_{b^{\ast}\to0}(1))\|\eps\|_{\dot{\calH}_{0}^{1}}.
\]
This shows the claim \eqref{eq:NonlinearCoercivityTemp1}.

(2) The equivalence \eqref{eq:NonlinearCoercivityH3-e2e3} follows
from the coercivity of $A_{Q}A_{Q}^{\ast}$ \eqref{eq:positivity-AQAQstar}.

We turn to \eqref{eq:NonlinearCoercivityH3-e1e3}. We simultaneously
consider the relations 
\begin{align*}
P_{2}+\eps_{2} & =w_{2}=A_{w}w_{1}=A_{P}P_{1}+A_{Q}\eps_{1}+(A_{w}-A_{Q})\eps_{1}+(A_{w}-A_{P})P_{1},\\
P_{1}+\eps_{1} & =w_{1}=\bfD_{w}w=\bfD_{P}P+L_{Q}\eps+(L_{P}-L_{Q})\eps+N_{P}(\eps).
\end{align*}
By the coercivity estimates \eqref{eq:AQ-coer-H2-sec2} and \eqref{eq:LQ-coer-H3-sec2},
we have 
\begin{align*}
\|\eps_{1}\|_{\dot{\calH}_{1}^{2}} & \aleq_{M}\|\eps_{2}\|_{\dot{\calH}_{2}^{1}}+\|A_{P}P_{1}-P_{2}\|_{\dot{\calH}_{2}^{1}}+\|(A_{w}-A_{Q})\eps_{1}\|_{\dot{\calH}_{2}^{1}}+\|(A_{w}-A_{P})P_{1}\|_{\dot{\calH}_{2}^{1}},\\
\|\eps\|_{\dot{\calH}_{0}^{3}} & \aleq_{M}\|\eps_{1}\|_{\dot{\calH}_{1}^{2}}+\|\bfD_{P}P-P_{1}\|_{\dot{\calH}_{1}^{2}}+\|(L_{P}-L_{Q})\eps\|_{\dot{\calH}_{1}^{2}}+\|N_{P}(\eps)\|_{\dot{\calH}_{1}^{2}}.
\end{align*}
Here, we have $\|\eps_{2}\|_{\dot{\calH}_{2}^{1}}\sim\|\eps_{3}\|_{L^{2}}$
by \eqref{eq:NonlinearCoercivityH3-e2e3}, and the $\eps$-independent
terms $\bfD_{P}P-P_{1}$ and $A_{P}P_{1}-P_{2}$ are estimated in
\eqref{eq:Comparison-P-P1-H3} and \eqref{eq:Comparison-P1-P2-H3}.
Therefore, 
\begin{align*}
\|\eps_{1}\|_{\dot{\calH}_{1}^{2}}+\tfrac{1}{|\log b|}\|\eps\|_{\dot{\calH}_{0}^{3}} & \aleq_{M}\|\eps_{3}\|_{L^{2}}+\tfrac{b^{2}}{|\log b|}+\|(A_{w}-A_{Q})\eps_{1}\|_{\dot{\calH}_{2}^{1}}+\|(A_{w}-A_{P})P_{1}\|_{\dot{\calH}_{2}^{1}}\\
 & \quad+\tfrac{1}{|\log b|}\|(L_{P}-L_{Q})\eps\|_{\dot{\calH}_{1}^{2}}+\tfrac{1}{|\log b|}\|N_{P}(\eps)\|_{\dot{\calH}_{1}^{2}}.
\end{align*}
We claim the estimates 
\begin{align}
 & \|(A_{w}-A_{Q})\eps_{1}\|_{\dot{\calH}_{2}^{1}}+\|(A_{w}-A_{P})P_{1}\|_{\dot{\calH}_{2}^{1}}\label{eq:NonlinearCoercivityTemp2}\\
 & \quad\aleq b^{3-}+(o_{b^{\ast}\to0}(1)+\|\eps\|_{\dot{\calH}_{0}^{3}}+\|\eps\|_{\dot{\calH}_{0}^{3}}^{2})(\|\eps_{1}\|_{\dot{\calH}_{1}^{2}}+\tfrac{1}{|\log b|}\|\eps\|_{\dot{\calH}_{0}^{3}}),\nonumber \\
 & \|(L_{P}-L_{Q})\eps\|_{\dot{\calH}_{1}^{2}}+\|N_{P}(\eps)\|_{\dot{\calH}_{1}^{2}}\label{eq:NonlinearCoercivityTemp3}\\
 & \quad\aleq b^{3-}+(o_{b^{\ast}\to0}(1)+\|\eps\|_{\dot{\calH}_{0}^{3}})\|\eps\|_{\dot{\calH}_{0}^{3}}.\nonumber 
\end{align}
Assuming these claims, we have 
\[
\|\eps_{1}\|_{\dot{\calH}_{1}^{2}}+\tfrac{1}{|\log b|}\|\eps\|_{\dot{\calH}_{0}^{3}}\aleq_{M}\|\eps_{3}\|_{L^{2}}+\tfrac{b^{2}}{|\log b|},
\]
which implies \eqref{eq:NonlinearCoercivityH3-e1e3} and \eqref{eq:NonlinearCoercivityH3-e0e3}
after substituting the bootstrap hypothesis for $\eps_{3}$.

Henceforth, we show the claims \eqref{eq:NonlinearCoercivityTemp2}
and \eqref{eq:NonlinearCoercivityTemp3}. First, we show \eqref{eq:NonlinearCoercivityTemp2}.
For the first term on the LHS of \eqref{eq:NonlinearCoercivityTemp2},
we use the definition of $\dot{\calH}_{2}^{1}$ to have 
\begin{align*}
\|(A_{w}-A_{Q})\eps_{1}\|_{\dot{\calH}_{2}^{1}} & \aleq\|(|w|^{2}-Q^{2})\eps_{1}\|_{L^{2}}+\|A_{\theta}[w]-A_{\theta}[Q]\|_{L^{\infty}}\|\partial_{y}(\tfrac{1}{y}\eps_{1})\|_{L^{2}}\\
 & \quad+\|\langle\log_{-}y\rangle(A_{\theta}[w]-A_{\theta}[Q])\|_{L^{\infty}}\|\eps_{1}\|_{\dot{\calH}_{1}^{2}}.
\end{align*}
Since $\partial_{y}(\tfrac{1}{y}\eps_{1})=\tfrac{1}{y}\partial_{+}\eps_{1}$,
where $\partial_{+}=\partial_{y}-\tfrac{1}{y}$ when acting on $1$-equivariant
functions, \eqref{eq:ComparisonH2H2} says that $\|\partial_{y}(\tfrac{1}{y}\eps_{1})\|_{L^{2}}\aleq\|\partial_{+}\eps_{1}\|_{\dot{H}_{2}^{1}}\aleq\|\eps_{1}\|_{\dot{H}_{1}^{2}}\aleq\|\eps_{1}\|_{\dot{\calH}_{1}^{2}}$.
Thus we have 
\[
\|(A_{w}-A_{Q})\eps_{1}\|_{\dot{\calH}_{2}^{1}}\aleq\|(|w|^{2}-Q^{2})\eps_{1}\|_{L^{2}}+\|\langle\log_{-}y\rangle(A_{\theta}[w]-A_{\theta}[Q])\|_{L^{\infty}}\|\eps_{1}\|_{\dot{\calH}_{1}^{2}}.
\]
Using the estimates 
\begin{align*}
\|(|w|^{2}-Q^{2})\eps_{1}\|_{L^{2}} & \aleq\||w|^{2}-|P|^{2}\|_{L^{\infty}}\|\eps_{1}\|_{L^{2}}+\|(|P|^{2}-Q^{2})\eps_{1}\|_{L^{2}}\\
 & \aleq(\|\eps\|_{L^{\infty}}^{2}+\|P\eps\|_{L^{\infty}})\|\eps_{1}\|_{L^{2}}+b^{1-}\|\eps_{1}\|_{\dot{\calH}_{1}^{2}},\\
\|\langle\log_{-}y\rangle(A_{\theta}[w]-A_{\theta}[Q])\|_{L^{\infty}} & \aleq\||w|^{2}-Q^{2}\|_{L^{1}}+\||w|^{2}-Q^{2}\|_{L^{\infty}}\\
 & \aleq b^{1-}+\|\eps\|_{L^{2}}+\|\eps\|_{L^{\infty}}+\|\eps\|_{L^{\infty}}^{2},
\end{align*}
weighted $L^{\infty}$-estimates (see Lemma~\ref{lem:Weighted-Linfty})
\begin{align*}
\|\eps\|_{L^{\infty}}^{2} & \aleq\|\eps\|_{\dot{\calH}_{0}^{1}}^{2}+\|\eps\|_{\dot{\calH}_{0}^{3}}^{2},\\
\|P\eps\|_{L^{\infty}} & \aleq\|\eps\|_{\dot{\calH}_{0}^{3}}+\|\eps\|_{\dot{\calH}_{0}^{1}}^{0+}\|\eps\|_{\dot{\calH}_{0}^{3}}^{1-},
\end{align*}
and substituting \eqref{eq:NonlinearCoercivityH1} and $\|\eps\|_{L^{2}}=o_{b^{\ast}\to0}(1)$,
we obtain 
\[
\|(A_{w}-A_{Q})\eps_{1}\|_{\dot{\calH}_{2}^{1}}\aleq b^{3-}+(o_{b^{\ast}\to0}(1)+\|\eps\|_{\dot{\calH}_{0}^{3}}+\|\eps\|_{\dot{\calH}_{0}^{3}}^{2})(\|\eps_{1}\|_{\dot{\calH}_{1}^{2}}+\tfrac{1}{|\log b|}\|\eps\|_{\dot{\calH}_{0}^{3}}).
\]
as desired in \eqref{eq:NonlinearCoercivityTemp2}. Next, the second
term on the LHS of \eqref{eq:NonlinearCoercivityTemp2} is estimated
by 
\begin{align*}
\|(A_{w}-A_{P})P_{1}\|_{\dot{\calH}_{2}^{1}} & \aleq\||A_{w}-A_{P}|_{-1}\|_{L^{\infty}}\||P_{1}|_{1}\|_{L^{2}}\\
 & \aleq b^{1-}\||w|^{2}-|P|^{2}\|_{L^{\infty}}\aleq b^{1-}(\|\eps\|_{L^{\infty}}^{2}+\|P\eps\|_{L^{\infty}}).
\end{align*}
Recalling how we dealt with $\|\eps\|_{L^{\infty}}^{2}+\|P\eps\|_{L^{\infty}}$
above, this bound suffices. This completes the proof of \eqref{eq:NonlinearCoercivityTemp2}.

Next, we show \eqref{eq:NonlinearCoercivityTemp3}. Recall that $(L_{P}-L_{Q})\eps$
and $N_{P}(\eps)$ are linear combinations of $\tfrac{1}{y}A_{\theta}[\psi_{1},\psi_{2}]\psi_{3}$.
In view of \eqref{eq:ComparisonH2H2} (see also its proof), we have
\begin{align*}
 & \|\tfrac{1}{y}A_{\theta}[\psi_{1},\psi_{2}]\psi_{3}\|_{\dot{\calH}_{1}^{2}}\\
 & \aleq\|\Delta_{1}(\tfrac{1}{y}A_{\theta}[\psi_{1},\psi_{2}]\psi_{3})\|_{L^{2}}+\|\chf_{y\sim1}\tfrac{1}{y}A_{\theta}[\psi_{1},\psi_{2}]\psi_{3}\|_{L^{2}},\\
 & \aleq\|\chf_{y\sim1}A_{\theta}[\psi_{1},\psi_{2}]\psi_{3}\|_{L^{2}}+\|\tfrac{1}{y}({\textstyle \int_{0}^{y}}\Re(\overline{\psi_{1}}\psi_{2})y'dy')(\partial_{y}-\tfrac{1}{y})\partial_{y}\psi_{3}\|_{L^{2}}\\
 & \quad+\|(\partial_{y}\psi_{3})\Re(\overline{\psi_{1}}\psi_{2})\|_{L^{2}}+\|\psi_{3}\partial_{y}\Re(\overline{\psi_{1}}\psi_{2})\|_{L^{2}}.
\end{align*}
We will only consider choices of $\psi_{1},\psi_{2},\psi_{3}$ that
can contribute to $(L_{P}-L_{Q})\eps$ or $N_{P}(\eps)$. That is,
the set of $\psi_{1},\psi_{2},\psi_{3}$ contains at least two $\eps$'s
or one $\eps$ and one $P-Q$. The first two terms can be estimated
using weighted $L^{\infty}$-estimates (Lemma \ref{lem:Weighted-Linfty}):
\begin{align*}
 & \|\chf_{y\sim1}A_{\theta}[\psi_{1},\psi_{2}]\psi_{3}\|_{L^{2}}+\|\tfrac{1}{y}({\textstyle \int_{0}^{y}}\Re(\overline{\psi_{1}}\psi_{2})y'dy')(\partial_{y}-\tfrac{1}{y})\partial_{y}\psi_{3}\|_{L^{2}}\\
 & \aleq\begin{cases}
\|\psi_{1}\psi_{2}\|_{L^{1}}\|\tfrac{1}{y}(\rd_{y}-\tfrac{1}{y})\rd_{y}\eps\|_{L^{2}}\aleq(b^{1-}+\|\eps\|_{L^{2}}+\|\eps\|_{L^{2}}^{2})\|\eps\|_{\dot{\calH}_{0}^{3}} & \text{if }\psi_{3}=\eps,\\
\|\langle y\rangle^{-3+}\psi_{1}\psi_{2}\|_{L^{\infty}}\aleq(b+\|\eps\|_{L^{\infty}})\|\eps\|_{\dot{\calH}_{0}^{3}} & \text{if }\psi_{3}\in\{P,Q\},\\
b\|\langle y\rangle^{-1}\psi_{1}\psi_{2}\|_{L^{\infty}}\aleq b\|\eps\|_{\dot{\calH}_{0}^{3}} & \text{if }\psi_{3}=P-Q,
\end{cases}\\
 & \aleq(o_{b^{\ast}\to0}(1)+\|\eps\|_{\dot{\calH}_{0}^{3}})\|\eps\|_{\dot{\calH}_{0}^{3}}.
\end{align*}
We note that in the case $\psi_{3}=\eps$, we used $(\rd_{y}-\tfrac{1}{y})\rd_{y}=\rd_{+}\rd_{+}$
and \eqref{eq:d_plus_plus-H3}.

The last two terms can be estimated by 
\begin{align*}
 & \|(\partial_{y}\psi_{3})\Re(\overline{\psi_{1}}\psi_{2})\|_{L^{2}}+\|\psi_{3}\partial_{y}\Re(\overline{\psi_{1}}\psi_{2})\|_{L^{2}}\\
 & \aleq\begin{cases}
\|\partial_{y}\eps\|_{L^{2}}\|\eps\|_{L^{\infty}}^{2}\aleq b^{1-}(b^{2-}+\|\eps\|_{\dot{\calH}_{0}^{3}}^{2}) & \text{if }\psi_{1}=\psi_{2}=\psi_{3}=\eps,\\
\|\langle y\rangle^{-2+}\partial_{y}(\psi_{j_{1}}\psi_{j_{2}})\|_{L^{2}}+\|\langle y\rangle^{-3+}\psi_{j_{1}}\psi_{j_{2}}\|_{L^{2}} & \text{if }\psi_{j_{3}}\in\{P,Q\}\text{ for some }j_{3}.
\end{cases}
\end{align*}
In the latter case, we can further estimate by 
\begin{align*}
 & \|\langle y\rangle^{-2+}\partial_{y}(\psi_{j_{1}}\psi_{j_{2}})\|_{L^{2}}+\|\langle y\rangle^{-3+}\psi_{j_{1}}\psi_{j_{2}}\|_{L^{2}}\\
 & \aleq b^{1-}\|\eps\|_{\dot{\calH}_{0}^{3}}+\|\eps\|_{\dot{\calH}_{0}^{1}}\|\langle y\rangle^{-2+}\eps\|_{L^{\infty}}\\
 & \aleq b^{1-}\|\eps\|_{\dot{\calH}_{0}^{3}}+\|\eps\|_{\dot{\calH}_{0}^{1}}(\|\eps\|_{\dot{\calH}_{0}^{3}}+\|\eps\|_{\dot{\calH}_{0}^{1}}^{0+}\|\eps\|_{\dot{\calH}_{0}^{3}}^{1-}),
\end{align*}
so 
\[
\|(\partial_{y}\psi_{3})\Re(\overline{\psi_{1}}\psi_{2})\|_{L^{2}}+\|\psi_{3}\partial_{y}\Re(\overline{\psi_{1}}\psi_{2})\|_{L^{2}}\aleq b^{3-}+(o_{b^{\ast}\to0}(1)+\|\eps\|_{\dot{\calH}_{0}^{3}})\|\eps\|_{\dot{\calH}_{0}^{3}}.
\]
This completes the proof of \eqref{eq:NonlinearCoercivityTemp3}.

(3) To prove \eqref{eq:H2-interpolation}, we interpolate \eqref{eq:NonlinearCoercivityH1}
and \eqref{eq:NonlinearCoercivityH3-e1e3}. First, the interpolation
estimate \eqref{eq:interpolation-2} says 
\[
\||\eps_{1}|_{-1}\|_{L^{2}}\aleq\|\eps_{1}\|_{L^{2}}^{\frac{1}{2}}\|\eps_{1}\|_{\dot{\calH}_{1}^{2}}^{\frac{1}{2}}\aleq_{M}Kb^{\frac{3}{2}}|\log b|^{\frac{1}{2}}\aleq b^{\frac{3}{2}}|\log b|^{\frac{1}{2}+}.
\]
Next, we use $\|A_{P}P_{1}-P_{2}\|_{L^{2}}\aleq\frac{b^{3/2}}{|\log b|}$
(which can be proved by \eqref{eq:APP1-P2-temp}) to get 
\begin{align*}
\|\eps_{2}\|_{L^{2}} & \aleq\|A_{P}P_{1}-P_{2}\|_{L^{2}}+\|A_{w}\eps_{1}\|_{L^{2}}+\|(A_{w}-A_{P})P_{1}\|_{L^{2}}.\\
 & \aleq\tfrac{b^{3/2}}{|\log b|}+\||\eps_{1}|_{-1}\|_{L^{2}}+\||w|^{2}-|P|^{2}\|_{L^{\infty}}\|yP_{1}\|_{L^{2}}.
\end{align*}
Since $\||\eps_{1}|_{-1}\|_{L^{2}}\aleq_{M}Kb^{\frac{3}{2}}|\log b|^{\frac{1}{2}}$
and $\||w|^{2}-|P|^{2}\|_{L^{\infty}}\aleq b^{2-}$, we have 
\[
\|\eps_{2}\|_{L^{2}}\aleq_{M}Kb^{\frac{3}{2}}|\log b|^{\frac{1}{2}}\aleq b^{\frac{3}{2}}|\log b|^{\frac{1}{2}+}.
\]
This completes the proof of \eqref{eq:H2-interpolation}. 
\end{proof}

\subsection{\label{subsec:Modulation-estimates}Modulation estimates}

In this subsection, we prove that the modulation parameters roughly
evolve according to the formal parameter ODEs \eqref{eq:FormalParameterLaw}.
The evolution laws of $\lmb$ and $\gmm$ will be obtained from differentiating
the first two orthogonality conditions $(\eps,\calZ_{1})_{r}=(\eps,\calZ_{2})_{r}=0$.
The evolution laws of $b$ and $\eta$ will be obtained from the $\eps_{1}$-equation,
thanks to the conditions $(\eps_{1},\td{\calZ}_{3})_{r}=(\eps_{1},\td{\calZ}_{4})_{r}=0$
from the nonlinear decomposition.

We start by deriving the equation for $\eps$. Recall \eqref{eq:w-eqn-sd}
and \eqref{eq:P-equation}: 
\begin{align*}
(\partial_{s}-\frac{\lmb_{s}}{\lmb}\Lambda+\gmm_{s}i)w+iL_{w}^{\ast}w_{1} & =0,\\
(\partial_{s}-\frac{\lmb_{s}}{\lmb}\Lambda+\gmm_{s}i)P+iL_{P}^{\ast}P_{1} & =-\Mod\cdot\mathbf{v}+i\Psi.
\end{align*}
Subtracting the second from the first, we get the equation for $\eps$:
\begin{equation}
(\partial_{s}-\frac{\lmb_{s}}{\lmb}\Lambda+\gmm_{s}i)\eps+(iL_{w}^{\ast}w_{1}-iL_{P}^{\ast}P_{1})=\Mod\cdot\mathbf{v}-i\Psi.\label{eq:e-equation}
\end{equation}
From the identity 
\[
iL_{w}^{\ast}w_{1}-iL_{P}^{\ast}P_{1}=iL_{Q}^{\ast}\eps_{1}+(iL_{P}^{\ast}-iL_{Q}^{\ast})\eps_{1}+(iL_{w}^{\ast}-iL_{P}^{\ast})w_{1},
\]
the first term $iL_{Q}^{\ast}\eps_{1}$ can be considered as the leading
term of $iL_{w}^{\ast}w_{1}-iL_{P}^{\ast}P_{1}$.

Next, we derive the equation for $\eps_{1}$. Recall that 
\begin{equation}
\td{\gmm}_{s}=\gmm_{s}+\int_{0}^{\infty}\Re(\overline{w}w_{1})dy'.\label{eq:def-gamma-tilde}
\end{equation}
Recall also \eqref{eq:w1-eqn-sd} and \eqref{eq:P1-equation}: 
\begin{align*}
(\partial_{s}-\frac{\lmb_{s}}{\lmb}\Lambda_{-1}+\td{\gmm}_{s}i)w_{1}+iA_{w}^{\ast}w_{2}-\Big(\int_{0}^{y}\Re(\overline{w}w_{1})dy'\Big)iw_{1} & =0,\\
(\partial_{s}-\frac{\lmb_{s}}{\lmb}\Lambda_{-1}+\td{\gmm}_{s}i)P_{1}+iA_{P}^{\ast}P_{2}-\Big(\int_{0}^{y}\Re(\overline{P}P_{1})dy'\Big)iP_{1} & =-\td{\Mod}\cdot\mathbf{v}_{1}+i\Psi_{1}.
\end{align*}
Subtracting the second from the first, we get the equation for $\eps_{1}$:
\begin{align}
 & (\partial_{s}-\frac{\lmb_{s}}{\lmb}\Lambda_{-1}+\td{\gmm}_{s}i)\eps_{1}+iA_{Q}^{\ast}\eps_{2}\nonumber \\
 & =-(iA_{w}^{\ast}-iA_{P}^{\ast})w_{2}-(iA_{P}^{\ast}-iA_{Q}^{\ast})\eps_{2}+({\textstyle \int_{0}^{y}}\Re(\overline{w}w_{1})dy')i\eps_{1}\label{eq:e1-equation}\\
 & \quad+({\textstyle \int_{0}^{y}}(\Re(\overline{w}w_{1}-\overline{P}P_{1}))dy')iP_{1}+\td{\Mod}\cdot\mathbf{v}_{1}-i\Psi_{1}.\nonumber 
\end{align}

\begin{lem}[Modulation estimates]
\label{lem:ModulationEstimates}We have 
\begin{align}
\Big|\frac{\lmb_{s}}{\lmb}+b\Big|+|\gmm_{s}-\eta|+|\td{\gmm}_{s}+\eta| & \aleq b^{2-},\label{eq:ModEstimateScalePhase}\\
|b_{s}+b^{2}+\eta^{2}+c_{b}(b^{2}-\eta^{2})|+|\eta_{s}+2c_{b}b\eta| & \aleq\tfrac{1}{\sqrt{\log M}}\|\eps_{3}\|_{L^{2}}+b^{3-}.\label{eq:ModEstimate-b-eta}
\end{align}
\end{lem}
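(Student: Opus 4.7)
The plan is to differentiate the four orthogonality conditions \eqref{eq:OrthogonalityNonlinear} with respect to the rescaled time $s$ and substitute the evolution equations \eqref{eq:e-equation} and \eqref{eq:e1-equation}. This produces two decoupled $2\times 2$ linear systems: one for the pair $(\tfrac{\lmb_{s}}{\lmb}+b,\gmm_{s}-\eta)$, coming from the conditions $(\eps,\calZ_{k})_{r}=0$ for $k\in\set{1,2}$, and one for $(b_{s}+b^{2}+\eta^{2}+c_{b}(b^{2}-\eta^{2}),\eta_{s}+2c_{b}b\eta)$, coming from $(\eps_{1},\td\calZ_{k})_{r}=0$ for $k\in\set{3,4}$. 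By the transversality relations \eqref{eq:Transversality-Z1234} and \eqref{eq:Transversality-Z-tilde-34} combined with \eqref{eq:yQyQ}, each matrix is diagonally dominant with diagonal of size $(yQ,yQ\chi_{M})_{r}\sim\log M$, so the systems are invertible and the modulation errors are bounded by $\tfrac{1}{\log M}$ times all remaining terms generated on the right-hand sides.

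For \eqref{eq:ModEstimateScalePhase}, I would organize the right-hand side of $\rd_{s}(\eps,\calZ_{k})_{r}=0$ into: (i)\ the modulation pairing $(\mathbf{Mod}\cdot\bfv,\calZ_{k})_{r}$, which produces the dominant diagonal via \eqref{eq:Transversality-Z1234} together with off-diagonal contributions of size $O(M^{C}b)$ from \eqref{eq:v-estimate}; (ii)\ the profile error $(i\Psi,\calZ_{k})_{r}$, bounded by \eqref{eq:Psi-RoughBound} and the pointwise estimate $|\calZ_{k}|_{1}\aleq M^{2}Q\chf_{(0,2M]}$ by $M^{C}b^{2}\abs{\log b}$; and (iii)\ the conjugation term, split as $(iL_{Q}^{\ast}\eps_{1},\calZ_{k})_{r}+((iL_{P}^{\ast}-iL_{Q}^{\ast})\eps_{1},\calZ_{k})_{r}+((iL_{w}^{\ast}-iL_{P}^{\ast})w_{1},\calZ_{k})_{r}$. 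Moving $L_{Q}^{\ast}$ back to $\calZ_{k}$ (which lies in $(\dot\calH_{0}^{1})^{\ast}$ by construction), then applying Cauchy--Schwarz and Lemma~\ref{lem:NonlinearCoercivity}, bounds the conjugation piece by $M^{C}(\nrm{\eps_{1}}_{L^{2}}+b\nrm{\eps}_{\dot\calH_{0}^{1}}+\nrm{\eps}_{\dot\calH_{0}^{1}}^{2})\aleq b^{2-}$ with room for the $M^{C}$ loss. The bound for $\widetilde\gmm_{s}+\eta$ follows from the bound for $\gmm_{s}-\eta$ together with the defining identity \eqref{eq:def-gamma-tilde} and the direct computation $\int_{0}^{\infty}\Re(\br{P}P_{1})dy=-2\eta+O(b^{2})$ using the $\eps$-decomposition and the bootstrap estimates on $\eps,\eps_{1}$.

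For the sharp estimate \eqref{eq:ModEstimate-b-eta}, the same procedure applied to $\rd_{s}(\eps_{1},\td\calZ_{k})_{r}=0$ with \eqref{eq:e1-equation} leaves four types of error. The scaling term $(\tfrac{\lmb_{s}}{\lmb}\Lmb_{-1}\eps_{1},\td\calZ_{k})_{r}$ absorbs a factor $|\tfrac{\lmb_{s}}{\lmb}+b|=O(b^{2-})$ already controlled by \eqref{eq:ModEstimateScalePhase}, and contributes $\aleq b^{3-}$. The profile error $(i\Psi_{1},\td\calZ_{k})_{r}$ is bounded via the $X$-norm estimate \eqref{eq:Psi1-LocalEnergy} and the easy bound $\nrm{\td\calZ_{k}}_{X^{\ast}}\aleq|\log b|^{C}$ by $b^{3}|\log b|^{C}$. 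The cubic and nonlocal pieces coming from $(iA_{w}^{\ast}-iA_{P}^{\ast})w_{2}$, $(iA_{P}^{\ast}-iA_{Q}^{\ast})\eps_{2}$, $(\int_{0}^{y}\Re(\br{w}w_{1})dy')i\eps_{1}$, and $(\int_{0}^{y}\Re(\br{w}w_{1}-\br{P}P_{1})dy')iP_{1}$ are absorbed using the bootstrap hypotheses together with Lemma~\ref{lem:NonlinearCoercivity}, again yielding $O(b^{3-})$. The decisive contribution is $(iA_{Q}^{\ast}\eps_{2},\td\calZ_{k})_{r}$, which I would estimate by a direct Cauchy--Schwarz on $\eps_{3}=A_{Q}^{\ast}\eps_{2}$:
\[
|(iA_{Q}^{\ast}\eps_{2},\td\calZ_{k})_{r}|=|(\eps_{3},i\td\calZ_{k})_{r}|\leq\nrm{\eps_{3}}_{L^{2}}\nrm{\td\calZ_{k}}_{L^{2}}\aleq\nrm{\eps_{3}}_{L^{2}}\sqrt{\log M},
\]
since $\nrm{\td\calZ_{k}}_{L^{2}}^{2}\aeq(yQ,yQ\chi_{M})_{r}\sim\log M$. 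Dividing by the $\log M$ from the transversality matrix produces precisely the claimed $\tfrac{1}{\sqrt{\log M}}\nrm{\eps_{3}}_{L^{2}}$.

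The main obstacle is to bookkeep the $\log M$ dependencies carefully enough that this single Cauchy--Schwarz saturates the $\sqrt{\log M}/\log M$ ratio while every remaining nonlinear and profile term is shown to contribute only $O(b^{3-})$: the degeneracies of $P_{1},P_{2}$ recorded in \eqref{eq:v1-estimate} and \eqref{eq:v2-estimate}, together with the interpolation estimate \eqref{eq:H2-interpolation} and the $X$-norm control of $\Psi_{1}$, are crucial in avoiding any $M^{C}$ losses in the third-order terms, whereas the $(\lmb,\gmm)$ system has enough room ($b^{2-}$ vs.\ the size $b|\log b|^{2+}$ of $\nrm{\eps}_{\dot\calH_{0}^{1}}$) to absorb such losses freely.
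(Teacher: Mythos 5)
Your overall architecture is the same as the paper's: differentiate the four orthogonality conditions, invert the transversality matrices whose diagonal is $(yQ,yQ\chi_{M})_{r}\sim\log M$, and obtain the $\tfrac{1}{\sqrt{\log M}}\|\epsilon_{3}\|_{L^{2}}$ gain from the single Cauchy--Schwarz $|(\epsilon_{3},i\widetilde{\mathcal{Z}}_{k})_{r}|\leq\|\epsilon_{3}\|_{L^{2}}\|\widetilde{\mathcal{Z}}_{k}\|_{L^{2}}\lesssim\sqrt{\log M}\,\|\epsilon_{3}\|_{L^{2}}$. However, there is a genuine gap in your treatment of the conjugation term in the $(\lambda,\gamma)$ system. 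You bound it by $M^{C}\big(\|\epsilon_{1}\|_{L^{2}}+b\|\epsilon\|_{\dot{\mathcal{H}}_{0}^{1}}+\|\epsilon\|_{\dot{\mathcal{H}}_{0}^{1}}^{2}\big)$ and claim this is $\lesssim b^{2-}$; but under the bootstrap hypothesis $\|\epsilon_{1}\|_{L^{2}}<Kb|\log b|^{2}$, the first term is only $O(b^{1-})$, not $O(b^{2-})$. This is not a cosmetic loss: a $b^{1-}$ bound for \eqref{eq:Modulation-e-claim} destroys \eqref{eq:ModEstimateScalePhase} (which must beat $b/|\log b|^{3/2}$ to derive the sharp blow-up rate later), and since the two $2\times2$ blocks are in fact coupled by $O(M^{C}b)$ off-diagonal entries (they are not exactly decoupled, as you assert), the term $M^{C}b\cdot b^{1-}=b^{2-}$ would also contaminate \eqref{eq:ModEstimate-b-eta}, swamping the claimed $\tfrac{1}{\sqrt{\log M}}\|\epsilon_{3}\|_{L^{2}}+b^{3-}$.

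The repair requires using more than the $L^{2}$ size of $\epsilon_{1}$: after writing $(iL_{w}^{\ast}w_{1}-iL_{P}^{\ast}P_{1},\mathcal{Z}_{k})_{r}=(\epsilon_{1},L_{P}i\mathcal{Z}_{k})_{r}+(w_{1},(L_{w}-L_{P})i\mathcal{Z}_{k})_{r}$, one must exploit the spatial decay $|L_{P}i\mathcal{Z}_{k}|\lesssim M^{C}\langle y\rangle^{-3+}$ and pair it with the \emph{weighted} bound $\|\langle y\rangle^{-2+}\epsilon_{1}\|_{L^{2}}\lesssim b^{2-}$, which follows by interpolating the bootstrap bound $\|\epsilon_{1}\|_{L^{2}}\lesssim b|\log b|^{2}$ against $\|\epsilon_{1}\|_{\dot{\mathcal{H}}_{1}^{2}}\lesssim_{M}K\tfrac{b^{2}}{|\log b|}$ from \eqref{eq:NonlinearCoercivityH3-e1e3} (itself a consequence of the $\|\epsilon_{3}\|_{L^{2}}$ hypothesis). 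In other words, the $\dot{H}^{3}$-level information enters already in the $(\lambda,\gamma)$ modulation estimate, not only in the $(b,\eta)$ one. With this correction the remaining steps of your proposal (the $X$-norm treatment of $\Psi_{1}$, the degeneracies of $P_{1},P_{2}$, and the computation of $\widetilde{\gamma}_{s}-\gamma_{s}=-2\eta+O(b^{2-})$) go through as in the paper.
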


\begin{proof}
In this proof, we freely use the bootstrap hypotheses \eqref{eq:BootstrapHypothesis},
as well as Lemmas~\ref{lem:NonlinearCoercivity}, \ref{lem:interpolation},
and \ref{lem:Weighted-Linfty} to estimate $\eps$, $\eps_{1}$, and
$\eps_{2}$. We also abuse the notation and identify the operator
$A_{w}^{\ast}-A_{w'}^{\ast}$, which is simply the multiplication
by a function (namely, the difference of the zeroth order terms),
with that function.

We note that the estimate of $|\gmm_{s}-\eta|$ will follow from the
estimate of $|\td{\gmm}_{s}+\eta|$ and the claim 
\begin{equation}
\td{\gmm}_{s}-\gmm_{s}={\textstyle \int_{0}^{\infty}}\Re(\overline{w}w_{1})dy=-2\eta+O(b^{2-}).\label{eq:Gamma-GammaTilde}
\end{equation}
The claim can be obtained from the computations 
\[
{\textstyle \int_{0}^{\infty}}\Re(\overline{P}P_{1})dy=\eta{\textstyle \int_{0}^{\infty}}(-\tfrac{y}{2}Q^{2})dy+O(b^{2-})=-2\eta+O(b^{2-})
\]
and 
\begin{align*}
|{\textstyle \int_{0}^{\infty}}\Re(\overline{\eps}P_{1})dy| & \aleq\|\eps\|_{\dot{\calH}_{0}^{1}}\|\tfrac{\langle y\rangle}{y}\langle\log y\rangle P_{1}\|_{L^{2}}\aleq b^{2-},\\
|{\textstyle \int_{0}^{\infty}}\Re(\overline{P}\eps_{1})dy| & \aleq\|\tfrac{1}{y\langle y\rangle^{1-}}\eps_{1}\|_{L^{2}}\|\langle y\rangle^{1-}P\|_{L^{2}}\aleq b^{2-},\\
|{\textstyle \int_{0}^{\infty}}\Re(\overline{\eps}\eps_{1})dy| & \aleq\|\eps\|_{L^{\infty-}}\|\tfrac{1}{\brk{y}}\|_{{L^{2+}}}\|\tfrac{\langle y\rangle}{y}\eps_{1}\|_{L^{2}}\aleq b^{2-},
\end{align*}
where in the last inequality we used \eqref{eq:interpolation-1}.

In order to derive the modulation estimates for $\lmb$ and $\td{\gmm}$,
we differentiate the orthogonality conditions $(\eps,\calZ_{k})_{r}=0$
for $k\in\{1,2\}$. It is convenient to rearrange the equation \eqref{eq:e-equation}
as 
\begin{align*}
 & \td{\Mod}\cdot(\mathbf{v}+(\Lambda\eps,-i\eps,0,0)^{t})\\
 & =\partial_{s}\eps+b\Lambda\eps+\eta i\eps+(iL_{w}^{\ast}w_{1}-iL_{P}^{\ast}P_{1})+i\Psi\\
 & \quad-(\td{\gmm}_{s}-\gmm_{s}+2\eta)iw-c_{b}(b^{2}-\eta^{2})(\partial_{b}P)-2c_{b}b\eta(\partial_{\eta}P).
\end{align*}
Taking the inner product with $\calZ_{k}$ with $k\in\{1,2\}$, we
get 
\begin{align}
 & \sum_{j=1}^{4}\{(v_{j},\calZ_{k})_{r}+O(M^{C}\|\eps\|_{\dot{\calH}_{0}^{3}})\}\td{\mathrm{Mod}}_{j}\nonumber \\
 & =(iL_{w}^{\ast}w_{1}-iL_{P}^{\ast}P_{1},\calZ_{k})_{r}-b(\eps,\Lambda\calZ_{k})_{r}-\eta(\eps,i\calZ_{k})_{r}-(\Psi,i\calZ_{k})_{r}\label{eq:Modulation-e-eqn}\\
 & \quad+(\td{\gmm}_{s}-\gmm_{s}+2\eta)(w,i\calZ_{k})_{r}-c_{b}(b^{2}-\eta^{2})(\partial_{b}P,\calZ_{k})_{r}-2c_{b}b\eta(\partial_{\eta}P,\calZ_{k})_{r}.\nonumber 
\end{align}
We first look at the matrix structure of the LHS of \eqref{eq:Modulation-e-eqn}.
By the transversality computation \eqref{eq:Transversality-Z1234}
and the fact that $\calZ_{k}$ is supported in the region $y\leq2M$,
we have 
\begin{multline}
\{(v_{j},\calZ_{k})_{r}+O(M^{C}\|\eps\|_{\dot{\calH}_{0}^{3}})\}_{1\leq k\leq2,\,1\leq j\leq4}\\
=\begin{pmatrix}-(yQ,yQ\chi_{M})_{r}+O(1) & 0 & 0 & 0\\
0 & -\tfrac{1}{4}(yQ,yQ\chi_{M})_{r}+O(1) & 0 & 0
\end{pmatrix}+O(M^{C}b).\label{eq:Modulation-e-matrix}
\end{multline}
Note that this matrix has logarithmic divergence due to $(yQ,yQ\chi_{M})_{r}\sim\log M$
by \eqref{eq:yQyQ}.

We turn to estimate the RHS of \eqref{eq:Modulation-e-eqn}. We claim
that 
\begin{equation}
|\text{RHS of }\eqref{eq:Modulation-e-eqn}|\aleq b^{2-}.\label{eq:Modulation-e-claim}
\end{equation}
For the first term on the RHS of \eqref{eq:Modulation-e-eqn}, we
have 
\begin{align*}
 & |(iL_{w}^{\ast}w_{1}-iL_{P}^{\ast}P_{1},\calZ_{k})_{r}|\\
 & \aleq|(\eps_{1},L_{P}i\calZ_{k})_{r}|+|(w_{1},(L_{w}-L_{P})i\calZ_{k})_{r}|\\
 & \aleq\|\langle y\rangle^{-2+}\eps_{1}\|_{L^{2}}\|\langle y\rangle^{2-}L_{P}i\calZ_{k}\|_{L^{2}}+\|w_{1}\|_{L^{2}}\|(L_{w}-L_{P})i\calZ_{k}\|_{L^{2}}\\
 & \aleq b^{2-}\|\langle y\rangle^{2-}L_{P}i\calZ_{k}\|_{L^{2}}+b^{1-}\|(L_{w}-L_{P})i\calZ_{k}\|_{L^{2}},
\end{align*}
so it suffices to show 
\begin{align*}
\|\langle y\rangle^{2-}L_{P}i\calZ_{k}\|_{L^{2}} & \aleq M^{C},\\
\|(L_{w}-L_{P})i\calZ_{k}\|_{L^{2}} & \aleq b^{1-}.
\end{align*}
The estimate for $L_{P}i\calZ_{k}$ follows from 
\[
|L_{P}i\calZ_{k}|\aleq M^{C}\brk{y}^{-3+}.
\]
The estimate for $(L_{w}-L_{P})i\calZ_{k}$ follows from 
\[
|(L_{w}-L_{P})i\calZ_{k}|\aleq\tfrac{1}{y}|(A_{\theta}[w]-A_{\theta}[P])\calZ_{k}|+\tfrac{1}{y}|A_{\theta}[\eps,i\calZ_{k}]w|+\tfrac{1}{y}|A_{\theta}[P,i\calZ_{k}]\eps|
\]
and 
\begin{align*}
\|\tfrac{1}{y}(A_{\theta}[w]-A_{\theta}[P])\calZ_{k}\|_{L^{2}}+\|\tfrac{1}{y}A_{\theta}[\eps,i\calZ_{k}]w\|_{L^{2}} & \aleq M^{C}\|\eps\|_{\dot{\calH}_{0}^{3}}\aleq b^{2-},\\
\|\tfrac{1}{y}A_{\theta}[P,i\calZ_{k}]\eps\|_{L^{2}} & \aleq M^{C}\|\eps\|_{\dot{\calH}_{0}^{1}}\aleq b^{1-}.
\end{align*}
The remaining terms on the RHS of \eqref{eq:Modulation-e-eqn} can
be estimated using \eqref{eq:Psi-RoughBound} and \eqref{eq:Gamma-GammaTilde};
we have 
\begin{align*}
|b(\eps,\Lambda\calZ_{k})_{r}|+|\eta(\eps,i\calZ_{k})_{r}| & \aleq bM^{C}\|\eps\|_{\dot{\calH}_{0}^{3}}\aleq b^{3-},\\
|(\Psi,i\calZ_{k})_{r}| & \aleq M^{C}b^{2}|\log b|\aleq b^{2-},\\
|(\td{\gmm}_{s}-\gmm_{s}+2\eta)(w,i\calZ_{k})_{r}| & \aleq M^{C}|\td{\gmm}_{s}-\gmm_{s}+2\eta|\aleq b^{2-},
\end{align*}
and 
\begin{align*}
 & |c_{b}(b^{2}-\eta^{2})(\partial_{b}P,\calZ_{k})_{r}|+|2c_{b}b\eta(\partial_{\eta}P,\calZ_{k})_{r}|\\
 & \aleq b^{2}\big(|(\partial_{b}P,\calZ_{k})_{r}|+|(\partial_{\eta}P,\calZ_{k})_{r}|\big)\aleq b^{2}M^{C}\aleq b^{2-}.
\end{align*}
Therefore, the claim \eqref{eq:Modulation-e-claim} is proved.

Next, in order to derive the modulation estimates for $b$ and $\eta$,
we differentiate the orthogonality conditions $(\eps_{1},\td{\calZ}_{k})_{r}=0$
for $k\in\{3,4\}$. We rearrange the equation \eqref{eq:e1-equation}
as 
\begin{equation}
\begin{aligned} & \td{\Mod}\cdot(\mathbf{v}_{1}+(\Lambda_{-1}\eps_{1},-i\eps_{1},0,0)^{t})\\
 & =\partial_{s}\eps_{1}+iA_{Q}^{\ast}\eps_{2}+b\Lambda_{-1}\eps_{1}-\eta i\eps_{1}+(iA_{w}^{\ast}-iA_{P}^{\ast})w_{2}+(iA_{P}^{\ast}-iA_{Q}^{\ast})\eps_{2}\\
 & \quad-({\textstyle \int_{0}^{y}}\Re(\overline{w}w_{1})dy')i\eps_{1}-({\textstyle \int_{0}^{y}}\Re(\overline{w}w_{1}-\overline{P}P_{1})dy')iP_{1}+i\Psi_{1}.
\end{aligned}
\label{eq:Modulation-e1-prelim}
\end{equation}
Taking the inner product with $\td{\calZ}_{k}$ with $k\in\{3,4\}$,
we get 
\begin{equation}
\begin{aligned} & \sum_{j=1}^{4}\{((\mathbf{v}_{1})_{j},\td{\calZ}_{k})_{r}+O(M^{C}\|\eps_{1}\|_{\dot{\calH}_{1}^{2}})\}\td{\mathrm{Mod}}_{j}=(iA_{Q}^{\ast}\eps_{2},\td{\calZ}_{k})_{r}\\
 & \quad+(b\Lambda_{-1}\eps_{1}-\eta i\eps_{1},\td{\calZ}_{k})_{r}+((iA_{w}^{\ast}-iA_{P}^{\ast})w_{2}+(iA_{P}^{\ast}-iA_{Q}^{\ast})\eps_{2},\td{\calZ}_{k})_{r}\\
 & \quad-(({\textstyle \int_{0}^{y}}\Re(\overline{w}w_{1})dy')i\eps_{1}+({\textstyle \int_{0}^{y}}\Re(\overline{w}w_{1}-\overline{P}P_{1}))dy')iP_{1},\td{\calZ}_{k})_{r}+(i\Psi_{1},\td{\calZ}_{k})_{r}.
\end{aligned}
\label{eq:Modulation-e1-eqn}
\end{equation}
We first look at the matrix structure of the LHS of \eqref{eq:Modulation-e1-eqn}.
By the structure of $\mathbf{v}_{1}$ \eqref{eq:v1-estimate} (in
particular the \emph{degeneracy} $\Lambda_{-1}P_{1}=O(b)=iP_{1}$),
the transversality computation \eqref{eq:Transversality-Z-tilde-34},
and the fact that $\td{\calZ}_{k}$ is supported in $(0,2M]$, we
obtain 
\begin{multline}
\{((\mathbf{v}_{1})_{j},\td{\calZ}_{k})_{r}+O(M^{C}\|\eps_{1}\|_{\dot{\calH}_{1}^{2}})\}_{3\leq k\leq4,\,1\leq j\leq4}\\
=\begin{pmatrix}0 & 0 & (\tfrac{1}{2}yQ,yQ\chi_{M})_{r} & 0\\
0 & 0 & 0 & (\tfrac{1}{2}yQ,yQ\chi_{M})_{r}
\end{pmatrix}+O(M^{C}b).\label{eq:Modulation-e1-matrix}
\end{multline}
As before, $(\tfrac{1}{2}yQ,yQ\chi_{M})_{r}\sim\log M$.

We turn to estimate the RHS of \eqref{eq:Modulation-e1-eqn}. We claim
that 
\begin{align}
|\text{RHS of }\eqref{eq:Modulation-e1-eqn}| & \aleq\sqrt{\log M}\|\eps_{3}\|_{L^{2}}+b^{3-},\label{eq:Modulation-e1-claim}
\end{align}
For the first term, since $A_{Q}^{\ast}\eps_{2}=\eps_{3}$, we estimate
as\footnote{The way of estimating this contribution is quite different from the
$m\geq1$ case. When $m\geq1$, the inner product matrix \eqref{eq:Modulation-e1-matrix}
has no logarithmic divergence in $M$. Instead, the smallness factor
in $M$ of \eqref{eq:ModEstimate-b-eta} comes from $A_{Q}i\td{\calZ}_{k}\approx0$
for $k\in\{3,4\}$ and $\|\frac{1}{y}\eps_{2}\|_{L^{2}}\sim\|\eps_{3}\|_{L^{2}}$:
\[
(\eps_{3},i\td{\calZ}_{k})_{r}=(\eps_{2},A_{Q}i\td{\calZ}_{k})_{r}\aleq\|\tfrac{1}{y}\eps_{2}\|_{L^{2}}\|yA_{Q}i\td{\calZ}_{k}\|_{L^{2}}\aleq M^{-1}\|\eps_{3}\|_{L^{2}}.
\]
When $m=0$, the smallness factor in $M$ of \eqref{eq:ModEstimate-b-eta}
simply comes from $(\log M)^{\frac{1}{2}}/(\log M)$, where $(\log M)^{\frac{1}{2}}$
and $\log M$ come from $\|\td{\calZ}_{k}\|_{L^{2}}\sim(\log M)^{\frac{1}{2}}$
and the inner product matrix \eqref{eq:Modulation-e1-matrix}, respectively.} 
\[
|(iA_{Q}^{\ast}\eps_{2},\td{\calZ}_{k})_{r}|\aleq\|\eps_{3}\|_{L^{2}}\|\td{\calZ}_{k}\|_{L^{2}}\aleq\sqrt{\log M}\|\eps_{3}\|_{L^{2}}.
\]
For the remaining terms, we claim the following weighted $L^{2}$-estimates
(this is also for a later use in the Morawetz correction; see the
proof of Lemma~\ref{lem:MorawetzCorrection}): 
\begin{gather}
\||\eps_{1}|_{1}\|_{X}\aleq b^{2-},\label{eq:virial-correction-temp1}\\
\|(A_{w}^{\ast}-A_{P}^{\ast})w_{2}\|_{X}+\|(A_{P}^{\ast}-A_{Q}^{\ast})\eps_{2}\|_{X}\aleq b^{3},\label{eq:virial-correction-temp2}\\
\|({\textstyle \int_{0}^{y}}\Re(\overline{w}w_{1})dy')\eps_{1}\|_{X}+\|({\textstyle \int_{0}^{y}}\Re(\overline{w}w_{1}-\overline{P}P_{1})dy')P_{1}\|_{X}\aleq b^{3-},\label{eq:virial-correction-temp3}\\
\|\Psi_{1}\|_{X}\aleq b^{3-}.\label{eq:virial-correction-temp4}
\end{gather}
Here, we recall from \eqref{eq:Def-Xnorm} that the $X$-norm is given
by $\|f\|_{X}=\|\langle y\rangle^{-2}\langle\log_{+}y\rangle f\|_{L^{2}}$.
We note that \eqref{eq:Modulation-e1-claim} follows from combining
\eqref{eq:virial-correction-temp1}-\eqref{eq:virial-correction-temp3}
with $\|\langle y\rangle^{2}\langle\log_{+}y\rangle^{-1}|\td{\calZ}_{k}|_{1}\|_{L^{2}}\aleq M^{C}$.
Henceforth, we focus on proving \eqref{eq:virial-correction-temp1}-\eqref{eq:virial-correction-temp3}.

The estimate \eqref{eq:virial-correction-temp1} follows from 
\[
\||\eps_{1}|_{1}\|_{X}=\|\langle y\rangle^{-2}\langle\log_{+}y\rangle|\eps_{1}|_{1}\|_{L^{2}}\aleq\|\eps_{1}\|_{\dot{\calH}_{1}^{2}}^{1-}\|\eps_{1}\|_{L^{2}}^{0+}\aleq b^{2-}.
\]
For \eqref{eq:virial-correction-temp2}, since $\|\eps_{2}\|_{\dot{\calH}_{2}^{1}}+\|P_{2}\|_{\dot{\calH}_{2}^{1}}\aleq b^{2}$,
it suffices to show 
\[
\|y\langle y\rangle^{-2}\langle\log_{+}y\rangle^{2}(|A_{w}^{\ast}-A_{P}^{\ast}|+|A_{P}^{\ast}-A_{Q}^{\ast}|)\|_{L^{\infty}}\aleq b.
\]
The estimate for $A_{w}^{\ast}-A_{P}^{\ast}$ follows from the observation
that $A_{w}^{\ast}-A_{P}^{\ast}$ is a linear combination of $\tfrac{1}{y}A_{\theta}[\psi_{1},\psi_{2}]$,
where $\psi_{1}\in\{P,\eps\}$ and $\psi_{2}=\eps$ and the estimate
\[
\|\langle y\rangle^{-2}\langle\log_{+}y\rangle^{2}A_{\theta}[\psi_{1},\psi_{2}]\|_{L^{\infty}}\aleq\|\psi_{1}\|_{L^{2}}\|\langle y\rangle^{-2+}\eps\|_{L^{2}}\aleq\nrm{\eps}_{\dot{\calH}_{0}^{3}}^{\frac{2}{3}-}\nrm{\eps}_{L^{2}}^{\frac{1}{3}+}\aleq b^{\frac{4}{3}-}.
\]
The estimate for $A_{P}^{\ast}-A_{Q}^{\ast}=-\frac{1}{y}(A_{\theta}[P]-A_{\theta}[Q])$
follows from \eqref{eq:AthtP-AthtQ}. The estimate \eqref{eq:virial-correction-temp3}
follows from 
\begin{align*}
\|({\textstyle \int_{0}^{y}}\Re(\overline{w}w_{1})dy')\eps_{1}\|_{X} & \aleq\|{\textstyle \int_{0}^{y}}\Re(\overline{w}w_{1})dy'\|_{L^{\infty}}\|\eps_{1}\|_{X}\\
 & \aleq\|w\|_{L^{2}}\|y^{-1}w_{1}\|_{L^{2}}\|\eps_{1}\|_{\dot{\calH}_{1}^{2}}^{1-}\|\eps_{1}\|_{L^{2}}^{0+}\aleq b^{3-}
\end{align*}
and 
\begin{align*}
 & \|({\textstyle \int_{0}^{y}}\Re(\overline{w}w_{1}-\overline{P}P_{1})dy')P_{1}\|_{X}\\
 & \aleq\|\langle\log_{+}y\rangle P_{1}\|_{L^{2}}\|y^{-1}\langle y\rangle^{-2}(|P\eps_{1}|+|\eps w_{1}|)\|_{L^{1}}\\
 & \aleq b^{1-}(\|P\|_{L^{2}}\|y^{-1}\langle y\rangle^{-2}\eps_{1}\|_{L^{2}}+\|\langle y\rangle^{-3}\eps\|_{L^{2}}\|y^{-1}\langle y\rangle w_{1}\|_{L^{2}})\aleq b^{3-}.
\end{align*}
Finally, the claim \eqref{eq:virial-correction-temp4} is proved in
\eqref{eq:Psi1-LocalEnergy}. Thus the claims \eqref{eq:virial-correction-temp1}-\eqref{eq:virial-correction-temp4}
and hence \eqref{eq:Modulation-e1-claim} are proved.

To complete the proof, we use the structures of the matrices \eqref{eq:Modulation-e-matrix}
and \eqref{eq:Modulation-e1-matrix}, and the logarithmic divergence
\eqref{eq:yQyQ} to find 
\begin{align*}
\Big|\frac{\lmb_{s}}{\lmb}+b\Big|+|\td{\gmm}_{s}+\eta| & \aleq\frac{1}{\log M}\eqref{eq:Modulation-e-claim}+M^{C}b\eqref{eq:Modulation-e1-claim},\\
|b_{s}+b^{2}+\eta^{2}+c_{b}(b^{2}-\eta^{2})|+|\eta_{s}+2c_{b}b\eta| & \aleq\frac{1}{\log M}\eqref{eq:Modulation-e1-claim}+M^{C}b\eqref{eq:Modulation-e-claim}.
\end{align*}
Substituting the claims finishes the proof.
\end{proof}
The estimates \eqref{eq:ModEstimateScalePhase} and \eqref{eq:ModEstimate-b-eta}
suffice to close our bootstrap procedure and derive finite-time blow-up.
However, these do not suffice to derive the sharp blow-up rates. Substituting
$\|\eps_{3}\|_{L^{2}}\leq K\frac{b^{2}}{|\log b|}$, the estimate
\eqref{eq:ModEstimate-b-eta} would only yield 
\[
|b_{s}+b^{2}+\tfrac{2b^{2}}{|\log b|}|\aleq\tfrac{K}{\sqrt{\log M}}\tfrac{b^{2}}{|\log b|},
\]
which would not be enough to determine the precise coefficient of
$\frac{b^{2}}{|\log b|}$. The sharp blow-up rate \emph{depends} on
the coefficient of $\frac{b^{2}}{|\log b|}$.

To overcome this issue, we note that the estimates are saturated by
the contribution of $(iA_{Q}^{\ast}\eps_{2},\td{\calZ}_{k})_{r}$.
To make this term smaller, we test \eqref{eq:Modulation-e1-prelim}
against better approximations of the kernel elements $yQ$, $iyQ$
of $A_{Q}$ instead of $\td{\calZ}_{k}$ ($k=3,4$). With this correction,
we improve the bound $\tfrac{1}{\sqrt{\log M}}\|\eps_{3}\|_{L^{2}}$
of \eqref{eq:ModEstimate-b-eta} by a logarithmic factor $\sqrt{|\log b|}$.
From this, we can see that the sharp coefficient of $\frac{b^{2}}{|\log b|}$
is $2$. The same argument was previously used in \cite{MerleRaphaelRodnianski2013InventMath}.

For a small universal constant $\delta>0$ (e.g., $\delta=\frac{1}{100C}$
for the $C$'s used in $M^{C}$ bounds), we introduce 
\[
B_{\delta}\coloneqq b^{-\delta},\quad\td{\calZ}_{3,\delta}=yQ\chi_{B_{\delta}},\quad\td{\calZ}_{4,\delta}=iyQ\chi_{B_{\delta}}.
\]
The refined modulation estimates will be derived from differentiating
$(\eps_{1},\td{\calZ}_{k,\delta})_{r}$. We remark that we do not
use $(\eps_{1},\td{\calZ}_{k,\delta})_{r}=0$ as orthogonality conditions
from the beginning. If $(\eps_{1},\td{\calZ}_{k,\delta})_{r}=0$ were
used, then the implicit constants of the coercivity relations would
depend on $b$ and create serious complications.
\begin{lem}[Refined modulation estimates for $b$ and $\eta$]
\label{lem:RefinedModulationEstimates}Define 
\[
\td b\coloneqq b-\frac{(\eps_{1},\td{\calZ}_{3,\delta})_{r}}{(\frac{1}{2}yQ,yQ\chi_{B_{\delta}})_{r}}\quad\text{and}\quad\td{\eta}\coloneqq\eta-\frac{(\eps_{1},\td{\calZ}_{4,\delta})_{r}}{(\tfrac{1}{2}yQ,yQ\chi_{B_{\delta}})_{r}}.
\]
Then, 
\begin{align}
|\td b-b|+|\td{\eta}-\eta| & \aleq b^{2-C\delta},\label{eq:btilde-b}\\
|\td b_{s}+b^{2}+\eta^{2}+c_{b}(b^{2}-\eta^{2})|+|\td{\eta}_{s}+2c_{b}b\eta| & \aleq\tfrac{1}{\sqrt{|\log b|}}\|\eps_{3}\|_{L^{2}}+b^{3-C\delta}.\label{eq:RefinedModulation}
\end{align}
In particular, 
\begin{equation}
|\td b_{s}+\td b^{2}+\tfrac{2\td b^{2}}{|\log b|}|+|\td{\eta}_{s}|\aleq\tfrac{b^{2}}{|\log b|^{\frac{3}{2}-}}.\label{eq:btilde-modulation}
\end{equation}
\end{lem}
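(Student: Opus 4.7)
The plan is to treat $\widetilde b$ and $\widetilde \eta$ as perturbations of $b,\eta$ designed to diagonalize the leading contribution of $iA_Q^*\epsilon_2$ in the modulation equation, and to exploit the algebraic identity $A_Q(yQ)=0$, which forces $A_Q\widetilde{\mathcal Z}_{3,\delta}=iyQ\chi_{B_\delta}'$ (and similarly for $k=4$) to be supported only in the thin annulus $[B_\delta,2B_\delta]$ with $\nrm{y^2Q\chi_{B_\delta}'}_{L^2}\aeq 1$. Throughout, set $D\coloneqq (\tfrac12 yQ,yQ\chi_{B_\delta})_r \aeq \delta|\log b|$ and $N_k\coloneqq (\epsilon_1,\widetilde{\mathcal Z}_{k,\delta})_r$, so that $\widetilde b=b-N_3/D$ and $\widetilde\eta=\eta-N_4/D$.

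First I would establish \eqref{eq:btilde-b}. Using the nonlinear-decomposition orthogonality $(\epsilon_1,\widetilde{\mathcal Z}_k)_r=0$, write $N_k=(\epsilon_1,yQ(\chi_{B_\delta}-\chi_M))_r$ (times $i$ for $k=3$), whose integrand is supported in $[M,2B_\delta]$. Pair $y^{-2}\brk{\log y}^{-1}\epsilon_1$ (controlled by $\nrm{\epsilon_1}_{\dot{\mathcal H}_1^2}\aleq b^2/|\log b|^{1-}$ via Lemma~\ref{lem:NonlinearCoercivity}) against $y^3Q\brk{\log y}(\chi_{B_\delta}-\chi_M)$, whose $L^2$-norm is $\aleq B_\delta^2|\log B_\delta|\aleq b^{-2\delta}|\log b|$. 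This yields $|N_k|\aleq b^{2-2\delta}|\log b|^{0+}$, and dividing by $D$ gives \eqref{eq:btilde-b}.

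The heart of the proof is \eqref{eq:RefinedModulation}. I would test the $\epsilon_1$-equation \eqref{eq:Modulation-e1-prelim} against $\widetilde{\mathcal Z}_{k,\delta}$ for $k=3,4$. Redoing the transversality computation \eqref{eq:Transversality-Z-tilde-34} with $\chi_M\rightsquigarrow\chi_{B_\delta}$ (legal since $B_\delta\ll B_1$), together with the degeneracy \eqref{eq:v1-estimate} of $\mathbf v_1$ in the $\lambda,\gamma$ entries, shows the matrix $\{((\mathbf v_1)_j,\widetilde{\mathcal Z}_{k,\delta})_r\}$ is diagonal up to $O(b)$ with entry $D$ on the $(3,3)$ and $(4,4)$ slots. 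For the leading $iA_Q^*\epsilon_2$ term, the $\bbC$-linearity of $A_Q$ and $A_Q(yQ)=0$ yield
\begin{equation*}
(iA_Q^*\epsilon_2,\widetilde{\mathcal Z}_{3,\delta})_r = (A_Q^*\epsilon_2,yQ\chi_{B_\delta})_r = (\epsilon_2,yQ\chi_{B_\delta}')_r,
\end{equation*}
and by Cauchy--Schwarz applied to $(y^{-1}\epsilon_2)\cdot(y^2 Q\chi_{B_\delta}')$ with $\nrm{y^{-1}\epsilon_2}_{L^2}\aleq\nrm{\epsilon_2}_{\dot{\mathcal H}_2^1}\aeq\nrm{\epsilon_3}_{L^2}$ (by Hardy on $2$-equivariant functions) and $\nrm{y^2 Q\chi_{B_\delta}'}_{L^2}\aeq 1$, this is $\aleq\nrm{\epsilon_3}_{L^2}$. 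Dividing by $D\aeq\delta|\log b|$ produces $\nrm{\epsilon_3}_{L^2}/|\log b|$, stronger than the claimed $\nrm{\epsilon_3}_{L^2}/\sqrt{|\log b|}$. The remaining terms on the right of \eqref{eq:Modulation-e1-prelim} are absorbed by reusing the weighted $X$-norm bounds \eqref{eq:virial-correction-temp1}--\eqref{eq:virial-correction-temp4}: paired with $\nrm{y^2\brk{\log_+y}^{-1}|\widetilde{\mathcal Z}_{k,\delta}|_1}_{L^2}\aleq B_\delta^{2-}=b^{-2\delta-}$, they produce $O(b^{3-C\delta})$ contributions. Finally, account for $(\epsilon_1,\partial_s\widetilde{\mathcal Z}_{k,\delta})_r$ and $N_k\partial_s D/D^2$: since $\partial_s\chi_{B_\delta}\aeq\delta(b_s/b)\chf_{[B_\delta,2B_\delta]}\aeq\delta b\chf_{[B_\delta,2B_\delta]}$ (using $b_s=O(b^2)$ from Lemma~\ref{lem:ModulationEstimates}), one gets $\nrm{\partial_s\widetilde{\mathcal Z}_{k,\delta}}_{L^2}\aeq\delta b$, and combining with $\nrm{\epsilon_1\chf_{[B_\delta,2B_\delta]}}_{L^2}\aleq b^{2-2\delta}$ (by the same weighted estimate as in the first step) gives $O(b^{3-C\delta})$; the $\partial_s D$ correction is handled similarly using $|N_k|\aleq b^{2-C\delta}$. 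This proves \eqref{eq:RefinedModulation}.

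The last step \eqref{eq:btilde-modulation} is purely algebraic. From \eqref{eq:btilde-b} and $|\eta|\leq b/|\log b|$ we have $\widetilde b^2-b^2=O(b^{3-C\delta})$, $\eta^2=O(b^2/|\log b|^2)$, and using $c_b=2/|\log b|+O(|\log b|^{-2})$ we get $\widetilde b^2+2\widetilde b^2/|\log b|=b^2+\eta^2+c_b(b^2-\eta^2)+O(b^2/|\log b|^2+b^{3-C\delta})$. Substituting into \eqref{eq:RefinedModulation} with $\nrm{\epsilon_3}_{L^2}\leq Kb^2/|\log b|$ from the bootstrap yields $|\widetilde b_s+\widetilde b^2+2\widetilde b^2/|\log b||\aleq b^2/|\log b|^{3/2-}$, and the bound on $\widetilde\eta_s$ follows in the same way using $|2c_b b\eta|\aleq b^2/|\log b|^2$. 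The main obstacle is the second step: isolating the precise cancellation from $A_Q(yQ)=0$ to trade the rough $\sqrt{\log M}\,\nrm{\epsilon_3}_{L^2}$ bound of Lemma~\ref{lem:ModulationEstimates} for the sharper $\nrm{\epsilon_3}_{L^2}/|\log b|$, which is what ultimately pins down the correct coefficient $2$ in front of $\widetilde b^2/|\log b|$ and hence the sharp log-corrected blow-up rate.
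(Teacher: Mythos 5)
Your overall architecture matches the paper's: test the $\epsilon_1$-equation against the $B_\delta$-truncated kernel elements, observe that the denominator $(\tfrac12 yQ,yQ\chi_{B_\delta})_r\sim\delta|\log b|$ beats the numerator, and absorb the non-vanishing $\partial_s(\epsilon_1,\widetilde{\mathcal Z}_{k,\delta})_r$ into the definitions of $\widetilde b,\widetilde\eta$. The steps for \eqref{eq:btilde-b} and \eqref{eq:btilde-modulation} are fine. However, your treatment of the leading term $(iA_Q^\ast\epsilon_2,\widetilde{\mathcal Z}_{k,\delta})_r$ — which you correctly identify as the heart of the matter — contains a genuine gap. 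You integrate by parts onto $A_Q(yQ\chi_{B_\delta})=(\partial_y\chi_{B_\delta})yQ$ and then invoke ``$\|y^{-1}\epsilon_2\|_{L^2}\lesssim\|\epsilon_2\|_{\dot{\mathcal H}_2^1}$ by Hardy on $2$-equivariant functions.'' This inequality is false in the present setting: the $\dot{\mathcal H}_2^1$-norm (equivalently $\|\epsilon_3\|_{L^2}=\|A_Q^\ast\epsilon_2\|_{L^2}$, since $\widetilde V\sim\langle y\rangle^{-2}$) only controls $\|y^{-1}\langle\log_+y\rangle^{-1}\epsilon_2\|_{L^2}$ — this is exactly the $2$d failure of Hardy's inequality that forces the logarithmic weight into the definition of $\dot{\mathcal H}_2^1$, and it is precisely the point where $m=0$ differs from $m\geq1$ (see the footnote in the proof of Lemma~\ref{lem:ModulationEstimates}, where the authors note that $\|\tfrac1y\epsilon_2\|_{L^2}\sim\|\epsilon_3\|_{L^2}$ is available only for $m\geq1$). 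If you run your computation with the correct weighted Hardy bound, the annulus $y\sim B_\delta$ costs a factor $\langle\log B_\delta\rangle\sim\delta|\log b|$, so $(\epsilon_2,yQ\partial_y\chi_{B_\delta})_r\lesssim|\log b|\,\|\epsilon_3\|_{L^2}$, and after dividing by $D$ you get only $O(\|\epsilon_3\|_{L^2})$ — no gain whatsoever, worse even than the unrefined Lemma~\ref{lem:ModulationEstimates}.

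The correct mechanism is simpler and does not use $A_Q(yQ)=0$ at all at this step: estimate $(iA_Q^\ast\epsilon_2,\widetilde{\mathcal Z}_{k,\delta})_r=(i\epsilon_3,\widetilde{\mathcal Z}_{k,\delta})_r$ by direct Cauchy--Schwarz, using $\|\widetilde{\mathcal Z}_{k,\delta}\|_{L^2}=\|yQ\chi_{B_\delta}\|_{L^2}\sim\sqrt{\log B_\delta}$ (the integrand $y^2Q^2\cdot y\sim y^{-1}$ accumulates a square root of a logarithm). Dividing by $D\sim\log B_\delta$ yields the stated $\tfrac{1}{\sqrt{|\log b|}}\|\epsilon_3\|_{L^2}$; the gain over Lemma~\ref{lem:ModulationEstimates} comes purely from the ratio $\sqrt{\log B_\delta}/\log B_\delta$ with $B_\delta=b^{-\delta}$ replacing the fixed $M$, not from any cancellation in $A_Q\widetilde{\mathcal Z}_{k,\delta}$. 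Your claimed bound $\|\epsilon_3\|_{L^2}/|\log b|$ is therefore not achievable by this route, but fortunately the weaker $\|\epsilon_3\|_{L^2}/\sqrt{|\log b|}$ is all that \eqref{eq:btilde-modulation} requires, since the bootstrap gives $\|\epsilon_3\|_{L^2}\lesssim b^2/|\log b|$ and hence an error $b^2/|\log b|^{3/2}$, small enough to pin down the coefficient $2$ in $2\widetilde b^2/|\log b|$.
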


\begin{proof}
In the following, we will compute $\partial_{s}(\eps_{1},\td{\calZ}_{k,\delta})_{r}$.
We take the inner product of \eqref{eq:Modulation-e1-prelim} and
$\td{\calZ}_{k,\delta}$ to obtain a variant of \eqref{eq:Modulation-e1-eqn}:
\begin{equation}
\begin{aligned} & \sum_{j=1}^{4}\{((\mathbf{v}_{1})_{j},\td{\calZ}_{k,\delta})_{r}+O(b^{-C\delta}\|\eps_{1}\|_{\dot{\calH}_{1}^{2}})\}\td{\mathrm{Mod}}_{j}=(\partial_{s}\eps_{1},\td{\calZ}_{k,\delta})_{r}+(iA_{Q}^{\ast}\eps_{2},\td{\calZ}_{k,\delta})_{r}\\
 & \quad+(b\Lambda_{-1}\eps_{1}-\eta i\eps_{1},\td{\calZ}_{k,\delta})_{r}+((iA_{w}^{\ast}-iA_{P}^{\ast})w_{2}+(iA_{P}^{\ast}-iA_{Q}^{\ast})\eps_{2},\td{\calZ}_{k,\delta})_{r}\\
 & \quad-(({\textstyle \int_{0}^{y}}\Re(\overline{w}w_{1})dy')i\eps_{1}+({\textstyle \int_{0}^{y}}\Re(\overline{w}w_{1}-\overline{P}P_{1}))dy')iP_{1},\td{\calZ}_{k,\delta})_{r}+(i\Psi_{1},\td{\calZ}_{k,\delta})_{r}.
\end{aligned}
\label{eq:RefinedModulation-eqn}
\end{equation}
We remark that there is an additional term $(\partial_{s}\eps_{1},\td{\calZ}_{k,\delta})_{r}$
on the RHS of \eqref{eq:RefinedModulation-eqn}. The matrix on the
LHS of \eqref{eq:RefinedModulation-eqn} satisfies (c.f.~\eqref{eq:Modulation-e1-matrix})
\begin{equation}
\begin{aligned} & \{((\mathbf{v}_{1})_{j},\td{\calZ}_{k,\delta})_{r}+O(b^{-C\delta}\|\eps_{1}\|_{\dot{\calH}_{1}^{2}})\}_{3\leq k\leq4,\,1\leq j\leq4}\\
 & =\begin{pmatrix}0 & 0 & (\tfrac{1}{2}yQ,yQ\chi_{B_{\delta}})_{r} & 0\\
0 & 0 & 0 & (\tfrac{1}{2}yQ,yQ\chi_{B_{\delta}})_{r}
\end{pmatrix}+O(b^{1-C\delta}).
\end{aligned}
\label{eq:RefinedModulation-matrix}
\end{equation}
For the terms on the RHS of \eqref{eq:RefinedModulation-eqn}, estimates
are very similar to those in Lemma~\ref{lem:ModulationEstimates}
with replacing $M$ by $B_{\delta}$. We use $\|\td{\calZ}_{k,\delta}\|_{L^{2}}\aleq\sqrt{\log B_{\delta}}$
and $\|\langle y\rangle^{2}\langle\log_{+}y\rangle^{-1}|\td{\calZ}_{k,\delta}|_{1}\|_{L^{2}}\aleq b^{-C\delta}$,
and follow the proof of \eqref{eq:Modulation-e1-claim} to obtain
\begin{equation}
\text{RHS of }\eqref{eq:RefinedModulation-eqn}=(\partial_{s}\eps_{1},\td{\calZ}_{k,\delta})_{r}+O(\sqrt{\log B_{\delta}}\|\eps_{3}\|_{L^{2}}+b^{3-C\delta}).\label{eq:RefinedModulation-temp}
\end{equation}
Summing up \eqref{eq:RefinedModulation-matrix} and \eqref{eq:RefinedModulation-temp},
and then applying the previous modulation estimates (Lemma~\ref{lem:ModulationEstimates})
to treat the term $O(b^{1-C\dlt}\td{\Mod})$, we arrive at 
\begin{equation}
\td{\mathrm{Mod}}_{k}=\frac{(\partial_{s}\eps_{1},\td{\calZ}_{k,\delta})_{r}}{(\frac{1}{2}yQ,yQ\chi_{B_{\delta}})_{r}}+O\Big(\frac{1}{\sqrt{\log B_{\delta}}}\|\eps_{3}\|_{L^{2}}+b^{3-C\delta}\Big).\label{eq:RefinedModulation-temp2}
\end{equation}
for $k\in\{3,4\}$.

We now differentiate $(\partial_{s}\eps_{1},\td{\calZ}_{k,\delta})_{r}$
by parts: 
\begin{align*}
 & \bigg|\frac{(\partial_{s}\eps_{1},\td{\calZ}_{k,\delta})_{r}}{(\frac{1}{2}yQ,yQ\chi_{B_{\delta}})_{r}}-\partial_{s}\bigg(\frac{(\eps_{1},\td{\calZ}_{k,\delta})_{r}}{(\frac{1}{2}yQ,yQ\chi_{B_{\delta}})_{r}}\bigg)\bigg|\\
 & \aleq\frac{|(\eps_{1},\partial_{s}\td{\calZ}_{k,\delta})_{r}|}{(\frac{1}{2}yQ,yQ\chi_{B_{\delta}})_{r}}+\frac{|(\eps_{1},\td{\calZ}_{k,\delta})_{r}(\frac{1}{2}yQ,yQ\partial_{s}\chi_{B_{\delta}})_{r}|}{(\frac{1}{2}yQ,yQ\chi_{B_{\delta}})_{r}^{2}}.
\end{align*}
Using $|b_{s}\partial_{b}\chi_{B_{\delta}}|\aleq b\cdot|b\partial_{b}\chi_{B_{\delta}}|\aleq b\chf_{[B_{\delta},2B_{\delta}]}$
and the $\dot{\calH}_{1}^{2}$-bound of $\eps_{1}$ from Lemma~\ref{lem:NonlinearCoercivity},
we obtain 
\begin{equation}
\bigg|\frac{(\partial_{s}\eps_{1},\td{\calZ}_{k,\delta})_{r}}{(\frac{1}{2}yQ,yQ\chi_{B_{\delta}})_{r}}-\partial_{s}\bigg(\frac{(\eps_{1},\td{\calZ}_{k,\delta})_{r}}{(\frac{1}{2}yQ,yQ\chi_{B_{\delta}})_{r}}\bigg)\bigg|\aleq b^{3-C\delta}.\label{eq:RefinedModulation-temp3}
\end{equation}

The definitions of $\td b$ and $\td{\eta}$ are motivated in view
of \eqref{eq:RefinedModulation-temp2} and \eqref{eq:RefinedModulation-temp3},
and the estimates \eqref{eq:btilde-b}, \eqref{eq:RefinedModulation},
and \eqref{eq:btilde-modulation} are immediate. 
\end{proof}

\subsection{\label{subsec:Energy-estimate}Energy estimate in $\dot{\protect\calH}_{0}^{3}$}

In this subsection, we propagate the control of $\eps$ forward-in-time.
The main idea is the energy method in higher derivatives with repulsivity.
More precisely, we proceed to higher order derivatives by adapted
derivatives, say $\eps_{k}$. We then apply the energy method with
correction terms. The correction terms are designed to exploit the
repulsivity observed in the variable $\eps_{2}$. Such an idea appeared
in \cite{RodnianskiSterbenz2010Ann.Math.,RaphaelRodnianski2012Publ.Math.,MerleRaphaelRodnianski2013InventMath}
in the context of wave maps and Schrödinger maps.

We will apply the energy method to $\eps_{2}$ with the energy functional
$\|A_{Q}^{\ast}\eps_{2}\|_{L^{2}}^{2}=\|\eps_{3}\|_{L^{2}}^{2}$.
Indeed, we need to work at least in the $\dot{H}^{3}$-level due to
scaling reasons. More precisely, as we are in the situation $\lmb\sim b|\log b|^{2}$
(which is dictated by the formal parameter law \eqref{eq:FormalParameterLaw}),
we can expect at best $\|\eps_{k}\|_{L^{2}}\aleq\lmb^{k}\sim b^{k}|\log b|^{2k}$.
In order to guarantee the modulation equation $b_{s}+b^{2}+\tfrac{2b^{2}}{|\log b|}\approx0$,
we need $k>2$ in view of Lemma~\ref{lem:ModulationEstimates}. On
the other hand, when $k=3$, a toy model 
\[
(\partial_{s}-\tfrac{\lmb_{s}}{\lmb}\Lambda_{-3})\eps_{3}\approx-iA_{Q}^{\ast}\Psi_{2}
\]
implies 
\[
(\partial_{s}-3\tfrac{\lmb_{s}}{\lmb})\|\eps_{3}\|_{L^{2}}\aleq\|A_{Q}^{\ast}\Psi_{2}\|_{L^{2}}\aleq\tfrac{b^{3}}{|\log b|}
\]
by \eqref{eq:Psi2-SharpEnergy}. Integrating this loses $b$, which
yields 
\[
\|\eps_{3}\|_{L^{2}}\aleq\tfrac{b^{2}}{|\log b|}.
\]
In view of Lemma~\ref{lem:ModulationEstimates}, this bound suffices
to guarantee the modulation equation $b_{s}+b^{2}+\frac{2b^{2}}{|\log b|}\approx0$.
Moreover, this motivates the bootstrap hypothesis for $\|\eps_{3}\|_{L^{2}}$.

In the energy estimate, there appear two non-perturbative contributions
in $\tfrac{1}{2}(\partial_{s}-6\tfrac{\lmb_{s}}{\lmb})\|\eps_{3}\|_{L^{2}}^{2}$.
One is from the commutator of the scaling operator $\Lmb_{-2}$ and
$A_{Q}^{\ast}$ acting on $\eps_{2}$. In the energy estimate, we
will see that this contribution has the \emph{good} (negative) sign,
thanks to the \emph{repulsivity} \eqref{eq:Def-Vtilde} of the operator
$A_{Q}A_{Q}^{\ast}$, i.e. $-\partial_{\lmb}(A_{Q_{\lmb}}A_{Q_{\lmb}}^{\ast})=\frac{y\partial_{y}\td V}{y^{2}}\leq0$
where $Q_{\lmb}=\lmb^{-1}Q(\lmb^{-1}\cdot)$. Another non-perturbative
contribution comes from the cubic nonlinearity. This will be treated
by both a Morawetz correction and the above repulsivity.

We start by deriving the equation for $\eps_{2}$. Recall \eqref{eq:w2-eqn-sd}
and \eqref{eq:P2-equation}: 
\begin{align*}
(\rd_{s}-\frac{\lmb_{s}}{\lmb}\Lmb_{-2}+\td{\gmm}_{s}i)w_{2}+iA_{w}A_{w}^{\ast}w_{2}-\left(\int_{0}^{y}\Re(\br ww_{1})dy'\right)iw_{2} & -i\br ww_{1}^{2}=0,\\
(\partial_{s}-\frac{\lmb_{s}}{\lmb}\Lambda_{-2}+\td{\gmm}_{s}i)P_{2}+iA_{P}A_{P}^{\ast}P_{2}-\Big(\int_{0}^{y}\Re(\overline{P}P_{1})dy'\Big)iP_{2} & -i\overline{P}(P_{1})^{2}\\
 & =-\td{\Mod}\cdot\mathbf{v}_{2}+i\Psi_{2}.
\end{align*}
Subtracting the second from the first and using the identity 
\begin{align*}
 & iA_{w}A_{w}^{\ast}w_{2}-iA_{P}A_{P}^{\ast}P_{2}\\
 & =iA_{Q}A_{Q}^{\ast}\eps_{2}+(iA_{w}A_{w}^{\ast}-iA_{P}A_{P}^{\ast})w_{2}+(iA_{P}A_{P}^{\ast}-iA_{Q}A_{Q}^{\ast})\eps_{2},
\end{align*}
we obtain the equation for $\eps_{2}$: 
\begin{align}
 & (\partial_{s}-\frac{\lmb_{s}}{\lmb}\Lambda_{-2}+\td{\gmm}_{s}i)\eps_{2}+iA_{Q}A_{Q}^{\ast}\eps_{2}-(i\overline{w}w_{1}^{2}-i\overline{P}P_{1}^{2})\nonumber \\
 & =-(iA_{w}A_{w}^{\ast}-iA_{P}A_{P}^{\ast})w_{2}-(iA_{P}A_{P}^{\ast}-iA_{Q}A_{Q}^{\ast})\eps_{2}\label{eq:e2-equation}\\
 & \quad+({\textstyle \int_{0}^{y}}\Re(\overline{w}w_{1}-\overline{P}P_{1}))dy')iw_{2}+({\textstyle \int_{0}^{y}}\Re(\overline{P}P_{1})dy')i\eps_{2}+\td{\Mod}\cdot\mathbf{v}_{2}-i\Psi_{2}.\nonumber 
\end{align}
Here we wrote the cubic difference term $-(i\overline{w}w_{1}^{2}-i\overline{P}P_{1}^{2})$
on the LHS, because it is a non-perturbative term. This term will
be handled using a Morawetz correction. All the terms on the RHS are
perturbative. 
\begin{lem}[Energy identity of $\eps_{3}$]
We have 
\begin{equation}
\begin{aligned}\tfrac{1}{2}(\partial_{s}-6\tfrac{\lmb_{s}}{\lmb})\|\eps_{3}\|_{L^{2}}^{2} & =b(\eps_{3},\tfrac{y}{2}Q^{2}\eps_{2}+A_{Q}^{\ast}(yQ^{2}\eps_{1}))_{r}\\
 & \quad+b\|\eps_{3}\|_{L^{2}}\cdot O(\tfrac{1}{\sqrt{\log M}}\|\eps_{3}\|_{L^{2}}+\tfrac{b^{2}}{|\log b|}).
\end{aligned}
\label{eq:EnergyIdentity}
\end{equation}
\end{lem}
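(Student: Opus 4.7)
The plan is to differentiate $\|\epsilon_3\|_{L^2}^2 = \|A_Q^*\epsilon_2\|_{L^2}^2$ in time by appealing to the $\epsilon_2$-equation \eqref{eq:e2-equation} and its conjugation through $A_Q^*$. Applying $A_Q^*$ to \eqref{eq:e2-equation} and using the scaling commutator identity $A_Q^*\Lambda_{-2} = \Lambda_{-3}A_Q^* - \tfrac{yQ^2}{2}$ (the analogue of the identity exploited in Step 2 of the proof of Proposition~\ref{prop:ModifiedProfile}), one obtains the equation
\begin{equation*}
(\partial_s - \tfrac{\lambda_s}{\lambda}\Lambda_{-3} + \widetilde{\gamma}_s i)\epsilon_3 + iA_Q^*A_Q\,\epsilon_3 = \tfrac{\lambda_s}{\lambda}\tfrac{yQ^2}{2}\epsilon_2 + A_Q^*\bigl(i\overline{w}w_1^2 - i\overline{P}P_1^2\bigr) + A_Q^* R,
\end{equation*}
where $R$ collects the perturbative terms on the right-hand side of \eqref{eq:e2-equation} (the differences $iA_wA_w^* - iA_PA_P^*$ and $iA_PA_P^* - iA_QA_Q^*$, the nonlocal integrals, $\widetilde{\mathbf{Mod}}\cdot\mathbf{v}_2$, and $-i\Psi_2$).

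Next, take the real $L^2$-pairing with $\epsilon_3$. The phase term $(\widetilde{\gamma}_s i\epsilon_3,\epsilon_3)_r$ vanishes by skew-symmetry of $(i\cdot,\cdot)_r$, and $(iA_Q^*A_Q\epsilon_3,\epsilon_3)_r = 0$ because $A_Q^*A_Q$ is $\mathbb{C}$-linear self-adjoint. The scaling pairing is computed by integration by parts in the $L^2(rdr)$ sense:
\begin{equation*}
(-\tfrac{\lambda_s}{\lambda}\Lambda_{-3}\epsilon_3,\epsilon_3)_r = -3\tfrac{\lambda_s}{\lambda}\|\epsilon_3\|_{L^2}^2,
\end{equation*}
which, transferred to the left, produces exactly $\tfrac{1}{2}(\partial_s - 6\tfrac{\lambda_s}{\lambda})\|\epsilon_3\|_{L^2}^2$ as desired.

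The two non-perturbative source contributions are then identified as follows. The commutator source $\tfrac{\lambda_s}{\lambda}\tfrac{yQ^2}{2}\epsilon_2$, combined with the modulation estimate \eqref{eq:ModEstimateScalePhase} in the form $\tfrac{\lambda_s}{\lambda} = -b + O(b^{2-})$, yields $b(\epsilon_3,\tfrac{y}{2}Q^2\epsilon_2)_r$ modulo a lower-order term. For the cubic, one linearizes in $\epsilon,\epsilon_1$:
\begin{equation*}
i\overline{w}w_1^2 - i\overline{P}P_1^2 = i\bigl(\overline{\epsilon}P_1^2 + 2\overline{P}P_1\,\epsilon_1\bigr) + (\text{h.o.t.~in }\epsilon,\epsilon_1),
\end{equation*}
and substitutes $P \approx Q$, $P_1 \approx -(ib+\eta)\tfrac{y}{2}Q$ from \eqref{eq:v1-estimate}. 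The leading contribution reduces to $byQ^2\epsilon_1$; after applying $A_Q^*$ and pairing with $\epsilon_3$ this produces the desired second piece $b(\epsilon_3, A_Q^*(yQ^2\epsilon_1))_r$. The remaining $\eta$- and $b^2$-corrections are absorbed into the perturbative error.

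The principal technical work lies in estimating every remaining contribution by $b\|\epsilon_3\|_{L^2}\cdot O(\tfrac{1}{\sqrt{\log M}}\|\epsilon_3\|_{L^2} + \tfrac{b^2}{|\log b|})$. Each perturbative term $X$ is handled via $|(A_Q^*X,\epsilon_3)_r| \leq \|X\|_{\dot{\mathcal{H}}_2^1}\|\epsilon_3\|_{L^2}$, which is valid by the unconditional coercivity \eqref{eq:positivity-AQAQstar}. The modulation contribution $\widetilde{\mathbf{Mod}}\cdot\mathbf{v}_2$ is the most delicate: combining the full degeneracy $\|\mathbf{v}_{2,1}\|_{\dot{\mathcal{H}}_2^1}+\|\mathbf{v}_{2,2}\|_{\dot{\mathcal{H}}_2^1} \lesssim b^2$ and $\|\mathbf{v}_{2,3}\|_{\dot{\mathcal{H}}_2^1}+\|\mathbf{v}_{2,4}\|_{\dot{\mathcal{H}}_2^1} \lesssim b$ from \eqref{eq:v2-estimate} with the modulation estimates \eqref{eq:ModEstimateScalePhase}--\eqref{eq:ModEstimate-b-eta} (crucially using the $\tfrac{1}{\sqrt{\log M}}$ gain) yields exactly $b\cdot\tfrac{1}{\sqrt{\log M}}\|\epsilon_3\|_{L^2} + b^{4-}$, as required. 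The profile error is handled by the sharp bound \eqref{eq:Psi2-SharpEnergy}, giving $\|\Psi_2\|_{\dot{\mathcal{H}}_2^1}\cdot\|\epsilon_3\|_{L^2} \lesssim \tfrac{b^3}{|\log b|}\|\epsilon_3\|_{L^2}$. The remaining nonlinear differences and the higher-order cubic corrections are estimated by weighted Sobolev inequalities, using the bootstrap bounds from Lemma~\ref{lem:NonlinearCoercivity}, together with the interpolation bound \eqref{eq:H2-interpolation} and weighted $L^\infty$-estimates of the type used in the proof of Lemma~\ref{lem:NonlinearCoercivity}. The main obstacle is this systematic error accounting, which requires careful identification of which $(b,\eta,\epsilon,\epsilon_1,\epsilon_2)$-powers contribute at the critical scale $\tfrac{b^3}{|\log b|}$ — in particular, verifying that all terms arising from the cubic $i\overline{w}w_1^2 - i\overline{P}P_1^2$ beyond the leading $byQ^2\epsilon_1$ are genuinely subcritical.
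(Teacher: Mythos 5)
Your proposal follows essentially the same route as the paper: conjugate the $\epsilon_{2}$-equation \eqref{eq:e2-equation} by $A_{Q}^{\ast}$ using the scaling commutator, pair with $\epsilon_{3}$, isolate the two non-perturbative sources (the commutator term $\tfrac{y}{2}Q^{2}\epsilon_{2}$ and the leading cubic $byQ^{2}\epsilon_{1}$ extracted from $2i\overline{P}P_{1}\epsilon_{1}$), and bound all remaining terms via $\|\cdot\|_{\dot{\mathcal{H}}_{2}^{1}}\|\epsilon_{3}\|_{L^{2}}$ using \eqref{eq:positivity-AQAQstar}, with the modulation term and $\Psi_{2}$ saturating the error exactly as you identify. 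One bookkeeping slip: since $A_{Q}^{\ast}\Lambda_{-2}=\Lambda_{-3}A_{Q}^{\ast}-\tfrac{yQ^{2}}{2}$, the commutator source on the right-hand side of your displayed $\epsilon_{3}$-equation should be $-\tfrac{\lambda_{s}}{\lambda}\tfrac{yQ^{2}}{2}\epsilon_{2}$, not $+\tfrac{\lambda_{s}}{\lambda}\tfrac{yQ^{2}}{2}\epsilon_{2}$; with $\tfrac{\lambda_{s}}{\lambda}\approx-b$ this gives the stated $+b(\epsilon_{3},\tfrac{y}{2}Q^{2}\epsilon_{2})_{r}$, whose sign is essential for the repulsivity argument in \eqref{eq:MorawetzRepulsivity}, and your final conclusion is consistent with the correct sign.
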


\begin{rem}
We remind the reader that the relations between $\eps$, $\eps_{1}$,
and $\eps_{2}$ are highly nonlinear. If one were to proceed to higher
order derivatives in a linear fashion, e.g. $\eps_{2}=A_{Q}L_{Q}\eps$,
then one would encounter a lot of non-perturbative errors $O(b\eps^{2})$
in the energy identity. Such errors would contain nonlocal expressions
from $A_{\theta}$ or $A_{t}$, thus it would be very difficult to
find correction terms. However, as we proceed with nonlinear adapted
derivatives, we are able to take advantage from the degeneracies $P_{1}=O(b)$
and $P_{2}=O(b^{2})$ to simplify the non-perturbative terms significantly.
In this sense, we believe that using nonlinear adapted derivatives
is more efficient and describes the blow-up regime more precisely
than using the linear ones. 
\end{rem}

\begin{rem}
\label{rem:energy-estimate-weak-repul}When $m\geq1$, the situation
is simpler than here. In that case, the authors in \cite{KimKwon2020arXiv}
were able to close the argument using linear adapted derivatives.
This is mainly due to the \emph{stronger repulsivity} of $A_{Q}A_{Q}^{\ast}$
and \emph{better decay} of $Q$. The stronger repulsivity enables
(a localized version of) the monotonicity from the virial functional
$(\eps_{2},i\Lambda\eps_{2})_{r}$, see \cite[(2.8) and (5.36)]{KimKwon2020arXiv}.
Moreover, thanks to the better decay of $Q$, many nonlocal contributions
of size $O(b\eps^{2})$ can in fact be estimated by some local norms
of $\eps$. See \cite[(5.33) and Lemma 5.1]{KimKwon2020arXiv}.

In contrast, the case $m=0$ has serious problems from the slower
decay of $Q$ and weaker repulsivity of $A_{Q}A_{Q}^{\ast}$. In fact,
$A_{Q}A_{Q}^{\ast}\approx-\Delta_{0}$ near the spatial infinity,
as the potential $\frac{\tilde{V}}{y^{2}}$ decays faster than $\frac{1}{y^{2}}$.
Thus the argument using a localized virial functional as in \cite{KimKwon2020arXiv}
meets a serious difficulty from the fact that $-\Delta_{0}$ (on 2D)
has zero resonance. Thus we do not rely on the virial functional in
this paper, but rather construct a precise correction term to handle
non-perturbative terms. To find such corrections, it is also crucial
to proceed with nonlinear adapted derivatives, to simplify the structure
of non-perturbative terms significantly. 
\end{rem}

\begin{proof}
As before, in this proof, we freely use the bootstrap hypotheses \eqref{eq:BootstrapHypothesis},
as well as Lemmas~\ref{lem:NonlinearCoercivity}, \ref{lem:interpolation},
and \ref{lem:Weighted-Linfty} to estimate $\eps$, $\eps_{1}$, and
$\eps_{2}$. We also abuse the notation and identify the operator
$A_{w}A_{w}^{\ast}-A_{w'}A_{w'}^{\ast}$, which is simply the multiplication
by a function (namely, the difference of the zeroth order terms),
with that function.

The equation for $\eps_{3}=A_{Q}^{\ast}\eps_{2}$ is given as 
\begin{align*}
 & (\partial_{s}-\tfrac{\lmb_{s}}{\lmb}\Lambda_{-3}+\td{\gmm}_{s}i)\eps_{3}+iA_{Q}^{\ast}A_{Q}\eps_{3}\\
 & =\tfrac{\lmb_{s}}{\lmb}(\partial_{\lmb}A_{Q_{\lmb}}^{\ast})\eps_{2}+A_{Q}^{\ast}(i\overline{w}w_{1}^{2}-i\overline{P}P_{1}^{2})+A_{Q}^{\ast}(\text{RHS of }\eqref{eq:e2-equation}).
\end{align*}
As opposed to $\eps_{1}$ or $\eps_{2}$, we take a linear adapted
derivative to get $\eps_{3}$. Taking the inner product with $\eps_{3}$,
we have the energy identity 
\begin{align*}
\tfrac{1}{2}(\partial_{s}-6\tfrac{\lmb_{s}}{\lmb})\|\eps_{3}\|_{L^{2}}^{2} & =\tfrac{\lmb_{s}}{\lmb}(\eps_{3},(\partial_{\lmb}A_{Q_{\lmb}}^{\ast})\eps_{2})_{r}+(\eps_{3},A_{Q}^{\ast}(i\overline{w}w_{1}^{2}-i\overline{P}P_{1}^{2}))_{r}\\
 & \quad+\|\eps_{3}\|_{L^{2}}\cdot O(\|\text{RHS of }\eqref{eq:e2-equation}\|_{\dot{\calH}_{2}^{1}}).
\end{align*}

The first and second terms have non-perturbative contributions. For
the first term, using $\partial_{\lmb}A_{Q_{\lmb}}^{\ast}=-\tfrac{y}{2}Q^{2}$
and the modulation estimate, 
\[
\tfrac{\lmb_{s}}{\lmb}(\eps_{3},(\partial_{\lmb}A_{Q_{\lmb}}^{\ast})\eps_{2})_{r}=b(\eps_{3},\tfrac{y}{2}Q^{2}\eps_{2})_{r}+O(b^{2-})\|\eps_{3}\|_{L^{2}}^{2}.
\]
For the second term, we first write 
\[
i\overline{w}w_{1}^{2}-i\overline{P}P_{1}^{2}=byQ^{2}\eps_{1}+(2i\overline{P}P_{1}-byQ^{2})\eps_{1}+i\overline{P}\eps_{1}^{2}+i\overline{\eps}w_{1}^{2}.
\]
We keep $byQ^{2}\eps_{1}$ and estimate the rest: (we use the weighted
$L^{\infty}$-bounds from Lemma~\ref{lem:Weighted-Linfty} for $L^{\infty}$
terms) 
\begin{align*}
 & \|(2i\overline{P}P_{1}-byQ^{2})\eps_{1}\|_{\dot{\calH}_{2}^{1}}\aleq\|(|\eta|yQ^{2}+b^{2}y^{3}Q^{2})|\eps_{1}|_{-1}\|_{L^{2}}\\
 & \phantom{\|(2i\overline{P}P_{1}-byQ^{2})\eps_{1}\|_{\dot{\calH}_{2}^{1}}}\aleq\tfrac{b}{|\log b|}\|\eps_{1}\|_{\dot{\calH}_{1}^{2}}\aleq_{M}K\tfrac{b^{3}}{|\log b|^{2}}\aleq\tfrac{b^{3}}{|\log b|},\\
 & \|\overline{P}\eps_{1}^{2}\|_{\dot{\calH}_{2}^{1}}\aleq\|\langle y\rangle^{-2+}\eps_{1}|\eps_{1}|_{-1}\|_{L^{2}}\aleq\|\langle y\rangle^{-1+}\eps_{1}\|_{L^{\infty}}\|\langle y\rangle^{-1-}|\eps_{1}|_{-1}\|_{L^{2}}\aleq b^{4-},\\
 & \|\overline{\eps}(w_{1}^{2}-\eps_{1}^{2})\|_{\dot{\calH}_{2}^{1}}\aleq\||P_{1}|_{1}|\overline{\eps}(w_{1}+\eps_{1})|_{-1}\|_{L^{2}}\aleq b^{1-}\|\langle y\rangle^{-1-}|\overline{\eps}(P_{1}+2\eps_{1})|_{-1}\|_{L^{2}}\\
 & \phantom{\|\overline{\eps}(w_{1}^{2}-\eps_{1}^{2})\|_{\dot{\calH}_{2}^{1}}}\aleq b^{1-}(\nrm{\brk{y}^{-2-}\rd_{y}\eps}_{L^{2}}\nrm{\brk{y}P_{1}}_{L^{\infty}}+\nrm{\brk{y}^{-2-}\eps}_{L^{\infty}}\nrm{\brk{y}\abs{P_{1}}_{-1}}_{L^{2}})\\
 & \phantom{\|\overline{\eps}(w_{1}^{2}-\eps_{1}^{2})\|_{\dot{\calH}_{2}^{1}}\aleq}+b^{1-}(\nrm{\rd_{y}\eps}_{L^{2}}\nrm{\brk{y}^{-1-}\eps_{1}}_{L^{\infty}}+\nrm{\eps}_{L^{\infty}}\nrm{\brk{y}^{-1-}\abs{\eps_{1}}_{-1}}_{L^{2}})\aleq b^{4-},\\
 & \|\overline{\eps}\eps_{1}^{2}\|_{\dot{\calH}_{2}^{1}}\aleq\|\partial_{y}\eps\|_{L^{2}}\|\eps_{1}\|_{L^{\infty}}^{2}\\
 & \phantom{\|\overline{\eps}\eps_{1}^{2}\|_{\dot{\calH}_{2}^{1}}\aleq}+\|\eps\|_{L^{\infty}}\|\partial_{y}\eps_{1}\|_{L^{2}}\|\eps_{1}\|_{L^{\infty}}+\|\eps\|_{\dot{\calH}_{0}^{1}}\|\langle\log_{-}y\rangle|\eps_{1}|^{2}\|_{L^{\infty}}\aleq b^{4-}.
\end{align*}
Therefore, we have 
\[
(\eps_{3},A_{Q}^{\ast}(i\overline{w}w_{1}^{2}-i\overline{P}P_{1}^{2}))_{r}=b(\eps_{3},A_{Q}^{\ast}(yQ^{2}\eps_{1}))_{r}+b\|\eps_{3}\|_{L^{2}}\cdot O(\tfrac{b^{2}}{|\log b|}).
\]

The remaining terms are all treated as errors; we claim 
\begin{equation}
\|\text{RHS of }\eqref{eq:e2-equation}\|_{\dot{\calH}_{2}^{1}}\aleq\tfrac{b}{\sqrt{\log M}}\|\eps_{3}\|_{L^{2}}+\tfrac{b^{3}}{|\log b|}.\label{eq:energy-estimate-claim}
\end{equation}
In fact, we will see that $\tfrac{b}{\sqrt{\log M}}\|\eps_{3}\|_{L^{2}}$
is saturated by the modulation term and $\frac{b^{3}}{|\log b|}$
is saturated by $\Psi_{2}$.

First, we show 
\[
\|(A_{w}A_{w}^{\ast}-A_{P}A_{P}^{\ast})w_{2}\|_{\dot{\calH}_{2}^{1}}\aleq b^{4-}.
\]
We first note that 
\begin{align*}
 & \|(A_{w}A_{w}^{\ast}-A_{P}A_{P}^{\ast})w_{2}\|_{\dot{\calH}_{2}^{1}}\\
 & \aleq\|A_{w}A_{w}^{\ast}-A_{P}A_{P}^{\ast}\|_{L^{\infty}}\|w_{2}\|_{\dot{\calH}_{2}^{1}}+\|\partial_{y}(A_{w}A_{w}^{\ast}-A_{P}A_{P}^{\ast})\|_{L^{2+}}\|w_{2}\|_{L^{\infty-}}\\
 & \aleq b^{2}\|A_{w}A_{w}^{\ast}-A_{P}A_{P}^{\ast}\|_{L^{\infty}}+b^{2-}\|\partial_{y}(A_{w}A_{w}^{\ast}-A_{P}A_{P}^{\ast})\|_{L^{2+}}.
\end{align*}
Recall that 
\[
A_{w}A_{w}^{\ast}=-\partial_{yy}-\tfrac{1}{y}\partial_{y}+\tfrac{1}{y^{2}}((2+A_{\theta}[w])^{2}+\tfrac{1}{2}y^{2}|w|^{2}).
\]
Thus 
\[
\|A_{w}A_{w}^{\ast}-A_{P}A_{P}^{\ast}\|_{L^{\infty}}\aleq\||w|^{2}-|P|^{2}\|_{L^{\infty}}\aleq\|\langle y\rangle^{-2+}\eps\|_{L^{\infty}}+\|\eps\|_{L^{\infty}}^{2}\aleq b^{2-}.
\]
On the other hand, using $|w|\aleq1$ and $|A_{\theta}[w]|+|A_{\theta}[P]|\aleq y^{2}\langle y\rangle^{-2}$,
we have the pointwise bound 
\begin{align*}
 & |\partial_{y}(A_{w}A_{w}^{\ast}-A_{P}A_{P}^{\ast})|\\
 & \aleq\tfrac{1}{y\langle y\rangle^{2}}|A_{\theta}[w]-A_{\theta}[P]|+\tfrac{y}{\langle y\rangle^{2}}||w|^{2}-|P|^{2}|+|\overline{w}\partial_{y}w-\overline{P}\partial_{y}P|.
\end{align*}
We estimate the $L^{2+}$ norms by 
\begin{align*}
 & \|\tfrac{1}{y\langle y\rangle^{2}}|A_{\theta}[w]-A_{\theta}[P]|\|_{L^{2+}}+\|\tfrac{y}{\langle y\rangle^{2}}(|w|^{2}-|P|^{2})\|_{L^{2+}}\\
 & \aleq\|\tfrac{1}{\langle y\rangle}(|w|^{2}-|P|^{2})\|_{L^{2+}}\aleq\|\tfrac{1}{\langle y\rangle^{3-}}\eps\|_{L^{2+}}+\|\tfrac{1}{\langle y\rangle}\eps\|_{L^{2+}}\|\eps\|_{L^{\infty}}\aleq b^{2-}
\end{align*}
and 
\[
\|\overline{w}\partial_{y}w-\overline{P}\partial_{y}P\|_{L^{2+}}\aleq\|\tfrac{1}{\langle y\rangle^{2-}}|\eps|_{-1}\|_{L^{2+}}+\|\partial_{y}\eps\|_{L^{2+}}\|\eps\|_{L^{\infty}}\aleq b^{2-}.
\]

Next, we show 
\[
\|(A_{P}A_{P}^{\ast}-A_{Q}A_{Q}^{\ast})\eps_{2}\|_{\dot{\calH}_{2}^{1}}\aleq\tfrac{b}{|\log b|}\|\eps_{3}\|_{L^{2}}.
\]
This follows from 
\[
\|(A_{P}A_{P}^{\ast}-A_{Q}A_{Q}^{\ast})\eps_{2}\|_{\dot{\calH}_{2}^{1}}\aleq\|\langle\log_{+}y\rangle|A_{P}A_{P}^{\ast}-A_{Q}A_{Q}^{\ast}|_{1}\|_{L^{\infty}}\|\eps_{2}\|_{\dot{\calH}_{2}^{1}}\aleq\tfrac{b}{|\log b|}\|\eps_{3}\|_{L^{2}}.
\]
Note that $\frac{b}{|\log b|}$ comes from $||P|^{2}-Q^{2}|\aleq\chf_{(0,2B_{1}]}(|\eta|Q+b^{2}y^{2}Q)$.

Next, we show 
\[
\|({\textstyle \int_{0}^{y}}\Re(\overline{w}w_{1}-\overline{P}P_{1})dy')iw_{2}\|_{\dot{\calH}_{2}^{1}}\aleq b^{4-}.
\]
If $\partial_{y}$ does not hit the integral term, we estimate this
by (using the estimates shown in the proof of \eqref{eq:Gamma-GammaTilde})
\[
\|\tfrac{1}{y}\Re(\overline{w}w_{1}-\overline{P}P_{1})\|_{L^{1}}\|w_{2}\|_{\dot{\calH}_{2}^{1}}\aleq b^{2-}\|w_{2}\|_{\dot{\calH}_{2}^{1}}\aleq b^{4-}.
\]
If $\partial_{y}$ hits the integral term, we would like to put $w_{2}\in L^{\infty}$,
but here we have a technical problem that $\eps_{2}\notin L^{\infty}$.
Instead, we put $w_{2}$ in $L^{\infty-}$ using \eqref{eq:interpolation-1}
and \eqref{eq:H2-interpolation}: 
\[
\|w_{2}\|_{L^{\infty-}}\aleq\|w_{2}\|_{L^{2}}^{0+}\|\partial_{y}w_{2}\|_{L^{2}}^{1-}\aleq b^{2-}.
\]
Thus we estimate this contribution as 
\begin{align*}
 & \|\Re(\overline{w}w_{1}-\overline{P}P_{1})iw_{2}\|_{L^{2}}\\
 & \aleq(b\|\langle y\rangle^{-1}\eps\|_{L^{2+}}+\|\langle y\rangle^{-2+}\eps_{1}\|_{L^{2+}}+\|\eps\|_{L^{\infty}}\|\eps_{1}\|_{L^{2+}})\|w_{2}\|_{L^{\infty-}}\aleq b^{4-}.
\end{align*}

Next, it is easy to see that 
\[
\|\Re(\overline{P}P_{1})i\eps_{2}\|_{L^{2}}\aleq\tfrac{b}{|\log b|}\|\eps_{2}\|_{\dot{\calH}_{2}^{1}}\sim\tfrac{b}{|\log b|}\|\eps_{3}\|_{L^{2}}.
\]

Next, by the modulation estimates (Lemma~\ref{lem:ModulationEstimates})
and cancellation estimates \eqref{eq:v2-estimate}, we have 
\[
\|\Mod\cdot\mathbf{v}_{2}\|_{\dot{\calH}_{2}^{1}}\aleq b(\tfrac{1}{\sqrt{\log M}}\|\eps_{3}\|_{L^{2}}+b^{3-}).
\]

Lastly, we use the sharp energy estimate \eqref{eq:Psi2-SharpEnergy}:
\[
\|\Psi_{2}\|_{\dot{\calH}_{2}^{1}}\aleq\tfrac{b^{3}}{|\log b|}.
\]
This completes the proof. 
\end{proof}
We now aim to handle the non-perturbative contribution $b(\eps_{3},\tfrac{y}{2}Q^{2}\eps_{2}+A_{Q}^{\ast}(yQ^{2}\eps_{1}))_{r}$.
To motivate this, we write 
\begin{align*}
 & b(\eps_{3},\tfrac{y}{2}Q^{2}\eps_{2}+A_{Q}^{\ast}(yQ^{2}\eps_{1}))_{r}\\
 & =3b(\eps_{3},\tfrac{y}{2}Q^{2}\eps_{2})_{r}+b\{(A_{Q}\eps_{3},yQ^{2}\eps_{1})_{r}-(yQ^{2}\eps_{2},A_{Q}^{\ast}\eps_{2})_{r}\}.
\end{align*}
The first term is \emph{non-positive}, thanks to the repulsivity:
\begin{equation}
\begin{aligned}(\eps_{3},yQ^{2}\eps_{2})_{r} & =-2(A_{Q}^{\ast}\eps_{2},(\partial_{\lmb}A_{Q_{\lmb}}^{\ast})\eps_{2})_{r}\\
 & =-(\eps_{2},\partial_{\lmb}(A_{Q_{\lmb}}A_{Q_{\lmb}}^{\ast})\eps_{2})_{r}=(\eps_{2},\tfrac{y\partial_{y}\td V}{y^{2}}\eps_{2})_{r}\leq0.
\end{aligned}
\label{eq:MorawetzRepulsivity}
\end{equation}
The second term can be deleted by a \emph{Morawetz correction}: 
\[
(A_{Q}\eps_{3},yQ^{2}\eps_{1})_{r}-(yQ^{2}\eps_{2},A_{Q}^{\ast}\eps_{2})_{r}\approx\partial_{s}(i\eps_{2},yQ^{2}\eps_{1})_{r}
\]
from $i\partial_{s}\eps_{2}\approx A_{Q}\eps_{3}$ and $i\partial_{s}\eps_{1}\approx A_{Q}^{\ast}\eps_{2}$.
Note that this Morawetz correction term shares a similar spirit of
that of \cite{MerleRaphaelRodnianski2013InventMath} in the Schrödinger
maps case. More precisely, we have the following.
\begin{lem}[Morawetz correction]
\label{lem:MorawetzCorrection}We have 
\begin{align}
|b(i\eps_{2},yQ^{2}\eps_{1})_{r}| & \aleq b^{5-},\label{eq:Morawetz-bdd}\\
(\partial_{s}-6\tfrac{\lmb_{s}}{\lmb})\{b(i\eps_{2},yQ^{2}\eps_{1})_{r}\} & =b(\eps_{3},A_{Q}^{\ast}(yQ^{2}\eps_{1})-yQ^{2}\eps_{2})_{r}\label{eq:Morawetz-deriv}\\
 & \quad+O(\tfrac{b}{\sqrt{\log M}}\|\eps_{3}\|_{L^{2}}^{2}+b^{6-}).\nonumber 
\end{align}
\end{lem}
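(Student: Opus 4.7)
The plan is to prove \eqref{eq:Morawetz-bdd} by a direct duality estimate, and \eqref{eq:Morawetz-deriv} by expanding $\partial_s$ using the equations \eqref{eq:e1-equation} and \eqref{eq:e2-equation}, extracting the leading dynamics, and bounding the remainder by the same techniques used in Lemma~\ref{lem:ModulationEstimates} and the energy estimate proof of \eqref{eq:energy-estimate-claim}.

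For \eqref{eq:Morawetz-bdd}, the key observation is that by the explicit form of $Q$, the weight $r^2\langle\log_+ r\rangle Q^2$ is dominated by $\langle y\rangle^{-2}\langle\log_+ y\rangle$, so the $\dot{\mathcal{H}}_2^1$--$X$ duality (as used throughout Section~\ref{subsec:Modulation-estimates}) yields
\[
|(i\epsilon_2,yQ^2\epsilon_1)_r|\lesssim \|\epsilon_2\|_{\dot{\mathcal{H}}_2^1}\|\epsilon_1\|_X\sim \|\epsilon_3\|_{L^2}\|\epsilon_1\|_X.
\]
The bootstrap gives $\|\epsilon_3\|_{L^2}\lesssim b^{2-}$, while \eqref{eq:virial-correction-temp1} gives $\|\epsilon_1\|_X\le \||\epsilon_1|_1\|_X\lesssim b^{2-}$, so multiplying by $b$ yields \eqref{eq:Morawetz-bdd}.

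For \eqref{eq:Morawetz-deriv}, I split
\[
(\partial_s-6\tfrac{\lambda_s}{\lambda})\{b(i\epsilon_2,yQ^2\epsilon_1)_r\}=b\,\partial_s(i\epsilon_2,yQ^2\epsilon_1)_r+(b_s-6\tfrac{\lambda_s}{\lambda}b)(i\epsilon_2,yQ^2\epsilon_1)_r,
\]
where the second term is $O(b^2\cdot b^{4-})=O(b^{6-})$ by Lemma~\ref{lem:ModulationEstimates} and \eqref{eq:Morawetz-bdd}. To compute the main term, I use \eqref{eq:e1-equation} and \eqref{eq:e2-equation} to write $\partial_s\epsilon_2=-iA_Q\epsilon_3+R_2$ and $\partial_s\epsilon_1=-i\epsilon_3+R_1$, where $R_1,R_2$ collect the scaling, phase, nonlocal-difference, conjugation-difference, modulation, profile error, and (for $R_2$) cubic contributions. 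Inserting these into $\partial_s(i\epsilon_2,yQ^2\epsilon_1)_r$ and using $yQ^2\in\bbR$, integration by parts $(A_Q\epsilon_3,yQ^2\epsilon_1)_r=(\epsilon_3,A_Q^{\ast}(yQ^2\epsilon_1))_r$, and the elementary identity $(if,ig)_r=(f,g)_r$, the leading contribution is exactly $(\epsilon_3,A_Q^{\ast}(yQ^2\epsilon_1)-yQ^2\epsilon_2)_r$. One is thus reduced to showing
\[
b|(iR_2,yQ^2\epsilon_1)_r|+b|(i\epsilon_2,yQ^2R_1)_r|\lesssim \tfrac{b}{\sqrt{\log M}}\|\epsilon_3\|_{L^2}^2+b^{6-}.
\]

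The main obstacle is this remainder estimate. The critical contribution $\tfrac{b}{\sqrt{\log M}}\|\epsilon_3\|_{L^2}^2$ arises from the modulation pieces $\widetilde{\mathbf{Mod}}\cdot\mathbf{v}_k$ in $R_k$: combining Lemma~\ref{lem:ModulationEstimates} (which gives $|\widetilde{\mathbf{Mod}}|\lesssim \tfrac{1}{\sqrt{\log M}}\|\epsilon_3\|_{L^2}+b^{3-}$) with the degeneracy bounds \eqref{eq:v1-estimate}--\eqref{eq:v2-estimate} produces the critical term after AM--GM absorption of cross terms. The other pieces of $R_1$ have $X$-norm $\lesssim b^{3-}$ by \eqref{eq:virial-correction-temp1}--\eqref{eq:virial-correction-temp4}, and the non-cubic pieces of $R_2$ have $\dot{\mathcal{H}}_2^1$-norm $\lesssim \tfrac{b^3}{|\log b|}$ by the estimates in the proof of \eqref{eq:energy-estimate-claim}; in each case the $\dot{\mathcal{H}}_2^1$--$X$ duality yields $O(b^{6-})$ after multiplying by $b$. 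The delicate piece is the cubic $i\bar w w_1^2-i\bar P P_1^2$ in $R_2$: its leading bilinear part is $2\bar P P_1\epsilon_1$, and using $\bar P P_1=-\tfrac{1}{2}(ib+\eta)yQ^2+O(b^2)$, the contribution to the pairing becomes
\[
2\pi(ib+\eta)\int y^3Q^4|\epsilon_1|^2\,y\,dy,
\]
in which the $ib$-part is purely imaginary and vanishes in the real inner product. The surviving $\eta$-part and all subleading cubic corrections carry an extra factor $|\eta|+b^2\lesssim b$, which combined with $\|yQ^2\epsilon_1\|_{L^2}\lesssim \|\epsilon_1\|_{\dot{\mathcal{H}}_1^2}\lesssim b^{2-}$ gives the desired $O(b^{6-})$ bound, completing the proof.
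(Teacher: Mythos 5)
Your proposal is correct and follows essentially the same route as the paper: the same $\dot{\mathcal{H}}_{2}^{1}$--$X$ duality for \eqref{eq:Morawetz-bdd}; the same extraction of the leading flows $\partial_{s}\epsilon_{2}\approx-iA_{Q}\epsilon_{3}$ and $\partial_{s}\epsilon_{1}\approx-iA_{Q}^{\ast}\epsilon_{2}$ followed by integration by parts to produce the main term of \eqref{eq:Morawetz-deriv}; the same identification of $\widetilde{\mathbf{Mod}}\cdot\mathbf{v}_{1}$ paired against $\epsilon_{2}$ as the sole source of the critical $\tfrac{b}{\sqrt{\log M}}\|\epsilon_{3}\|_{L^{2}}^{2}$; and the same weighted bounds \eqref{eq:virial-correction-temp1}--\eqref{eq:virial-correction-temp4} for the remaining pieces. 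The only place you deviate is the cubic term, where your cancellation of the $ib$-part of $2i\overline{P}P_{1}\epsilon_{1}$ in the real pairing is correct but unnecessary: the paper simply bounds $\|y^{-1}\langle\log_{+}y\rangle^{-1}\,byQ^{2}\epsilon_{1}\|_{L^{2}}\lesssim b\|\epsilon_{1}\|_{\dot{\mathcal{H}}_{1}^{2}}\lesssim b^{3-}$ and pairs against $\|\epsilon_{1}\|_{X}\lesssim b^{2-}$, which already yields $O(b^{6-})$ without exploiting any sign structure.
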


\begin{proof}
The first bound \eqref{eq:Morawetz-bdd} is immediate from \eqref{eq:virial-correction-temp1}
and the bootstrap hypothesis: 
\begin{align*}
b(i\eps_{2},yQ^{2}\eps_{1})_{r} & \aleq b\|\eps_{2}\|_{\dot{\calH}_{2}^{1}}\|\langle y\rangle^{-2}\langle\log_{+}y\rangle\eps_{1}\|_{L^{2}}\aleq b^{3-}\|\eps_{3}\|_{L^{2}}\aleq b^{5-}.
\end{align*}

We turn to the derivative estimate \eqref{eq:Morawetz-deriv}. We
compute 
\begin{equation}
\begin{aligned} & (\partial_{s}-6\tfrac{\lmb_{s}}{\lmb})\{b(i\eps_{2},yQ^{2}\eps_{1})_{r}\}\\
 & =b(A_{Q}\eps_{3},yQ^{2}\eps_{1})_{r}-b(yQ^{2}\eps_{2},A_{Q}^{\ast}\eps_{2})_{r}+(b_{s}-6\tfrac{\lmb_{s}}{\lmb}b)(i\eps_{2},yQ^{2}\eps_{1})_{r}\\
 & \quad+b(i\partial_{s}\eps_{2}-A_{Q}\eps_{3},yQ^{2}\eps_{1})_{r}-b(yQ^{2}\eps_{2},i\partial_{s}\eps_{1}-A_{Q}^{\ast}\eps_{2})_{r}
\end{aligned}
\label{eq:Morawetz-temp}
\end{equation}
As illustrated in the above, the first two terms of \eqref{eq:Morawetz-temp}
are the desired corrections: 
\[
b(A_{Q}\eps_{3},yQ^{2}\eps_{1})_{r}-b(yQ^{2}\eps_{2},A_{Q}^{\ast}\eps_{2})_{r}=b(\eps_{3},A_{Q}^{\ast}(yQ^{2}\eps_{1})-yQ^{2}\eps_{2})_{r}.
\]

The remaining terms of \eqref{eq:Morawetz-temp} are all treated as
errors. The third term is easily estimated by 
\[
|(b_{s}-6\tfrac{\lmb_{s}}{\lmb}b)(i\eps_{2},yQ^{2}\eps_{1})_{r}|\aleq b^{2}|(i\eps_{2},yQ^{2}\eps_{1})_{r}|\aleq b^{6-}.
\]

For the fourth term, by the estimate 
\begin{align*}
 & |b(i\partial_{s}\eps_{2}-A_{Q}\eps_{3},yQ^{2}\eps_{1})_{r}|\\
 & \aleq b\|y^{-1}\langle\log_{+}y\rangle^{-1}(i\partial_{s}\eps_{2}-A_{Q}\eps_{3})\|_{L^{2}}\|\langle y\rangle^{-2}\langle\log_{+}y\rangle\eps_{1}\|_{L^{2}}\\
 & \aleq b^{3-}\|y^{-1}\langle\log_{+}y\rangle^{-1}(i\partial_{s}\eps_{2}-A_{Q}\eps_{3})\|_{L^{2}}
\end{align*}
it suffices to prove 
\[
\|y^{-1}\langle\log_{+}y\rangle^{-1}(\partial_{s}\eps_{2}+iA_{Q}\eps_{3})\|_{L^{2}}\aleq b^{3-}.
\]
To show this, rewrite the equation \eqref{eq:e2-equation} as 
\begin{align*}
\partial_{s}\eps_{2}+iA_{Q}\eps_{3} & =\tfrac{\lmb_{s}}{\lmb}\Lambda_{-2}\eps_{2}-\td{\gmm}_{s}i\eps_{2}+(i\overline{w}w_{1}^{2}-i\overline{P}P_{1}^{2})+(\text{RHS of }\eqref{eq:e2-equation}).
\end{align*}
It only suffices to estimate the first three terms on the RHS above,
because we know from the proof of energy estimate \eqref{eq:energy-estimate-claim}
that $\|\text{RHS of }\eqref{eq:e2-equation}\|_{\dot{\calH}_{2}^{1}}\aleq\tfrac{b^{3}}{|\log b|}$.
We now estimate 
\[
\|y^{-1}\langle\log_{+}y\rangle^{-1}(\tfrac{\lmb_{s}}{\lmb}\Lambda_{-2}\eps_{2}-\td{\gmm}_{s}i\eps_{2})\|_{L^{2}}\aleq b\|\eps_{2}\|_{\dot{\calH}_{2}^{1}}\aleq b^{3}.
\]
Next, (from the proof of \eqref{eq:energy-estimate-claim}) 
\begin{align*}
 & \|y^{-1}\langle\log_{+}y\rangle^{-1}(i\overline{w}w_{1}^{2}-i\overline{P}P_{1}^{2})\|_{L^{2}}\\
 & \aleq\|y^{-1}\langle\log_{+}y\rangle^{-1}byQ^{2}\eps_{1}\|_{L^{2}}+\tfrac{b^{3}}{|\log b|}\aleq b\|\eps_{1}\|_{\dot{\calH}_{1}^{2}}+\tfrac{b^{3}}{|\log b|}\aleq b^{3}.
\end{align*}

Finally, the last term of \eqref{eq:Morawetz-temp} can be estimated
by 
\begin{align*}
|b(yQ^{2}\eps_{2},i\partial_{s}\eps_{1}-A_{Q}^{\ast}\eps_{2})_{r}| & \aleq b\|\eps_{2}\|_{\dot{\calH}_{2}^{1}}\|\langle y\rangle^{-2}\langle\log_{+}y\rangle(\partial_{s}\eps_{1}+iA_{Q}^{\ast}\eps_{2})\|_{L^{2}}\\
 & \aleq b\|\eps_{3}\|_{L^{2}}\|\partial_{s}\eps_{1}+iA_{Q}^{\ast}\eps_{2}\|_{X},
\end{align*}
so it suffices to establish the bound 
\[
\|\partial_{s}\eps_{1}+iA_{Q}^{\ast}\eps_{2}\|_{X}\aleq\tfrac{1}{\sqrt{\log M}}\|\eps_{3}\|_{L^{2}}+b^{3-}.
\]
To show this, we rewrite the equation \eqref{eq:e1-equation} of $\eps_{1}$
as 
\begin{align*}
\partial_{s}\eps_{1}+iA_{Q}^{\ast}\eps_{2} & =\tfrac{\lmb_{s}}{\lmb}\Lambda_{-1}\eps_{1}-\td{\gmm}_{s}i\eps_{1}-(iA_{w}^{\ast}-iA_{P}^{\ast})w_{2}-(iA_{P}^{\ast}-iA_{Q}^{\ast})\eps_{2}\\
 & \quad+({\textstyle \int_{0}^{y}}\Re(\overline{w}w_{1})dy')i\eps_{1}+({\textstyle \int_{0}^{y}}(\Re(\overline{w}w_{1})-\Re(\overline{P}P_{1}))dy')iP_{1}\\
 & \quad+\td{\Mod}\cdot\mathbf{v}_{1}-i\Psi_{1}.
\end{align*}
Recall that all terms except $\td{\Mod}\cdot\mathbf{v}_{1}$ on the
RHS are already estimated in the proof of the modulation estimates;
see \eqref{eq:virial-correction-temp1}-\eqref{eq:virial-correction-temp4}.
Thus these terms contribute to the error $O(b^{3-})$. The term $\td{\Mod}\cdot\mathbf{v}_{1}$
can be estimated by the modulation estimates (Lemma~\ref{lem:ModulationEstimates})
and estimates for $\mathbf{v}_{1}$ \eqref{eq:v1-estimate}: 
\[
\|\td{\Mod}\cdot\mathbf{v}_{1}\|_{X}\aleq\tfrac{1}{\sqrt{\log M}}\|\eps_{3}\|_{L^{2}}+b^{3-}.
\]
This completes the proof. 
\end{proof}
Define the \emph{modified third energy} by 
\[
\calF_{3}\coloneqq\tfrac{1}{2}\|\eps_{3}\|_{L^{2}}^{2}-b(i\eps_{2},yQ^{2}\eps_{1})_{r}.
\]

\begin{prop}[The modified energy inequality]
\label{prop:ModifiedEnergyInequality}We have 
\begin{align}
|\calF_{3}-\tfrac{1}{2}\|\eps_{3}\|_{L^{2}}^{2}| & \leq b^{5-}\label{eq:ModifiedEnergyCoer}\\
(\partial_{s}-6\tfrac{\lmb_{s}}{\lmb})\calF_{3} & \leq b(\tfrac{1}{100}\|\eps_{3}\|_{L^{2}}^{2}+C\tfrac{b^{4}}{|\log b|^{2}}),\label{eq:ModifiedEnergyDeriv}
\end{align}
where $C$ is some universal constant. 
\end{prop}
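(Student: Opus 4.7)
The first estimate \eqref{eq:ModifiedEnergyCoer} is immediate from the definition of $\calF_{3}$ and the bound \eqref{eq:Morawetz-bdd} of Lemma~\ref{lem:MorawetzCorrection}, since $\calF_{3}-\tfrac{1}{2}\nrm{\eps_{3}}_{L^{2}}^{2}=-b(i\eps_{2},yQ^{2}\eps_{1})_{r}=O(b^{5-})$.

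The heart of the matter is \eqref{eq:ModifiedEnergyDeriv}. The plan is simply to subtract the Morawetz identity \eqref{eq:Morawetz-deriv} from the energy identity \eqref{eq:EnergyIdentity}. Indeed, combining the two yields
\begin{align*}
(\rd_{s}-6\tfrac{\lmb_{s}}{\lmb})\calF_{3} & =b(\eps_{3},\tfrac{y}{2}Q^{2}\eps_{2}+A_{Q}^{\ast}(yQ^{2}\eps_{1}))_{r}-b(\eps_{3},A_{Q}^{\ast}(yQ^{2}\eps_{1})-yQ^{2}\eps_{2})_{r}\\
 & \peq+b\nrm{\eps_{3}}_{L^{2}}\cdot O(\tfrac{1}{\sqrt{\log M}}\nrm{\eps_{3}}_{L^{2}}+\tfrac{b^{2}}{\abs{\log b}})+O(\tfrac{b}{\sqrt{\log M}}\nrm{\eps_{3}}_{L^{2}}^{2}+b^{6-}).
\end{align*}
The two nonlocal pieces $\pm bA_{Q}^{\ast}(yQ^{2}\eps_{1})$ exactly cancel, which is precisely the role of the Morawetz correction $-b(i\eps_{2},yQ^{2}\eps_{1})_{r}$ in the definition of $\calF_{3}$, and we are left with the main local term
\[
b(\eps_{3},\tfrac{y}{2}Q^{2}\eps_{2}+yQ^{2}\eps_{2})_{r}=3b(\eps_{3},\tfrac{y}{2}Q^{2}\eps_{2})_{r}.
\]

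At this point the key observation is the repulsivity computation \eqref{eq:MorawetzRepulsivity}, which gives
\[
(\eps_{3},yQ^{2}\eps_{2})_{r}=(\eps_{2},\tfrac{y\rd_{y}\td{V}}{y^{2}}\eps_{2})_{r}\leq0,
\]
by virtue of $-y\rd_{y}\td{V}\geq0$ from \eqref{eq:Def-Vtilde}. Since $b>0$, the entire main term is nonpositive and can be dropped. For the remaining error terms, we choose $M$ large enough so that $\frac{b}{\sqrt{\log M}}\leq\frac{b}{200}$, absorbing the two $\tfrac{1}{\sqrt{\log M}}\nrm{\eps_{3}}_{L^{2}}^{2}$ contributions. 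The mixed term $b\nrm{\eps_{3}}_{L^{2}}\cdot\tfrac{b^{2}}{\abs{\log b}}$ is handled by Young's inequality:
\[
b\nrm{\eps_{3}}_{L^{2}}\cdot\tfrac{b^{2}}{\abs{\log b}}\leq\tfrac{b}{200}\nrm{\eps_{3}}_{L^{2}}^{2}+50b\cdot\tfrac{b^{4}}{\abs{\log b}^{2}},
\]
and the $b^{6-}$ remainder is obviously dominated by $b\cdot\tfrac{b^{4}}{\abs{\log b}^{2}}$ for $b<b^{\ast}$ small. Summing gives exactly \eqref{eq:ModifiedEnergyDeriv}.

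There is no genuine obstacle at this stage since the hard work has already been done: the Morawetz correction was tailored in Lemma~\ref{lem:MorawetzCorrection} to kill the nonlocal obstruction $bA_{Q}^{\ast}(yQ^{2}\eps_{1})$ in \eqref{eq:EnergyIdentity}, leaving only the local piece whose sign is governed by the repulsivity of $A_{Q}A_{Q}^{\ast}$. The only thing to be mildly careful about is arithmetic: the coefficient $\tfrac{1}{2}+1=\tfrac{3}{2}$ arising from adding the two local contributions, and verifying that the smallness $\tfrac{1}{\sqrt{\log M}}$ and the parabolic Young split both fit comfortably under the universal constant $\tfrac{1}{100}$ on the right-hand side.
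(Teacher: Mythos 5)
Your proof is correct and follows essentially the same route as the paper: \eqref{eq:ModifiedEnergyCoer} from \eqref{eq:Morawetz-bdd}, then subtracting \eqref{eq:Morawetz-deriv} from \eqref{eq:EnergyIdentity} so that the nonlocal terms $\pm b(\epsilon_{3},A_{Q}^{\ast}(yQ^{2}\epsilon_{1}))_{r}$ cancel, leaving the nonpositive term $\tfrac{3}{2}b(\epsilon_{3},yQ^{2}\epsilon_{2})_{r}$ by the repulsivity \eqref{eq:MorawetzRepulsivity}, with the remaining errors absorbed via $M\gg1$ and Young's inequality. The arithmetic (the $\tfrac{3}{2}$ coefficient and the Young split) checks out.
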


\begin{proof}
The coercivity \eqref{eq:ModifiedEnergyCoer} follow from \eqref{eq:Morawetz-bdd}.
For the monotonicity \eqref{eq:ModifiedEnergyDeriv}, we combine \eqref{eq:EnergyIdentity}
and \eqref{eq:MorawetzRepulsivity} to have 
\begin{align*}
 & |(\partial_{s}-6\tfrac{\lmb_{s}}{\lmb})\calF_{3}-\tfrac{3}{2}b(\eps_{3},yQ^{2}\eps_{2})_{r}|\\
 & \leq Cb(\tfrac{1}{\sqrt{\log M}}\|\eps_{3}\|_{L^{2}}^{2}+\tfrac{b^{2}}{|\log b|}\|\eps_{3}\|_{L^{2}})+Cb(\tfrac{1}{\sqrt{\log M}}\|\eps_{3}\|_{L^{2}}^{2}+b^{5-})\\
 & \leq b\big((\tfrac{C}{\sqrt{\log M}}+\tfrac{1}{200})\|\eps_{3}\|_{L^{2}}^{2}+\tfrac{Cb^{4}}{|\log b|^{2}}\big).
\end{align*}
By the repulsivity \eqref{eq:MorawetzRepulsivity} and $M\gg1$, we
have 
\[
(\partial_{s}-6\tfrac{\lmb_{s}}{\lmb})\calF_{3}\leq b(\tfrac{1}{100}\|\eps_{3}\|_{L^{2}}^{2}+C\tfrac{b^{4}}{|\log b|^{2}}).
\]
This completes the proof. 
\end{proof}

\subsection{\label{subsec:ClosingBootstrap}Proofs of Propositions~\ref{prop:main-bootstrap},
\ref{prop:Sets-I-pm}, and \ref{prop:SharpDescription}}

In this last subsection, we finish the proofs of Propositions~\ref{prop:main-bootstrap},
\ref{prop:Sets-I-pm}, and \ref{prop:SharpDescription}. The arguments
here are very similar to the Schrödinger map case \cite{MerleRaphaelRodnianski2013InventMath}.
We include the proofs for the sake of completeness. We note that there
are some simplifications in our case, thanks to the conservation of
mass and energy. 
\begin{lem}[Consequences of modulation estimates]
We have 
\begin{align}
\int_{0}^{t}\frac{b}{\lmb^{2}}\cdot\frac{b^{4}}{\lmb^{6}|\log b|^{2}}d\tau & \leq\frac{b^{4}(t)}{\lmb^{6}(t)|\log b(t)|^{2}},\label{eq:Conseq-Modulation-claim1}\\
\frac{b(t)|\log b(t)|^{2}}{\lmb(t)} & =\bigg(1+O\Big(\frac{1}{|\log b_{0}|^{\frac{1}{2}-}}\Big)\bigg)\frac{b_{0}|\log b_{0}|^{2}}{\lmb_{0}},\label{eq:Conseq-Modulation-claim2}\\
\frac{\lmb(t)}{\lmb_{0}} & \leq\Big(\frac{b(t)}{b_{0}}\Big)^{\frac{3}{4}}.\label{eq:Conseq-Modulation-claim3}
\end{align}
\end{lem}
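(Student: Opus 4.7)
The plan is to reduce all three claims to ODE-type inequalities in the self-similar time variable $s$ (with $ds/dt = 1/\lambda^2$), and then exploit the modulation laws
\[
\tfrac{\lambda_s}{\lambda} = -b + O(b^{2-}), \qquad \widetilde{b}_s + \widetilde{b}^2 + \tfrac{2\widetilde{b}^2}{|\log b|} = O\!\left(\tfrac{b^2}{|\log b|^{3/2-}}\right), \qquad \widetilde{b}-b = O(b^{2-C\delta})
\]
from Lemmas~\ref{lem:ModulationEstimates} and~\ref{lem:RefinedModulationEstimates}.

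For claim~\eqref{eq:Conseq-Modulation-claim1}, I change variables via $d\tau = \lambda^2\,ds$ so the LHS becomes $\int_0^s \frac{b^5}{\lambda^6|\log b|^2}\,ds'$. The idea is to show that $F(s) \coloneqq \frac{b^4}{\lambda^6|\log b|^2}$ is monotone increasing with derivative comparable to the integrand. Using $\partial_s|\log b| = -b_s/b \sim b$ (since $b>0$ is small so $|\log b|=-\log b$), a direct computation gives
\[
\partial_s \log F = 4\tfrac{b_s}{b} - 6\tfrac{\lambda_s}{\lambda} - 2\tfrac{\partial_s|\log b|^2}{|\log b|^2} = (-4b - \tfrac{8b}{|\log b|}) + 6b - \tfrac{4b}{|\log b|} + O(b^{2-}) = 2b + O\!\left(\tfrac{b}{|\log b|}\right),
\]
so $\partial_s F \geq b F = \frac{b^5}{\lambda^6|\log b|^2}$ for $b$ small. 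Integrating from $0$ to $s$ and dropping the nonnegative initial value yields \eqref{eq:Conseq-Modulation-claim1}.

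For claim~\eqref{eq:Conseq-Modulation-claim2}, the crucial observation is that the leading-order modulation laws produce an exact cancellation: letting $G \coloneqq b|\log b|^2/\lambda$,
\[
\partial_s \log G = \tfrac{b_s}{b} + 2\tfrac{\partial_s|\log b|}{|\log b|} - \tfrac{\lambda_s}{\lambda} = \left(-b - \tfrac{2b}{|\log b|}\right) + \tfrac{2b}{|\log b|} + b + O\!\left(\tfrac{b}{|\log b|^{3/2-}}\right) = O\!\left(\tfrac{b}{|\log b|^{3/2-}}\right),
\]
where I used the refined estimate \eqref{eq:btilde-modulation} together with \eqref{eq:btilde-b} to control $b_s$. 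To integrate the remainder I change variables from $s$ to $b$ using $b_s \sim -b^2$, so that $ds \sim -db/b^2$ and
\[
\int_0^s \tfrac{b}{|\log b|^{3/2-}}\, ds' \sim \int_{b(s)}^{b_0} \tfrac{db}{b|\log b|^{3/2-}} = \int_{|\log b_0|}^{|\log b(s)|}\tfrac{du}{u^{3/2-}} \lesssim |\log b_0|^{-1/2+}.
\]
Exponentiating yields $G(s)/G(0) = 1 + O(|\log b_0|^{-1/2+})$, which is \eqref{eq:Conseq-Modulation-claim2}.

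For claim~\eqref{eq:Conseq-Modulation-claim3}, I introduce $H \coloneqq \lambda^{4/3}/b$ and compute
\[
\partial_s \log H = \tfrac{4}{3}\tfrac{\lambda_s}{\lambda} - \tfrac{b_s}{b} = -\tfrac{4}{3}b + b + \tfrac{2b}{|\log b|} + O(b^{2-}) = -\tfrac{b}{3} + \tfrac{2b}{|\log b|} + O(b^{2-}) \leq 0
\]
for $b \leq b^\ast$ small enough, since $\tfrac{b}{3}$ dominates the other two terms. Hence $H(s) \leq H(0)$, i.e.~$(\lambda/\lambda_0)^{4/3} \leq b/b_0$, giving \eqref{eq:Conseq-Modulation-claim3}.

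The delicate step is \eqref{eq:Conseq-Modulation-claim2}: the leading $O(b)$ and $O(b/|\log b|)$ pieces must cancel identically so that only the integrable $O(b/|\log b|^{3/2-})$ remainder survives. This is precisely why we need the sharper modulation law \eqref{eq:btilde-modulation} (improving the naive estimate \eqref{eq:ModEstimate-b-eta} by a factor $\sqrt{|\log b|}$)—without it, the integrand would be borderline non-integrable in $b$, and the logarithm-corrected blow-up law could not be pinned down.
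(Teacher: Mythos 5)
Your arguments for \eqref{eq:Conseq-Modulation-claim1} and \eqref{eq:Conseq-Modulation-claim3} are correct and essentially the paper's: for \eqref{eq:Conseq-Modulation-claim3} the computation is identical, and for \eqref{eq:Conseq-Modulation-claim1} you recast the paper's integration by parts (which uses $\tfrac{b}{\lambda^{2}}=-\tfrac{\lambda_{t}}{\lambda}+O(b^{2-}\lambda^{-2})$ and absorbs $\tfrac{4}{6}$ of the integral back into the left-hand side) as the statement $\partial_{s}F\geq bF$ for $F=b^{4}\lambda^{-6}|\log b|^{-2}$; both rest on the same modulation inputs $\tfrac{\lambda_{s}}{\lambda}\approx-b$, $\tfrac{b_{s}}{b}\approx-b$, and your version is a harmless (arguably cleaner) reorganization.

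There is, however, a genuine gap in your proof of \eqref{eq:Conseq-Modulation-claim2}: you compute $\partial_{s}\log G$ for $G=b|\log b|^{2}/\lambda$ and claim that \eqref{eq:btilde-modulation} \emph{together with} \eqref{eq:btilde-b} controls $b_{s}$ to precision $O(b^{2}/|\log b|^{\frac{3}{2}-})$. It does not: the pointwise bound $|\widetilde{b}-b|\lesssim b^{2-C\delta}$ says nothing about $\partial_{s}(\widetilde{b}-b)$, and in fact $b_{s}-\widetilde{b}_{s}$ is only of size $O(b^{2}/|\log b|)$ (it involves $(\partial_{s}\epsilon_{1},\widetilde{\mathcal{Z}}_{k,\delta})_{r}$, which is comparable to $\widetilde{\mathbf{Mod}}$ itself). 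The only quantity whose $s$-derivative satisfies the refined law with the integrable $O(b^{2}/|\log b|^{\frac{3}{2}-})$ error is $\widetilde{b}$, by Lemma~\ref{lem:RefinedModulationEstimates}; for $b$ itself one only has \eqref{eq:ModEstimate-b-eta}, whose $O(b^{2}/|\log b|)$ error would produce a divergent $\int du/u$ after your change of variables and would not yield the $1+O(|\log b_{0}|^{-\frac{1}{2}+})$ conclusion. The repair is exactly what the paper does: run the cancellation computation for $\widetilde{G}\coloneqq\widetilde{b}|\log\widetilde{b}|^{2}/\lambda$ (also using $|b_{s}+b^{2}|+|\widetilde{b}_{s}+\widetilde{b}^{2}|\lesssim b^{2}/|\log b|$ to justify the change of variables in the error integral), and only at the end convert $\widetilde{b}$ back to $b$ at the two endpoints via \eqref{eq:btilde-b}, which costs a multiplicative $1+O(b^{1-})$ and is harmless. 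With that substitution your argument goes through; your identification of the cancellation and of the necessity of the $\sqrt{|\log b|}$ gain is correct.
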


\begin{proof}
The estimate \eqref{eq:Conseq-Modulation-claim1} follows from $\frac{b}{\lmb^{2}}=-\frac{\lmb_{t}}{\lmb}+O(\frac{b^{2-}}{\lmb^{2}})$
and integration by parts: 
\begin{align*}
 & \int_{0}^{t}\frac{b}{\lmb^{2}}\cdot\frac{b^{4}}{\lmb^{6}|\log b|^{2}}d\tau\\
 & =\frac{1}{6}\Big[\frac{b^{4}}{\lmb^{6}|\log b|^{2}}\Big]_{0}^{t}-\frac{4}{6}\int_{0}^{t}\frac{b_{t}b^{3}}{\lmb^{6}|\log b|^{2}}d\tau+O\Big(\frac{1}{|\log b^{\ast}|}\int_{0}^{t}\frac{b}{\lmb^{2}}\cdot\frac{b^{4}}{\lmb^{6}|\log b|^{2}}d\tau\Big)\\
 & \leq\frac{1}{6}\frac{b^{4}(t)}{\lmb^{6}(t)|\log b|^{2}}+\frac{4}{6}\int_{0}^{t}\frac{b}{\lmb^{2}}\cdot\frac{b^{4}}{\lmb^{6}|\log b|^{2}}d\tau+O\Big(\frac{1}{|\log b^{\ast}|}\int_{0}^{t}\frac{b}{\lmb^{2}}\cdot\frac{b^{4}}{\lmb^{6}|\log b|^{2}}d\tau\Big).
\end{align*}

To show the estimate \eqref{eq:Conseq-Modulation-claim2}, we need
the refined modulation estimates (Lemma~\ref{lem:RefinedModulationEstimates}).
We compute using \eqref{eq:btilde-b}, \eqref{eq:btilde-modulation}
and $|b_{s}+b^{2}|+|\td b_{s}+\td b^{2}|\aleq\frac{b^{2}}{|\log b|}$
to get 
\[
\partial_{s}\log\Big(\frac{\lmb}{\td b|\log\td b|^{2}}\Big)=\Big(\frac{\lmb_{s}}{\lmb}+\td b\Big)-\Big(\frac{\td b_{s}+\td b^{2}+\frac{2\td b^{2}}{|\log\td b|}}{\td b}\Big)+O\Big(\frac{b}{|\log b|^{2}}\Big)=O\Big(\frac{b}{|\log b|^{\frac{3}{2}-}}\Big).
\]
Integrating this, we have 
\[
\bigg|\Big(\frac{\td b(t)|\log\td b(t)|^{2}}{\lmb(t)}\Big)^{-1}\frac{\td b_{0}|\log\td b_{0}|^{2}}{\lmb_{0}}-1\bigg|\aleq\int_{0}^{t}\frac{b}{\lmb^{2}}\cdot\frac{1}{|\log b|^{\frac{3}{2}-}}d\tau.
\]
The error term (the RHS) can be estimated using $\frac{b}{\lmb^{2}}=-\frac{b_{t}}{b}+O(\frac{1}{|\log b|}\frac{b}{\lmb^{2}})$:
\[
\bigg(1+O\Big(\frac{1}{|\log b^{\ast}|}\Big)\bigg)\int_{0}^{t}\frac{b}{\lmb^{2}}\cdot\frac{1}{|\log b|^{\frac{3}{2}-}}d\tau=-\int_{0}^{t}\frac{b_{t}}{b|\log b|^{\frac{3}{2}-}}d\tau\aleq\frac{1}{|\log b_{0}|^{\frac{1}{2}-}}.
\]
Finally replacing $\td b$ by $b$ using \eqref{eq:btilde-b} completes
the proof of \eqref{eq:Conseq-Modulation-claim2}.

The estimate \eqref{eq:Conseq-Modulation-claim3} follows from 
\[
\partial_{s}\log\Big(\frac{\lmb^{\frac{4}{3}}}{b}\Big)=\frac{\lmb_{s}}{3\lmb}+\Big(\frac{\lmb_{s}}{\lmb}+b\Big)-\Big(\frac{b_{s}+b^{2}}{b}\Big)=-\frac{b}{3}+O\Big(\frac{b}{|\log b|}\Big)\leq0.
\]
This completes the proof. 
\end{proof}
We are now ready to prove the main bootstrap Proposition~\ref{prop:main-bootstrap}\@. 
\begin{proof}[Proof of the main boostrap Proposition~\ref{prop:main-bootstrap}]
Note that $b(t)\leq b_{0}$ is immediate from $b_{s}\approx-b^{2}<0$.

We first close the $\|\eps_{3}\|_{L^{2}}$-bound. By the modified
energy inequality (Proposition~\ref{prop:ModifiedEnergyInequality}),
we have 
\[
\frac{1}{2}\frac{\|\eps_{3}(t)\|_{L^{2}}^{2}}{\lmb^{6}(t)}\leq\frac{1}{2}\frac{\|\eps_{3}(0)\|_{L^{2}}^{2}}{\lmb_{0}^{6}}+\frac{b_{0}^{9/2}}{\lmb_{0}^{6}}+\frac{b^{9/2}(t)}{\lmb^{6}(t)}+\Big(\frac{K^{2}}{100}+C\Big)\int_{0}^{t}\frac{b}{\lmb^{2}}\cdot\frac{b^{4}}{\lmb^{6}|\log b|^{2}}d\tau.
\]
Applying the claims \eqref{eq:Conseq-Modulation-claim1} and \eqref{eq:Conseq-Modulation-claim3}
yields 
\[
\|\eps_{3}(t)\|_{L^{2}}^{2}\leq\Big(\frac{b(t)}{b_{0}}\Big)^{\frac{9}{2}}\|\eps_{3}(0)\|_{L^{2}}^{2}+\Big(\frac{K^{2}}{50}+C\Big)\frac{b^{4}(t)}{|\log b(t)|^{2}}.
\]
Applying the initial bound \eqref{eq:def-Utilde-init} and $K\gg1$,
this closes the $\|\eps_{3}\|_{L^{2}}$-bound.

We now close the $\|\eps_{1}\|_{L^{2}}$-bound. Thanks to the energy
conservation, we have 
\[
\frac{\|w_{1}(t)\|_{L^{2}}}{\lmb(t)}=\frac{\|w_{1}(0)\|_{L^{2}}}{\lmb_{0}}.
\]
Thus we have 
\begin{align*}
\|\eps_{1}(t)\|_{L^{2}} & \leq\|w_{1}(t)\|_{L^{2}}+\|P_{1}(t)\|_{L^{2}}\\
 & \leq\frac{\lmb(t)}{\lmb_{0}}(\|\eps_{1}(0)\|_{L^{2}}+Cb_{0}|\log b_{0}|^{\frac{1}{2}})+Cb(t)|\log b(t)|^{\frac{1}{2}}\\
 & \leq\frac{\lmb(t)}{\lmb_{0}}(\|\eps_{1}(0)\|_{L^{2}}+b_{0}|\log b_{0}|^{2})+b(t)|\log b(t)|^{2}.
\end{align*}
Applying the initial bound \eqref{eq:def-Utilde-init}, \eqref{eq:Conseq-Modulation-claim2},
and $K\gg1$, this closes the $\|\eps_{1}\|_{L^{2}}$-bound.

We now close the $\|\eps\|_{L^{2}}$-bound. Thanks to the mass conservation,
\[
\|w(t)\|_{L^{2}}=\|w(0)\|_{L^{2}}.
\]
We manipulate 
\begin{align*}
\|w\|_{L^{2}}^{2} & =\|P\|_{L^{2}}^{2}+2(P,\eps)_{r}+\|\eps\|_{L^{2}}^{2}\\
 & =\|Q\|_{L^{2}}^{2}+\|\eps\|_{L^{2}}^{2}+O(\|P\eps\|_{L^{1}}+|\|P\|_{L^{2}}^{2}-\|Q\|_{L^{2}}^{2}|)\\
 & =\|Q\|_{L^{2}}^{2}+\|\eps\|_{L^{2}}^{2}+O(b^{1-}).
\end{align*}
Therefore, 
\[
\|\eps(t)\|_{L^{2}}\leq\|\eps(0)\|_{L^{2}}+O(b_{0}^{\frac{1}{2}-}).
\]
Applying the initial bound \eqref{eq:def-Utilde-init} and $b^{\ast}\ll1$,
this closes the $\|\eps\|_{L^{2}}$-bound. 
\end{proof}
We turn to the proof of Proposition~\ref{prop:Sets-I-pm}. Let us
recall the situation in the proof of Theorem~\ref{thm:main-thm}.
For a fixed $(\widehat{\lmb}_{0},\widehat{\gmm}_{0},\widehat{b}_{0},\widehat{\eps}_{0})\in\td{\calU}_{\init}$,
we were considering the one-parameter family of solutions $u^{(\widehat{\eta}_{0})}$
starting from the initial data formed by the rough decomposition,
i.e. $u_{0}^{(\widehat{\eta}_{0})}=\frac{e^{i\widehat{\gmm}_{0}}}{\widehat{\lmb}_{0}}[P(\cdot;\widehat{b}_{0},\widehat{\eta}_{0})+\widehat{\eps}_{0}](\frac{\cdot}{\widehat{\lmb}_{0}})$,
$\widehat{\eta}_{0}\in(-\frac{\widehat{b}_{0}}{2|\log\widehat{b}_{0}|},\frac{\widehat{b}_{0}}{2|\log\widehat{b}_{0}|})$.
Here we added a superscript $(\widehat{\eta}_{0})$ for clarification.
We then changed the decomposition into the nonlinear decomposition
$(\lmb_{0},\gmm_{0},b_{0},\eta_{0},\eps_{0})$, and denote by $(\lmb(t),\gmm(t),b(t),\eta(t),\eps(t))$
the nonlinear decomposition of $u^{(\widehat{\eta}_{0})}(t)$ at time
$t$. We also recall by \eqref{eq:initial-decomposition-transition}
that the difference of $(\widehat{\lmb}_{0},\widehat{\gmm}_{0},\widehat{b}_{0},\widehat{\eta}_{0},\widehat{\eps}_{0})$
and $(\lmb_{0},\gmm_{0},b_{0},\eta_{0},\eps_{0})$ is bounded by $\widehat{b}_{0}^{2-}$.
Finally, we assumed (for a contradiction argument) that for any $\widehat{\eta}_{0}$
the solution $u^{(\widehat{\eta}_{0})}$ exits the trapped regime
by violating the $\eta$-bound: $|\eta(T_{\mathrm{exit}}^{(\widehat{\eta}_{0})})|=\frac{b(T_{\mathrm{exit}}^{(\widehat{\eta}_{0})})}{|\log b(T_{\mathrm{exit}}^{(\widehat{\eta}_{0})})|}$. 
\begin{proof}[Proof of Proposition~\ref{prop:Sets-I-pm}]
We need to show that $\calI_{\pm}$ are nonempty open sets.

To show that $\calI_{\pm}$ is nonempty, we show $\pm\frac{1}{5}\frac{\widehat{b}_{0}}{|\log\widehat{b}_{0}|}\in\calI_{\pm}$.
We compute the variation of the ratio $\frac{\eta|\log b|}{b}$ using
the modulation estimates \eqref{eq:ModEstimate-b-eta}: 
\begin{align*}
\partial_{s}\Big(\frac{\eta|\log b|}{b}\Big) & =\frac{\eta|\log b|}{b}\Big(-\frac{b_{s}}{b}\big(1+\frac{1}{|\log b|}\big)\Big)+\frac{\eta_{s}|\log b|}{b}\\
 & =\frac{\eta|\log b|}{b}\cdot b\Big(1+O\big(\frac{1}{|\log b|}\big)\Big)+O\big(\frac{Kb}{\sqrt{\log M}}\big).
\end{align*}
Thus if $|\frac{\eta|\log b|}{b}|\geq\frac{1}{10}$ holds at some
time, $|\frac{\eta|\log b|}{b}|$ starts to increase, thanks to $\frac{K}{\sqrt{\log M}}\ll1$.
In particular, if $\widehat{\eta}_{0}=\pm\frac{1}{5}\frac{\widehat{b}_{0}}{|\log\widehat{b}_{0}|}$,
by \eqref{eq:initial-decomposition-transition} $\pm\eta_{0}\geq\frac{1}{10}\frac{b_{0}}{|\log b_{0}|}$
so $\eta$ must have same sign with $\eta_{0}$ at $T_{\mathrm{exit}}^{(\eta_{0})}$,
saying that $\pm\frac{1}{5}\frac{\widehat{b}_{0}}{|\log\widehat{b}_{0}|}\in\calI_{\pm}$.

We turn to show that $\calI_{\pm}$ is open. Since $\widehat{\eta}_{0}\in\calI_{\pm}$,
there exists $t^{(\widehat{\eta}_{0})}\in[0,T_{\mathrm{exit}}^{(\widehat{\eta}_{0})})$
such that $\pm\eta^{(\widehat{\eta}_{0})}(t^{(\widehat{\eta}_{0})})>\frac{1}{2}\frac{b_{0}}{|\log b_{0}|}(t^{(\widehat{\eta}_{0})})$.
By the continuous dependence, (obtained by combining the local well-posedness
and Lemma~\ref{lem:decomp}) for all $\eta_{0}'$ near $\widehat{\eta}_{0}$
we have $t^{(\widehat{\eta}_{0})}\in[0,T_{\mathrm{exit}}^{(\eta_{0}')})$
and $\pm\eta^{(\eta_{0}')}(t^{(\widehat{\eta}_{0})})>\frac{1}{2}\frac{b_{0}}{|\log b_{0}|}(t^{(\widehat{\eta}_{0})})$.
Such $\eta_{0}'$ belongs to $\calI_{\pm}$ due to the argument in
the previous paragraph. This completes the proof.
\end{proof}
In view of Propositions~\ref{prop:main-bootstrap} and \ref{prop:Sets-I-pm},
we have constructed a trapped solution $u$. The remaining task is
to show that $u$ blows up in finite time as described in Theorem~\ref{thm:main-thm}. 
\begin{proof}[Proof of Proposition~\ref{prop:SharpDescription}]
The proof is very similar to \cite[Section 6]{MerleRaphaelRodnianski2013InventMath}.

(1) By the claim \eqref{eq:Conseq-Modulation-claim3}, we have 
\[
\partial_{t}\lmb^{\frac{2}{3}}=-\frac{b}{3\lmb^{\frac{4}{3}}}+\frac{1}{3\lmb^{\frac{4}{3}}}\Big(\frac{\lmb_{s}}{\lmb}+b\Big)=-\frac{b}{3\lmb^{\frac{4}{3}}}\Big(1+O\big((b^{\ast})^{1-}\big)\Big)\leq-\frac{b_{0}}{4\lmb_{0}^{\frac{4}{3}}}.
\]
This implies the finite-time blow-up, $T<+\infty$. By the standard
blow-up criterion, i.e., a $H^{1}$-solution blows up at a finite
time $T<+\infty$ only if $\lim_{t\uparrow T}\|u(t)\|_{\dot{H}^{1}}=\infty$,
we have $\lmb(T)\coloneqq\lim_{t\uparrow T}\lmb(t)=0$. Moreover,
due to \eqref{eq:Conseq-Modulation-claim2} and $|\eta|<\frac{b}{|\log b|}$,
we have $b(T)\coloneqq\lim_{t\uparrow T}b(t)=0$ and $\eta(T)\coloneqq\lim_{t\uparrow}\eta(t)=0$.

(2) We start by rewriting the claim \eqref{eq:Conseq-Modulation-claim2}
as 
\begin{equation}
\frac{b|\log b|^{2}}{\lmb}=\ell\Big(1+O\big(\frac{1}{|\log b|^{\frac{1}{2}-}}\big)\Big),\qquad\ell\coloneqq\lim_{t\uparrow T}\frac{b(t)|\log b(t)|^{2}}{\lmb(t)}\in(0,\infty),\label{eq:Conseq-Modulation-claim4}
\end{equation}
where the existence of $\ell\in(0,\infty)$ follows from \eqref{eq:Conseq-Modulation-claim2}
(on $[t,T)$ instead of on $[0,t]$) and $b(T)=0$.

We now claim the asymptotics of the parameters $\lmb$ and $b$: 
\begin{align}
\lmb(t) & =\ell\cdot\frac{T-t}{|\log(T-t)|^{2}}(1+o_{t\to T}(1)),\label{eq:sharp-lambda-asymptotics}\\
b(t) & =\ell^{2}\cdot\frac{T-t}{|\log(T-t)|^{4}}(1+o_{t\to T}(1)).\label{eq:sharp-b-asymptotics}
\end{align}
To see this, we first derive the asymptotics of $\lmb$ and $b$ in
the $s$-variable. We integrate the refined modulation estimate \eqref{eq:btilde-modulation}
in the $s$-variable from $[s,\infty)$ to obtain 
\[
\td b(s)=\frac{1}{s}-\frac{2}{s\log s}+O\Big(\frac{1}{s|\log s|^{\frac{3}{2}-}}\Big).
\]
By \eqref{eq:btilde-b}, the same asymptotics apply to $b(s)$. Thus
\eqref{eq:Conseq-Modulation-claim4} yields 
\[
\ell\lmb(s)=\frac{(\log s)^{2}}{s}(1+o_{s\to\infty}(1))
\]
and hence 
\begin{equation}
b=\frac{\ell\lmb}{|\log(\ell\lmb)|^{2}}(1+o_{s\to\infty}(1)).\label{eq:Conseq-Modulation-claim5}
\end{equation}
In the original time variable $t$, the sharp $\lmb$-asymptotics
\eqref{eq:sharp-lambda-asymptotics} follow from integrating 
\[
\lmb_{t}=-\frac{b}{\lmb}(1+o_{t\to T}(1))=-\frac{\ell}{|\log(\ell\lmb)|^{2}}(1+o_{t\to T}(1))
\]
backwards in time from $T$ to $t$ with $\lmb(T)=0$. The sharp $b$-asymptotics
\eqref{eq:sharp-b-asymptotics} follow from substituting the sharp
$\lmb$-asymptotics into \eqref{eq:Conseq-Modulation-claim5}.

Next, we claim that $\gmm(t)$ converges to some $\gmm^{\ast}$ as
$t\to T$. Indeed, from the refined modulation estimate \eqref{eq:btilde-modulation}
and $\eta\to0$, we have 
\[
|\td{\eta}(s)|\aleq\frac{1}{s(\log s)^{\frac{3}{2}-}}.
\]
By \eqref{eq:btilde-b}, the same bound holds for $\eta(s)$. Thus
the modulation estimate \eqref{eq:ModEstimateScalePhase} says that
$\gmm_{s}$ is integrable in $[s,\infty)$: 
\[
|\gmm_{s}|\aleq\frac{1}{s(\log s)^{\frac{3}{2}-}}.
\]
Hence $\gmm(t)$ converges to some $\gmm^{\ast}$ as $t\to T$.

(3) It now remains to show that $u$ decomposes as in Theorem~\ref{thm:main-thm}.

We first claim the outer $L^{2}$-convergence: $\chf_{[R,\infty)}u(t)$
converges in $L^{2}$ for any $R>0$. To show this, choose any $R>0$
and we show that $\chf_{[R,\infty)}u(t)$ converges in $L^{2}$. In
view of $i\partial_{t}(\chf_{[R,\infty)}u)=\chf_{[R,\infty)}L_{u}^{\ast}\bfD_{u}u$,
it suffices to show that $t\mapsto\|\chf_{[R,\infty)}L_{u}^{\ast}\bfD_{u}u(t)\|_{L^{2}}$
is integrable. By scaling, we observe that 
\[
\|\chf_{[R,\infty)}L_{u}^{\ast}\bfD_{u}u\|_{L^{2}}=\lmb^{-2}(t)\|\chf_{[\lmb^{-1}(t)R,\infty)}L_{w}^{\ast}w_{1}\|_{L^{2}}.
\]
Since 
\[
\chf_{[\lmb^{-1}R,\infty)}|L_{w}^{\ast}w_{1}|\aleq\chf_{[\lmb^{-1}R,\infty)}(|w_{1}|_{-1}+|w|{\textstyle \int_{y}^{\infty}}|ww_{1}|dy'),
\]
we have 
\[
\|\chf_{[\lmb^{-1}R,\infty)}L_{w}^{\ast}w_{1}\|_{L^{2}}\aleq\|\chf_{[\lmb^{-1}R,\infty)}|w_{1}|_{-1}\|_{L^{2}}(1+\|w\|_{L^{2}}^{2}).
\]
Because $P_{1}$ is supported in $(0,2B_{1}]$ and $2B_{1}<\lmb^{-1}R$
for $t$ sufficiently close to $T$, we have by \eqref{eq:H2-interpolation}
\[
\|\chf_{[\lmb^{-1}R,\infty)}L_{w}^{\ast}w_{1}\|_{L^{2}}\aleq\||\eps_{1}|_{-1}\|_{L^{2}}\aleq b^{\frac{3}{2}}|\log b|^{\frac{1}{2}-}.
\]
Using the sharp asymptotics \eqref{eq:sharp-lambda-asymptotics} and
\eqref{eq:sharp-b-asymptotics}, $\lmb^{-2}b^{\frac{3}{2}}|\log b|^{\frac{1}{2}-}$
is integrable, and hence the claim is proved.

The above claim says that there exists a function $u^{\ast}$ such
that $\chf_{[R,\infty)}u^{\ast}\in L^{2}$ and $\chf_{[R,\infty)}u(t)\to\chf_{[R,\infty)}u^{\ast}$
in $L^{2}$ for any $R>0$. We show that this $u^{\ast}$ satisfies
the statement of Theorem~\ref{thm:main-thm}. Let 
\[
\eps^{\sharp}(t,r)\coloneqq\frac{e^{i\gmm(t)}}{\lmb(t)}\eps\Big(t,\frac{r}{\lmb(t)}\Big)
\]
Since $(\gmm,b,\eta)\to(\gmm^{\ast},0,0)$ and $\frac{\ell(T-t)}{\lmb(t)|\log(T-t)|^{2}}\to1$,
we have 
\[
\frac{e^{i\gmm(t)}}{\lmb(t)}P\Big(\frac{r}{\lmb(t)};b(t),\eta(t)\Big)-e^{i\gmm^{\ast}}\frac{|\log(T-t)|^{2}}{\ell(T-t)}Q\Big(\frac{|\log(T-t)|^{2}}{\ell(T-t)}r\Big)\to0\text{ in }L^{2}.
\]
Thus it suffices to show that $u^{\ast}\in H_{0}^{1}$ and $\eps^{\sharp}(t)\to u^{\ast}$
in $L^{2}$ as $t\to T$. On one hand, $\chf_{[R,\infty)}\eps^{\sharp}(t)\to\chf_{[R,\infty)}u^{\ast}$
in $L^{2}$ for any $R>0$, as the outer convergence is insensitive
to the concentrating bubble. On the other hand, due to the boundedness
of $\frac{b|\log b|^{2}}{\lmb}$ (see \eqref{eq:Conseq-Modulation-claim4})
and $\|\eps^{\sharp}\|_{\dot{H}_{0}^{1}}=\lmb^{-1}\|\eps\|_{\dot{H}_{0}^{1}}$,
we see that $\eps^{\sharp}(t)$ is uniformly bounded in $H_{0}^{1}$.
Therefore, $u^{\ast}\in H_{0}^{1}$ and $\eps^{\sharp}(t)\rightharpoonup u^{\ast}$
weakly in $H_{0}^{1}$. By the Rellich-Kondrachov compactness theorem,
$\eps^{\sharp}(t)\to u^{\ast}$ in $L_{\mathrm{loc}}^{2}$. Combining
this with outer $L^{2}$-convergence shows that $\eps^{\sharp}(t)\to u^{\ast}$
in $L^{2}$. This finishes the proof. 
\end{proof}

\appendix

\section{\label{sec:Adapted-function-spaces}Adapted function spaces}

In this section, we prove the facts regarding to the adapted function
spaces introduced in Section~\ref{subsec:Adapted-function-spaces}.
Our main focuses are on (sub-)coercivity estimates of Proposition
\ref{prop:LinearCoercivity}. On the way, we compare the adapted function
spaces with the usual equivariant Sobolev spaces and prove various
$L^{\infty}$-estimates and interpolation estimates.

Our main tools are weighted Hardy's inequalities: 
\begin{lem}[{{Weighted Hardy's inequality for $\partial_{r}$; see \cite[Lemma A.1]{KimKwon2020arXiv}}}]
\label{lem:WeightedHardy-dr}Let $0<r_{1}<r_{2}<\infty$; let $\varphi:[r_{1},r_{2}]\to\bbR_{+}$
be a $C^{1}$ weight function such that $\partial_{r}\varphi$ is
nonvanishing and $\varphi\aleq|r\partial_{r}\varphi|$. Then, for
smooth $f:[r_{1},r_{2}]\to\bbC$, we have 
\[
\int_{r_{1}}^{r_{2}}\Big|\frac{f}{r}\Big|^{2}|r\partial_{r}\varphi|rdr\aleq\int_{r_{1}}^{r_{2}}|\partial_{r}f|^{2}\varphi\,rdr+\begin{cases}
\varphi(r_{2})|f(r_{2})|^{2} & \text{if }\partial_{r}\varphi>0,\\
\varphi(r_{1})|f(r_{1})|^{2} & \text{if }\partial_{r}\varphi<0.
\end{cases}
\]
\end{lem}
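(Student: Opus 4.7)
The plan is to reduce this to a textbook integration-by-parts plus Cauchy--Schwarz argument, using the structural hypothesis $\varphi\lesssim|r\partial_r\varphi|$ precisely to make the resulting absorbing step close.

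First I would unfold the weights on the left-hand side: since $|f/r|^2|r\partial_r\varphi|\,r\,dr = |f|^2|\partial_r\varphi|\,dr$, the claim is equivalent to
\[
\int_{r_1}^{r_2}|f|^2|\partial_r\varphi|\,dr \lesssim \int_{r_1}^{r_2}|\partial_r f|^2\varphi\,r\,dr + \text{boundary term}.
\]
Assume first $\partial_r\varphi>0$ (the other case is symmetric). Integration by parts gives
\[
\int_{r_1}^{r_2}|f|^2\partial_r\varphi\,dr = |f(r_2)|^2\varphi(r_2) - |f(r_1)|^2\varphi(r_1) - 2\,\Re\!\int_{r_1}^{r_2}\bar f\,\partial_r f\,\varphi\,dr.
\]
Drop the negative boundary contribution at $r_1$, and split the cross term by Cauchy--Schwarz using the weight $\varphi\,r$ on the derivative factor:
\[
\Big|2\,\Re\!\int_{r_1}^{r_2}\bar f\,\partial_r f\,\varphi\,dr\Big|
\le 2\Big(\int_{r_1}^{r_2}|f|^2\frac{\varphi}{r}\,dr\Big)^{1/2}\Big(\int_{r_1}^{r_2}|\partial_r f|^2\varphi\,r\,dr\Big)^{1/2}.
\]

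At this stage the hypothesis enters: from $\varphi\lesssim|r\partial_r\varphi|$ we get $\varphi/r\lesssim|\partial_r\varphi|$, hence
\[
\int_{r_1}^{r_2}|f|^2\frac{\varphi}{r}\,dr \lesssim \int_{r_1}^{r_2}|f|^2|\partial_r\varphi|\,dr.
\]
Applying AM--GM to the product of square roots with a small constant on the $|f|^2|\partial_r\varphi|$ factor, we can absorb half of that integral into the left-hand side of the IBP identity, leaving
\[
\int_{r_1}^{r_2}|f|^2|\partial_r\varphi|\,dr \lesssim |f(r_2)|^2\varphi(r_2) + \int_{r_1}^{r_2}|\partial_r f|^2\varphi\,r\,dr,
\]
which is the desired inequality. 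The case $\partial_r\varphi<0$ is identical after reversing the sign in IBP: now the $r_2$ boundary term is the one with the negative sign and gets dropped, leaving $\varphi(r_1)|f(r_1)|^2$ as the surviving boundary contribution.

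There is no serious obstacle here; the only point requiring care is the bookkeeping in the absorbing step, which is why one needs the structural assumption $\varphi\lesssim|r\partial_r\varphi|$ (and not merely $\varphi/r\lesssim|\partial_r\varphi|$ on a subset). One should also note that the inequality is stated pointwise-in-$r$-boundary, so no decay or integrability of $\varphi$ at the endpoints is required — all the endpoint information is captured by the explicit boundary term.
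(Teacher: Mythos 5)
Your proof is correct: the unfolding of the weights, the integration by parts, the Cauchy--Schwarz with split weights $\sqrt{\varphi/r}\cdot\sqrt{\varphi r}$, the use of $\varphi/r\lesssim|\partial_r\varphi|$ to close the absorption, and the sign analysis determining which boundary term survives are all exactly the standard argument that the paper defers to via the citation to \cite[Lemma A.1]{KimKwon2020arXiv}. The absorption step is legitimate here since all integrals are finite ($f$ smooth, $\varphi\in C^1$ on a compact interval), so nothing further is needed.
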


By carefully choosing $\varphi$, we also have logarithmic Hardy's
inequality: 
\begin{lem}[{{Logarithmic Hardy's inequality; see \cite[Corollary A.3]{KimKwon2020arXiv}}}]
\label{lem:LogarithmicHardy}For $k\in\mathbb{R}$, we have 
\begin{equation}
\int_{r_{1}}^{r_{2}}\Big|\frac{f}{r^{k+1}\langle\log r\rangle}\Big|^{2}rdr\aleq\int_{r_{1}}^{r_{2}}\Big|\frac{(\partial_{r}-\frac{k}{r})f}{r^{k}}\Big|^{2}rdr+\begin{cases}
|f(1)|^{2} & \text{if }1\in[r_{1},r_{2}],\\
|(r_{2})^{-k}f(r_{2})|^{2} & \text{if }r_{2}\leq1,\\
|(r_{1})^{-k}f(r_{1})|^{2} & \text{if }r_{1}\geq1.
\end{cases}\label{eq:LogHardy}
\end{equation}
\end{lem}

We now introduce the adapted function spaces $\dot{\calH}_{0}^{1}$,
$\dot{\calH}_{2}^{1}$, $\dot{\calH}_{1}^{2}$, and $\dot{\calH}_{0}^{3}$.
These are all different from $\dot{H}_{0}^{1}$, $\dot{H}_{1}^{2}$,
$\dot{H}_{1}^{2}$, and $\dot{H}_{0}^{3}$, but are essentially same
for functions with high frequency. As a result, their inhomogeneous
versions are the same: $\dot{\calH}_{m}^{k}\cap L^{2}=H_{m}^{k}$.

The adapted function spaces are motivated to have boundedness and
subcoercivity estimates for the linear adapted derivatives, e.g. $L_{Q}\eps$,
$A_{Q}L_{Q}\eps$, and $A_{Q}^{\ast}A_{Q}L_{Q}\eps$ with various
levels of regularity. The first one $\dot{\calH}_{0}^{1}$ is designed
to control $\eps$, provided that $\eps_{1}\approx L_{Q}\eps\in L^{2}$.
On the other hand, the spaces $\dot{\calH}_{2}^{1}$, $\dot{\calH}_{1}^{2}$,
and $\dot{\calH}_{0}^{3}$ are designed to control $\eps_{2}$, $\eps_{1}$,
and $\eps$, provided that $\eps_{3}=A_{Q}^{\ast}\eps_{2}\in L^{2}$.

\subsubsection*{The space $\dot{\protect\calH}_{0}^{1}$}

\ 

For $0$-equivariant Schwartz functions $f$, define 
\[
\|f\|_{\dot{\calH}_{0}^{1}}\coloneqq\|\partial_{r}f\|_{L^{2}}+\|r^{-1}\langle\log_{-}r\rangle^{-1}f\|_{L^{2}}.
\]
Define the space $\dot{\calH}_{0}^{1}$ by taking the completion of
$\calS_{0}$ under this norm. This is the adapted function space at
$\dot{H}^{1}$-level. We note that $\dot{\calH}_{0}^{1}$ is \emph{stronger}
than $\dot{H}_{0}^{1}$, due to its control at infinity. Nevertheless,
$L^{2}\cap\dot{\calH}_{0}^{1}=H_{0}^{1}$. 
\begin{lem}[Boundedness and subcoercivity of $L_{Q}$]
For $v\in\dot{\calH}_{0}^{1}$, we have 
\[
\|L_{Q}v\|_{L^{2}}+\|\chf_{r\sim1}v\|_{L^{2}}\sim\|v\|_{\dot{\calH}_{0}^{1}}
\]
Moreover, the kernel of $L_{Q}:\dot{\calH}_{0}^{1}\to L^{2}$ is $\mathrm{span}_{\bbR}\{\Lambda Q,iQ\}$. 
\end{lem}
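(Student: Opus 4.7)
The plan is to prove the norm equivalence and the kernel characterization in three steps, exploiting the decomposition $L_{Q}v=\bfD_{Q}v+QB_{Q}v$ with $B_{Q}v=r^{-1}\int_{0}^{r}\Re(\br{Q}v)r'dr'$.

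First, for the upper bound $\nrm{L_{Q}v}_{L^{2}}\aleq\nrm{v}_{\dot{\calH}_{0}^{1}}$, the $\rd_{r}v$-contribution is immediate. The zero-order contribution uses the pointwise bound $\abs{A_{\tht}[Q]/r}\aleq\min(r,r^{-1})$ together with $\nrm{v}_{L^{2}(r\leq1)}\aleq\nrm{v}_{\dot{\calH}_{0}^{1}}$ (which follows since $r\brk{\log r}$ is bounded on $(0,1]$) and $\nrm{r^{-1}v}_{L^{2}(r\geq1)}\leq\nrm{v}_{\dot{\calH}_{0}^{1}}$. The nonlocal term $QB_{Q}v$ is controlled by a standard Hardy-type estimate, using Cauchy--Schwarz on $\int_{0}^{r}Qvr'\,dr'$ weighted against $r'Q(r')\brk{\log_{-}r'}$ to reconstruct the $\dot{\calH}_{0}^{1}$-norm, and exploiting the rapid decay $Q\aleq\brk{r}^{-2}$.

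Second, for the kernel characterization, split $v=v_{R}+iv_{I}$. Since $L_{Q}v_{R}$ is real and $L_{Q}(iv_{I})=i\bfD_{Q}v_{I}$ is purely imaginary (because $B_{Q}(iv_{I})=0$ by reality), the kernel equation decouples into $L_{Q}v_{R}=0$ and $\bfD_{Q}v_{I}=0$. The latter is the local first-order ODE $v_{I}'=(A_{\tht}[Q]/r)v_{I}$ with $\bfD_{Q}Q=0$, so $v_{I}=cQ$ by integrating factor $Q^{-1}$, with the regularity at the origin fixing this single kernel mode. For the real part, rewriting $L_{Q}v_{R}=0$ as a $2$-dimensional ODE system in $(v_{R},B_{Q}v_{R})$ and analyzing its fundamental solutions, together with the $\dot{\calH}_{0}^{1}$-regularity requirement, forces $v_{R}=c\Lmb Q$ (this is essentially the argument of \cite[Section~3]{KimKwon2019arXiv}).

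Third, for the lower bound, argue by contradiction: suppose there is a sequence $v_{n}\in\dot{\calH}_{0}^{1}$ with $\nrm{v_{n}}_{\dot{\calH}_{0}^{1}}=1$ while $\nrm{L_{Q}v_{n}}_{L^{2}}+\nrm{\chf_{r\sim1}v_{n}}_{L^{2}}\to0$. By Banach--Alaoglu and Rellich, a subsequence converges weakly in $\dot{\calH}_{0}^{1}$ and strongly in $L_{\mathrm{loc}}^{2}$ to some $v_{\infty}$ satisfying $L_{Q}v_{\infty}=0$ and $v_{\infty}\equiv0$ on $\set{r\sim1}$. By Step~2, $v_{\infty}\in\mathrm{span}_{\bbR}\set{\Lmb Q,iQ}$, and since neither $\Lmb Q$ nor $Q$ vanishes on the chosen annulus, $v_{\infty}=0$. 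We then upgrade to strong $\dot{\calH}_{0}^{1}$-convergence to $0$: $\rd_{r}v_{n}=L_{Q}v_{n}+(A_{\tht}[Q]/r)v_{n}-QB_{Q}v_{n}$ gives $\nrm{\rd_{r}v_{n}}_{L^{2}}\to0$ once $L_{\mathrm{loc}}^{2}$-convergence (and the spatial decay of the coefficients) is invoked; the logarithmic Hardy inequality \eqref{eq:LogHardy} with $k=0$ on $(0,1]$ then yields $\nrm{r^{-1}\brk{\log_{-}r}^{-1}v_{n}}_{L^{2}(r\leq1)}\to0$; the analogous estimate for $r\geq1$ uses the asymptotic profile $A_{\tht}[Q](\infty)=-2$ and integration by parts to recover the $r^{-1}$-decay from $L_{Q}v_{n}\to0$. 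The main obstacle is this last point: the region $r\geq1$, where $\brk{\log_{-}r}^{-1}=1$ offers no cushion, so one must extract the decay $r^{-1}v_{n}\in L^{2}$ at infinity directly from the equation, compensating for the well-known failure of Hardy's inequality for radial $0$-equivariant $\dot{H}^{1}$-functions in $\bbR^{2}$.
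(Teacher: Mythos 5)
Your Steps 1 and 2 (boundedness, and the decoupling of the kernel equation into the real part $L_{Q}v_{R}=0$ and the \emph{local} equation $\bfD_{Q}v_{I}=0$ for the imaginary part) are correct and match the paper, which likewise treats $QB_{Q}$ as perturbative and defers the real-part kernel analysis to the earlier works. The problem is Step 3, which carries the entire content of the lemma. Your compactness scheme has a circular step: you first claim $\nrm{\rd_{r}v_{n}}_{L^{2}}\to0$ from the identity $\rd_{r}v_{n}=L_{Q}v_{n}+(A_{\tht}[Q]/r)v_{n}-QB_{Q}v_{n}$ using ``$L^{2}_{\mathrm{loc}}$-convergence and the spatial decay of the coefficients.'' But $A_{\tht}[Q]/r=-2r/(1+r^{2})\approx-2/r$ for $r\geq1$, so $\nrm{\chf_{[R,\infty)}(A_{\tht}[Q]/r)v_{n}}_{L^{2}}$ is comparable to $\nrm{\chf_{[R,\infty)}r^{-1}v_{n}}_{L^{2}}$, which is exactly the tail of the $\dot{\calH}_{0}^{1}$-norm you are trying to show vanishes; $L^{2}_{\mathrm{loc}}$-convergence gives no control over it. So the $\rd_{r}$-estimate can only come \emph{after} the $r\geq1$ decay estimate, which you postpone to the end and only gesture at. That postponed estimate is the heart of the matter, and ``integration by parts using $A_{\tht}[Q](\infty)=-2$'' is not yet a proof: one must rule out concentration of $v_{n}$ at spatial infinity along the approximate zero mode $r^{-2}$ of $\rd_{r}+\tfrac{2}{r}$, which requires a genuine weighted Hardy inequality, not just the asymptotics of the coefficient.

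The paper avoids compactness altogether and proves the subcoercivity directly. Writing $\bfD_{Q}=Q\rd_{r}Q^{-1}$, it applies the weighted Hardy inequality of Lemma~\ref{lem:WeightedHardy-dr} to $f=Q^{-1}v$ with weight $\varphi=Q^{2}$ on $[r_{0},\infty)$ (using $-r\rd_{r}(Q^{2})\sim Q^{2}$ for $r\geq10$), which yields $\nrm{\chf_{[r_{0},\infty)}r^{-1}v}_{L^{2}}^{2}\aleq\nrm{\chf_{[r_{0},\infty)}\bfD_{Q}v}_{L^{2}}^{2}+\abs{v(r_{0})}^{2}$; averaging over $r_{0}\in[10,20]$ converts the boundary term into $\nrm{\chf_{[10,20]}v}_{L^{2}}^{2}$. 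Near the origin the same lemma is applied with $r\rd_{r}\varphi=Q^{2}\brk{\log_{-}r}^{-2}$, producing the logarithmic weight in $\dot{\calH}_{0}^{1}$. Since these estimates use only $\bfD_{Q}v\in L^{2}$ (and $QB_{Q}v$ is absorbed), they are free of the circularity above; once you have them, the weak-limit/Rellich detour is redundant. If you want to keep your contradiction argument, you must (i) prove the quantitative $r\geq1$ estimate first and independently of $\rd_{r}v_{n}$, essentially reproducing the paper's $\varphi=Q^{2}$ computation, and (ii) only then deduce $\nrm{\rd_{r}v_{n}}_{L^{2}}\to0$; as written, the order of implications does not close.
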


\begin{proof}
By density, we may assume $v\in\calS_{0}$. Recall that $L_{Q}=\bfD_{Q}+QB_{Q}$.
First, $QB_{Q}$ is perturbative in the sense that 
\[
\|QB_{Q}v\|_{L^{2}}\aleq\|\tfrac{1}{r^{2}}{\textstyle \int_{0}^{r}}\langle r'\rangle^{-3}|v|r'dr'\|_{L^{2}}\aleq\|\langle r\rangle^{-3}v\|_{L^{2}}\aleq\|\chf_{[r_{0}^{-1},r_{0}]}v\|_{L^{2}}+r_{0}^{-1+}\|v\|_{\dot{\calH}_{0}^{1}},
\]
for any $r_{0}\geq1$. Therefore, it suffices to show 
\[
\|\bfD_{Q}v\|_{L^{2}}+\|\chf_{r\sim1}v\|_{L^{2}}\sim\|v\|_{\dot{\calH}_{0}^{1}}.
\]
We note that the boundedness $(\aleq)$ is obvious. Henceforth, we
focus on the subcoercivity $(\ageq)$ of $\bfD_{Q}$. We use the operator
identity $\bfD_{Q}=Q\partial_{r}Q^{-1}$ and try to apply weighted
Hardy's inequality (Lemma~\ref{lem:WeightedHardy-dr}) for $Q^{-1}v$.
In the region $r\geq10$, we have $-r\partial_{r}(Q^{2})\sim Q^{2}$,
so applying Lemma~\ref{lem:WeightedHardy-dr} for $f=Q^{-1}v$ with
$\varphi=Q^{2}$ yields 
\[
\|\chf_{[r_{0},\infty)}\tfrac{1}{r}v\|_{L^{2}}^{2}\aleq\|\chf_{[r_{0},\infty)}\bfD_{Q}v\|_{L^{2}}^{2}+|v(r_{0})|^{2},
\]
provided that $r_{0}\geq10$. Averaging over $r_{0}\in[10,20]$, we
get 
\[
\|\chf_{[20,\infty)}\tfrac{1}{r}v\|_{L^{2}}^{2}\aleq\|\chf_{[10,\infty)}\bfD_{Q}v\|_{L^{2}}^{2}+\|\chf_{[10,20]}v\|_{L^{2}}^{2}.
\]
In the region $r\leq\frac{1}{10}$, we have $Q\sim1$. We choose $\varphi:(0,\frac{1}{10}]\to\bbR_{+}$
such that $r\partial_{r}\varphi=Q^{2}\langle\log_{-}r\rangle^{-2}$
and $\lim_{r\to0^{+}}\varphi(r)=0$. This $\varphi$ is very similar
to that used in the proof of logarithmic Hardy's inequality (Lemma~\ref{lem:LogarithmicHardy}).
Note that $\varphi(r)\sim\langle\log_{-}r\rangle^{-1}$ so $\varphi\aleq r\partial_{r}\varphi$
\emph{does not hold} (and hence Lemma~\ref{lem:WeightedHardy-dr}
cannot be applied) but the proof of the logarithmic Hardy inequality
applies. After averaging the boundary term, we have 
\[
\|\chf_{(0,\frac{1}{20}]}\tfrac{1}{r\langle\log_{-}r\rangle}v\|_{L^{2}}^{2}\aleq\|\chf_{(0,\frac{1}{10}]}\bfD_{Q}v\|_{L^{2}}^{2}+\|\chf_{[\frac{1}{20},\frac{1}{10}]}v\|_{L^{2}}^{2}.
\]
Therefore, we have proved that 
\[
\|\bfD_{Q}v\|_{L^{2}}^{2}+\|\chf_{r\sim1}v\|_{L^{2}}^{2}\ageq\|(\chf_{r\ll1}+\chf_{r\gg1})\tfrac{1}{r\langle\log_{-}r\rangle}v\|_{L^{2}}^{2}.
\]
Adding both sides by $\|\chf_{r\sim1}v\|_{L^{2}}^{2}$, we get 
\[
\|\bfD_{Q}v\|_{L^{2}}^{2}+\|\chf_{r\sim1}v\|_{L^{2}}^{2}\ageq\|\tfrac{1}{r\langle\log_{-}r\rangle}v\|_{L^{2}}^{2}.
\]
Combining this with $\|\bfD_{Q}v\|_{L^{2}}=\|\partial_{r}v\|_{L^{2}}+O(\|\tfrac{1}{r\langle\log_{-}r\rangle}v\|_{L^{2}})$
yields the conclusion.

For the kernel characterization, we refer to \cite[Lemma A.5]{KimKwon2020arXiv}.
The argument there still works for $m=0$ with a slight modification. 
\end{proof}
\begin{lem}[Coercivity of $L_{Q}$ at $\dot{H}^{1}$-level]
\label{lem:coercivityAppendix}Let $\psi_{1},\psi_{2}$ be elements
of the dual space $(\dot{\calH}_{0}^{1})^{\ast}$. If the $2\times2$
matrix $(a_{ij})$ defined by $a_{i1}=(\psi_{i},\Lambda Q)_{r}$ and
$a_{i2}=(\psi_{i},iQ)_{r}$ has nonzero determinant, then we have
a coercivity estimate 
\[
\|v\|_{\dot{\calH}_{0}^{1}}\aleq_{\psi_{1},\psi_{2}}\|L_{Q}v\|_{L^{2}}\aleq\|v\|_{\dot{\calH}_{0}^{1}},\qquad\forall v\in\dot{\calH}_{m}^{1}\cap\{\psi_{1},\psi_{2}\}^{\perp}.
\]
\end{lem}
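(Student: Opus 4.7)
\medskip

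\noindent\textbf{Proof proposal.} The boundedness $\|L_{Q}v\|_{L^{2}}\lesssim\|v\|_{\dot{\mathcal{H}}_{0}^{1}}$ is already contained in the preceding lemma, so I only need to establish the lower bound. The plan is to combine the subcoercivity estimate
\[
\|v\|_{\dot{\mathcal{H}}_{0}^{1}}\lesssim\|L_{Q}v\|_{L^{2}}+\|\mathbf{1}_{r\sim1}v\|_{L^{2}}
\]
with the kernel characterization $\ker(L_{Q}:\dot{\mathcal{H}}_{0}^{1}\to L^{2})=\mathrm{span}_{\mathbb{R}}\{\Lambda Q,iQ\}$ via a standard compactness-and-contradiction argument. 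The linear functionals $\psi_{1},\psi_{2}$ enter only to rule out the two-dimensional kernel from the weak limit.

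Assume for contradiction that the estimate fails. Then there exists a sequence $v_{n}\in\dot{\mathcal{H}}_{0}^{1}\cap\{\psi_{1},\psi_{2}\}^{\perp}$ with $\|v_{n}\|_{\dot{\mathcal{H}}_{0}^{1}}=1$ and $\|L_{Q}v_{n}\|_{L^{2}}\to0$. By subcoercivity, $\|\mathbf{1}_{r\sim1}v_{n}\|_{L^{2}}\gtrsim1$ for $n$ large. Since $\{v_{n}\}$ is bounded in $\dot{\mathcal{H}}_{0}^{1}$, and since functions in $\dot{\mathcal{H}}_{0}^{1}$ enjoy a uniform $H^{1}$-bound on any annulus $\{r_{1}\le r\le r_{2}\}$, the Rellich--Kondrachov theorem yields a subsequence converging strongly in $L^{2}_{\mathrm{loc}}$ to some $v_{\infty}$, and weakly in $\dot{\mathcal{H}}_{0}^{1}$ to the same limit. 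In particular $\|\mathbf{1}_{r\sim1}v_{\infty}\|_{L^{2}}\gtrsim1$, so $v_{\infty}\not\equiv0$.

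Next I would verify that $L_{Q}v_{\infty}=0$. The local part $\bfD_{Q}v_{n}$ passes to the limit in the sense of distributions by weak convergence of $\partial_{r}v_{n}$ together with $L^{2}_{\mathrm{loc}}$ convergence of $v_{n}$; and the nonlocal part $QB_{Q}v_{n}$ passes to the limit because $Q(r)B_{Q}v_{n}(r)=\tfrac{Q(r)}{r}\int_{0}^{r}\Re(\overline{Q}v_{n})\,r'dr'$ depends continuously on $v_{n}$ via the strong $L^{2}_{\mathrm{loc}}$ topology (the weight $Q$ provides enough decay to allow truncation outside a large ball at the cost of $o(1)$ as the radius tends to infinity, uniformly in $n$, using the $\dot{\mathcal{H}}_{0}^{1}$-bound). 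Hence $v_{\infty}\in\ker L_{Q}$, so $v_{\infty}=\alpha\Lambda Q+\beta iQ$ for some $(\alpha,\beta)\ne(0,0)$. On the other hand, since $\psi_{1},\psi_{2}\in(\dot{\mathcal{H}}_{0}^{1})^{\ast}$, the orthogonality conditions $(v_{n},\psi_{j})_{r}=0$ pass to the weak limit, giving the linear system $a_{j1}\alpha+a_{j2}\beta=0$ for $j=1,2$. By the assumed invertibility of $(a_{ij})$, we conclude $\alpha=\beta=0$, contradicting $v_{\infty}\ne0$.

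The step I expect to demand the most care is justifying $L_{Q}v_{n}\rightharpoonup L_{Q}v_{\infty}$ in $L^{2}$ (or at least in $\mathcal{D}'$), because $B_{Q}$ is a nonlocal operator and $v_{n}$ a priori only converges in $L^{2}_{\mathrm{loc}}$. The remedy is to exploit the fast decay of the kernel $Q$: splitting $B_{Q}v_{n}(r)=\tfrac{1}{r}\int_{0}^{R}+\tfrac{1}{r}\int_{R}^{r}$ and using $\|\langle r\rangle^{-3}v_{n}\|_{L^{2}}\lesssim\|v_{n}\|_{\dot{\mathcal{H}}_{0}^{1}}$ as in the proof of the preceding lemma makes the tail contribution uniformly small, while the truncated head converges by strong $L^{2}_{\mathrm{loc}}$-convergence. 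The rest of the argument is then routine.
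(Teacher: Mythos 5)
Your proof is correct and follows the standard route: the paper itself omits the argument (deferring to \cite[Lemma A.6]{KimKwon2020arXiv}), and that reference proves the coercivity exactly as you do, by combining the subcoercivity estimate $\|v\|_{\dot{\mathcal{H}}_{0}^{1}}\lesssim\|L_{Q}v\|_{L^{2}}+\|\mathbf{1}_{r\sim1}v\|_{L^{2}}$ and the kernel characterization from the preceding lemma with a compactness/contradiction argument, using the nondegeneracy of $(a_{ij})$ to exclude $\mathrm{span}_{\bbR}\{\Lambda Q,iQ\}$ from the weak limit. Your treatment of the nonlocal term via the decay bound $\|QB_{Q}v\|_{L^{2}}\lesssim\|\langle r\rangle^{-3}v\|_{L^{2}}$ together with strong $L^{2}_{\mathrm{loc}}$ convergence is exactly the right way to close the one delicate step.
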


\begin{proof}
We omit the proof and refer to \cite[Lemma A.6]{KimKwon2020arXiv}.
\end{proof}

\subsubsection*{The space $\dot{\protect\calH}_{2}^{1}$}

\ 

Define the space $\dot{\calH}_{2}^{1}$ by taking the completion of
$\calS_{2}$ under the norm for $2$-equivariant functions 
\[
\|v\|_{\dot{\calH}_{2}^{1}}\coloneqq\|\partial_{r}v\|_{L^{2}}+\|r^{-1}\langle\log_{+}r\rangle^{-1}v\|_{L^{2}}.
\]
Note that $\dot{\calH}_{2}^{1}$ is \emph{weaker} than $\dot{H}_{2}^{1}$
at infinity. Nevertheless, we have $\dot{\calH}_{2}^{1}\cap L^{2}=H_{2}^{1}$. 
\begin{lem}[Coercivity of $A_{Q}^{\ast}$]
For $v\in\dot{\calH}_{2}^{1}$, we have 
\begin{equation}
\|A_{Q}^{\ast}v\|_{L^{2}}\sim\|v\|_{\dot{\calH}_{2}^{1}}.\label{eq:Coercivity-AQstar-appendix}
\end{equation}
\end{lem}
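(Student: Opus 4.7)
My plan is to prove the two inequalities separately, exploiting the explicit expression
\[
A_Q^\ast v = -\rd_r v - \tfrac{2+A_\tht[Q]}{r}v,\qquad A_\tht[Q] = -2 + \tfrac{2}{1+r^2},
\]
together with the manifest positivity/repulsivity
\[
\nrm{A_Q^\ast v}_{L^2}^2 = (v,A_QA_Q^\ast v)_r = \int \abs{\rd_r v}^2\,rdr + \int \tfrac{\td{V}}{r^2}\abs{v}^2\,rdr,
\]
where $\td{V}=(2+A_\tht[Q])^2+r^2Q^2 \sim 1$ near $r=0$ and $\td{V}\sim r^{-2}$ as $r\to\infty$ (cf.~\eqref{eq:Def-Vtilde}). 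By density it suffices to work with Schwartz $2$-equivariant functions.

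For the upper bound $\nrm{A_Q^\ast v}_{L^2} \aleq \nrm{v}_{\dot{\calH}_2^1}$, I would bound the two terms of $A_Q^\ast v$ directly. The first is $\rd_r v$, trivially controlled by the $\dot{\calH}_2^1$-norm. For the second, I use the pointwise bound $\abs{2+A_\tht[Q]} \aleq \brk{r}^{-2}$ (coming from the explicit formula above), so that $\abs{\tfrac{2+A_\tht[Q]}{r}}\aleq \tfrac{1}{r\brk{r}^2}\aleq \tfrac{1}{r\brk{\log_+r}}$, which is exactly the $\dot{\calH}_2^1$-weight.

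For the lower bound $\nrm{v}_{\dot{\calH}_2^1}\aleq \nrm{A_Q^\ast v}_{L^2}$, the identity above already gives $\nrm{\rd_r v}_{L^2}\leq \nrm{A_Q^\ast v}_{L^2}$. The real work is to control $\nrm{r^{-1}\brk{\log_+r}^{-1}v}_{L^2}$. I would split at a fixed radius $r_0>1$: in the inner region $r\leq r_0$ the repulsive potential satisfies $\td{V}/r^2 \ageq 1/r^2$, so the positivity identity gives $\nrm{\chf_{r\leq r_0}\tfrac{v}{r}}_{L^2}\aleq \nrm{A_Q^\ast v}_{L^2}$, which absorbs the inner piece of the $\dot{\calH}_2^1$-weight (where $\brk{\log_+ r}\sim 1$). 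In the outer region $r\geq r_0$ the potential is too weak to be directly useful, so I instead apply the logarithmic Hardy inequality \eqref{eq:LogHardy} with $k=0$ on $[r_0,\infty)$ to obtain
\[
\int_{r_0}^{\infty}\Big|\tfrac{v}{r\brk{\log r}}\Big|^2 rdr \aleq \int_{r_0}^{\infty}\abs{\rd_r v}^2 rdr + \abs{v(r_0)}^2,
\]
and absorb $\abs{v(r_0)}^2$ by averaging $r_0$ over $[2,3]$ and using the inner estimate $\nrm{\chf_{r\sim 1}v}_{L^2}\aleq \nrm{A_Q^\ast v}_{L^2}$ derived above.

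The only delicate point is the matching at $r\sim 1$: the positivity identity gives $\nrm{r^{-1}v}_{L^2}$ control only with the constant coming from $\td V$, which degenerates as one moves to infinity, so the log-Hardy step is essential and must be combined carefully with the averaged boundary trace. Putting all three ingredients together yields $\nrm{r^{-1}\brk{\log_+r}^{-1}v}_{L^2}\aleq \nrm{A_Q^\ast v}_{L^2}$, completing the equivalence.
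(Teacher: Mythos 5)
Your proposal is correct and follows essentially the same route as the paper: both arguments rest on the identity $\|A_{Q}^{\ast}v\|_{L^{2}}^{2}=\|\partial_{r}v\|_{L^{2}}^{2}+\int\frac{\widetilde{V}}{r^{2}}|v|^{2}$ with $\widetilde{V}\sim\langle r\rangle^{-2}$, which gives $\|A_{Q}^{\ast}v\|_{L^{2}}\sim\|\partial_{r}v\|_{L^{2}}+\|r^{-1}\langle r\rangle^{-1}v\|_{L^{2}}$, and then upgrade the weight at infinity to $r^{-1}\langle\log_{+}r\rangle^{-1}$ via the logarithmic Hardy inequality \eqref{eq:LogHardy}, absorbing the boundary term $\|\mathbf{1}_{r\sim1}v\|_{L^{2}}$ into the already-controlled quantity. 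Your averaging of the trace point $r_{0}$ is just a slightly more explicit version of the paper's absorption step.
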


\begin{proof}
By density, we may assume $v\in\calS_{2}$. From 
\begin{gather*}
A_{Q}A_{Q}^{\ast}=-\partial_{rr}-\tfrac{1}{r}\partial_{r}+\tfrac{\td V}{r^{2}},\\
\td V=(2+A_{\theta}[Q])^{2}+r^{2}Q^{2}\sim\langle r\rangle^{-2},
\end{gather*}
we have 
\[
\|A_{Q}^{\ast}v\|_{L^{2}}^{2}\sim\|\partial_{r}v\|_{L^{2}}^{2}+\|r^{-1}\langle r\rangle^{-1}v\|_{L^{2}}^{2}.
\]
Applying the logarithmic Hardy's inequality \eqref{eq:LogHardy},
we have 
\[
\|\chf_{r\geq1}r^{-1}\langle\log_{+}r\rangle^{-1}v\|_{L^{2}}\aleq\|\partial_{r}v\|_{L^{2}}+\|\chf_{r\sim1}v\|_{L^{2}}.
\]
Absorbing $\|\chf_{r\sim1}v\|_{L^{2}}$ into $\|r^{-1}\langle r\rangle^{-1}v\|_{L^{2}}$,
the conclusion follows. 
\end{proof}

\subsubsection*{The space $\dot{\protect\calH}_{1}^{2}$}

\ 

Define the space $\dot{\calH}_{1}^{2}$ by taking the completion of
$\calS_{1}$ under the norm for $1$-equivariant functions 
\[
\|v\|_{\dot{\calH}_{1}^{2}}\coloneqq\|\partial_{rr}v\|_{L^{2}}+\|r^{-1}\langle\log r\rangle^{-1}|v|_{-1}\|_{L^{2}}.
\]
It turns out that $\dot{\calH}_{1}^{2}$ is \emph{stronger} than $\dot{H}_{1}^{2}$
and $\dot{\calH}_{1}^{2}\cap L^{2}=H_{1}^{2}$. 
\begin{lem}[Comparison of $\dot{\calH}_{1}^{2}$ and $\dot{H}_{1}^{2}$]
For $v\in\calS_{1}$, we have 
\begin{equation}
\|v\|_{\dot{\calH}_{1}^{2}}\sim\|v\|_{\dot{H}_{1}^{2}}+\|\chf_{r\sim1}v\|_{L^{2}}.\label{eq:ComparisonH2H2}
\end{equation}
Moreover, one cannot remove $\|\chf_{r\sim1}v\|_{L^{2}}$ in the estimate
\eqref{eq:ComparisonH2H2}. 
\end{lem}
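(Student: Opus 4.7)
The plan is to prove \eqref{eq:ComparisonH2H2} by two one-sided estimates, and then outline the sharpness.

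\emph{Direction 1:} $\|v\|_{\dot H_1^2}+\|\mathbf{1}_{r\sim 1}v\|_{L^2}\lesssim\|v\|_{\dot{\mathcal H}_1^2}$. The $\mathbf{1}_{r\sim 1}$-bound is immediate since $r^{-1}\brk{\log r}^{-1}\sim 1$ and $|v|_{-1}\ge r^{-1}|v|$ on $r\sim 1$. For the $\dot H_1^2$-bound I use $\|v\|_{\dot H_1^2}\sim\|\Delta_1 v\|_{L^2}$ together with the identity $\Delta_1 v=\partial_{rr}v+\tfrac{1}{r}\partial_+ v$, where $\partial_+ v=\partial_r v-\tfrac{1}{r}v$ is $2$-equivariant and behaves like $O(r^2)$ at the origin. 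The crux is the Hardy-type estimate
\[
\|\tfrac{\partial_+ v}{r}\|_{L^2}\lesssim\|\partial_{rr}v\|_{L^2},
\]
proved by setting $g:=\partial_+ v$, observing $\partial_{rr}v=(\partial_r+\tfrac1r)g$, and integrating by parts: with $F:=rg$ one has $F'=r\,\partial_{rr}v$ and
\[
\int r^{-1}|g|^2\,dr=\int r^{-3}|F|^2\,dr=\int r^{-2}\Re(\bar F F')\,dr=\int\Re(\bar g\,\partial_{rr}v)\,dr,
\]
the boundary terms vanishing by the growth of $g$ at $0$ and its Schwartz decay at $\infty$; Cauchy--Schwarz then yields the claim. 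Combining gives $\|\Delta_1 v\|_{L^2}\lesssim\|\partial_{rr}v\|_{L^2}\le\|v\|_{\dot{\mathcal H}_1^2}$.

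\emph{Direction 2:} $\|v\|_{\dot{\mathcal H}_1^2}\lesssim\|v\|_{\dot H_1^2}+\|\mathbf{1}_{r\sim 1}v\|_{L^2}$. The bound $\|\partial_{rr}v\|_{L^2}\lesssim\|v\|_{\dot H_1^2}$ follows from Calder\'on--Zygmund. For the weighted term I split $|v|_{-1}\le r^{-1}|v|+|\partial_r v|$. The $r^{-1}|v|$ piece is handled via the logarithmic Hardy inequality \eqref{eq:LogHardy} with $k=1$, which gives
\[
\|r^{-2}\brk{\log r}^{-1}v\|_{L^2}\lesssim \|r^{-1}\brk{\log r}^{-1}\partial_+ v\|_{L^2}+|v(r_0)|\lesssim \|\tfrac{\partial_+ v}{r}\|_{L^2}+|v(r_0)|,
\]
with $r_0\sim 1$, the main term being controlled by Direction 1. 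The $|\partial_r v|$ piece is treated similarly by \eqref{eq:LogHardy} with $k=0$, giving $\|r^{-1}\brk{\log r}^{-1}\partial_r v\|_{L^2}\lesssim\|\partial_{rr}v\|_{L^2}+|\partial_r v(r_0)|$. The boundary values $v(r_0)$ and $\partial_r v(r_0)$ are then absorbed by averaging $r_0$ over $[1,2]$ and invoking the one-dimensional Sobolev embedding $H^1([1,2])\hookrightarrow L^\infty([1,2])$ together with local elliptic regularity $\|v\|_{H^1(r\sim 1)}\lesssim\|v\|_{L^2(r\sim 1)}+\|v\|_{\dot H_1^2}$, thereby absorbing the localized $L^2$-contribution into $\|\mathbf{1}_{r\sim 1}v\|_{L^2}$.

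\emph{Sharpness.} I would show that $\|\mathbf{1}_{r\sim 1}v\|_{L^2}$ cannot be removed by exploiting the $\Delta_1$-harmonic mode $v(r)=r$, which is in the formal kernel of $\|\cdot\|_{\dot H_1^2}$ but contributes nontrivially to the log-weighted integral defining $\|\cdot\|_{\dot{\mathcal H}_1^2}$ and to $\|\mathbf{1}_{r\sim 1}\cdot\|_{L^2}$. Concretely, the sequence $v_n(r)=r\chi(r/n)$ in $\mathcal{S}_1$ satisfies $\|v_n\|_{\dot H_1^2}=O(1)$ (the only contribution to $\|\Delta_1 v_n\|_{L^2}$ comes from the cutoff region $r\sim n$, where the amplitude is $O(n^{-1})$), while $\|\mathbf{1}_{r\sim 1}v_n\|_{L^2}\gtrsim 1$; a further two-scale modification that damps the $\dot H_1^2$-contribution near $r\sim 1$ then rules out the pure inequality $\|v\|_{\dot{\mathcal H}_1^2}\lesssim\|v\|_{\dot H_1^2}$.

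The main obstacle I expect is the careful treatment of the boundary terms in the logarithmic Hardy inequality: unlike in simpler Hardy contexts these cannot be neglected, and must be converted into averaged $L^2$-quantities on $r\sim 1$ via one-dimensional Sobolev embedding and local elliptic estimates, which is where the $\|\mathbf{1}_{r\sim 1}v\|_{L^2}$ term enters on the right-hand side.
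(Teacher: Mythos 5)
Your proof of the two-sided estimate \eqref{eq:ComparisonH2H2} is correct and follows essentially the same route as the paper: both directions hinge on the factorization $\partial_{rr}=(\partial_r+\tfrac{1}{r})(\partial_r-\tfrac{1}{r})=(\partial_r+\tfrac1r)\partial_+$ and the Hardy estimate $\|r^{-1}\partial_+v\|_{L^2}\lesssim\|\partial_{rr}v\|_{L^2}$ (which you prove by a direct integration by parts rather than by citing the weighted Hardy lemma — a harmless variation), with the logarithmic Hardy inequality supplying the weighted lower-order terms and the boundary values absorbed into $\|\mathbf{1}_{r\sim1}v\|_{L^2}$.

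The sharpness argument, however, has a genuine gap. For $v_n(r)=r\chi(r/n)$ one has not only $\|v_n\|_{\dot H_1^2}=O(1)$ and $\|\mathbf{1}_{r\sim1}v_n\|_{L^2}\sim1$, but also $\|v_n\|_{\dot{\mathcal H}_1^2}=O(1)$: indeed $\partial_{rr}v_n$ is of size $n^{-1}$ on $r\sim n$, and the log weight makes the lower-order term convergent,
\begin{equation*}
\|r^{-1}\brk{\log r}^{-1}|v_n|_{-1}\|_{L^2}^2\lesssim\int_1^{2n}\frac{dr}{r\brk{\log r}^2}=O(1).
\end{equation*}
So all three quantities are comparable and this family exhibits no failure of $\|v\|_{\dot{\mathcal H}_1^2}\lesssim\|v\|_{\dot H_1^2}$. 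Nor can a ``two-scale modification'' rescue it: the weights are calibrated precisely so that a single linear ramp $re^{i\theta}$ over one dyadic range contributes $O(1)$ to \emph{both} norms. What is actually needed is a superposition over $\sim N$ dyadic scales, e.g.\ $v(x)=(x_1+ix_2)\sum_{n=1}^N\chi_{2^n}(x)$ as in the paper. There the amplitude near $r\sim1$ builds up to $N$, so $\|v\|_{\dot{\mathcal H}_1^2}\gtrsim\|\mathbf{1}_{r\sim1}r^{-2}\brk{\log r}^{-1}v\|_{L^2}\gtrsim N$, while the dyadic pieces of $\Delta_1v$ are quasi-orthogonal and each of size $O(1)$ in $L^2$, giving $\|v\|_{\dot H_1^2}\lesssim N^{1/2}$; letting $N\to\infty$ shows the term $\|\mathbf{1}_{r\sim1}v\|_{L^2}$ cannot be removed.
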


\begin{proof}
For the $(\ageq)$-direction, due to $\|v\|_{\dot{H}_{1}^{2}}\sim\|\partial_{+}v\|_{\dot{H}_{2}^{1}}\sim\||\partial_{+}v|_{-1}\|_{L^{2}}$
by \eqref{eq:GenHardyAppendix-1}, it suffices to establish 
\begin{equation}
\|\partial_{+}v\|_{\dot{H}_{2}^{1}}\sim\||\partial_{+}v|_{-1}\|_{L^{2}}\aleq\|v\|_{\dot{\calH}_{1}^{2}}.\label{eq:d_plus-H2}
\end{equation}
To show \eqref{eq:d_plus-H2}, we recognize that $\partial_{rr}=(\partial_{r}+\frac{1}{r})(\partial_{r}-\frac{1}{r})$
and $\partial_{r}-\frac{1}{r}$ is the radial part of $\partial_{+}$
acting on $1$-equivariant functions. We then apply Hardy's inequality
(Lemma~\ref{lem:WeightedHardy-dr}) to the operator $\partial_{r}+\frac{1}{r}=\frac{1}{r}\partial_{r}r$
with $f=r(\partial_{r}+\frac{1}{r})v$, $\varphi=\frac{1}{r^{2}}$,
$r_{1}\to0$, and $r_{2}\to\infty$. Note that the boundary term at
$r_{1}$ goes to zero as $r_{1}\to0$ because $(\partial_{r}-\frac{1}{r})v$
degenerates at the origin of order $r^{2}$ for $v\in\calS_{1}$.
As a result, we obtain 
\[
\|\tfrac{1}{r}(\partial_{r}-\tfrac{1}{r})v\|_{L^{2}}\aleq\|(\partial_{r}+\tfrac{1}{r})(\partial_{r}-\tfrac{1}{r})v\|_{L^{2}}=\|\partial_{rr}v\|_{L^{2}}.
\]
Since $\partial_{r}=(\partial_{r}+\tfrac{1}{r})-\tfrac{1}{r}$, it
is also possible to upgrade the above as 
\[
\||\partial_{+}v|_{-1}\|_{L^{2}}=\||(\partial_{r}-\tfrac{1}{r})v|_{-1}\|_{L^{2}}\aleq\|(\partial_{r}+\tfrac{1}{r})(\partial_{r}-\tfrac{1}{r})v\|_{L^{2}}=\|\partial_{rr}v\|_{L^{2}}.
\]
This shows \eqref{eq:d_plus-H2} and hence the $(\ageq)$-direction
of \eqref{eq:ComparisonH2H2}.

For the $(\aleq)$-direction, we note that 
\[
\|\partial_{rr}v\|_{L^{2}}=\|(\partial_{r}+\tfrac{1}{r})\partial_{+}v\|_{L^{2}}\aleq\||\partial_{+}v|_{-1}\|_{L^{2}}\aleq\|\partial_{+}v\|_{\dot{H}_{2}^{1}}\aleq\|v\|_{\dot{H}_{1}^{2}}.
\]
Next, by the logarithmic Hardy's inequality \eqref{eq:LogHardy},
we have 
\[
\|r^{-2}\langle\log r\rangle^{-1}v\|_{L^{2}}\aleq\|\partial_{r}(\tfrac{1}{r}v)\|_{L^{2}}+\|\chf_{r\sim1}v\|_{L^{2}}\aleq\|\tfrac{1}{r}(\partial_{r}-\tfrac{1}{r})v\|_{L^{2}}+\|\chf_{r\sim1}v\|_{L^{2}}.
\]
Using $\partial_{r}v=(\partial_{r}-\tfrac{1}{r})v+\tfrac{1}{r}v$,
we further deduce that 
\[
\|r^{-1}\langle\log r\rangle^{-1}|v|_{-1}\|_{L^{2}}\aleq\||(\partial_{r}-\tfrac{1}{r})v|_{-1}\|_{L^{2}}+\|\chf_{r\sim1}v\|_{L^{2}}\aleq\|v\|_{\dot{H}_{1}^{2}}+\|\chf_{r\sim1}v\|_{L^{2}}.
\]
This completes the proof of \eqref{eq:ComparisonH2H2}.

To see why $\|\chf_{r\sim1}v\|_{L^{2}}$ in \eqref{eq:ComparisonH2H2}
cannot be removed, consider $v(x)=(x_{1}+ix_{2})\sum_{n=1}^{N}\chi_{2^{n}}(x)$
with $N\in\bbN$ sufficiently large. Then $\|v\|_{\dot{\calH}_{1}^{2}}\ageq N$
but $\|v\|_{\dot{H}_{1}^{2}}\aleq N^{\frac{1}{2}}$. 
\end{proof}
We turn to the subcoercivity estimate. We want to control $v$, provided
that $A_{Q}v\in\dot{\calH}_{2}^{1}$. 
\begin{lem}[Boundedness and subcoercivity of $A_{Q}$]
For $v\in\dot{\calH}_{1}^{2}$, we have 
\begin{equation}
\|A_{Q}v\|_{\dot{\calH}_{2}^{1}}+\|\chf_{r\sim1}v\|_{L^{2}}\sim\|v\|_{\dot{\calH}_{1}^{2}}.\label{eq:subcoercivity-AQ}
\end{equation}
Moreover, the kernel of $A_{Q}:\dot{\calH}_{1}^{2}\to\dot{\calH}_{2}^{1}$
is $\mathrm{span}_{\bbC}\{rQ\}$. 
\end{lem}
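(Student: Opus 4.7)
The proof proceeds in parallel with the subcoercivity argument for $L_{Q}$ on $\dot{\mathcal{H}}_{0}^{1}$ given earlier in this section. The key algebraic fact, which follows from the explicit formula $A_{\theta}[Q] = -2r^{2}/(1+r^{2})$ and a short computation, is the first-order conjugation identity
\[
A_{Q} \;=\; (rQ)\,\partial_{r}\,(rQ)^{-1},
\]
acting on $1$-equivariant functions. This both exhibits $rQ$ as the only candidate kernel element and suggests the substitution $f = (rQ)^{-1}v$, under which $A_{Q}v = (rQ)\,\partial_{r}f$. Note that $rQ \sim r$ at the origin and $rQ \sim 1/r$ at infinity, which will dictate the choice of weights below.

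\textbf{Boundedness ($\lesssim$).} Write $A_{Q}v = \partial_{r}v - \tfrac{1}{r}(1+A_{\theta}[Q])v$, where $\tfrac{1}{r}(1+A_{\theta}[Q])$ is a smooth radial multiplier of size $O(\langle r\rangle^{-3})$ at infinity and $O(r)$ at the origin. The $L^{2}$-norm of $\partial_{r}A_{Q}v$ is then dominated by $\|\partial_{rr}v\|_{L^{2}}$ plus weighted $L^{2}$-norms of $|v|_{-1}$ against the fast-decaying multiplier, all of which are controlled by $\|v\|_{\dot{\mathcal{H}}_{1}^{2}}$ via the logarithmic Hardy inequality \eqref{eq:LogHardy} and the comparison \eqref{eq:ComparisonH2H2}. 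The weighted piece $\|r^{-1}\langle\log_{+}r\rangle^{-1}A_{Q}v\|_{L^{2}}$ is bounded analogously, reading the $\partial_{r}v$-term as $|v|_{-1}$ and absorbing the smooth multiplier.

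\textbf{Subcoercivity ($\gtrsim$).} Split into the three regions $r\ge 10$, $r\le 1/10$, and $r\sim 1$. In the outer region we apply Lemma~\ref{lem:WeightedHardy-dr} to $f = (rQ)^{-1}v$ with weight $\varphi = (rQ)^{2} \sim r^{-2}$, using that $-r\partial_{r}\varphi \sim \varphi$, to obtain control on $\|\mathbf{1}_{[20,\infty)}r^{-2}v\|_{L^{2}}$ by $\|A_{Q}v\|_{L^{2}}$ plus a boundary term at $r=r_{0}\in[10,20]$ that is absorbed into $\|\mathbf{1}_{r\sim 1}v\|_{L^{2}}$ after averaging over $r_{0}$. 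In the inner region, where $rQ\sim r$, we mimic the construction used in the $L_{Q}$ subcoercivity proof and in the derivation of \eqref{eq:LogHardy}: choose $\varphi:(0,1/10]\to\bbR_{+}$ with $r\partial_{r}\varphi = (rQ)^{2}\langle\log_{-}r\rangle^{-2}$ and $\lim_{r\to 0^{+}}\varphi = 0$, which satisfies $\varphi\lesssim r\partial_{r}\varphi$. Applying Lemma~\ref{lem:WeightedHardy-dr} and averaging the boundary term at $r\in[1/20,1/10]$ produces the bound $\|\mathbf{1}_{(0,1/20]}r^{-2}\langle\log_{-}r\rangle^{-1}v\|_{L^{2}}\lesssim \|A_{Q}v\|_{L^{2}} + \|\mathbf{1}_{r\sim 1}v\|_{L^{2}}$. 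Combining and upgrading via $\|\mathbf{1}_{r\sim 1}v\|_{L^{2}}$ yields control of the weighted piece $\|r^{-1}\langle\log r\rangle^{-1}(r^{-1}v)\|_{L^{2}}$. From $\partial_{r}v = A_{Q}v + \tfrac{1}{r}(1+A_{\theta}[Q])v$ we then recover $\|r^{-1}\langle\log r\rangle^{-1}\partial_{r}v\|_{L^{2}}$, and applying $\partial_{r}$ to the same identity controls $\|\partial_{rr}v\|_{L^{2}}$ by $\|\partial_{r}A_{Q}v\|_{L^{2}}$ plus the weighted $L^{2}$-norms already estimated, at which point the full $\dot{\mathcal{H}}_{1}^{2}$-norm is under control.

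\textbf{Kernel.} If $v\in\dot{\mathcal{H}}_{1}^{2}$ with $A_{Q}v=0$, then the factorization forces $(rQ)^{-1}v$ to be constant on $(0,\infty)$, hence $v=c\cdot rQ$ with $c\in\bbC$; a direct check verifies $rQ\in\dot{\mathcal{H}}_{1}^{2}$ (both $\partial_{rr}(rQ)\in L^{2}$ and the weighted piece converge at both ends), so $\ker A_{Q} = \mathrm{span}_{\bbC}\{rQ\}$.

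\textbf{Anticipated main obstacle.} The delicate step is the choice of weight $\varphi$ in the small-$r$ region: we must recover the logarithmic factor $\langle\log_{-}r\rangle^{-2}$ through $r\partial_{r}\varphi$ rather than $\varphi$ itself, so as to match the weight $r^{-4}\langle\log_{-}r\rangle^{-2}$ built into the $\dot{\mathcal{H}}_{1}^{2}$-norm near the origin. Handling the boundary terms at $r\sim 1$ from both the inner and outer Hardy inequalities, and absorbing them into the unweighted local term $\|\mathbf{1}_{r\sim 1}v\|_{L^{2}}$ via averaging, is routine but must be done carefully to close the estimate; the nonlocality-free, $\bbC$-linear structure of $A_{Q}$ makes this step cleaner here than in the analogous argument for $L_{Q}$.
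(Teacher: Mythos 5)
Your factorization $A_{Q}=(rQ)\,\partial_{r}\,(rQ)^{-1}$ is correct (it is the content of \eqref{eq:AQ-ker} and Proposition~\ref{prop:AQ-green}), and your kernel argument is fine, but the analytic part has genuine gaps. The most serious one is in the outer region: with $\varphi=(rQ)^{2}$ the right-hand side of Lemma~\ref{lem:WeightedHardy-dr} is $\int|\partial_{r}f|^{2}\varphi\,r\,dr=\|A_{Q}v\|_{L^{2}([r_{0},\infty))}^{2}$, and this quantity is \emph{not} controlled by $\|A_{Q}v\|_{\dot{\mathcal{H}}_{2}^{1}}$, which at infinity only sees $r^{-1}\langle\log_{+}r\rangle^{-1}A_{Q}v$ in $L^{2}$. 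Remark~\ref{rem:AQ-subcoercivity-remark} gives the explicit obstruction: for $v=(x_{1}+ix_{2})\chi_{R}$ one has $\|A_{Q}v\|_{\dot{\mathcal{H}}_{2}^{1}}+\|\mathbf{1}_{r\sim1}v\|_{L^{2}}=O(1)$ while $\|A_{Q}v\|_{L^{2}([10,\infty))}^{2}\sim R^{2}$. You are implicitly transplanting the $L_{Q}$ argument, where $L_{Q}v$ really is measured in $L^{2}$; here $A_{Q}v$ is measured in the weaker norm $\dot{\mathcal{H}}_{2}^{1}$, so you must instead take $\varphi\sim(rQ)^{2}r^{-2}\langle\log r\rangle^{-2}$, which turns the right-hand side into $\|r^{-1}\langle\log_{+}r\rangle^{-1}A_{Q}v\|_{L^{2}}^{2}$ and the left-hand side into the correct weighted quantity $\|r^{-2}\langle\log r\rangle^{-1}v\|_{L^{2}}^{2}$ (note also that with your $\varphi$ the left-hand side is $\|r^{-1}v\|_{L^{2}}$, not $\|r^{-2}v\|_{L^{2}}$). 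A second error: the multiplier $\tfrac{1}{r}(1+A_{\theta}[Q])=\tfrac{1-r^{2}}{r(1+r^{2})}$ is not $O(r)$ at the origin and $O(\langle r\rangle^{-3})$ at infinity; it behaves like $+\tfrac{1}{r}$ and $-\tfrac{1}{r}$ respectively. Hence the term-by-term estimates in your boundedness direction and in your recovery of $\|\partial_{rr}v\|_{L^{2}}$ fail: near the origin the individual pieces $\tfrac{1}{r}\partial_{r}v$ and $\tfrac{1}{r^{2}}v$ behave like $c/r\notin L^{2}(r\,dr)$ when $v\sim cr$, and only the combination $\tfrac{1}{r}(\partial_{r}-\tfrac{1}{r})v$ is under control; one must carry this cancellation through every step.

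For comparison, the paper's proof sidesteps all of this weight-matching by a second-order reduction: $A_{Q}^{\ast}A_{Q}=-\Delta_{1}-Q^{2}$, so $\|A_{Q}^{\ast}A_{Q}v\|_{L^{2}}$ agrees with $\|\Delta_{1}v\|_{L^{2}}$ up to the absorbable perturbation $\|Q^{2}v\|_{L^{2}}$; the comparison \eqref{eq:ComparisonH2H2} gives $\|v\|_{\dot{\mathcal{H}}_{1}^{2}}\sim\|\Delta_{1}v\|_{L^{2}}+\|\mathbf{1}_{r\sim1}v\|_{L^{2}}$, and the unconditional coercivity \eqref{eq:Coercivity-AQstar-appendix} applied to $A_{Q}v$ converts $\|A_{Q}^{\ast}(A_{Q}v)\|_{L^{2}}$ into $\|A_{Q}v\|_{\dot{\mathcal{H}}_{2}^{1}}$. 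I recommend that route; if you insist on the first-order Hardy approach, it can be repaired, but only with the corrected weights and the $\partial_{r}\mp\tfrac{1}{r}$ cancellations made explicit.
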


\begin{rem}
\label{rem:AQ-subcoercivity-remark}The log weight in the definition
of $\dot{\calH}_{1}^{2}$ cannot be improved (or, removed). Indeed,
if one considers $v(x)=(x_{1}+ix_{2})\chi_{R}(x)$ for large $R$,
then $\|A_{Q}v\|_{\dot{\calH}_{2}^{1}}$ is uniformly bounded in $R$,
but both $\|r^{-2}v\|_{L^{2}}$ and $\|r^{-1}\partial_{r}v\|_{L^{2}}$
diverge as $R\to\infty$.
\end{rem}

\begin{proof}
By density, we may assume $v\in\calS_{1}$. We note that 
\begin{align*}
A_{Q}^{\ast}A_{Q} & =-\partial_{rr}-\tfrac{1}{r}\partial_{r}+\tfrac{1}{r^{2}}-Q^{2}=-\Delta_{1}-Q^{2},\\
\|Q^{2}v\| & \aleq\|\chf_{[r_{0}^{-1},r_{0}]}v\|_{L^{2}}+r_{0}^{-2+}\|v\|_{\dot{\calH}_{1}^{2}},\\
\|v\|_{\dot{\calH}_{1}^{2}} & \sim_{r_{0}}\|\Delta_{1}v\|_{L^{2}}+\|\chf_{[r_{0}^{-1},r_{0}]}v\|_{L^{2}},
\end{align*}
for $r_{0}\geq10$. Taking $r_{0}$ sufficiently large, we obtain
\[
\|v\|_{\dot{\calH}_{1}^{2}}\sim_{r_{0}}\|A_{Q}^{\ast}A_{Q}v\|_{L^{2}}+\|\chf_{[r_{0}^{-1},r_{0}]}v\|_{L^{2}}.
\]
Applying the coercivity \eqref{eq:Coercivity-AQstar-appendix} shows
the subcoercivity estimate. For the kernel characterization, notice
that $A_{Q}$ is a first-order differential operator such that $A_{Q}(rQ)=0$.
A standard ODE theory concludes the proof. 
\end{proof}
\begin{lem}[Coercivity of $A_{Q}$ at $\dot{H}^{2}$-level]
Let $\psi_{1},\psi_{2}$ be elements of $(\dot{\calH}_{1}^{2})^{\ast}$,
which is the dual space of $\dot{\calH}_{1}^{2}$. If the $2\times2$
matrix $(a_{ij})$ defined by $a_{i1}=(\psi_{i},rQ)_{r}$ and $a_{i2}=(\psi_{i},irQ)_{r}$
has nonzero determinant, then we have a coercivity estimate 
\[
\|v\|_{\dot{\calH}_{1}^{2}}\aleq_{\psi_{1},\psi_{2}}\|A_{Q}v\|_{\dot{\calH}_{2}^{1}}\aleq\|v\|_{\dot{\calH}_{1}^{2}},\qquad\forall v\in\dot{\calH}_{1}^{2}\cap\{\psi_{1},\psi_{2}\}^{\perp}.
\]
\end{lem}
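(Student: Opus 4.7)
The upper bound $\|A_Q v\|_{\dot{\mathcal{H}}_2^1} \lesssim \|v\|_{\dot{\mathcal{H}}_1^2}$ is already contained in \eqref{eq:subcoercivity-AQ}, so the only work is the lower (coercivity) bound. My plan is to run the standard compactness-plus-kernel-elimination argument: combine the subcoercivity estimate \eqref{eq:subcoercivity-AQ} with the kernel characterization $\ker(A_Q|_{\dot{\mathcal{H}}_1^2}) = \mathrm{span}_{\bbC}\{rQ\}$ and the transversality assumption on $(\psi_1, \psi_2)$.

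More precisely, I would argue by contradiction. Suppose there exists a sequence $v_n \in \dot{\mathcal{H}}_1^2 \cap \{\psi_1,\psi_2\}^\perp$ with $\|v_n\|_{\dot{\mathcal{H}}_1^2} = 1$ and $\|A_Q v_n\|_{\dot{\mathcal{H}}_2^1} \to 0$. The subcoercivity estimate \eqref{eq:subcoercivity-AQ} then forces $\|\mathbf{1}_{r \sim 1} v_n\|_{L^2} \gtrsim 1$ for all large $n$. Since $\{v_n\}$ is bounded in $\dot{\mathcal{H}}_1^2$ (hence in $\dot H_1^2$ on any fixed compact annulus, by \eqref{eq:ComparisonH2H2}), after extracting a subsequence, $v_n \rightharpoonup v_\infty$ weakly in $\dot{\mathcal{H}}_1^2$, and $v_n \to v_\infty$ strongly in $L^2(\{r \sim 1\})$ by Rellich--Kondrachov. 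The strong convergence on the annulus gives $\|\mathbf{1}_{r \sim 1} v_\infty\|_{L^2} \gtrsim 1$, so in particular $v_\infty \neq 0$. Weak lower semicontinuity then yields $A_Q v_\infty = 0$ in $\dot{\mathcal{H}}_2^1$, and by the kernel characterization $v_\infty = \alpha \, rQ$ for some $\alpha \in \bbC \setminus \{0\}$.

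Finally, weak convergence against $\psi_1, \psi_2 \in (\dot{\mathcal{H}}_1^2)^*$ preserves the orthogonality, so $(\psi_i, v_\infty)_r = 0$ for $i = 1, 2$. Writing $\alpha = \alpha_1 + i\alpha_2$ with $\alpha_j \in \bbR$, this gives
\[
a_{i1} \alpha_1 + a_{i2} \alpha_2 = 0, \qquad i = 1, 2,
\]
where $a_{ij}$ is the matrix in the hypothesis. Since $\det(a_{ij}) \neq 0$, we conclude $\alpha_1 = \alpha_2 = 0$, i.e.\ $v_\infty = 0$, contradicting $v_\infty \neq 0$. This yields the desired coercivity with an implicit constant depending on $\psi_1, \psi_2$ (through the size of $\det(a_{ij})$ and the operator norms of $\psi_i$).

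The only non-routine point is justifying the weak convergence step cleanly: one must verify that $A_Q : \dot{\mathcal{H}}_1^2 \to \dot{\mathcal{H}}_2^1$ is weakly continuous (immediate from boundedness) and that the pairing with $\psi_i \in (\dot{\mathcal{H}}_1^2)^*$ is preserved under weak limits (immediate from the definition of weak convergence). The Rellich step relies on \eqref{eq:ComparisonH2H2} to transfer $\dot{\mathcal{H}}_1^2$ boundedness into a genuine $H^2$ bound on any compact annulus where $Q^2$ is nondegenerate; this is the one place where Remark~\ref{rem:AQ-subcoercivity-remark} reminds us that the log weight cannot be sharpened, but it plays no role here since we only need local compactness.
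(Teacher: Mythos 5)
Your proof is correct and follows exactly the route the paper intends: the paper omits this proof, referring to the analogous Lemma on the coercivity of $L_{Q}$ (itself deferred to \cite[Lemma A.6]{KimKwon2020arXiv}), and that argument is precisely your scheme of combining the subcoercivity estimate \eqref{eq:subcoercivity-AQ} with the kernel characterization $\ker A_{Q}=\mathrm{span}_{\bbC}\{rQ\}$, local Rellich compactness, and the transversality of $(\psi_{1},\psi_{2})$ against $rQ$, $irQ$.
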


\begin{proof}
We omit the proof as it can be proved in a similar manner to Lemma
\ref{lem:coercivityAppendix}. 
\end{proof}

\subsubsection*{The space $\dot{\protect\calH}_{0}^{3}$}

\ 

Define the space $\dot{\calH}_{0}^{3}$ by taking the completion of
$\calS_{0}$ under the norm for $0$-equivariant functions 
\[
\|v\|_{\dot{\calH}_{0}^{3}}\coloneqq\|\partial_{rrr}v\|_{L^{2}}+\|r^{-1}\langle\log r\rangle^{-1}|\partial_{r}v|_{-1}\|_{L^{2}}+\|r^{-1}\langle r\rangle^{-2}\langle\log r\rangle^{-1}v\|_{L^{2}}.
\]
It turns out that $\dot{\calH}_{0}^{3}$ is \emph{stronger} than $\dot{H}_{0}^{3}$
but $\dot{\calH}_{0}^{3}\cap L^{2}=H_{0}^{3}$.
\begin{lem}[Comparison of $\dot{\calH}_{0}^{3}$ and $\dot{H}_{0}^{3}$]
For $v\in\calS_{0}$, we have 
\begin{equation}
\|v\|_{\dot{\calH}_{0}^{3}}\sim\|v\|_{\dot{H}_{0}^{3}}+\|\chf_{r\sim1}v\|_{L^{2}}.\label{eq:ComparisonH3H3}
\end{equation}
Moreover, $\|\chf_{r\sim1}v\|_{L^{2}}$ cannot be removed.
\end{lem}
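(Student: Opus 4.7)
The plan is to reduce the third-order comparison \eqref{eq:ComparisonH3H3} to the already-proven second-order comparison $\|v\|_{\dot{\mathcal{H}}_1^2} \sim \|v\|_{\dot{H}_1^2}+\|\mathbf{1}_{r\sim1}v\|_{L^2}$. For $v\in\mathcal{S}_0$, set $u:=\partial_r v$, which is the radial profile of the smooth $1$-equivariant function $\partial_+v=[\partial_rv]e^{i\theta}\in\mathcal{S}_1$. Comparing the definitions of $\|\cdot\|_{\dot{\mathcal{H}}_0^3}$ and $\|\cdot\|_{\dot{\mathcal{H}}_1^2}$ gives the exact identity
\[
\|v\|_{\dot{\mathcal{H}}_0^3}=\|u\|_{\dot{\mathcal{H}}_1^2}+\|r^{-1}\langle r\rangle^{-2}\langle\log r\rangle^{-1}v\|_{L^2},
\]
so the first two summands are handled by the prior lemma applied to $u$, and only the weighted $L^2$ norm of $v$ itself remains. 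To tie this back to $\|v\|_{\dot{H}_0^3}$, I use the standard identification $\|v\|_{\dot{H}_0^3}\sim\|\partial_+v\|_{\dot{H}_1^2}=\|u\|_{\dot{H}_1^2}$, which follows from $\|\partial_+v\|_{\dot{H}^k}\sim\|v\|_{\dot{H}^{k+1}}$ for radial $v$ together with $\partial_+v\in\dot{H}_1^2$.

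For the direction $(\gtrsim)$, the second-order lemma yields $\|v\|_{\dot{H}_0^3}\sim\|u\|_{\dot{H}_1^2}\le\|u\|_{\dot{\mathcal{H}}_1^2}\le\|v\|_{\dot{\mathcal{H}}_0^3}$, while $\|\mathbf{1}_{r\sim1}v\|_{L^2}\lesssim\|r^{-1}\langle r\rangle^{-2}\langle\log r\rangle^{-1}v\|_{L^2}\le\|v\|_{\dot{\mathcal{H}}_0^3}$ is immediate since the weight is bounded below on $\{r\sim1\}$. For $(\lesssim)$, the second-order lemma gives $\|u\|_{\dot{\mathcal{H}}_1^2}\lesssim\|u\|_{\dot{H}_1^2}+\|\mathbf{1}_{r\sim1}\partial_rv\|_{L^2}\sim\|v\|_{\dot{H}_0^3}+\|\mathbf{1}_{r\sim1}\partial_rv\|_{L^2}$, and the local term $\|\mathbf{1}_{r\sim1}\partial_rv\|_{L^2}$ is controlled by $\|v\|_{\dot{H}_0^3}+\|\mathbf{1}_{r\sim1}v\|_{L^2}$ by standard interpolation on the bounded annulus $\{r\sim1\}$. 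It remains to bound the weighted $L^2$ of $v$: I split into the outer region $\{r\ge1\}$ and inner region $\{r\le1\}$. On the outer region, the weight $r^{-1}\langle r\rangle^{-2}\langle\log r\rangle^{-1}$ is dominated by $r^{-3}$, and iterated application of weighted Hardy (Lemma~\ref{lem:WeightedHardy-dr} with $\varphi=r^{-2k}$) reduces $\|r^{-3}v\|_{L^2(r\ge1)}$ to $\|\partial_{rrr}v\|_{L^2(r\ge1)}$ plus boundary contributions at $r\sim1$ that are absorbed into $\|\mathbf{1}_{r\sim1}v\|_{L^2}$ and the preceding local-derivative estimates. On the inner region, the polynomial factor is harmless and logarithmic Hardy \eqref{eq:LogHardy} with $k=0$ controls $\|r^{-1}\langle\log r\rangle^{-1}v\|_{L^2(r\le1)}$ by $\|\partial_rv\|_{L^2(r\le1)}$ plus a boundary term at $r=1$, both of which are estimated as above.

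For the final assertion that $\|\mathbf{1}_{r\sim1}v\|_{L^2}$ cannot be removed, I take $v_R(x)=\chi(|x|/R)$ for a fixed radial bump with $\chi\equiv1$ on $[0,1/2]$. Then $\|v_R\|_{\dot{H}_0^3}^2=R^{-4}\|\chi\|_{\dot{H}^3}^2\to0$, but $v_R\equiv1$ on $\{r\le R/2\}$, so $\|\mathbf{1}_{r\sim1}v_R\|_{L^2}\to\|\mathbf{1}_{r\sim1}\|_{L^2}$ and the weighted norm $\|r^{-1}\langle r\rangle^{-2}\langle\log r\rangle^{-1}v_R\|_{L^2}$ converges to the fixed positive constant $\|r^{-1}\langle r\rangle^{-2}\langle\log r\rangle^{-1}\|_{L^2}$ (which is finite since the weight is square-integrable against $r\,dr$), giving $\|v_R\|_{\dot{\mathcal{H}}_0^3}\gtrsim1$ despite $\|v_R\|_{\dot{H}_0^3}\to0$. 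The main obstacle is the $(\lesssim)$ direction's weighted estimate on $v$ itself, where one must carefully balance the logarithmic weight near the origin (which is sharp, reflecting the 2D failure $\dot{H}^1\not\hookrightarrow L^\infty$) against the polynomial decay at infinity (needed so that the Hardy iteration closes without invoking unavailable lower Sobolev norms of $v$); the split at $r=1$ and the compatible boundary contributions from the two Hardy inequalities are what make everything fit together.
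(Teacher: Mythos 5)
Your overall strategy---setting $u=\partial_{r}v$, observing that the first two summands of $\|v\|_{\dot{\mathcal{H}}_{0}^{3}}$ are exactly $\|u\|_{\dot{\mathcal{H}}_{1}^{2}}$, and reducing to the second-order comparison \eqref{eq:ComparisonH2H2}---is sound and is essentially the paper's route; the $(\gtrsim)$ direction is fine, and your counterexample $v_{R}=\chi(\cdot/R)$ for the non-removability of the local term is correct (and cleaner than the paper's). The genuine problem is your treatment of the zeroth-order term $\|r^{-1}\langle r\rangle^{-2}\langle\log r\rangle^{-1}v\|_{L^{2}}$ in the $(\lesssim)$ direction.

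In the outer region you discard the logarithmic weight and try to prove $\|r^{-3}v\|_{L^{2}(r\geq1)}\lesssim\|\partial_{rrr}v\|_{L^{2}}+(\text{local})$. This inequality is \emph{false}: take $v=|x|^{2}\chi_{R}$. Then $\partial_{+}\partial_{+}v=(\partial_{r}-\tfrac{1}{r})\partial_{r}v$ vanishes for $r\leq R$ and is $O(1)$ on $r\sim R$, so $\|v\|_{\dot{H}_{0}^{3}}+\|\mathbf{1}_{r\sim1}v\|_{L^{2}}=O(1)$, while $\|r^{-3}v\|_{L^{2}(1\leq r\leq R)}^{2}\sim\int_{1}^{R}\tfrac{dr}{r}=\log R\to\infty$. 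Correspondingly, your iterated Hardy scheme with $\varphi=r^{-2k}$ necessarily stalls at the final step: the inequality needed there, $\|r^{-1}g\|_{L^{2}(r\geq1)}\lesssim\|\partial_{r}g\|_{L^{2}(r\geq1)}+|g(1)|$ for radial $g$, is the endpoint two-dimensional Hardy inequality, which fails (this is the same failure as that of \eqref{eq:HardySobolevAppendix-1} when $m=0$), and Lemma~\ref{lem:WeightedHardy-dr} explicitly excludes constant $\varphi$. The $\langle\log r\rangle^{-1}$ factor is not decoration; it is exactly what makes this term controllable. The repair is to keep it: apply Lemma~\ref{lem:WeightedHardy-dr} with $\varphi=r^{-4}\langle\log r\rangle^{-2}$ (which does satisfy $\varphi\lesssim|r\partial_{r}\varphi|$ for $r\geq2$), or \eqref{eq:LogHardy}, to get $\|r^{-3}\langle\log r\rangle^{-1}v\|_{L^{2}(r\geq1)}\lesssim\|r^{-2}\langle\log r\rangle^{-1}\partial_{r}v\|_{L^{2}(r\geq1)}+(\text{local})$, and note that the right-hand side is dominated by $\|r^{-1}\langle\log r\rangle^{-1}|\partial_{r}v|_{-1}\|_{L^{2}}\leq\|u\|_{\dot{\mathcal{H}}_{1}^{2}}$, which you have already bounded. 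A smaller gap of the same nature occurs in the inner region: after dropping the weight you need $\|\partial_{r}v\|_{L^{2}(r\leq1)}$ on the full disk, which is \emph{not} given by ``interpolation on the annulus'' since your only $L^{2}$ data for $v$ lives on $\{r\sim1\}$, not on the disk; retaining the $\langle\log r\rangle^{-1}$ weight produced by \eqref{eq:LogHardy} again yields a quantity dominated by $\|u\|_{\dot{\mathcal{H}}_{1}^{2}}$ and closes the argument. This is precisely how the paper's proof proceeds.
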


\begin{proof}
For the $(\ageq)$-direction, it suffices to establish 
\begin{equation}
\|\partial_{+}\partial_{+}v\|_{\dot{H}_{2}^{1}}\sim\||\partial_{+}\partial_{+}v|_{-1}\|_{L^{2}}\aleq\|v\|_{\dot{\calH}_{0}^{3}},\label{eq:d_plus_plus-H3}
\end{equation}
due to $\|v\|_{\dot{H}_{0}^{3}}\sim\|\partial_{+}\partial_{+}v\|_{\dot{H}_{2}^{1}}\sim\||\partial_{+}\partial_{+}v|_{-1}\|_{L^{2}}$.
To show \eqref{eq:d_plus_plus-H3}, we recognize that $\partial_{rrr}=(\partial_{r}+\frac{1}{r})(\partial_{r}-\frac{1}{r})\partial_{r}$
and $(\partial_{r}-\frac{1}{r})\partial_{r}$ is the radial part of
$\partial_{+}\partial_{+}$ acting on $0$-equivariant functions.
Therefore, we use Hardy's inequality for $\partial_{r}+\frac{1}{r}=\frac{1}{r}\partial_{r}r$
in the proof of \eqref{eq:ComparisonH2H2} to have 
\[
\||\partial_{+}\partial_{+}v|_{-1}\|_{L^{2}}\aleq\|\partial_{rrr}v\|_{L^{2}}.
\]
This shows \eqref{eq:d_plus_plus-H3} and hence the $(\ageq)$-direction
of \eqref{eq:ComparisonH3H3}.

For the $(\aleq)$-direction, we use the definition of the $\dot{\calH}_{1}^{2}$-norm
to have 
\[
\|\partial_{rrr}v\|_{L^{2}}+\|r^{-1}\langle\log r\rangle^{-1}|\partial_{r}v|_{-1}\|_{L^{2}}\aleq\|\partial_{r}v\|_{\dot{\calH}_{1}^{2}}\aleq\|\partial_{+}v\|_{\dot{\calH}_{1}^{2}},
\]
use weighted Hardy's inequality (Lemma \ref{lem:WeightedHardy-dr})
for $r\ageq1$ and weighted logarithmic Hardy's inequality \eqref{eq:LogHardy}
for $r\aleq1$ to have 
\[
\|r^{-1}\langle r\rangle^{-2}\langle\log r\rangle^{-1}v\|_{L^{2}}\aleq\|\langle r\rangle^{-2}\langle\log_{+}r\rangle^{-1}\partial_{r}v\|_{L^{2}}+\|\chf_{r\sim1}v\|_{L^{2}},
\]
and use \eqref{eq:ComparisonH2H2} to have 
\[
\|\partial_{+}v\|_{\dot{\calH}_{1}^{2}}\aleq\|\partial_{+}v\|_{\dot{H}_{1}^{2}}+\|\chf_{r\sim1}\partial_{+}v\|_{L^{2}}\aleq\|v\|_{\dot{H}_{0}^{3}}+\|\chf_{r\sim1}\partial_{+}v\|_{L^{2}}.
\]
Combining the above three displays yields 
\[
\|v\|_{\dot{\calH}_{0}^{3}}\aleq\|v\|_{\dot{H}_{0}^{3}}+\|\chf_{r\sim1}\partial_{+}v\|_{L^{2}}+\|\chf_{r\sim1}v\|_{L^{2}}.
\]
In order to remove $\|\chf_{r\sim1}\partial_{+}v\|_{L^{2}}=\|\chf_{r\sim1}\partial_{r}v\|_{L^{2}}$,
we use an interpolation bound 
\[
\|\chf_{r\sim1}\partial_{r}v\|_{L^{2}}\aleq\|\chf_{r\sim1}v\|_{L^{2}}+\|\chf_{r\sim1}\partial_{rrr}v\|_{L^{2}}\aleq\|\chf_{r\sim1}v\|_{L^{2}}+\|v\|_{\dot{H}_{0}^{3}}.
\]
This shows the $(\aleq)$-direction of \eqref{eq:ComparisonH3H3}.

To see why $\|\chf_{r\sim1}v\|_{L^{2}}$ in \eqref{eq:ComparisonH3H3}
cannot be removed, consider $v(x)=|x|^{2}\sum_{n=1}^{N}\chi_{2^{n}}(x)$
with $N\in\bbN$ sufficiently large. Then $\|v\|_{\dot{\calH}_{0}^{3}}\ageq N$
but $\|v\|_{\dot{H}_{0}^{3}}\aleq N^{\frac{1}{2}}$.
\end{proof}
We turn to the subcoercivity estimates of $L_{Q}$.
\begin{lem}[Boundedness and subcoercivity of $L_{Q}$ at $\dot{H}^{3}$-level]
For $v\in\dot{\calH}_{0}^{3}$, we have 
\begin{equation}
\|L_{Q}v\|_{\dot{\calH}_{1}^{2}}+\|\chf_{r\sim1}v\|_{L^{2}}\sim\|v\|_{\dot{\calH}_{0}^{3}}.\label{eq:subcoercivity-H3}
\end{equation}
Moreover, the kernel of $L_{Q}:\dot{\calH}_{0}^{3}\to\dot{\calH}_{1}^{2}$
is $\mathrm{span}_{\bbR}\{\Lambda Q,iQ\}$. 
\end{lem}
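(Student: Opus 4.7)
The plan is to follow the template of the $\dot{H}^1$-level subcoercivity established earlier in this appendix, working two orders of regularity higher. The boundedness $\|L_Q v\|_{\dot{\mathcal{H}}_1^2} \lesssim \|v\|_{\dot{\mathcal{H}}_0^3}$ is a direct computation: writing $L_Q = \bfD_Q + QB_Q$ with $\bfD_Q = \partial_r - \frac{1}{r}A_\theta[Q]$, the local piece follows by direct substitution into the definition of $\|\cdot\|_{\dot{\mathcal{H}}_1^2}$ using $|A_\theta[Q]|_k \lesssim r^2\langle r\rangle^{-2}$, while the nonlocal piece $QB_Q v = \tfrac{Q}{r}\int_0^r \Re(\overline{Q}v)r'\,dr'$ is dominated by the fast decay $Q \lesssim \langle r\rangle^{-2}$ together with weighted Hardy estimates.

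For the subcoercivity, I would first observe the elementary norm identity
\[
\|v\|_{\dot{\mathcal{H}}_0^3} \sim \|\partial_r v\|_{\dot{\mathcal{H}}_1^2} + \|r^{-1}\langle r\rangle^{-2}\langle\log r\rangle^{-1}v\|_{L^2},
\]
which is immediate from comparing the definitions of $\dot{\mathcal{H}}_0^3$ and $\dot{\mathcal{H}}_1^2$. The weighted $L^2$ piece above is dominated by $\|v\|_{\dot{\mathcal{H}}_0^1}$, hence by the $\dot H^1$-level subcoercivity it is controlled by $\|L_Q v\|_{L^2} + \|\mathbf{1}_{r\sim 1}v\|_{L^2}$; the term $\|L_Q v\|_{L^2}$ is then dominated by $\|L_Q v\|_{\dot{\mathcal{H}}_1^2}$ plus a local piece, using cutoff averaging as in the $\dot H^1$ proof. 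For the first piece, I would write
\[
\partial_r v = L_Q v - QB_Q v + \tfrac{1}{r}A_\theta[Q]v,
\]
so it suffices to show that the two error terms have $\dot{\mathcal{H}}_1^2$-norm bounded by $\|v\|_{\dot{\mathcal{H}}_0^1}$ plus a local piece. Both $\tfrac{1}{r}A_\theta[Q]v$ and $QB_Q v$ have strong spatial decay (thanks to the factors of $Q$ and $A_\theta[Q]$), so after a dyadic decomposition of the nonlocal integral and Schur-type estimates against the two weights that appear in $\|\cdot\|_{\dot{\mathcal{H}}_1^2}$, their $\dot{\mathcal{H}}_1^2$-norms are controlled by $\|v\|_{\dot{\mathcal{H}}_0^1}$ plus a small multiple of $\|v\|_{\dot{\mathcal{H}}_0^3}$. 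Combining these bounds and absorbing the small piece into the left-hand side completes the subcoercivity.

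The kernel statement follows by combining the already-known $\dot H^1$-level kernel characterization (the kernel of $L_Q$ in $\dot{\mathcal{H}}_0^1$ is $\mathrm{span}_{\bbR}\{\Lambda Q, iQ\}$) with direct inclusion $\Lambda Q, iQ \in \dot{\mathcal{H}}_0^3$: both profiles are smooth, decay like $r^{-2}$ at infinity, and their third derivatives and weighted lower-order derivatives lie in $L^2$ (the logarithmic weight in the definition of $\dot{\mathcal{H}}_0^3$ is harmless because of the $\langle r\rangle^{-2}$ decay).

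The main technical obstacle I anticipate is the careful bookkeeping needed to show that $QB_Q$ is a ``compact perturbation'' at the $\dot{\mathcal{H}}_1^2$-level, namely
\[
\|QB_Q v\|_{\dot{\mathcal{H}}_1^2} \lesssim \|v\|_{\dot{\mathcal{H}}_0^1} + o(1)\cdot\|v\|_{\dot{\mathcal{H}}_0^3}.
\]
The logarithmic weight $\langle\log r\rangle^{-1}$ in $\dot{\mathcal{H}}_1^2$ is exactly compatible with the one in $\dot{\mathcal{H}}_0^3$ (because $L_Q$ shifts the equivariance index from $0$ to $1$), and this compatibility is what makes the Schur estimate on the nonlocal integral barely close; it must be tracked carefully on inner (${r \lesssim 1}$), intermediate ($r \sim 1$), and outer ($r \gg 1$) regions.
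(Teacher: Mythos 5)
Your overall architecture (split $L_Q=\bfD_Q+QB_Q$, treat $QB_Q$ as a decaying perturbation, reduce the rest to the $\dot H^1$-level result) breaks down at the step where you declare $\tfrac{1}{r}A_\theta[Q]v$ to have ``strong spatial decay.'' It does not: $A_\theta[Q]=-\tfrac12\int_0^r Q^2 r'\,dr'\to -2$ as $r\to\infty$, so on $r\gtrsim 1$ one has $\bfD_Q\approx\partial_r+\tfrac{2}{r}$ and the term you are trying to absorb is essentially $\tfrac{2}{r}v$. This is \emph{not} controllable: $\|\partial_{rr}(\tfrac{v}{r})\|_{L^2}$ contains $\|r^{-1}\partial_{rr}v\|_{L^2(r\geq1)}$, whereas $\|v\|_{\dot{\mathcal H}_0^3}$ only furnishes $\|r^{-1}\langle\log r\rangle^{-1}\partial_{rr}v\|_{L^2}$ — a logarithmic gap that is exactly the two-dimensional Hardy failure the log weights in these spaces are designed around (cf.\ the remark after the lemma showing the weights cannot be removed, tested on $v=|x|^2\chi_R$). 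So under your splitting even the boundedness direction $\|L_Qv\|_{\dot{\mathcal H}_1^2}\lesssim\|v\|_{\dot{\mathcal H}_0^3}$ fails, not just the subcoercivity. The paper's proof avoids this by never separating $\tfrac{2}{r}$ from $\partial_r$: on $r\geq 1$ it keeps the operator $\partial_r+\tfrac{2}{r}$ intact and exploits the exact factorization $(\partial_r+\tfrac{2}{r})(\partial_r-\tfrac{1}{r})(\partial_r+\tfrac{2}{r})v=(\partial_r+\tfrac{4}{r})(\partial_r-\tfrac{1}{r})\partial_r v$ together with the weighted Hardy inequality for the \emph{improving} operators $\partial_r+\tfrac{k}{r}$, $k>0$, to convert $|(\partial_r-\tfrac{1}{r})\bfD_Qv|_{-1}$ into $|\partial_+\partial_+v|_{-1}$ with no log loss; only $QB_Q$ and the genuine tail $\tfrac{2+A_\theta[Q]}{r}\sim r^{-3}$ are treated perturbatively. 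Some such algebraic input is unavoidable here.

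Two further gaps are downstream of this but worth naming. First, your route through the $\dot H^1$-level subcoercivity needs $\|L_Qv\|_{L^2}\lesssim\|L_Qv\|_{\dot{\mathcal H}_1^2}+\|\mathbf 1_{r\sim1}(\cdot)\|_{L^2}$, which is false: a homogeneous second-order norm plus a local term cannot see $L^2$ mass of $L_Qv$ living at large radius (more generally, $\|v\|_{\dot{\mathcal H}_0^1}$ is \emph{not} dominated by $\|v\|_{\dot{\mathcal H}_0^3}+\|\mathbf 1_{r\sim1}v\|_{L^2}$ — test on $\chi(r/R)$). The paper's argument never passes through the $\dot H^1$-level estimate for this reason. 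Second, for the kernel you only establish the inclusion $\mathrm{span}_{\bbR}\{\Lambda Q,iQ\}\subseteq\ker L_Q$; the reverse inclusion does not follow from the $\dot{\mathcal H}_0^1$ kernel characterization because $\dot{\mathcal H}_0^3\not\subseteq\dot{\mathcal H}_0^1$, so a separate ODE/asymptotic argument (as in the cited lemma of the prior work) is still required.
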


\begin{rem}
The log weight in the definition of $\dot{\calH}_{0}^{3}$ cannot
be improved (or, removed), by arguing similarly as in Remark~\ref{rem:AQ-subcoercivity-remark}
with the function $v(x)=|x|^{2}\chi_{R}(x)$. 
\end{rem}

\begin{proof}
By density, we may assume $v\in\calS_{0}$. Recall $L_{Q}=\bfD_{Q}+QB_{Q}$.
We first claim that the contribution of $QB_{Q}$ is perturbative:
\begin{equation}
\|QB_{Q}v\|_{\dot{\calH}_{1}^{2}}\aleq\|\langle r\rangle^{-5}v\|_{L^{2}}+\|\langle r\rangle^{-4}\partial_{r}v\|_{L^{2}}.\label{eq:QBQ-contribution-H3}
\end{equation}
To see this, we estimate using \eqref{eq:ComparisonH2H2} 
\[
\|QB_{Q}v\|_{\dot{\calH}_{1}^{2}}\aleq\|QB_{Q}v\|_{\dot{H}_{1}^{2}}+\|\chf_{r\sim1}QB_{Q}v\|_{L^{2}}\aleq\|\Delta_{1}(QB_{Q}v)\|_{L^{2}}+\|\chf_{r\sim1}QB_{Q}v\|_{L^{2}}.
\]
The RHS can be bounded by 
\begin{align*}
\|\Delta_{1}(QB_{Q}v)\|_{L^{2}} & =\|\partial_{r}(\partial_{r}+\tfrac{1}{r})(QB_{Q}v)\|_{L^{2}}\\
 & =\|\partial_{r}\{(\partial_{r}Q)B_{Q}v+Q\Re(Qv)\}\|_{L^{2}}\\
 & =\|((\partial_{r}-\tfrac{1}{r})\partial_{r}Q)B_{Q}v+2(\partial_{r}Q)\Re(Qv)+Q\partial_{r}\Re(Qv)\|_{L^{2}}\\
 & \aleq\|\langle r\rangle^{-5}v\|_{L^{2}}+\|\langle r\rangle^{-4}\partial_{r}v\|_{L^{2}}
\end{align*}
and 
\[
\|\chf_{r\sim1}QB_{Q}v\|_{L^{2}}\aleq\|\chf_{r\aleq1}v\|_{L^{2}}.
\]

Next, we show the $(\aleq)$-direction of \eqref{eq:subcoercivity-H3}.
By \eqref{eq:QBQ-contribution-H3}, it suffices to show 
\[
\|\bfD_{Q}v\|_{\dot{\calH}_{1}^{2}}\aleq\|v\|_{\dot{\calH}_{0}^{3}}.
\]
In view of \eqref{eq:ComparisonH2H2}, we have 
\[
\|\bfD_{Q}v\|_{\dot{\calH}_{1}^{2}}\aleq\|\bfD_{Q}v\|_{\dot{H}_{1}^{2}}+\|\chf_{r\sim1}\bfD_{Q}v\|_{L^{2}}\aleq\||\partial_{+}\bfD_{Q}v|_{-1}\|_{L^{2}}+\|\chf_{r\sim1}|v|_{-1}\|_{L^{2}}.
\]
In the region $r\ll1$, we have $\bfD_{Q}\approx\partial_{r}$, so
\[
\|\chf_{(0,1]}|\partial_{+}\bfD_{Q}v|_{-1}\|_{L^{2}}\aleq\|\chf_{(0,1]}|\partial_{+}\partial_{+}v|_{-1}\|_{L^{2}}+\|\chf_{(0,1]}|(\partial_{r}-\tfrac{1}{r})(\tfrac{A_{\theta}[Q]}{r}v)|_{-1}\|_{L^{2}}.
\]
In the region $r\gg1$, we have $\bfD_{Q}\approx\partial_{r}+\frac{2}{r}$,
so 
\begin{align*}
 & \|\chf_{[1,\infty)}|\partial_{+}\bfD_{Q}v|_{-1}\|_{L^{2}}\\
 & \aleq\|\chf_{[1,\infty)}|(\partial_{r}-\tfrac{1}{r})(\partial_{r}+\tfrac{2}{r})v|_{-1}\|_{L^{2}}+\|\chf_{[1,\infty)}|(\partial_{r}-\tfrac{1}{r})(\tfrac{2+A_{\theta}[Q]}{r}v)|_{-1}\|_{L^{2}}.
\end{align*}
One crucial observation is that $|(\partial_{r}-\tfrac{1}{r})(\partial_{r}+\tfrac{2}{r})v|_{-1}$
can be controlled by $|\partial_{+}\partial_{+}v|_{-1}$ in view of
\begin{equation}
(\partial_{r}+\tfrac{2}{r})(\partial_{r}-\tfrac{1}{r})(\partial_{r}+\tfrac{2}{r})v=(\partial_{r}+\tfrac{4}{r})(\partial_{r}-\tfrac{1}{r})\partial_{r}v=(\partial_{r}+\tfrac{4}{r})\partial_{+}\partial_{+}v\label{eq:ComparisonH3H3-identity}
\end{equation}
and Hardy's inequality: 
\begin{align*}
 & \|\chf_{[1,\infty)}|(\partial_{r}-\tfrac{1}{r})(\partial_{r}+\tfrac{2}{r})v|_{-1}\|_{L^{2}}\\
 & \aleq\|\chf_{[\frac{1}{2},\infty)}(\partial_{r}+\tfrac{2}{r})(\partial_{r}-\tfrac{1}{r})(\partial_{r}+\tfrac{2}{r})v\|_{L^{2}}+\|\chf_{[\frac{1}{2},1]}|v|_{-2}\|_{L^{2}}\\
 & \aleq\|\chf_{[\frac{1}{2},\infty)}|\partial_{+}\partial_{+}v|_{-1}\|_{L^{2}}+\|\chf_{[\frac{1}{2},1]}|v|_{-2}\|_{L^{2}}.
\end{align*}
Combining the above estimates, we arrive at 
\begin{align*}
\|\bfD_{Q}v\|_{\dot{\calH}_{1}^{2}} & \aleq\||\partial_{+}\partial_{+}v|_{-1}\|_{L^{2}}+\|\chf_{(0,1]}|(\partial_{r}-\tfrac{1}{r})(\tfrac{A_{\theta}[Q]}{r}v)|_{-1}\|_{L^{2}}\\
 & \quad+\|\chf_{[1,\infty)}|(\partial_{r}-\tfrac{1}{r})(\tfrac{2+A_{\theta}[Q]}{r}v)|_{-1}\|_{L^{2}}+\|\chf_{r\sim1}|v|_{-2}\|_{L^{2}}.
\end{align*}
Applying the estimates 
\begin{align*}
 & \|\chf_{(0,1]}|(\partial_{r}-\tfrac{1}{r})(\tfrac{A_{\theta}[Q]}{r}v)|_{-1}\|_{L^{2}}+\|\chf_{[1,\infty)}|(\partial_{r}-\tfrac{1}{r})(\tfrac{2+A_{\theta}[Q]}{r}v)|_{-1}\|_{L^{2}}\\
 & \qquad\aleq\|\langle r\rangle^{-3}\partial_{rr}v\|_{L^{2}}+\|\langle r\rangle^{-4}\partial_{r}v\|_{L^{2}}+\|\langle r\rangle^{-5}v\|_{L^{2}}
\end{align*}
and \eqref{eq:d_plus_plus-H3}, we get 
\begin{align*}
\|\bfD_{Q}v\|_{\dot{\calH}_{1}^{2}} & \aleq\||\partial_{+}\partial_{+}v|_{-1}\|_{L^{2}}+\|\langle r\rangle^{-3}\partial_{rr}v\|_{L^{2}}+\|\langle r\rangle^{-4}\partial_{r}v\|_{L^{2}}+\|\langle r\rangle^{-5}v\|_{L^{2}}\aleq\|v\|_{\dot{\calH}_{0}^{3}}.
\end{align*}
Combining this with \eqref{eq:QBQ-contribution-H3}, the $(\aleq)$-direction
of \eqref{eq:subcoercivity-H3} is proved.

Next, we show the $(\ageq)$-direction of \eqref{eq:subcoercivity-H3}.
By \eqref{eq:ComparisonH3H3}, we have 
\[
\|v\|_{\dot{\calH}_{0}^{3}}\aleq\|v\|_{\dot{H}_{0}^{3}}+\|\chf_{r\sim1}v\|_{L^{2}}\aleq\||\partial_{+}\partial_{+}v|_{-1}\|_{L^{2}}+\|\chf_{r\sim1}v\|_{L^{2}}.
\]
Thus we aim to control $\||\partial_{+}\partial_{+}v|_{-1}\|_{L^{2}}$
in terms of $\|\bfD_{Q}v\|_{\dot{\calH}_{1}^{2}}$. Again, we separately
consider the regions $r\leq1$ and $r\geq1$. In the region $r\leq1$,
\[
\|\chf_{(0,1]}|\partial_{+}\partial_{+}v|_{-1}\|_{L^{2}}\aleq\|\chf_{(0,1]}|\partial_{+}\bfD_{Q}v|_{-1}\|_{L^{2}}+\|\chf_{(0,1]}|(\partial_{r}-\tfrac{1}{r})(\tfrac{A_{\theta}[Q]}{r}v)|_{-1}\|_{L^{2}}.
\]
In the region $r\geq1$, we use \eqref{eq:ComparisonH3H3-identity}
with Hardy's inequality that 
\begin{align*}
 & \|\chf_{[1,\infty)}|\partial_{+}\partial_{+}v|_{-1}\|_{L^{2}}\\
 & \aleq\|\chf_{[\frac{1}{2},\infty)}|(\partial_{r}-\tfrac{1}{r})(\partial_{r}+\tfrac{2}{r})v|_{-1}\|_{L^{2}}+\|\chf_{[\frac{1}{2},1]}|v|_{-2}\|_{L^{2}}\\
 & \aleq\|\chf_{[\frac{1}{2},\infty)}|\partial_{+}\bfD_{Q}v|_{-1}\|_{L^{2}}+\|\chf_{[\frac{1}{2},1]}|v|_{-2}\|_{L^{2}}+\|\chf_{[\frac{1}{2},\infty)}|(\partial_{r}-\tfrac{1}{r})(\tfrac{A_{\theta}[Q]}{r}v)|_{-1}\|_{L^{2}}.
\end{align*}
Therefore, 
\begin{align*}
\|v\|_{\dot{\calH}_{0}^{3}} & \aleq\||\partial_{+}\partial_{+}v|_{-1}\|_{L^{2}}+\|\chf_{r\sim1}v\|_{L^{2}}\\
 & \aleq\||\partial_{+}\bfD_{Q}v|_{-1}\|_{L^{2}}+\|\langle r\rangle^{-3}\partial_{rr}v\|_{L^{2}}+\|\langle r\rangle^{-4}\partial_{r}v\|_{L^{2}}+\|\langle r\rangle^{-5}v\|_{L^{2}}\\
 & \aleq\|\bfD_{Q}v\|_{\dot{\calH}_{1}^{2}}+\|\langle r\rangle^{-3}\partial_{rr}v\|_{L^{2}}+\|\langle r\rangle^{-4}\partial_{r}v\|_{L^{2}}+\|\langle r\rangle^{-5}v\|_{L^{2}},
\end{align*}
where in the last inequality we used \eqref{eq:d_plus-H2}. Combining
this with \eqref{eq:QBQ-contribution-H3}, we have proved that 
\[
\|L_{Q}v\|_{\dot{\calH}_{1}^{2}}+\|\langle r\rangle^{-3}\partial_{rr}v\|_{L^{2}}+\|\langle r\rangle^{-4}\partial_{r}v\|_{L^{2}}+\|\langle r\rangle^{-5}v\|_{L^{2}}\ageq\|v\|_{\dot{\calH}_{0}^{3}}.
\]
It now remains to replace the perturbative terms by $\|\chf_{r\sim1}v\|_{L^{2}}$.
For this, we use 
\[
\|\langle r\rangle^{-3}\partial_{rr}v\|_{L^{2}}+\|\langle r\rangle^{-4}\partial_{r}v\|_{L^{2}}+\|\langle r\rangle^{-5}v\|_{L^{2}}\aleq\|\chf_{[r_{0}^{-1},r_{0}]}|v|_{-2}\|_{L^{2}}+r_{0}^{-\frac{1}{2}}\|v\|_{\dot{\calH}_{0}^{3}}
\]
and choose $r_{0}$ large enough to obtain 
\[
\|L_{Q}v\|_{\dot{\calH}_{1}^{2}}+\|\chf_{r\sim1}|v|_{-2}\|_{L^{2}}\ageq\|v\|_{\dot{\calH}_{0}^{3}}.
\]
Finally applying an interpolation bound 
\[
\|\chf_{r\sim1}|v|_{-2}\|_{L^{2}}\aleq\|\chf_{r\sim1}v\|_{L^{2}}+\|\chf_{r\sim1}v\|_{L^{2}}^{\frac{1}{3}}\|\chf_{r\sim1}\partial_{rrr}v\|_{L^{2}}^{\frac{2}{3}}
\]
completes the proof of the $(\ageq)$-direction of \eqref{eq:subcoercivity-H3}.

The kernel characterization can be proved by a slight modification
of the argument in \cite[Lemma A.13]{KimKwon2020arXiv}. 
\end{proof}
\begin{lem}[Coercivity of $L_{Q}$ at $\dot{H}^{3}$-level]
Let $\psi_{1},\psi_{2}$ be elements of the dual space $(\dot{\calH}_{0}^{3})^{\ast}$.
If the $2\times2$ matrix $(a_{ij})$ defined by $a_{i1}=(\psi_{i},\Lambda Q)_{r}$
and $a_{i2}=(\psi_{i},iQ)_{r}$ has nonzero determinant, then we have
a coercivity estimate 
\[
\|v\|_{\dot{\calH}_{0}^{3}}\aleq_{\psi_{1},\psi_{2}}\|L_{Q}v\|_{\dot{\calH}_{1}^{2}}\aleq\|v\|_{\dot{\calH}_{0}^{3}},\qquad\forall v\in\dot{\calH}_{0}^{3}\cap\{\psi_{1},\psi_{2}\}^{\perp}.
\]
\end{lem}

\begin{proof}
We omit the proof and refer to \cite[Lemma A.15]{KimKwon2020arXiv}. 
\end{proof}

\subsubsection*{Interpolation and $L^{\infty}$ estimates}
\begin{lem}[Interpolation estimates]
\label{lem:interpolation} Let $v_{2}$ be a radial function and
$v_{1}\in H_{1}^{2}$. We have 
\begin{align}
\|v_{2}\|_{L^{\infty-}} & \aleq\|v_{2}\|_{L^{2}}^{0+}\|\partial_{r}v_{2}\|_{L^{2}}^{1-},\label{eq:interpolation-1}\\
\||v_{1}|_{-1}\|_{L^{2}} & \aleq\|v_{1}\|_{L^{2}}^{\frac{1}{2}}\|v_{1}\|_{\dot{\calH}_{1}^{2}}^{\frac{1}{2}}.\label{eq:interpolation-2}
\end{align}
\end{lem}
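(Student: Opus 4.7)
\medskip

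\noindent\textbf{Proof proposal for Lemma~\ref{lem:interpolation}.}
Both estimates are standard two-dimensional interpolation inequalities, and my plan is to reduce each one to a well-known Gagliardo--Nirenberg / generalized Hardy estimate on $\bbR^{2}$, together with the comparison results between the adapted and equivariant Sobolev spaces already established in Appendix~\ref{sec:Adapted-function-spaces}.

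For \eqref{eq:interpolation-1}, the plan is to invoke the two-dimensional Gagliardo--Nirenberg inequality
\[
\nrm{v_{2}}_{L^{p}(\bbR^{2})}\aleq_{p}\nrm{v_{2}}_{L^{2}}^{2/p}\nrm{\nabla v_{2}}_{L^{2}}^{1-2/p},\qquad 2\leq p<\infty,
\]
for the radial function $v_{2}$, for which $\nrm{\nabla v_{2}}_{L^{2}}=\nrm{\rd_{r}v_{2}}_{L^{2}}$ (up to the usual $2\pi$ Jacobian). Since $L^{\infty-}$ is shorthand for $L^{p}$ with $p$ arbitrarily large, choosing $p\to\infty$ yields the exponents $0+$ and $1-$ as stated. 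No equivariance or adapted-space structure enters here, so this part is essentially automatic.

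For \eqref{eq:interpolation-2}, the plan is to combine three ingredients. First, since $v_{1}$ is $1$-equivariant, the generalized Hardy inequality \eqref{eq:GenHardyAppendix-1} gives
\[
\nrm{\abs{v_{1}}_{-1}}_{L^{2}}\aeq\nrm{v_{1}}_{\dot H_{1}^{1}},
\]
so it suffices to control $\nrm{v_{1}}_{\dot H_{1}^{1}}$ by the right-hand side. Second, the classical $L^{2}$--$\dot H^{2}$ interpolation on $\bbR^{2}$ yields
\[
\nrm{v_{1}}_{\dot H_{1}^{1}}\aleq\nrm{v_{1}}_{L^{2}}^{1/2}\nrm{v_{1}}_{\dot H_{1}^{2}}^{1/2}.
\]
Third, the comparison \eqref{eq:ComparisonH2H2} shows $\nrm{v_{1}}_{\dot H_{1}^{2}}\aleq\nrm{v_{1}}_{\dot{\calH}_{1}^{2}}+\nrm{\chf_{r\sim1}v_{1}}_{L^{2}}\aleq\nrm{v_{1}}_{\dot{\calH}_{1}^{2}}+\nrm{v_{1}}_{L^{2}}$, and the resulting extra $\nrm{v_{1}}_{L^{2}}$ term is absorbed by the desired right-hand side (via $ab\leq a^{1/2}b^{1/2}\cdot b^{1/2}$-type manipulation). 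Chaining these three steps gives the stated bound.

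Neither step poses a real obstacle: the only mild subtlety is the direction of the embedding between $\dot{\calH}_{1}^{2}$ and $\dot H_{1}^{2}$ (the adapted space is \emph{stronger}, which is what we need), and this has already been addressed in \eqref{eq:ComparisonH2H2}. For completeness, the density of $\mathcal{S}_{m}$ in both spaces, used to pass from smooth to rough functions, follows from the definitions of $\dot{\calH}_{m}^{k}$ by completion.
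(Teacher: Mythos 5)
Your overall route coincides with the paper's. For \eqref{eq:interpolation-1} the paper does not cite Gagliardo--Nirenberg but reproves it for radial functions: it applies the fundamental theorem of calculus to $\partial_{r}|v_{2}|^{p}$ and Minkowski's inequality to get $\||v_{2}|^{p}\|_{L^{2}}\lesssim_{p}\||v_{2}|^{p-1}\|_{L^{2}}\|\partial_{r}v_{2}\|_{L^{2}}$, and iterates to obtain $\|v_{2}\|_{L^{2p}}\lesssim_{p}\|v_{2}\|_{L^{2}}^{1/p}\|\partial_{r}v_{2}\|_{L^{2}}^{1-1/p}$; invoking the standard two-dimensional inequality $\|v\|_{L^{p}}\lesssim_{p}\|v\|_{L^{2}}^{2/p}\|\nabla v\|_{L^{2}}^{1-2/p}$ directly, as you do, gives the same exponents and is an acceptable shortcut. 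For \eqref{eq:interpolation-2} your three ingredients are exactly the paper's chain $\||v_{1}|_{-1}\|_{L^{2}}\lesssim\|v_{1}\|_{\dot{H}_{1}^{1}}\lesssim\|v_{1}\|_{L^{2}}^{1/2}\|v_{1}\|_{\dot{H}_{1}^{2}}^{1/2}\lesssim\|v_{1}\|_{L^{2}}^{1/2}\|v_{1}\|_{\dot{\mathcal{H}}_{1}^{2}}^{1/2}$, but your last step needs a correction: \eqref{eq:ComparisonH2H2} states $\|v\|_{\dot{\mathcal{H}}_{1}^{2}}\sim\|v\|_{\dot{H}_{1}^{2}}+\|\mathbf{1}_{r\sim1}v\|_{L^{2}}$, so it yields $\|v_{1}\|_{\dot{H}_{1}^{2}}\lesssim\|v_{1}\|_{\dot{\mathcal{H}}_{1}^{2}}$ outright, with no additive remainder. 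The weaker form you wrote, $\|v_{1}\|_{\dot{H}_{1}^{2}}\lesssim\|v_{1}\|_{\dot{\mathcal{H}}_{1}^{2}}+\|v_{1}\|_{L^{2}}$, leaves you with an extra $\|v_{1}\|_{L^{2}}$ that your proposed ``$ab\leq a^{1/2}b^{1/2}\cdot b^{1/2}$'' absorption cannot remove: there is no bound $\|v_{1}\|_{L^{2}}\lesssim\|v_{1}\|_{L^{2}}^{1/2}\|v_{1}\|_{\dot{\mathcal{H}}_{1}^{2}}^{1/2}$ in general, since under $v_{1}\mapsto v_{1}(\cdot/\lambda)$ the ratio $\|v_{1}\|_{L^{2}}/\|v_{1}\|_{\dot{\mathcal{H}}_{1}^{2}}$ grows like $\lambda^{2}$ (up to logarithms). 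Using the sharp one-sided comparison closes the argument.
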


\begin{proof}
For the estimate \eqref{eq:interpolation-1}, we will in fact show
\[
\|v_{2}\|_{L^{2p}}\aleq_{p}\|v_{2}\|_{L^{2}}^{\frac{1}{p}}\|\partial_{r}v_{2}\|_{L^{2}}^{1-\frac{1}{p}},\qquad\forall p\in[1,\infty).
\]
As the case $p=1$ is immediate, it suffices to show for $p\in[2,\infty)$
by interpolation. Applying the FTC to the expression $\partial_{r}|v_{2}|^{p}(r)\aleq_{p}|v_{2}|^{p-1}|\partial_{r}v_{2}|$
and using Minkowski's inequality, we get 
\begin{align*}
\||v_{2}|^{p}\|_{L^{2}} & \aleq_{p}\|{\textstyle \int_{r}^{\infty}}|v_{2}|^{p-1}|\partial_{r}v_{2}|dr'\|_{L^{2}(rdr)}\\
 & \aleq_{p}{\textstyle \int_{0}^{\infty}}\|\chf_{r\leq r'}\|_{L^{2}(rdr)}|v_{2}|^{p-1}|\partial_{r}v_{2}|dr'\\
 & \aleq_{p}{\textstyle \int_{0}^{\infty}}|v_{2}|^{p-1}|\partial_{r}v_{2}|\,r'dr'\\
 & \aleq_{p}\||v_{2}|^{p-1}\|_{L^{2}}\|\partial_{r}v_{2}\|_{L^{2}}.
\end{align*}
Therefore, 
\[
\|v_{2}\|_{L^{2p}}^{p}\aleq_{p}\|v_{2}\|_{L^{2(p-1)}}^{p-1}\|\partial_{r}v_{2}\|_{L^{2}}\aleq_{p}\|v_{2}\|_{L^{2}}^{\frac{1}{p-1}}\|v_{2}\|_{L^{2p}}^{p(1-\frac{1}{p-1})}\|\partial_{r}v_{2}\|_{L^{2}}.
\]
Rearranging this completes the proof of \eqref{eq:interpolation-1}.

The estimate \eqref{eq:interpolation-2} follows from 
\[
\||v_{1}|_{-1}\|_{L^{2}}\aleq\|v_{1}\|_{\dot{H}_{1}^{1}}\aleq\|v_{1}\|_{L^{2}}^{\frac{1}{2}}\|v_{1}\|_{\dot{H}_{1}^{2}}^{\frac{1}{2}}\aleq\|v_{1}\|_{L^{2}}^{\frac{1}{2}}\|v_{1}\|_{\dot{\calH}_{1}^{2}}^{\frac{1}{2}},
\]
where in the last inequality we used \eqref{eq:ComparisonH2H2}. 
\end{proof}
\begin{lem}[Weighted $L^{\infty}$-estimates]
\label{lem:Weighted-Linfty}Let $v_{m}$ be $m$-equivariant functions,
$m\in\{0,1,2\}$. Near the origin, we have 
\begin{align*}
\|\chf_{(0,1]}v\|_{L^{\infty}} & \aleq\|v\|_{\dot{\calH}_{0}^{3}},\\
\|\chf_{(0,1]}v_{1}\|_{L^{\infty}} & \aleq\|v_{1}\|_{\dot{\calH}_{1}^{2}},\\
\|\chf_{(0,1]}v_{2}\|_{L^{\infty}} & \aleq\|v_{2}\|_{\dot{\calH}_{2}^{1}}.
\end{align*}
Near infinity, we have 
\begin{align*}
\|\chf_{[1,\infty)}v\|_{L^{\infty}} & \aleq\|v\|_{\dot{\calH}_{0}^{1}},\\
\|\chf_{[1,\infty)}\langle\log_{+}r\rangle^{-1}|v|_{-2}\|_{L^{\infty}} & \aleq\|v\|_{\dot{\calH}_{0}^{3}},\\
\|\chf_{[1,\infty)}v_{1}\|_{L^{\infty}} & \aleq\|v_{1}\|_{L^{2}}^{\frac{1}{2}}\|v_{1}\|_{\dot{\calH}_{1}^{2}}^{\frac{1}{2}},\\
\|\chf_{[1,\infty)}\langle\log_{+}r\rangle^{-1}|v_{1}|_{-1}\|_{L^{\infty}} & \aleq\|v_{1}\|_{\dot{\calH}_{1}^{2}},\\
\|\chf_{[1,\infty)}\langle\log_{+}r\rangle^{-1}v_{2}\|_{L^{\infty}} & \aleq\|v_{2}\|_{\dot{\calH}_{2}^{1}}.
\end{align*}
\end{lem}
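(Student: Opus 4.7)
The plan is to reduce by density to $v_m \in \mathcal{S}_m$ and establish each inequality by the fundamental theorem of calculus together with a weighted Cauchy--Schwarz matched to the defining weight of the $\dot{\mathcal{H}}_m^k$ norm. The eight bounds split into three groups: unweighted infinity estimates, log-weighted infinity estimates, and near-origin estimates. Among these, only the $m = 0$ origin case requires combining multiple ingredients.

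For the unweighted infinity estimates, Schwartz decay yields
\[
|v_m(r)|^2 = -2\Re\int_r^\infty \bar v_m\,\partial_{r'}v_m\,dr' \leq 2\Big(\int_r^\infty \tfrac{|v_m|^2}{r'}\,dr'\Big)^{\!1/2}\Big(\int_r^\infty |\partial_{r'}v_m|^2 r'\,dr'\Big)^{\!1/2}.
\]
On $r\geq 1$ with $m = 0$ the weight $\langle\log_{-} r\rangle = 1$, so the first factor is exactly $\|v\|_{\dot{\mathcal{H}}_{0}^{1}}$; for $m = 1$ the trivial bound $\int_r^\infty |v_1|^2/r'\,dr' \leq r^{-2}\|v_1\|_{L^2(r\,dr)}^2$ yields the desired interpolation $\|\mathbf{1}_{[1,\infty)}v_1\|_{L^\infty}\lesssim \|v_1\|_{L^2}^{1/2}\|v_1\|_{\dot{\mathcal{H}}_{1}^{2}}^{1/2}$. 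For the log-weighted infinity estimates, I pass to $u = \log r$ with $w(u) := v_m(e^u)$; the change of variable matches $\|r^{-1}\langle\log_{+} r\rangle^{-1}v_m\|_{L^2(r\,dr)}$ with $\|w/\langle u\rangle\|_{L^2(du)}$ and $\|\partial_r v_m\|_{L^2(r\,dr)}$ with $\|\partial_u w\|_{L^2(du)}$. Differentiating $|w(u)|^2/\langle u\rangle^2$ in $u$ and integrating from $u$ to $\infty$ (the boundary term vanishes by Schwartz decay) produces
\[
\frac{|w(u)|^2}{\langle u\rangle^2} \lesssim \|w/\langle u'\rangle\|_{L^2(du')}\big(\|\partial_{u'}w\|_{L^2(du')} + \|w/\langle u'\rangle\|_{L^2(du')}\big) \lesssim \|v_m\|_{\dot{\mathcal{H}}_{m}^{1}}^2.
\]
The $|v|_{-2}$ and $|v_1|_{-1}$ variants split the supremum over components $r^{-\ell}\partial_r^{k-\ell}v$, each handled by the same template; for instance $\langle\log_{+} r\rangle^{-1}\partial_{rr}v$ uses the $\partial_{rrr}v$ and $r^{-1}\langle\log r\rangle^{-1}\partial_{rr}v$ summands of $\|v\|_{\dot{\mathcal{H}}_{0}^{3}}$.

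Near the origin with $m \geq 1$, $m$-equivariance forces $v_m(0) = 0$, so FTC from the origin gives $|v_m(r)|^2 \leq 2\|v_m/r'\|_{L^2((0,1],r'\,dr')}\|\partial_{r'}v_m\|_{L^2((0,1],r'\,dr')}$, and since $\langle\log_{+} r\rangle = 1$ on $(0,1]$ and $r^{-1}\leq C\, r^{-2}\langle\log r\rangle^{-1}$ on $(0,1]$, both factors are directly bounded by $\|v_m\|_{\dot{\mathcal{H}}_{m}^{k}}$ using the corresponding weighted components of the norm. For the delicate $m = 0$ case, I proceed in two stages: (i) control $|v(1)|$ by the 2D Sobolev embedding $H^2(\{r\sim 1\}) \hookrightarrow L^\infty$, since on $r \sim 1$ all three weighted components of $\|v\|_{\dot{\mathcal{H}}_{0}^{3}}$ become ordinary $L^2$ norms of $v$, $\partial_r v$ and $\partial_{rr}v$; (ii) for $r \in (0,1]$ write $v(r) = v(1) - \int_r^1 \partial_{r'}v\,dr'$ and apply Cauchy--Schwarz,
\[
\int_0^1 |\partial_{r'}v|\,dr' \leq \Big(\int_0^1 \tfrac{|\partial_{r'}v|^2}{(r')^3\langle\log r'\rangle^2}\,dr'\Big)^{\!1/2}\Big(\int_0^1 (r')^3\langle\log r'\rangle^2\,dr'\Big)^{\!1/2} \lesssim \|v\|_{\dot{\mathcal{H}}_{0}^{3}},
\]
where the first factor is exactly the $r^{-1}\partial_r v$ component of $|\partial_r v|_{-1}$ weighted by $r^{-1}\langle\log r\rangle^{-1}$ (against the Jacobian $r'\,dr'$) appearing in $\|v\|_{\dot{\mathcal{H}}_{0}^{3}}$, while the second factor is a convergent integral on $(0,1]$.

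The principal difficulty is the $m = 0$ near-origin estimate: Hardy fails at first order, so one must exploit the stronger near-origin weight on $\partial_r v$ encoded by the $|\partial_r v|_{-1}$ term, combined with a Sobolev-embedding control of the boundary value at $r = 1$. The key observation is that the weight $(r')^{-3}\langle\log r'\rangle^{-2}$ on $|\partial_{r'}v|^2$ is strong enough, via Cauchy--Schwarz against the $dr'$-integrable weight $(r')^3\langle\log r'\rangle^2$, to make $\partial_r v \in L^1(dr)$ near the origin without any log loss; once this is in place, the $|v|_{-k}$ and infinity variants are formulaic and require only bookkeeping of which component of $\|v\|_{\dot{\mathcal{H}}_m^k}$ supplies each weight in the Cauchy--Schwarz.
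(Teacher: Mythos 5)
Most of your argument is sound and is consistent in spirit with the paper's (very terse) proof: FTC plus Cauchy--Schwarz matched to the weights of the adapted norms near infinity, and, near the origin, your two-step treatment of $m=0$ (Sobolev control of $|v(1)|$ on the annulus $r\sim1$, then $\partial_r v\in L^1(dr)$ near $0$ from the $r^{-2}\langle\log r\rangle^{-1}\partial_r v$ component of $\|v\|_{\dot{\mathcal H}_0^3}$) is a legitimate alternative to the paper's appeal to $\dot{\mathcal H}_0^3\hookrightarrow H^3_{\mathrm{loc}}$ and Sobolev embedding. One bookkeeping caveat for the log-weighted bounds on $|v_1|_{-1}$ and $|v|_{-2}$: applying your template literally to $f=r^{-1}v_1$ would require $\|r^{-1}\partial_r v_1\|_{L^2([1,\infty))}$ \emph{without} the logarithmic weight, which the norm does not supply; one must instead differentiate the fully weighted quantity (e.g.\ $|v_1|^2r^{-2}\langle\log r\rangle^{-2}$) so that each resulting term matches a component of $\|v_1\|_{\dot{\mathcal H}_1^2}$.

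There is, however, a genuine gap in the estimate $\|\mathbf 1_{[1,\infty)}v_1\|_{L^\infty}\lesssim\|v_1\|_{L^2}^{1/2}\|v_1\|_{\dot{\mathcal H}_1^2}^{1/2}$. Your ``trivial bound'' on the first factor gives, for $r\ge1$,
\begin{equation*}
|v_1(r)|^2\lesssim\|v_1\|_{L^2}\,\|\partial_r v_1\|_{L^2},
\end{equation*}
and $\|\partial_r v_1\|_{L^2}$ is an $\dot H^1$-level quantity that is \emph{not} controlled by the $\dot H^2$-level norm $\|v_1\|_{\dot{\mathcal H}_1^2}$; the best available is $\|\partial_r v_1\|_{L^2}\lesssim\|v_1\|_{L^2}^{1/2}\|v_1\|_{\dot{\mathcal H}_1^2}^{1/2}$, which only yields $|v_1(r)|\lesssim\|v_1\|_{L^2}^{3/4}\|v_1\|_{\dot{\mathcal H}_1^2}^{1/4}$. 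This is strictly weaker than the claimed bound: the two sides scale differently, and for a unit bump supported at radius $R\gg1$ one has $\|v_1\|_{L^\infty}\sim1$, $\|v_1\|_{L^2}\sim R$, $\|v_1\|_{\dot{\mathcal H}_1^2}\sim R^{-1}$, so the desired right-hand side is $O(1)$ while your intermediate bound is $O(R^{1/2})$. The loss matters in the application, where $\|\epsilon_1\|_{L^2}\gg\|\epsilon_1\|_{\dot{\mathcal H}_1^2}$. The fix is to keep the weight $1/r'$ on $v_1$ in the first factor instead of pulling out $r^{-2}$: since $\int_r^\infty|v_1|^2/r'\,dr'\le\|r^{-1}v_1\|_{L^2}^2\lesssim\|v_1\|_{\dot H_1^1}^2$ by Hardy's inequality for $m=1$ (cf.\ \eqref{eq:HardySobolevAppendix-1}), both factors are at the $\dot H^1$ level, whence $|v_1(r)|^2\lesssim\|v_1\|_{\dot H_1^1}^2\lesssim\|v_1\|_{L^2}\|v_1\|_{\dot H_1^2}\lesssim\|v_1\|_{L^2}\|v_1\|_{\dot{\mathcal H}_1^2}$ by interpolation and \eqref{eq:ComparisonH2H2}. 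This is exactly the paper's route via the Hardy--Sobolev inequality and \eqref{eq:interpolation-2}.
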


\begin{proof}
Near the origin, $L^{\infty}$-estimates for $v$ and $v_{1}$ follow
from $\dot{\calH}_{0}^{3}\hookrightarrow H_{\mathrm{loc}}^{3}$ and
$\dot{\calH}_{1}^{2}\hookrightarrow H_{\mathrm{loc}}^{2}$, and the
Sobolev embeddings. For $v_{2}$, we use the FTC argument: 
\[
\|\chf_{(0,1]}|v_{2}|^{2}\|_{L^{\infty}}\aleq\int_{0}^{1}|\tfrac{1}{r}v_{2}||\partial_{r}v_{2}|r'dr'\aleq\|\chf_{(0,1]}\tfrac{1}{r}v_{2}\|_{L^{2}}\|\chf_{(0,1]}\partial_{r}v_{2}\|_{L^{2}}\aleq\|v_{2}\|_{\dot{\calH}_{2}^{1}}^{2}.
\]

Near infinity, all the estimates except $\|\chf_{[1,\infty)}v_{1}\|_{L^{\infty}}$
follow from the FTC arguments and the definitions of our adapted function
spaces. We omit their proofs. For $\|\chf_{[1,\infty)}v_{1}\|_{L^{\infty}}$,
we use \eqref{eq:HardySobolevAppendix-1} and \eqref{eq:interpolation-2}
instead: 
\[
\|\chf_{[1,\infty)}v_{1}\|_{L^{\infty}}\aleq\|v_{1}\|_{\dot{H}_{1}^{1}}\aleq\|v_{1}\|_{L^{2}}^{\frac{1}{2}}\|v_{1}\|_{\dot{\calH}_{1}^{2}}^{\frac{1}{2}}.
\]
This completes the proof.
\end{proof}
\bibliographystyle{abbrv}
\bibliography{References}

\begin{thebibliography}{10}

\bibitem{AtiyahHitchin}
M.~Atiyah and N.~Hitchin.
\newblock {\em The geometry and dynamics of magnetic monopoles}.
\newblock M. B. Porter Lectures. Princeton University Press, Princeton, NJ,
  1988.

\bibitem{BergeDeBouardSaut1995Nonlinearity}
L.~Berg\'{e}, A.~De~Bouard, and J.-C. Saut.
\newblock Blowing up time-dependent solutions of the planar, {C}hern-{S}imons
  gauged nonlinear {S}chr\"{o}dinger equation.
\newblock {\em Nonlinearity}, 8(2):235--253, 1995.

\bibitem{BergeDeBouardSaut1995PRL}
L.~Berg\'{e}, A.~De~Bouard, and J.-C. Saut.
\newblock Collapse of {C}hern-{S}imons-gauged matter fields.
\newblock {\em Phys. Rev. Lett.}, 74(20):3907--3911, 1995.

\bibitem{BourgainWang1997}
J.~Bourgain and W.~Wang.
\newblock Construction of blowup solutions for the nonlinear {S}chr\"odinger
  equation with critical nonlinearity.
\newblock {\em Ann. Scuola Norm. Sup. Pisa Cl. Sci. (4)}, 25(1-2):197--215
  (1998), 1997.
\newblock Dedicated to Ennio De Giorgi.

\bibitem{ChangShatahUhlenbeck2000CPAM}
N.-H. Chang, J.~Shatah, and K.~Uhlenbeck.
\newblock Schr\"{o}dinger maps.
\newblock {\em Comm. Pure Appl. Math.}, 53(5):590--602, 2000.

\bibitem{ChouWan1994PacificJMath}
K.~S. Chou and T.~Y.-H. Wan.
\newblock Asymptotic radial symmetry for solutions of {$\Delta u+e^u=0$} in a
  punctured disc.
\newblock {\em Pacific J. Math.}, 163(2):269--276, 1994.

\bibitem{Collot2017AnalPDE}
C.~Collot.
\newblock Nonradial type {II} blow up for the energy-supercritical semilinear
  heat equation.
\newblock {\em Anal. PDE}, 10(1):127--252, 2017.

\bibitem{Collot2018MemAMS}
C.~Collot.
\newblock Type {II} blow up manifolds for the energy supercritical semilinear
  wave equation.
\newblock {\em Mem. Amer. Math. Soc.}, 252(1205):v+163, 2018.

\bibitem{DemouliniStuart}
S.~Demoulini and D.~Stuart.
\newblock Adiabatic limit and the slow motion of vortices in a
  {C}hern-{S}imons-{S}chr\"{o}dinger system.
\newblock {\em Comm. Math. Phys.}, 290(2):597--632, 2009.

\bibitem{Glassey1977JMP}
R.~T. Glassey.
\newblock On the blowing up of solutions to the {C}auchy problem for nonlinear
  {S}chr\"{o}dinger equations.
\newblock {\em J. Math. Phys.}, 18(9):1794--1797, 1977.

\bibitem{GustafsonKangTsai2008Duke}
S.~Gustafson, K.~Kang, and T.-P. Tsai.
\newblock Asymptotic stability of harmonic maps under the {S}chr\"{o}dinger
  flow.
\newblock {\em Duke Math. J.}, 145(3):537--583, 2008.

\bibitem{GustafsonNakanishiTsai2010CMP}
S.~Gustafson, K.~Nakanishi, and T.-P. Tsai.
\newblock Asymptotic stability, concentration, and oscillation in harmonic map
  heat-flow, {L}andau-{L}ifshitz, and {S}chr\"{o}dinger maps on {$\Bbb R^2$}.
\newblock {\em Comm. Math. Phys.}, 300(1):205--242, 2010.

\bibitem{HillairetRaphael2012AnalPDE}
M.~Hillairet and P.~Rapha\"{e}l.
\newblock Smooth type {II} blow-up solutions to the four-dimensional
  energy-critical wave equation.
\newblock {\em Anal. PDE}, 5(4):777--829, 2012.

\bibitem{Huh2013Abstr.Appl.Anal}
H.~Huh.
\newblock Energy solution to the {C}hern-{S}imons-{S}chr\"{o}dinger equations.
\newblock {\em Abstr. Appl. Anal.}, pages Art. ID 590653, 7, 2013.

\bibitem{HuhSeok2013JMP}
H.~Huh and J.~Seok.
\newblock The equivalence of the {C}hern-{S}imons-{S}chr\"{o}dinger equations
  and its self-dual system.
\newblock {\em J. Math. Phys.}, 54(2):021502, 5, 2013.

\bibitem{JackiwPi1990PRD}
R.~Jackiw and S.-Y. Pi.
\newblock Classical and quantal nonrelativistic {C}hern-{S}imons theory.
\newblock {\em Phys. Rev. D (3)}, 42(10):3500--3513, 1990.

\bibitem{JackiwPi1990PRL}
R.~Jackiw and S.-Y. Pi.
\newblock Soliton solutions to the gauged nonlinear {S}chr\"{o}dinger equation
  on the plane.
\newblock {\em Phys. Rev. Lett.}, 64(25):2969--2972, 1990.

\bibitem{JackiwPi1991PRD}
R.~Jackiw and S.-Y. Pi.
\newblock Time-dependent {C}hern-{S}imons solitons and their quantization.
\newblock {\em Phys. Rev. D (3)}, 44(8):2524--2532, 1991.

\bibitem{JackiwPi1992Progr.Theoret.}
R.~Jackiw and S.-Y. Pi.
\newblock Self-dual {C}hern-{S}imons solitons.
\newblock {\em Progr. Theoret. Phys. Suppl.}, (107):1--40, 1992.
\newblock Low-dimensional field theories and condensed matter physics (Kyoto,
  1991).

\bibitem{JendrejLawrieRodriguez2022ASENS}
J.~Jendrej, A.~Lawrie, and C.~Rodriguez.
\newblock Dynamics of bubbling wave maps with prescribed radiation.
\newblock {\em Ann. Sci. \'{E}c. Norm. Sup\'{e}r. (4)}, 55(4):1135--1198, 2022.

\bibitem{KimKwon2019arXiv}
K.~Kim and S.~Kwon.
\newblock On pseudoconformal blow-up solutions to the self-dual
  {C}hern-{S}imons-{S}chr\"{o}dinger equation: existence, uniqueness, and
  instability.
\newblock {\em arXiv e-prints 1909.01055, to appear in Mem. Amer. Math. Soc.},
  2019.

\bibitem{KimKwon2020arXiv}
K.~Kim and S.~Kwon.
\newblock Construction of blow-up manifolds to the equivariant self-dual
  {C}hern-{S}imons-{S}chr\"{o}dinger equation.
\newblock {\em arXiv e-prints 2009.02943}, 2020.

\bibitem{KriegerSchlagTataru2008Invent}
J.~Krieger, W.~Schlag, and D.~Tataru.
\newblock Renormalization and blow up for charge one equivariant critical wave
  maps.
\newblock {\em Invent. Math.}, 171(3):543--615, 2008.

\bibitem{KriegerSchlagTataru2009Duke}
J.~Krieger, W.~Schlag, and D.~Tataru.
\newblock Slow blow-up solutions for the {$H^1(\Bbb R^3)$} critical focusing
  semilinear wave equation.
\newblock {\em Duke Math. J.}, 147(1):1--53, 2009.

\bibitem{Lim2018JDE}
Z.~M. Lim.
\newblock Large data well-posedness in the energy space of the
  {C}hern-{S}imons-{S}chr\"{o}dinger system.
\newblock {\em J. Differential Equations}, 264(4):2553--2597, 2018.

\bibitem{LiuSmith2016}
B.~Liu and P.~Smith.
\newblock Global wellposedness of the equivariant
  {C}hern-{S}imons-{S}chr\"{o}dinger equation.
\newblock {\em Rev. Mat. Iberoam.}, 32(3):751--794, 2016.

\bibitem{LiuSmithTataru2014IMRN}
B.~Liu, P.~Smith, and D.~Tataru.
\newblock Local wellposedness of {C}hern-{S}imons-{S}chr\"{o}dinger.
\newblock {\em Int. Math. Res. Not. IMRN}, (23):6341--6398, 2014.

\bibitem{MantonSutcliffe}
N.~Manton and P.~Sutcliffe.
\newblock {\em Topological solitons}.
\newblock Cambridge Monographs on Mathematical Physics. Cambridge University
  Press, Cambridge, 2004.

\bibitem{MerleRaphael2003GAFA}
F.~Merle and P.~Rapha\"{e}l.
\newblock Sharp upper bound on the blow-up rate for the critical nonlinear
  {S}chr\"{o}dinger equation.
\newblock {\em Geom. Funct. Anal.}, 13(3):591--642, 2003.

\bibitem{MerleRaphael2005CMP}
F.~Merle and P.~Rapha\"{e}l.
\newblock Profiles and quantization of the blow up mass for critical nonlinear
  {S}chr\"{o}dinger equation.
\newblock {\em Comm. Math. Phys.}, 253(3):675--704, 2005.

\bibitem{MerleRaphael2006JAMS}
F.~Merle and P.~Rapha\"{e}l.
\newblock On a sharp lower bound on the blow-up rate for the {$L^2$} critical
  nonlinear {S}chr\"{o}dinger equation.
\newblock {\em J. Amer. Math. Soc.}, 19(1):37--90, 2006.

\bibitem{MerleRaphaelRodnianski2013InventMath}
F.~Merle, P.~Rapha\"{e}l, and I.~Rodnianski.
\newblock Blowup dynamics for smooth data equivariant solutions to the critical
  {S}chr\"{o}dinger map problem.
\newblock {\em Invent. Math.}, 193(2):249--365, 2013.

\bibitem{MerleRaphaelRodnianski2015CambJMath}
F.~Merle, P.~Rapha\"{e}l, and I.~Rodnianski.
\newblock Type {II} blow up for the energy supercritical {NLS}.
\newblock {\em Camb. J. Math.}, 3(4):439--617, 2015.

\bibitem{MerleRaphaelSzeftel2013AJM}
F.~Merle, P.~Rapha\"{e}l, and J.~Szeftel.
\newblock The instability of {B}ourgain-{W}ang solutions for the {$L^2$}
  critical {NLS}.
\newblock {\em Amer. J. Math.}, 135(4):967--1017, 2013.

\bibitem{OhPusateri2015}
S.-J. Oh and F.~Pusateri.
\newblock Decay and scattering for the {C}hern-{S}imons-{S}chr\"{o}dinger
  equations.
\newblock {\em Int. Math. Res. Not. IMRN}, (24):13122--13147, 2015.

\bibitem{Perelman2014CMP}
G.~Perelman.
\newblock Blow up dynamics for equivariant critical {S}chr\"{o}dinger maps.
\newblock {\em Comm. Math. Phys.}, 330(1):69--105, 2014.

\bibitem{RaphaelRodnianski2012Publ.Math.}
P.~Rapha\"{e}l and I.~Rodnianski.
\newblock Stable blow up dynamics for the critical co-rotational wave maps and
  equivariant {Y}ang-{M}ills problems.
\newblock {\em Publ. Math. Inst. Hautes \'{E}tudes Sci.}, 115:1--122, 2012.

\bibitem{RaphaelSchweyer2013CPAM}
P.~Rapha\"{e}l and R.~Schweyer.
\newblock Stable blowup dynamics for the 1-corotational energy critical
  harmonic heat flow.
\newblock {\em Comm. Pure Appl. Math.}, 66(3):414--480, 2013.

\bibitem{RaphaelSchweyer2014AnalPDE}
P.~Rapha\"{e}l and R.~Schweyer.
\newblock Quantized slow blow-up dynamics for the corotational energy-critical
  harmonic heat flow.
\newblock {\em Anal. PDE}, 7(8):1713--1805, 2014.

\bibitem{RodnianskiSterbenz2010Ann.Math.}
I.~Rodnianski and J.~Sterbenz.
\newblock On the formation of singularities in the critical {${\rm O}(3)$}
  {$\sigma$}-model.
\newblock {\em Ann. of Math. (2)}, 172(1):187--242, 2010.

\bibitem{BergWilliams2013}
J.~B. van~den Berg and J.~F. Williams.
\newblock ({I}n-)stability of singular equivariant solutions to the
  {L}andau-{L}ifshitz-{G}ilbert equation.
\newblock {\em European J. Appl. Math.}, 24(6):921--948, 2013.

\end{thebibliography}

\end{document}